
\documentclass[11pt]{article}

\usepackage{amsmath, amsthm}
\usepackage{amsmath, amsfonts}
\usepackage{amsmath, amssymb}
\usepackage{amsmath}
\usepackage{graphics}
\usepackage{graphicx}
\usepackage{color}
\usepackage{hyperref}
\usepackage[all]{xy}

\textheight =  24   cm
\textwidth  =  16   cm
\hoffset    =  -1.5 cm
\topmargin  =  -2   cm 

\parindent      = 5.0mm

\marginparwidth = 2 cm
\marginparsep   = 0.2 cm



\newtheorem{theorem}{Theorem}[subsection]
\newtheorem{definition}[theorem]{Definition}
\newtheorem{definition-lemma}[theorem]{Definition/Lemma}
\newtheorem{definition-explanation}[theorem]{Definition/Explanation}
\newtheorem{explanation-definition}[theorem]{Explanation/Definition}
\newtheorem{definition-fact}[theorem]{Definition/Fact}
\newtheorem{definition-notation}[theorem]{Definition/Notation}
\newtheorem{lemma}[theorem]{Lemma}
\newtheorem{lemma-definition}[theorem]{Lemma/Definition}
\newtheorem{proposition}[theorem]{Proposition}

\newtheorem{remark}[theorem]{\it Remark}
\newtheorem{remark-notation}[theorem]{\it Remark/Notation}

\newtheorem{convention}[theorem]{\it Convention}

\newtheorem{example-definition}[theorem]{Example/Definition}

\newtheorem{definition-prototype}[theorem]{Definition-Prototype}

\numberwithin{equation}{subsection}

\newtheorem{stheorem}{Theorem}[section]
\newtheorem{sdefinition}[stheorem]{Definition}
\newtheorem{sdefinition-lemma}[stheorem]{Definition/Lemma}
\newtheorem{sdefinition-explanation}[stheorem]{Definition/Explanation}
\newtheorem{sexplanation-definition}[stheorem]{Explanation/Definition}
\newtheorem{sdefinition-fact}[stheorem]{Definition/Fact}
\newtheorem{sdefinition-notation}[theorem]{Definition/Notation}
\newtheorem{slemma}[stheorem]{Lemma}
\newtheorem{slemma-definition}[stheorem]{Lemma/Definition}

\newtheorem{sremark-notation}[stheorem]{\it Remark/Notation}

\newtheorem{sassumption}[stheorem]{\it Assumption}

\newtheorem{sexample-definition}[stheorem]{Example/Definition}

\newtheorem{sdefinition-prototype}[stheorem]{Definition-Prototype}


\newtheorem{ssdefinition-lemma}[sstheorem]{Definition/Lemma}
\newtheorem{ssdefinition-explanation}[sstheorem]{Definition/Explanation}
\newtheorem{ssexplanation-definition}[sstheorem]{Explanation/Definition}
\newtheorem{ssdefinition-fact}[stheorem]{Definition/Fact}
\newtheorem{ssdefinition-notation}[theorem]{Definition/Notation}

\newtheorem{sslemma-definition}[sstheorem]{Lemma/Definition}

\newtheorem{ssremark-notation}[sstheorem]{\it Remark/Notation}

\newtheorem{ssexample-definition}[sstheorem]{Example/Definition}

\newtheorem{ssdefinition-prototype}[sstheorem]{Definition-Prototype}



\newcommand{\Ann}{\mbox{\it Ann}\,}

\newcommand{\Coh}{\mbox{\it Coh}\,}
 \newcommand{\CohCategory}{\mbox{\it ${\cal C}$\!oh}\,}

\newcommand{\Coker}{\mbox{\it Coker}\,}

\newcommand{\Endsheaf}{\mbox{\it ${\cal E}\!$nd}\,}
 
\newcommand{\Err}{\mbox{\it Err}\,}

\newcommand{\FM}{\mbox{${\cal F}$\!${\cal M}$}\,}
 \newcommand{\smallFM}{\mbox{\small${\cal F}$\!${\cal M}$}\,}

\newcommand{\Gr}{\mbox{\it Gr}\,}

\newcommand{\Id}{\mbox{\it Id}\,}
 \newcommand{\scriptsizeId}{\mbox{\scriptsize\it Id}}

\newcommand{\Imaginary}{\mbox{\it Im}\,}
\newcommand{\Image}{\mbox{\it Im}\,}

\newcommand{\Ker}{\mbox{\it Ker}\,}

\newcommand{\Proj}{\mbox{\it Proj}\,}

\newcommand{\Quot}{\mbox{\it Quot}\,}

\newcommand{\Real}{\mbox{\it Re}\,}

\newcommand{\Spec}{\mbox{\it Spec}\,}
 \newcommand{\boldSpec}{\mbox{\it\bf Spec}\,}

\newcommand{\Supp}{\mbox{\it Supp}\,}

\newcommand{\Tor}{\mbox{\it Tor}}

 \newcommand{\scriptsizeZss}{\mbox{\scriptsize\it $Z$-$ss$}\,}
 \newcommand{\tinyZss}{\mbox{\tiny\it $Z$-$ss$}\,}

\newcommand{\ch}{\mbox{\it ch}\,}

\newcommand{\degree}{\mbox{\it deg}\,}

\newcommand{\determinant}{\mbox{\it det}\,}
\newcommand{\dimm}{\mbox{\it dim}\,}

\newcommand{\flatscriptsize}{\mbox{\scriptsize\it flat}\,}

\newcommand{\pr}{\mbox{\it pr}}
 \newcommand{\prscriptsize}{\mbox{\scriptsize\it pr}}

\newcommand{\redscriptsize}{\mbox{\scriptsize\rm red}\,}

\newcommand{\singularscriptsize}{\mbox{\scriptsize\it singular}\,}

\newcommand{\td}{\mbox{\it td}\,}

\newcommand{\torsionscriptsize}{\mbox{\scriptsize\it torsion}\,}
\newcommand{\torsionfreescriptsize}{\mbox{\scriptsize\it torsion-free}\,}





\begin{document}

\enlargethispage{24cm}

\begin{titlepage}

$ $

\vspace{-1.5cm} 

\noindent\hspace{-1cm}
\parbox{6cm}{\small August 2013}\
   \hspace{6cm}\
   \parbox[t]{6cm}{yymm.nnnn [math.AG] \\
                D(10.2): D-string ws instanton,  \\
                $\mbox{\hspace{3.8em}} $ 
				$Z$-semistable morphism
				}

\vspace{2cm}

\centerline{\large\bf
  A mathematical theory of D-string world-sheet instantons,}
\vspace{1ex}
\centerline{\large\bf
 II: Moduli stack of $Z$-(semi)stable morphisms from Azumaya nodal curves }
\vspace{1ex}
\centerline{\large\bf
  with a fundamental module to a projective Calabi-Yau $3$-fold}


\vspace{3em}

\centerline{\large
  Chien-Hao Liu
   \hspace{1ex} and \hspace{1ex}
  Shing-Tung Yau
}

\vspace{4em}

\begin{quotation}
\centerline{\bf Abstract}

\vspace{0.3cm}

\baselineskip 12pt  
{\small
 In this Part II, D(10.2), of D(10), 
 we take D(10.1) (arXiv:1302.2054 [math.AG]) 
  as the foundation to define the notion of  $Z$-semistable morphisms 
   from general Azumaya nodal curves,  of genus $\ge 2$, with a fundamental module
   to a projective Calabi-Yau $3$-fold
   and show that the moduli stack of such $Z$-semistable morphisms of a fixed type is compact. 
 This gives us a counter moduli stack to D-strings 
   as the moduli stack of stable maps in Gromov-Witten theory to the fundamental string. 
 It serves and prepares for us the basis toward a new invariant of Calabi-Yau $3$-fold
   that captures soft-D-string world-sheet instanton numbers in superstring theory.       
 This note is written hand-in-hand with D(10.1) and is to be read side-by-side with ibidem.  
 } 
\end{quotation}

\vspace{9em}

\baselineskip 12pt
{\footnotesize
\noindent
{\bf Key words:} \parbox[t]{14cm}{D-string world-sheet instanton, central charge;
      Azumaya nodal curve, fundamental module,\\ $Z$-semistable morphism; 
	  bubbling ${\Bbb P}^1$-tree, positivity; compactness of moduli.
 }} 

\bigskip

\noindent {\small MSC number 2010: 14D20, 81T30, 14F05, 14N35, 14A22.
} 

\bigskip

\baselineskip 10pt
{\scriptsize
\noindent{\bf Acknowledgements.}
For this Part II in D(10), we thank 
 D.S.~Nagaraj, C.S.~Seshadri and Jun Li, Baosen Wu
    for their works [N-S: II]  and [L-W] that influence our thought on the problem.  
C.-H.L.\ thanks in addition
 Baosen~Wu 
    for lectures/discussions on [L-W] and literature guide;   
 Montserrat Teixidor i Bigas 
    for conversation on universal vector bundles on nodal curves;
 Siu-Cheong Lau
     for discussions on symplectic aspect of D-branes and related open Gromov-Witten theory;  	
 Yng-Ing Lee, Chung-Jun Tsai
     for discussions on special Lagrangian cycles;  
 Murad Alim, Constantin Bachas, Pei-Ming Ho, Albrecht Klemm, Cumrun Vafa
     for discussions on related stringy issues and literature guide;
 Eric Sharpe for a communication; 	 
 Hai-Chau Chang, Heng-Yu Chen, Toshiaki Fujimori, Hui-Wen Lin, Ai-Nung Wang, 
 Chin-Lung Wang
     and participants of the Geometry Seminar and the String Theory Seminar
	 at National Taiwan University for conversations/discussions on this project, May 2013;
 Melody Tung Chan,  Siu-Cheong Lau, Cumrun Vafa
   for the enlightening topic courses, spring 2013;
 Yu-jong Tzeng 
   for instruction on Beamer (LaTeX) for presentation; 
 Chieh-Hsiung Kuan, Silika Prohl, Lanfang Wu
   for conversations;
 Department of Mathematics and Department of Physics at National Taiwan University  and 
 National Center for Theoretical Sciences - Mathematics Division, Taipei Office,
   for hospitality, May 2013, while part of the work is in preparation;
 Alexandra Grot
   for Bach that accompanies the typing of the notes; 
 Ling-Miao Chou for moral support.
S.-T.Y.\ thanks in addition
 Department of Mathematics at National Taiwan University
   for hospitality, fall 2013, while part of the manuscript is in preparation.
The project is supported by NSF grants DMS-9803347 and DMS-0074329.
} 

\end{titlepage}

\newpage

\begin{titlepage}

$ $

\vspace{12em} 

\centerline{\small\it
 Chien-Hao Liu dedicates D(10.1)and D(10.2) to}
\centerline{\small\it
 his uncle Prof.~Pin-Hsiung Liu (1925 -- 2004), aunt Ms.~Rui-Be Lin,}
\centerline{\small\it
 and cousin Master Guo-Guang Shr (Te-Ru Liu)}
\centerline{\small\it
 for their hospitality in his first year of college,}
\centerline{\small\it
 years of communications with Te-Ru,}
\centerline{\small\it
 and lots of cherished memories.}

%
%

\end{titlepage}


\newpage
$ $

\vspace{-3em}

\centerline{\sc
 D-string world-sheet instantons II: Stack of $Z$-Semistable Morphisms
 } %

\vspace{2em}


\begin{flushleft}
{\Large\bf 0. Introduction and outline}
\end{flushleft}
 In a suitable regime of superstring theory, D-branes in a Calabi-Yau space
   and their most fundamental behaviors can be nicely described
   mathematically through morphisms from Azumaya spaces
   with a fundamental module to that Calabi-Yau space.
 In the earlier work [L-L-S-Y]
    (D(2): arXiv:0809.2121 [math.AG], with Si Li and Ruifang Song)
	 from the project,
   we explored this notion for the case of D1-branes (i.e.\ D-strings)  and
    laid down some basic ingredients toward understanding
	  the notion of D-string world-sheet instantons in this context.
  In this continuation, D(10), of D(2), we move on to construct
   a moduli stack of semistable morphisms from Azumaya nodal curves
   with a fundamental module to a projective Calabi-Yau $3$-fold $Y$.   
 In Part I of the note, D(10.1),
   we defined
              the notion of twisted central charge $Z$
			     for Fourier-Mukai transforms of dimension $1$ and width $[0]$
				  from nodal curves
            and the associated stability condition on such transforms
   and proved that for a given compact stack of nodal curves $C_{\cal M}/{\cal M}$,
     the stack $\smallFM^{1,[0];\tinyZss}_{C_{\cal M}/{\cal M}}(Y,c)$
	 of $Z$-semistable Fourier-Mukai transforms
	 of dimension $1$ and width $[0]$
	  from nodal curves in the family $C_{\cal M }/{\cal M}$
	  to $Y$ of fixed twisted central charge $c$ is compact.
	  
 In the current Part II (D(10.2)) of D(10),
  we take D(10.1) as the foundation to define the notion of  $Z$-semistable morphisms 
   from general Azumaya nodal curves, of genus $\ge 2$, with a fundamental module
   to a projective Calabi-Yau $3$-fold 
   (Definition~\ref{Zssm})
   and show that the moduli stack of such $Z$-semistable morphisms of a fixed type is compact
   (Theorem~4.0). 
 This gives us a counter moduli stack to D-strings 
   as the moduli stack of stable maps in Gromov-Witten theory to the fundamental string. 
 It serves and prepares for us the basis toward a new invariant of Calabi-Yau $3$-fold
   that captures {\it soft-D-string world-sheet instanton numbers} in superstring theory. 
(Cf.\ Sec.~\ref{review}, Theme `{\it Issues on stability conditions for morphisms 
           from general Azumaya nodal curves with a fundamental module}'.)

\bigskip
  
\noindent{\it Remark 0.1.}{\it $[$work of Nagaraj-Seshadri and Li-Wu$]$.}	
 This is a remark for experts on the issue of moduli spaces of vector bundles 
   on nodal curves and their degenerations.
 Our notion of $Z$-semistable morphisms from Azumaya nodal curves with a fundamental module 
  to a Calabi-Yau $3$-fold is a generalization of the construction from 
  {\it D.S.~Nagaraj} and {\it C.S.~Seshadri}
  of generalized Gieseker moduli spaces in their study of degenerations of moduli spaces
  of vector bundles on curves ([N-S, II]).  
 Conditions (1), (2), and (3) in Definition~\ref{Zssm} in Sec.~3.1 are indeed 
  a generalization of an abstraction of their related study [N-S, II: Sec.~2]: 
    ``{\it Vector bundles over the curves $X_k$}", where $X_k$ is a nodal curve from 
	bubbling off a node of a stable curve by attaching a ${\Bbb P}^1$-chain of length $k$.
 In this regard, Sec.~2.1 and Sec.~2.2 of this note may be regarded 
  as a counter part to part of [N-S, II: Sec.~2].)	
 On the other hand, with the requirement of compactness of moduli space however constructed in mind,
  we need something to help demonstrate/control how far a general Fourier-Mukai transform
   that appears in the reduction is from being in our category.
 For this we learned from {\it Baosen Wu} his work with {\it Jun Li} [L-W] 
   the technique of error Hilbert polynomials in their case (turned to error charges in our case).
 In this regard, Sec.~2.3 and Sec.~4.2 are the parallel to their study of error Hilbert polynomials
   of sheaves in a degeneration in 
   [L-W: Sec.~3.3 ``{\it Numerical criterion}" 
             and part of Sec.~5 ``{\it Properness of the moduli stacks}"]. 
 We acknowledge also the works
   [Ca], [Gi2], [Kau], [K-L], [M-O-P], [Pa], [P-R], [Sch], [Sun], and [TiB]
  for related studies that have influenced us in the brewing years since spring 2008. 

 \bigskip

\bigskip

\noindent
{\bf Convention.}
 Standard notations, terminology, operations, facts in
  (1) stacks;$\,$
  (2) moduli spaces of sheaves;$\,$
  (3) cohomological techniques in algebraic geometry
 can be found respectively in$\,$
  (1) [L-MB];$\,$
  (2) [H-L];$\,$
  (3) [Gro2], [Ha], [EGA$_{\mbox{\scriptsize III}}$].
 \begin{itemize}
  \item[$\cdot$]
   All varieties, schemes and their products are over ${\Bbb C}$;
   a `{\it curve}' means a $1$-dimensional proper scheme over ${\Bbb C}$.

  \item[$\cdot$]
   The `{\it support}' $\Supp({\cal F})$
    of a coherent sheaf ${\cal F}$ on a scheme $Y$
    means the {\it scheme-theoretical support} of ${\cal F}$
   unless otherwise noted;
   ${\cal I}_Z$ denotes the {\it ideal sheaf} of
    a subscheme of $Z$ of a scheme $Y$;
  $l({\cal F})$ denotes the {\it length} of a coherent sheaf ${\cal F}$ of dimension $0$.

  \item[$\cdot$]
   The current note continues the study in
    [L-L-S-Y] (arXiv:0809.2121 [math.AG], D(2))
	 and [L-Y3] (arXiv:1302.2054 [math.AG] D(10.1)).
   A partial review of D-branes and Azumaya noncommutative geometry
    is given in [L-Y3] (arXiv:1003.1178 [math.SG], D(6)) and
    [Liu1] (arXiv:1112.4317 [math.AG])
	(see also [Liu2] and [Liu3]).
   Notations and conventions follow these earlier works when applicable.
 \end{itemize}

\bigskip

\bigskip
   
\begin{flushleft}
{\bf Outline}
\end{flushleft}
\nopagebreak
{\small
\baselineskip 12pt  
\begin{itemize}
 \item[0.]
  Introduction.
  
 \item[1.]
  D-string world-sheet instantons, 
  morphisms from Azumaya nodal curves with a fundamental module,
  Fourier-Mukai transforms, and issues on stability conditions.
  
 \item[2.]
 Preliminaries aiming for a definition of stability conditions for morphisms
                    from general Azumaya nodal curves with a fundamental module.					
  \vspace{-.6ex}
  \begin{itemize}
   \item[2.1]
    Push-forward and higher direct image of sheaves.

   \item[2.2]
   A special class of  morphisms from Azumaya curves with a fundamental module and
   positivity of fundamental modules on ${\Bbb P}^1$-trees.
   
   \item[2.3]
   Remarks on nonnegative torsion-free sheaves on a ${\Bbb P}^1$-tree.
  \end{itemize}
  
 \item[3.]
  The space of D-string world-sheet instantons:
  The moduli stack of $Z$-semistable morphisms\\ from Azumaya nodal curves
  with a fundamental module to a Calabi-Yau 3-fold.
  \vspace{-.6ex}
  \begin{itemize}
   \item[3.1]
    The moduli stack of $Z$-semistable morphisms from Azumaya nodal curves
    with a fundamental module to a Calabi-Yau 3-fold.
	
  \item[3.2]	
    A natural morphism
	  from ${\frak M}_{Az^{\!f}\!(g;r,\chi)}^{\tinyZss}(Y; \beta, c)$
	  to $\FM_g^{1,[0];\scriptsizeZss}(Y;c)$.
  \end{itemize}

 \item[4.]
  Compactness of the moduli stack
   ${\frak M}_{Az^{\!f}\!(g;r,\chi)}^{\tinyZss}(Y; \beta, c)$
   of $Z$-semistable morphisms.
   \vspace{-.6ex}
   \begin{itemize}
     \item[4.1]
	  Boundedness of
	  ${\frak M}_{Az^{\!f}\!(g;r,\chi)}^{\tinyZss}(Y; \beta, c)$.
	
     \item[4.2]
	 The error charge of a Fourier-Mukai transform.
	
     \item[4.3]
	 Completeness of
	  ${\frak M}_{Az^{\!f}\!(g;r,\chi)}^{\tinyZss}(Y; \beta, c)$.
   \end{itemize}

   
 %
 %
 %
 %
 %
\end{itemize}
} 

\newpage

\section{D-string world-sheet instantons,
  morphisms from Azumaya nodal curves with a fundamental module,
  Fourier-Mukai transforms, and issues on stability conditions}
\label{review}
We review in this section related notions from
   [L-L-S-Y] (D(2)) and [L-Y3] (D(10.1))
  to bring out terminologies and notations needed.
Readers are referred to ibidem for details and references.

\bigskip

\begin{flushleft}
{\bf Holomorphic D-branes, morphisms from Azumaya schemes with a fundamental module,
          and Fourier-Mukai transforms}
\end{flushleft}
A holomorphic D-brane in superstring theory can be described
 as a morphism from a scheme with a matrix-type noncommutative structure sheaf
 (i.e. an Azumaya scheme $(X,{\cal O}_X^{A\!z})$)
   together with a fundamental ${\cal O}_X^{A\!z}$-module ${\cal E}$
 to $(Y, {\cal O}_Y)$, where ${\cal O}_Y$ is the structure sheaf of $Y$
  in either commutative or noncommutative setting; in notation/symbol,
 $$
   \varphi\; :\;  (X,{\cal O}_X^{A\!z}; {\cal E})\;
     \longrightarrow\;   (Y,{\cal O}_Y)\,,
 $$
 with a built-in isomorphism
   ${\cal O}_X^{A\!z}\simeq \Endsheaf_{{\cal O}_X}({\cal E})$.
In true contents, this means
 a contravariant gluing system of ring-homomorphisms
 $$
    {\cal O}_X^{A\!z}\;
	   \longleftarrow\; {\cal O}_Y\; :\; \varphi^{\sharp}\,,
$$
which in general {\it does not} induce any morphisms directly
   from $X$ to $Y$.
It is through $\varphi^{\sharp}$ that
 the ${\cal O}_X^{A\!z}$-module ${\cal E}$
 can be pushed forward to an ${\cal O}_Y$-module,
 in notation $\varphi_{\ast}({\cal E})$, on $Y$.
 
When the target space $Y$ is a commutative scheme
  and ${\cal E}$ is locally free ${\cal O}_X$-module,
then
 associated to a morphism
  $\varphi :(X,{\cal O}_X^{A\!z};{\cal E})
                                               \rightarrow (Y,{\cal O}_Y)$
  is the following diagram
  $$
   \xymatrix{
   {\cal O}_X^{A\!z}= \Endsheaf_{{\cal O}_X}({\cal E})\\
        {\cal A}_{\varphi}\;:=\,
         		\Image\varphi^{\sharp}\ar@{^{(}->}[u]
                   				&&& {\cal O}_Y\ar[lll]_-{\varphi^{\sharp}}\\
   {\cal O}_X\rule{0ex}{3ex}  \ar@{^{(}->}[u]								&&&&,
   }
  $$
  which defines a subscheme
    $X_{\varphi}:= \boldSpec{\cal A}_{\varphi}\subset X\times Y$
	together with a coherent sheaf $\tilde{\cal E}_{\varphi}$
	 supported on $X_{\varphi}$,
   which is simply the ${\cal O}_X^{A\!z}$-module ${\cal E}$
    regarded as an ${\cal A}_{\varphi}$-module.
 $\tilde{\cal E}_{\varphi}$ is called the {\it graph} of  the morphism $\varphi$.
It is a coherent sheaf on $X\times Y$
 that is flat over $X$, of relative dimension $0$.
Conversely, given such a coherent sheaf $\tilde{\cal E}$ on $X\times Y$,
 a morphism
   $\varphi_{\tilde{\cal E}}:(X,{\cal O}_X^{A\!z};{\cal E})\rightarrow Y$
 can be constructed from $\tilde{\cal E}$ by taking
 \begin{itemize}
  \item[$\cdot$]
    ${\cal E}=\pr_{1\,\ast}\tilde{\cal E}$,
	
  \item[$\cdot$] 	
   ${\cal O}_X^{A\!z}=\Endsheaf_{{\cal O}_X}({\cal E})$,  and

  \item[$\cdot$]
   $\varphi_{\tilde{\cal E}}^{\;\sharp}:
     {\cal O}_Y\rightarrow {\cal O}_X^{A\!z}\,$
   is defined by the composition
   $$
     {\cal O}_Y
        \xrightarrow{\hspace{1em}pr_2^{\sharp}\hspace{1em}}
      {\cal O}_{X\times Y}
        \xrightarrow{\hspace{1.2em}\iota^{\sharp}\hspace{1.2em}}
		{\cal O}_{Supp(\tilde{\cal E})}\;
		\hookrightarrow\; {\cal O}_X^{A\!z}\,.
   $$
 \end{itemize}
Here,
  $X\xleftarrow{\pr_1} X\times Y \xrightarrow{\pr_2} $
      are the projection maps,
  $\iota:\Supp(\tilde{\cal E})\rightarrow X\times Y$	
    is the embedding of the subscheme,   and
note that $\Supp(\tilde{\cal E})$ is affine over $X$.

Treating $\tilde{\cal E}$ as an object in the bounded derived category
 $D^b(\Coh(X\times Y))$ of coherent sheaves on $X\times Y$,
 $\tilde{\cal E}$ defines a Fourier-Mukai transform
 $\Phi_{\tilde{\cal F}} : D^b(\Coh(X))\rightarrow D^b(\Coh(Y))$,
 in short name, a {\it Fourier-Mukai transform from $X$ to $Y$}.
In this way, the data that specifies a morphism
 $\varphi:(X,{\cal O}_X^{A\!z};{\cal E})\rightarrow Y$
 is matched to a data that specifies a special kind of Fourier-Mukai transform.
 
\bigskip

\begin{sdefinition}\label{sdFM}
{\bf [support, dimension, width of Fourier-Mukai transform].}
{\rm ([L-Y3: Definition~1.1].)}
{\rm
For a general $\tilde{\cal F}^{\bullet}\in D^b(\Coh(X\times Y))$,
 we define
   the {\it (scheme-theoretical) support} $\Supp(\tilde{\cal F}^{\bullet})$
       of  $\tilde{\cal F}^{\bullet}$
     to be the (scheme-theoretical) support of
	   $\oplus_iH^i(\tilde{\cal F}^{\bullet})$,
   the {\it dimension} $\dimm\tilde{\cal F}^{\bullet}$
      of $\tilde{\cal F}^{\bullet}$
     to be the dimension $\dimm(\Supp({\cal F}^{\bullet}))$,  and
   the {\it width} of  $\tilde{\cal F}^{\bullet}$ to be the interval
      $[i, j]$ such that
	    $H^i(\tilde{\cal F}^{\bullet})\ne 0$,
	    $H^j(\tilde{\cal F}^{\bullet})\ne 0$,  	and
		$H^k(\tilde{\cal F}^{\bullet})= 0$,  for $k\notin [i,j]$.
 We'll denote the width $[i,i]$ by $[i]$.		
}\end{sdefinition}

\bigskip

\noindent
Thus, for $X$ fixed of pure dimension $d$,
   the stack of morphisms $(X,{\cal O}_X^{A\!z};{\cal E})\rightarrow Y$
   is embedded in the stack of Fourier-Mukai transforms from $X$ to $Y$
   of dimension $d$ and width $[0]$;
 the latter is identical to the stack of $d$-dimensional  coherent sheaves on $X\times Y$.
Similar statement holds for $X$ not fixed.
({\sc Figure} 1-1.)
\begin{figure} [htbp]
 \bigskip
 \centering
 \includegraphics[width=0.80\textwidth]{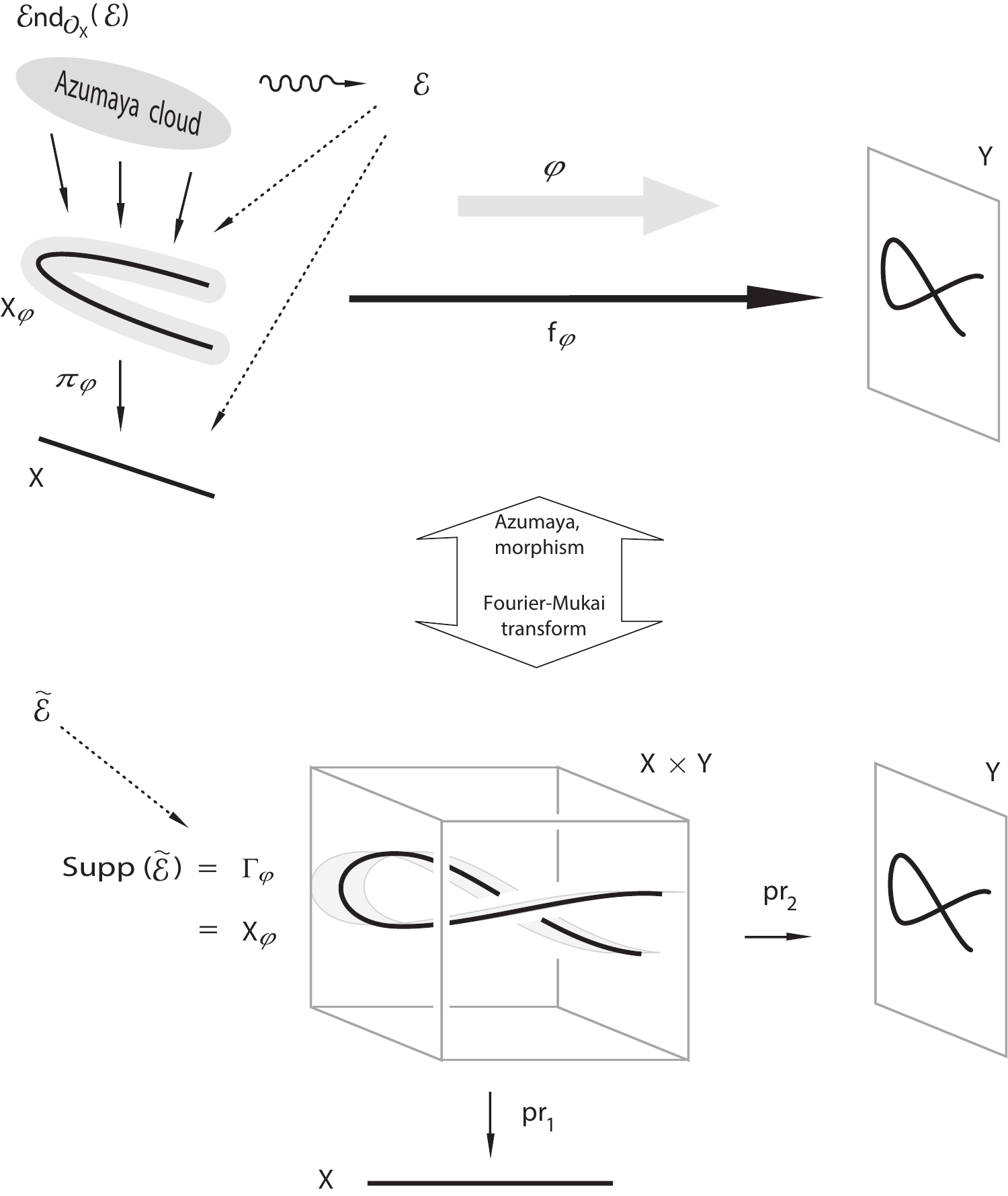}
 
 \vspace{4em}
 \centerline{\parbox{13cm}{\small\baselineskip 12pt
  {\sc Figure}~1-1.
    The equivalence between
      a morphism $\varphi$ from an Azumaya scheme with a fundamental module
	    $(X,{\cal O}_X^{Az}:=\Endsheaf_{{\cal O}_X}({\cal E}); {\cal E})$
	    to a scheme $Y$    and
	  a special kind of Fourier-Mukai transform $\tilde{\cal E}\in \CohCategory(X\times Y)$
	    from $X$ to $Y$.
      }}
 \bigskip
\end{figure}

\bigskip

We now specialize to the objects of this subseries D(10):
{\it The case of D1-branes (i.e.\ D-strings) world-sheet instantons}.

\bigskip

\begin{flushleft}
{\bf D-string world-sheet instantons, central charges, and stability conditions}
\end{flushleft}
In the context of a compactification of Type IIB superstring theory on a Calabi-Yau $3$-fold $Y$,
 an instanton in the effective $4$-dimensional, $N=1$, supersymmetric quantum field theory
 can be created by ``wrapping" a (Euclideanized/Wick-rotated) D-string world-volume
 on some cycles in $Y$; cf.\ {\sc Figure} 1-2.
\begin{figure} [htbp]
 \bigskip
 \centering
 \includegraphics[width=0.80\textwidth]{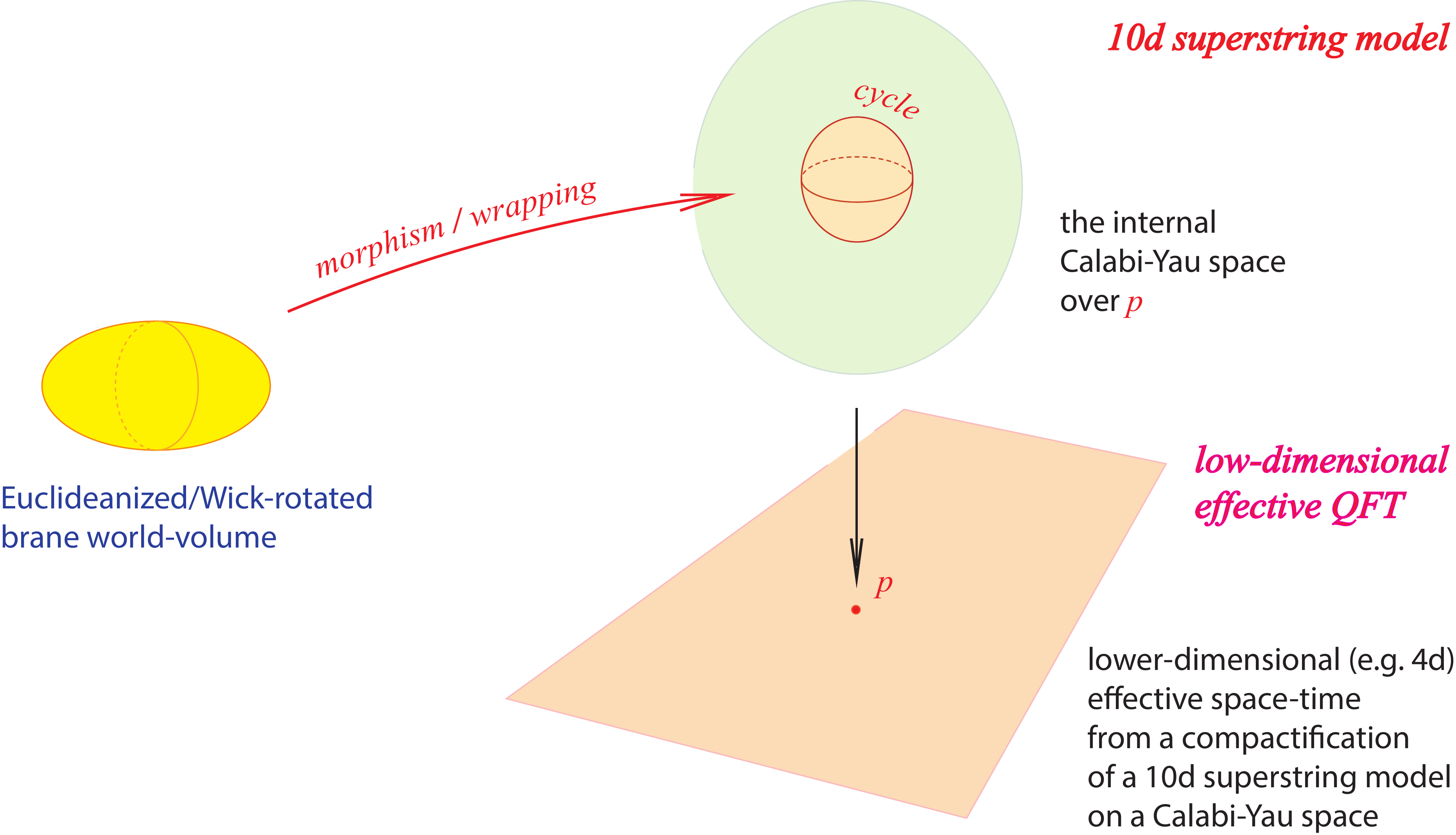}
  
 \vspace{2em}
 \centerline{\parbox{13cm}{\small\baselineskip 12pt
  {\sc Figure}~1-2.
     An instanton in the effective $4$-dimensional quantum field theory is created by
	  ``wrapping"  a Euclideanized/Wick-rotated D-brane world-volume on a cycle of
	  the internal Calabi-Yau space $Y$ in the compactification of a Type II superstring theory.
	 In the weak D-brane tension regime of superstring theory,
      such ``wrapping" can be described by a morphism $\varphi$
	  from an Azumaya space with a fundamental module $(X^{Az},{\cal E})$ to $Y$.
    In particular, for $D$-strings in the Type IIB superstring theory, this gives a description of
      {\it soft D-string world-sheet instantons}.	
    }}
 \bigskip
\end{figure}	
For such instanton to be stable (in the sense of not to decay away by fluctuations as time passes),
  it is required that this Euclidean D-string world-sheet, together with the Chan-Paton sheaf thereupon,
  satisfy some stability conditions governed by the central charge of D-branes.
In [L-Y3] (D(10.1)), these notions are rephrased/polished
 (in an appropriate large-radius limit of the Calabi-Yau space with the tension of the fundamental string
     going to infinity)
  to the following setting along the line of the D-project:

\bigskip
 
Let
  $C$ be a nodal curve with a polarization class $L$  and
  $Y$ be a projective Calabi-Yau manifold with a complexified K\"{a}hler class
    $B+\sqrt{-1}J$.
	
\bigskip
				
\begin{sdefinition}\label{tcFM}
{\bf [twisted central charge of Fourier-Mukai transform].}
{\rm ([L-Y3: Definition~2.1.1].)}
{\rm
 Let
   $\tilde{\cal F}$ be a coherent sheaf of dimension $1$ on $C\times Y$   and
   $\Phi_{\tilde{\cal F}}$ be the Fourier-Mukai transform $\tilde{\cal F}$ defines.
 Then, the {\it twisted central charge} of $\Phi_{\tilde{\cal F}}$
  associated to the data $(B+\sqrt{-1}J,L)$ is defined to be
  $$
   Z^{B+\sqrt{-1}J, L}(\Phi_{\tilde{\cal F}})\;
   :=\; Z^{B+\sqrt{-1}J, L}(\tilde{\cal F)}\;
   :=\;   \int_{C\times Y}\,
                \pr_2^{\ast}\left(
                    \frac{e^{-(B+\sqrt{-1}J)}}{\sqrt{td(T_Y)}}
                                          \right)\,
                \cdot\, \pr_1^{\ast}\, e^{-\sqrt{-1}L}\,
                \cdot\, \tau_{C\times Y}(\tilde{\cal F})\,,
  $$
  where  $\tau_{C\times Y}(\tilde{\cal F}):= \ch(\tilde{\cal F })
                    \cdot \td(T_{C\times Y})$ is the $\tau$-class of $\tilde{\cal F}$.
} \end{sdefinition}		

\bigskip
 
\begin{slemma}\label{tccef}
 {\bf [twisted central charge: explicit form].}
 {\rm ([L-Y3: Lemma~2.1.2].)}
  Continuing the above notation.
  Let
   $$
     \tilde{\beta}(\tilde{\cal F})\; :=\;  \sum_i d_i[\zeta_i]\; \in\;  A_1(C\times Y)\,,
   $$
    where
	  $\zeta_i$ runs through the generic points of $\Supp(\tilde{\cal F})$   and
      $d_i$  is the dimension of $\tilde{\cal F}|_{\zeta_i}$
		   as a $k_{\zeta_i}$-vector space.
  Then,
   $$
     Z^{B+\sqrt{-1}J,L}(\tilde{\cal F})\;
      =\;  \left(\chi(\tilde{\cal F})-B\cdot\tilde{\beta}(\tilde{\cal F})\right)\,
 	        -\,\sqrt{-1}\,\left((J+L)\cdot \tilde{\beta}(\tilde{\cal F})\right)\,.
   $$
  In particular, for non-zero coherent sheaves on $C\times Y$ of dimension $\le 1$,
   $Z^{B+\sqrt{-1}J,L}$ takes its values
    in the partially completed lower-half complex plane
   $$
     \hat{\Bbb H}_-\;
	 :=\; \{ z\in {\Bbb C}\,|\,
                   \mbox{either $\Imaginary z <0$
		                          or $\Imaginary z=0$ with $\Real z >0$}\, \}\,.
   $$
\end{slemma}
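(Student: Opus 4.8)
The plan is to reduce the definition of $Z^{B+\sqrt{-1}J,L}(\tilde{\cal F})$ to a Hirzebruch--Riemann--Roch computation on the $4$-fold $C\times Y$, exploiting that $\tilde{\cal F}$ is supported in dimension $\le 1$. First I would use the multiplicativity of the Todd class on a product, $\td(T_{C\times Y})=\pr_1^{\ast}\td(T_C)\cdot\pr_2^{\ast}\td(T_Y)$, to rewrite
$$
 Z^{B+\sqrt{-1}J,L}(\tilde{\cal F})\;=\;\int_{C\times Y} Q\cdot\ch(\tilde{\cal F})\,\td(T_{C\times Y})\,,
$$
where $Q:=\pr_1^{\ast}e^{-\sqrt{-1}L}\cdot\pr_2^{\ast}\!\big(e^{-(B+\sqrt{-1}J)}/\sqrt{\td(T_Y)}\big)$ is the pulled-back twisting factor. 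The point of keeping $\ch(\tilde{\cal F})\,\td(T_{C\times Y})$ grouped together is that its integral against the constant term of $Q$ is exactly $\chi(\tilde{\cal F})$ by HRR, so the Todd corrections are automatically bundled into the Euler characteristic.

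Next I would carry out the dimension bookkeeping. Since $\dim_{\Bbb C}(C\times Y)=4$ and $\tilde{\cal F}$ has dimension $\le 1$, the components $\ch_i(\tilde{\cal F})$ vanish for $i\le 2$, and the lowest nonvanishing one is the weighted support cycle: $\ch_3(\tilde{\cal F})=\tilde\beta(\tilde{\cal F})\in A_1(C\times Y)$, the fundamental cycle of $\Supp(\tilde{\cal F})$ counted with the generic-stalk lengths $d_i$. Because integration over $C\times Y$ extracts the codimension-$4$ part, and $[\ch(\tilde{\cal F})\td(T_{C\times Y})]_b=0$ for $b\le 2$, only the codimension-$0$ and codimension-$1$ parts $Q_0,Q_1$ of $Q$ can contribute, paired respectively with $[\ch\,\td]_4$ and with $[\ch\,\td]_3=\ch_3=\tilde\beta$. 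Since $Q_0=1$, the first contribution is $\int_{C\times Y}[\ch(\tilde{\cal F})\td(T_{C\times Y})]_4=\chi(\tilde{\cal F})$.

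For the second contribution I would compute $Q_1$ explicitly. Here the Calabi--Yau hypothesis $c_1(T_Y)=0$ enters: it forces the codimension-$1$ part of $\sqrt{\td(T_Y)}$ to vanish, so that $Q_1=-\sqrt{-1}\,\pr_1^{\ast}L-\pr_2^{\ast}(B+\sqrt{-1}J)$. Pairing with $\tilde\beta$ and pushing forward along the two projections via the projection formula turns this into $-(B\cdot\tilde\beta)-\sqrt{-1}\big((J+L)\cdot\tilde\beta\big)$, where $B\cdot\tilde\beta$ and $J\cdot\tilde\beta$ are read off on $Y$ and $L\cdot\tilde\beta$ on $C$. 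Adding the two contributions yields the asserted formula $Z=\big(\chi(\tilde{\cal F})-B\cdot\tilde\beta\big)-\sqrt{-1}\big((J+L)\cdot\tilde\beta\big)$.

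Finally, for the location of the values in $\hat{\Bbb H}_-$ I would argue by positivity. The imaginary part is $-\big((J+L)\cdot\tilde\beta\big)$ with $J$ Kähler and $L$ a polarization, hence both strictly positive on effective curve classes. If $\dim\tilde{\cal F}=1$ then $\tilde\beta\neq 0$ is effective; since every irreducible component of $\Supp(\tilde{\cal F})$ is a genuine curve it cannot be contracted by both projections, so $(J+L)\cdot\tilde\beta>0$ and $\Imaginary Z<0$. If $\dim\tilde{\cal F}=0$ then $\tilde\beta=0$, so $\Imaginary Z=0$ while $\Real Z=\chi(\tilde{\cal F})=l(\tilde{\cal F})>0$ for a nonzero sheaf; either way $Z\in\hat{\Bbb H}_-$. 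The one step requiring genuine care is the clean identification $\ch_3(\tilde{\cal F})=\tilde\beta$ together with the projection-formula manipulations defining $B\cdot\tilde\beta$ and $(J+L)\cdot\tilde\beta$; the positivity dichotomy, by contrast, is elementary once the formula is in hand.
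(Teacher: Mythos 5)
Your proposal is correct, and it is essentially the intended argument: note that this paper does not prove the lemma at all but imports it from [L-Y3: Lemma~2.1.2] (D(10.1)), where the proof is precisely this degree-by-degree expansion of the integrand. The one point you should adjust is that $C$ is a \emph{nodal} curve, so $C\times Y$ is singular and a coherent sheaf on it need not admit a finite locally free resolution; hence ``$\ch(\tilde{\cal F})\cdot\td(T_{C\times Y})$'' should be read, as the paper's notation $\tau_{C\times Y}(\tilde{\cal F})$ signals, as the Baum--Fulton--MacPherson $\tau$-class, and your appeal to Hirzebruch--Riemann--Roch replaced by singular Riemann--Roch ($\chi(\tilde{\cal F})$ is the degree of the dimension-$0$ component of $\tau_{C\times Y}(\tilde{\cal F})$). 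This changes nothing structurally: for $\tilde{\cal F}$ of dimension $\le 1$ one has $\tau_k(\tilde{\cal F})=0$ for $k\ge 2$ and $\tau_1(\tilde{\cal F})=\tilde{\beta}(\tilde{\cal F})$ --- and your reading of the $d_i$ as generic-stalk lengths is the right interpretation of the statement --- so only $Q_0=1$ and $Q_1=-\sqrt{-1}\,\pr_1^{\ast}L-\pr_2^{\ast}(B+\sqrt{-1}J)$ contribute, where the vanishing of the codimension-$1$ part of $\sqrt{\td(T_Y)}$ uses $c_1(T_Y)=0$ exactly as you say; the projection-formula bookkeeping and the positivity dichotomy (an irreducible curve in $C\times Y$ cannot be contracted by both projections, $J$ is K\"{a}hler, $L$ has positive degree, and in the $0$-dimensional case $\chi(\tilde{\cal F})=l(\tilde{\cal F})>0$) are exactly right.
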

  
\bigskip

\begin{sdefinition}\label{slope}
 {\bf [$Z$-slope $\mu^Z$].}
 {\rm ([L-Y3: Definition~2.1.4].)}
 {\rm
 Continuing Definition~\ref{tcFM}.
 We define the {\it $Z$-slope} for a non-zero coherent sheaf $\tilde{\cal F}$ on $C\times Y$
  of dimension $\le 1$ to be
  $$
   \begin{array}{crl}
      \mu^Z(\tilde{\cal F})
       &  :=
	   &  - \left.
	          \Real \left(Z^{B+\sqrt{-1}J,L}(\tilde{\cal F})\right)
	          \right /
		      \Imaginary\left(Z^{B+\sqrt{-1}J,L}(\tilde{\cal F})\right)   \\[1.6ex]
     & = 	
	   & \left.
      	     \left(\chi(\tilde{\cal F})-B\cdot\tilde{\beta}(\tilde{\cal F})\right)
		     \right/
 	         \left((J+L)\cdot \tilde{\beta}(\tilde{\cal F})\right)\,.
   \end{array}
  $$
}\end{sdefinition}

\bigskip

\begin{sdefinition}\label{Zss}
{\bf [$Z$-semistable, $Z$-stable, $Z$-unstable, strictly $Z$-semistable].}
{\rm ([L-Y3: Definition~2.2.1].)}
{\rm
 Continuing the discussion.
 \begin{itemize}
  \item[(1)]
   A $1$-dimensional coherent sheaf $\tilde{\cal F}$ on $C\times Y$
     is said to be {\it $Z$-semistable} (resp.\ $Z$-stable)
    if $\tilde{\cal F}$ is pure and
       $\mu^Z(\tilde{\cal F}^{\prime})\le $ (resp.\ $<$)
       $\mu^Z(\tilde{\cal F})$
	   for any nonzero proper subsheaf
	     $\tilde{\cal F }^{\prime}\subset \tilde{\cal F}$.
   Such $\tilde{\cal F}$ is called $Z$-{\it unstable} if it is not $Z$-semistable,   and
   is called {\it strictly $Z$-semistable} if it is $Z$-semistable but not $Z$-stable.
  
  \item[(2)]
    A morphism
	  $\varphi:
	     (C,{\cal O}_C^{Az}:= \Endsheaf_{{\cal O}_C}({\cal E}); {\cal E})
		 \rightarrow Y$
       is said to be {\it $Z$-semistable}
	      (resp.\ {\it $Z$-stable}, {\it $Z$-unstable}, {\it strictly $Z$-semistable})
    if its graph $\tilde{\cal E}_{\varphi}\in \CohCategory_1(C\times Y)$
      is $Z$-semistable	
	   (resp.\ $Z$-stable, $Z$-unstable, strictly $Z$-semistable).
  \end{itemize}
 When the central charge functional  $Z$ is known and fixed either explicitly or implicitly,
  we may use the terminology: {\it semistable, stable, unstable, strictly semistable},
  for simplicity.
}\end{sdefinition}

\bigskip

\begin{flushleft}
{\bf Compactness of the moduli stack of Fourier-Mukai transforms from stable curves}
\end{flushleft}
\begin{sassumption}{$[g\ge 2]$.}  {\rm
{\it For the rest of the notes, we assume that $g\ge 2$}
 and leave the special cases of $g=0$ and $g=1$ to separate notes.
} \end{sassumption}
   
\bigskip

\begin{stheorem}\label{FM-compact}
 {\bf [$\FM_g^{1,[0];\scriptsizeZss}(Y;c)$ compact].}
   {\rm (Cf.\ [L-Y3: Theorem 3.1]).}
   Let
    $(Y, B+\sqrt{-1}J)$ be a projective Calabi-Yau $3$-fold
       with a fixed complexified K\"{a}hler class,
    $\overline{\cal M}_g$  be the moduli stack of stable curves of genus $g$,
    $C_{\overline{\cal M}_g}/\overline{\cal M}_g$
	  be the associated universal curve over $\overline{\cal M}_g$
	   with a fixed relative polarization class $L$.
   Then
     the moduli stack
	    $\FM_g^{1,[0];\scriptsizeZss}(Y;c)$
      of $Z^{B+\sqrt{-1}J,L}$-semi-stable Fourier-Mukai transforms
	     of dimension $1$,  width $[0]$, and central charge $c\in \hat{\Bbb H}_-$
     from stable curves of genus $g$  to $Y$
	 is compact.
\end{stheorem}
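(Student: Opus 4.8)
The plan is to exhibit the moduli stack as proper over the already-proper Deligne--Mumford stack $\overline{\cal M}_g$ and to establish properness by the two usual pillars, \emph{boundedness} (finite type) and \emph{completeness} (the valuative criterion). Recording the underlying stable curve of each transform gives a forgetful morphism $\FM_g^{1,[0];\scriptsizeZss}(Y;c)\to\overline{\cal M}_g$; since $\overline{\cal M}_g$ is compact it suffices to prove this morphism is of finite type and complete. By the dictionary following Definition~\ref{sdFM}, a Fourier--Mukai transform of dimension $1$ and width $[0]$ from a curve $C$ to $Y$ is the same datum as a pure $1$-dimensional coherent sheaf $\tilde{\cal F}$ on $C\times Y$, so each fiber of the forgetful morphism is a moduli of $Z$-semistable pure $1$-dimensional sheaves on $C\times Y$ of fixed central charge $c$, and I would carry out the whole argument in the language of such sheaves.

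For boundedness, note that fixing $c\in\hat{\Bbb H}_-$ fixes, through the explicit form of Lemma~\ref{tccef}, both $\chi(\tilde{\cal F})$ and the class $\tilde{\beta}(\tilde{\cal F})$, hence the Hilbert polynomial of $\tilde{\cal F}$ relative to a fixed polarization on $C_{\overline{\cal M}_g}\times Y$. On a single $C\times Y$ the $Z$-semistability of Definition~\ref{Zss} bounds the $\mu^Z$-slopes, and therefore the ordinary slopes, of all subsheaves, so a $\mu^Z$-adapted Le Potier--Simpson estimate shows that the $Z$-semistable sheaves of this numerical type form a bounded family. Spreading this uniformly over the proper base $\overline{\cal M}_g$ with its universal curve yields finite type of the total stack.

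For completeness I would verify the valuative criterion over a discrete valuation ring $R$ with fraction field $K$ and residue field $k$. Starting from a $Z$-semistable transform over $\Spec K$ --- a stable curve $C_K/K$ with a $Z$-semistable sheaf $\tilde{\cal F}_K$ on $C_K\times Y$ --- I would first, after a finite base change, use properness of $\overline{\cal M}_g$ to extend $C_K$ to a stable curve $C_R/R$, and then extend $\tilde{\cal F}_K$ to an $R$-flat coherent sheaf $\tilde{\cal F}_R$ on $C_R\times Y$ by taking its scheme-theoretic closure (a Grothendieck/Quot flattening). The special fiber $\tilde{\cal F}_k$ need be neither pure nor $Z$-semistable, so the crux is a Langton-type procedure adapted to the $Z$-slope: whenever $\tilde{\cal F}_k$ carries a maximal $Z$-destabilizing quotient $B$, replace $\tilde{\cal F}_R$ by $\kker(\tilde{\cal F}_R\twoheadrightarrow B)$, iterate, and prove the process terminates at a $\tilde{\cal F}_k$ that is pure of dimension $1$ and $Z$-semistable. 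Separatedness (uniqueness of the limit, up to $S$-equivalence in the strictly semistable case) then follows from the standard fact that a generically invertible homomorphism between $Z$-semistable sheaves of equal $\mu^Z$ is an isomorphism on saturations.

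The hard part will be this completeness step, for two intertwined reasons. First, $C_R\times Y$ is singular along $(\text{nodes of }C_k)\times Y$, so classical Langton on a smooth family does not apply verbatim; I would need to control the transversal behavior of $\tilde{\cal F}$ at these nodes --- its purity and torsion-freeness --- and check that the elementary modifications never create embedded $0$-dimensional subsheaves nor let ${\Bbb P}^1$-bubble components siphon off central charge, so that the limit stays of dimension $1$ and width $[0]$. Second, because stability is measured by $\mu^Z$ rather than by a Hilbert polynomial, the monotone integer that makes Langton's algorithm terminate must be re-engineered: the role of Langton's colength should be played by a $Z$-flavored error charge, and the decisive (and most delicate) point is to show that this quantity strictly decreases under each modification while remaining bounded below --- precisely where the explicit formula of Lemma~\ref{tccef} and the positivity of the fundamental module are expected to do the work.
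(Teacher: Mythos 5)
This theorem is not proved in the present paper at all but quoted from the companion work [L-Y3: Theorem 3.1], and your outline matches the architecture of that proof as the paper itself records it: the identification of dimension-$1$, width-$[0]$ transforms with pure $1$-dimensional sheaves on $C\times Y$, boundedness from the numerical invariants that $c$ pins down via Lemma~\ref{tccef} (namely $\chi(\tilde{\cal F})$ and the pairings of $\tilde{\beta}(\tilde{\cal F})$ against $B$ and $J+L$, hence a fixed Hilbert polynomial), and completeness via the valuative criterion with a Langton-style elementary-modification argument over a DVR --- precisely the technique the present paper attributes to [L-Y3: Sec.~3] in Step (a) of Sec.~4.3. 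Your one stray concern --- that ${\Bbb P}^1$-bubbles might siphon off central charge and that a new ``error charge'' is needed to make the algorithm terminate --- actually belongs to the proof of Theorem~4.0 for the moduli stack of morphisms (Sec.~4.2--4.3), not to $\FM_g^{1,[0];\scriptsizeZss}(Y;c)$: here the domain curves remain stable (extended using properness of $\overline{\cal M}_g$), no bubbling occurs, and the usual Langton colength-type induction for $\mu^Z$-semistable pure sheaves on the fibers of $C_{\overline{\cal M}_g}\times Y$ suffices.
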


\bigskip

\begin{flushleft}
{\bf Issues on stability conditions for morphisms from general Azumaya nodal curves
          with a fundamental module}
\end{flushleft}
%
As in Gromov-Witten theory for stable maps,
   the domain Azumaya nodal curves with a fundamental module of
    our intended $Z$-semistable morphisms are not fixed.
 To have a good mathematical theory of D-string world-sheet instantons,
  we want our moduli stack of intended $Z$-semistable morphisms
     from Azumaya nodal curves with a fundamental module to $Y$ to be compact.
 This is a minimal requirement to have a well-defined intersection theory on the moduli stack.
 Only then may one have a chance for a good-enough tangent-obstruction theory,
 at least in some important cases,  to  define {\it D-string world-sheet instanton numbers}
  from a purely D-string world-sheet aspect.
 If achieved, this would give us a counter theory to D-strings as
   Gromov-Witten theory to the fundamental string.
			 	
 {\it However},
 also as in Gromov-Witten theory,
  degenerations of a naive $Z$-semistable morphism $\varphi$
    in our problem may give rise to objects not in our category.
 For example	and in terms of the graph
     $\tilde{\cal E}_{\varphi}\in \CohCategory_1(C\times Y)$ of $\varphi$,
   $\Supp(\widetilde{\cal E}_{\varphi})$ may turn to have a vertical component
    with respect to the projection map $\pr_C:C\times Y\rightarrow C$ and/or
  ${\cal E}={\pr_C}_{\ast}(\tilde{\cal E}_{\varphi})$
    may turn to a non-locally-free sheaf on a deformation of $C$.
Just considering morphisms alone,
  all such bad degenerations/irregularities of morphisms under a deformation
  can be corrected/absorbed by ${\Bbb P}^1$-bubbling trees added to nodal curves,
  as in Gromov-Witten theory.
However, as there is no known universal estimate to relate the complexity of
  degenerations of intended $Z$-semistable morphisms in our category
  to a bound on the complexity of the ${\Bbb P }^1$-trees needed to absorb
   the bad degeneration,
 there is no way to select beforehand a large enough polarization class $L$
 on nodal curves to guarantee that we never use up its positivity condition
 to define $Z$-semistability condition on morphisms
 when extending  the polarization class on a nodal curve to its cousin with
  ${\Bbb P}^1$-bubbling trees.
This is the same issue mathematicians already ran into
  when studying the Gieseker-type compactifications of moduli spaces
  of vector bundles on stable curves.
		
Following the lesson learned from these previous studies on moduli spaces
 of vector bundles on nodal curves
 ([Gi2], [N-S,II], and e.g.\ [Kau], [K-L], [Sch], [Sun], [TiB]),
the way out of such difficulty in our situation is,
  in conceptual, nontechnical, and slightly imprecise words, that  		
   one needs to separate the underlying domain nodal curve $C$ of a morphism
 $\varphi:(C,{\cal O}_C^{Az};{\cal E})\rightarrow Y$ in our problem
 	  into the union of a major nodal subcurve $C_0$
	     and a minor subcurve $C_1$.
	     $C_0$ in general contributes to the imaginary part of the twisted central charge
		  of $\varphi$
	    and is where the $Z$-semistability of $\varphi$  should be imposed
		 in the standard way, as given in Definition~\ref{Zss}
		    now applied only to $\tilde{\cal E}_{\varphi}|_{C_0}$,		
	 while $C_1$ consists only of ${\Bbb P}^1$-bubbling trees,
	   whose contribution to the $Z$-semistability of $\varphi$
	   has to be imposed by hand (in an as-natural-as-possible way).

 The moduli stack
     $\FM_g^{1,[0];\scriptsizeZss}(Y;c)$
        of $Z^{B+\sqrt{-1}J,L}$-semi-stable Fourier-Mukai transforms
	    of dimension $1$,  width $[0]$, and central charge $c\in \hat{\Bbb H}_-$
        from stable curves of genus $g$  to $Y$
     in [L-Y3] D(10.1)	
  was constructed exactly to make such separation of morphisms from over nodal curves
   to morphisms from over main-versus-minor subcurve of nodal curves precise
  and, hence, ease the process toward a complete notion/definition of stability conditions
   for $\varphi$.
 The compactness of  $\FM_g^{1,[0];\scriptsizeZss}(Y;c)$,
    cf.\ Theorem~\ref{FM-compact}  above from [L-Y3: Theorem 3.1],
    justifies that this is a reasonable stack to begin with.
 What remains to be done
  constitutes the core of the current note D(10.2): Sec.~2 -- Sec.~4.

\bigskip

\section{Preliminaries aiming for a definition of stability conditions for morphisms
                    from general Azumaya nodal curves with a fundamental module}
\label{preliminary}	
To extend the notion of semistable morphisms
  from Azumaya nodal curves with a fundamental module
 beyond the domains reviewed in Sec.~\ref{review} in such a way
   that it is natural  and
   that it gives rise to a compact moduli stack,
there are two sets of technical issues one has to understand beforehand
 -- one on the behavior of cohomologies under push-pull  and
     the other on the behavior over bubbling ${\Bbb P}^1$-trees.
In this section, we explain what paves our path toward Definition~\ref{Zssm}
 of general semistable morphisms in Sec.~\ref{msssm}.
It is also through this section one can see better why Definition~\ref{Zssm},
  though conceivably not the only possibility, is very natural.

\bigskip

\subsection{Push-forward and higher direct image of sheaves}

We address in this subsection
  related cohomological issues toward the notion of semistable morphisms in our problem.

\bigskip

\begin{lemma}\label{cpf}
{\bf [criterion for pushing forward a flat family to another flat family].}
 Let
   $S$ be a separated Noetherian base scheme;
   $f_S:X^{\prime}_S \rightarrow X_S$ be an $S$-morphism of  separated Noetherian schemes
      of finite type over $S$;  and
	${\cal F}^{\prime}_S$ be a quasi-coherent sheaf on $X^{\prime}_S$ that is flat over $S$.
 Assume that $R^i\!{f_S}_{\ast}({\cal F}^{\prime}_S)=0$ for all $i>0$,
 then ${f_S}_{\ast}({\cal F}^{\prime}_S)$ is a quasi-coherent sheaf on $X_S$
     that is flat over $S$.
\end{lemma}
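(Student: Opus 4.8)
The plan is to reduce the statement to a purely affine, module-theoretic assertion and then invoke a standard homological lemma. Quasi-coherence of ${f_S}_{\ast}({\cal F}^{\prime}_S)$ is automatic, since a finite-type morphism of Noetherian schemes is quasi-compact and quasi-separated, so pushforward preserves quasi-coherence. Flatness over $S$ is local on $X_S$ and on $S$, and pushforward together with its higher derived functors commutes with restriction to opens, so the vanishing hypothesis $R^i{f_S}_{\ast}({\cal F}^{\prime}_S)=0$ is inherited under such localization. Hence I may assume $S=\Spec A$ and $X_S=\Spec B$ are both affine. In this situation $M:=\Gamma(X^{\prime}_S,{\cal F}^{\prime}_S)$ is the $B$-module corresponding to ${f_S}_{\ast}({\cal F}^{\prime}_S)$, and the goal becomes the single statement that $M$ is flat as an $A$-module.

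Next I would set up a \v{C}ech computation. Because $X^{\prime}_S$ is of finite type over the Noetherian $S$ it is quasi-compact, so it admits a finite affine open cover $\{U_0,\dots,U_n\}$; because $X^{\prime}_S$ is separated, every finite intersection $U_{i_0\cdots i_p}$ is again affine. The associated \v{C}ech complex $C^{\bullet}$ computes $H^i(X^{\prime}_S,{\cal F}^{\prime}_S)$. Each term is a finite direct sum of section modules $\Gamma(U_{i_0\cdots i_p},{\cal F}^{\prime}_S)$, and since ${\cal F}^{\prime}_S$ is flat over $S$, the sections of this flat quasi-coherent sheaf over an affine open form a flat $A$-module; hence every $C^p$ is a flat $A$-module. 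On the other hand, since $X_S$ is affine the Leray spectral sequence degenerates and identifies $H^i(X^{\prime}_S,{\cal F}^{\prime}_S)$ with $\Gamma(X_S,R^i{f_S}_{\ast}{\cal F}^{\prime}_S)$; the hypothesis therefore forces $H^i(C^{\bullet})=0$ for $i>0$, while $H^0(C^{\bullet})=M$. In other words $C^{\bullet}$ extends to a finite exact sequence $0\to M\to C^0\to C^1\to\cdots\to C^n\to 0$ of $A$-modules in which every $C^p$ is flat.

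It then remains to run the homological lemma: a module admitting a finite right resolution by flat modules is itself flat. Splitting the sequence into short exact pieces $0\to K^p\to C^p\to K^{p+1}\to 0$ with $K^0=M$ and $K^p=\Image(d^{p-1})$, one argues by descending induction using the $\Tor$ long exact sequence: $K^n=C^n$ is flat, and if $K^{p+1}$ and $C^p$ are flat then $\Tor_{j}(K^p,N)$ is squeezed between two vanishing groups for every $j\ge 1$ and every $A$-module $N$, so $K^p$ is flat; at the bottom this yields flatness of $K^0=M$. Regluing over the affine covers of $X_S$ and of $S$ then gives that ${f_S}_{\ast}({\cal F}^{\prime}_S)$ is flat over $S$. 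The only genuinely delicate point is the hinge in the middle: one must ensure that the vanishing of the relative higher direct images really translates into exactness of a \emph{\v{C}ech complex of flat modules}, which is exactly where the affineness of $X_S$ (to kill the higher Leray terms) and the separatedness of $X^{\prime}_S$ (to keep the \v{C}ech terms affine, hence flat over $A$) are both used.
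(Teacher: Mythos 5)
Your proof is correct and is essentially the paper's own argument: both reduce to the affine case, identify the pushforward with $H^0$ of a finite \v{C}ech complex whose terms are flat over the base, use the vanishing of higher direct images to make the augmented complex exact, and then conclude flatness of $H^0$ by dimension shifting in $\Tor$ along the finite flat resolution. Your added remarks (Leray degeneration over the affine target, separatedness keeping \v{C}ech intersections affine, automatic quasi-coherence of the pushforward) merely make explicit points the paper leaves implicit.
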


\bigskip

We remark that in our application, $f_S$ is projective and the same statement holds with
 `quasi-coherent'  replaced by `coherent'.

\bigskip

\begin{proof}
 As the statement is local, without loss of generality, one may assume that
  both $S$ and $X$ are affine with $S=\Spec A_0$, $X=\Spec A$, and $A$ is an $A_0$-algebra.
 In this case,
  ${f_S}_{\ast}({\cal F}^{\prime}_S)
      = H^0(X^{\prime}_S, {\cal F}^{\prime}_S)^{\sim}$,
   the quasi-coherent sheaf on $X_S$ associated to the $A$-module 	
   $H^0(X^{\prime}_S, {\cal F}^{\prime}_S)$.
 Similarly,
  $R^i\!{f_S}_{\ast}({\cal F}^{\prime}_S)
     = H^i(X^{\prime}_S, {\cal F}^{\prime}_S)^{\sim}$
   for   $i>0$.
 The statement becomes:
  \begin{itemize}
   \item[{\tiny $\bullet$}]
  {\it  Let ${\cal F}^{\prime}_S$ be a quasi-coherent sheaf on $X^{\prime}_S$
             that is flat over $S=\Spec A_0$  with
           $H^i(X^{\prime}_S,{\cal F}^{\prime}_S)=0$ for all $i>0$.
           Then, $H^0(X^{\prime}_S, {\cal F}^{\prime}_S)$	 is a flat $A_0$-module.	}
  \end{itemize}
  Which we will prove.

 Let ${\cal U}=\{U_{\alpha}\}_{\alpha}$
   be a finite affine open cover of $X^{\prime}_S$,
 then $\{H^i(X^{\prime}_S,{\cal F}^{\prime}_S)\}_{i\ge 0}$
   is the cohomology associated to the \v{C}ech complex
   $$
    (\, C^{\bullet}({\cal U},{\cal F}^{\prime}_S)\,,\, d\,)\; : \hspace{1em}
     C^0({\cal U},{\cal  F}^{\prime}_S)\; \stackrel{d^0}{\longrightarrow}\;
	 C^1({\cal U},{\cal  F}^{\prime}_S)\; \stackrel{d^1}{\longrightarrow}\;
	 C^2({\cal U},{\cal  F}^{\prime}_S)\; \stackrel{d^2}{\longrightarrow}\;  \cdots\,.
   $$
  Note that  since ${\cal F}^{\prime}_S$ is flat over $S$,
   $C^p({\cal U},{\cal F}^{\prime}_S)
	    := \prod_{i_0<\,\cdots\, <i_p}{\cal F}^{\prime}_S(U_{i_0\,\cdots\,i_p})$
    is a flat $A_0$-module, for all $p\ge 0$.		
  Consider now the following sequence of short exact sequences
  $${\tiny
   \begin{array}{ccccccccccccccccccc}
    0 & \rightarrow  & H^0(X^{\prime}_S,{\cal F}^{\prime}_S)
	   & \rightarrow  & C^0({\cal U},{\cal F}^{\prime}_S)
	   & \rightarrow  & \Image d^0    & \rightarrow  & 0                   \\[.6ex]
	&& \|       &&&&  \|                                                                                                 \\[.6ex]
	&& \Ker d^0             &&   0   & \rightarrow        & \Ker d^1  & \rightarrow
	   & C^1({\cal U},{\cal F}^{\prime}_S) & \rightarrow
	   & \Image d^1       & \rightarrow                      & 0                                         \\[.6ex]
    &&&&&&&&&&  \|	                                                                                             \\[.6ex]
	&&&&&&&&  0      & \rightarrow      &  \Ker d^2                  & \rightarrow
	   & C^2({\cal U},{\cal F}^{\prime}_S) & \rightarrow
	   & \Image d^2      & \rightarrow                               &  0                                \\[.6ex]
	&&&&&&&&&&&&&&  \|                                                                                      \\[.6ex]
	&&&&&&&&&&&&&&  \cdots\cdots  &&&.
   \end{array} } 
  $$
  Here, $\Image d^i = \Ker d^{i+1}$, for all $i\ge 0$,
   since $H^i(X^{\prime}_S,{\cal F}^{\prime}_S)=0$, for all $i>0$, by assumption.
  Let $I_0$ be any ideal of $A_0$.  Then,
   $$
       \Tor_i^{A_0}(I_0, C^p({\cal U},{\cal F}^{\prime}_S))\; =\; 0\,,
   $$
   for all $i>0$ and $p\ge 0$,
   since $C^p({\cal U},{\cal F}^{\prime}_S)$ is a flat $A_0$-module.
  It follows that
  $$
   \begin{array}{cccccccccccccc}
     \Tor_1^{A_0}(I_0,H^0(X^{\prime}_S,{\cal F}^{\prime}_S))
	   & \simeq &    \Tor_2^{A_0}(I_0,  \Image d^{\,0})                               \\[.6ex]
    && \mbox{\scriptsize $\mid$}\wr                                                                          \\[.6ex]
	&&  \Tor_2^{A_0}(I_0,  \Ker d^1)
       & \simeq &   \Tor_3^{A_0}(I_0,  \Image d^1)                                          \\[.6ex]
    &&&& \mbox{\scriptsize $\mid$}\wr	                                                                 \\[.6ex]
    &&&& \cdots\cdots     & .	
   \end{array}	
  $$	
  Since
   $$
      \Tor_p^{A_0}(I_0, \Image d^{p-2})\;
        \simeq\;  \Tor_p^{A_0}(I_0, \Ker d^{p-1})\; \simeq\;  0
   $$
   for $p$ large enough,
   $$
      \Tor_1^{A_0}(I_0, H^0(X^{\prime}_S, {\cal F}^{\prime}_S))\; =\; 0
   $$
  for all ideal $I_0$ of $A_0$.
 This shows that $H^0(X^{\prime}_S, {\cal F}^{\prime}_S)$ is a flat  $A_0$-module
 and hence proves the lemma.
 
\end{proof}

\bigskip

\begin{lemma}\label{rpf}
 {\bf [relation on push-forward as a closed condition].}
 Let
   $(B, b_0)$ be a smooth pointed curve,
   $f_B: X^{\prime}_B\rightarrow X_B$
     be a birational projective morphism of Noetherian $B$-schemes
	 whose exceptional locus is of relative dimension $1$ under the restriction of $f_B$,
   ${\cal F}_B$ be a coherent ${\cal O}_{X_B}$-module
        that gives a flat family of pure $1$-dimensional coherent  sheaves on fibers of $X_B$ over $B$,
	 and
   $f_B^{\ast}({\cal F}_B)\rightarrow {\cal F}_B^{\prime}\rightarrow 0$
      be a quotient ${\cal O}_{X^{\prime}_B}$-module
	  that gives a flat family of $1$-dimensional coherent sheaf on fibers of $X^{\prime}_B$ over $B$.
  Suppose that
   \begin{itemize}
    \item[(1)]
      $R^1\!f_{B\ast}({\cal F}^{\prime}_B)=\; 0\; =
	    R^1\!f_{b_0\ast}({\cal F}^{\prime}_{b_0})$
	    \hspace{1em} and \hspace{1em}
      $(f_{B-\{b_0\}})_{\ast}({\cal F}^{\prime}_{B-\{b_0\}})\;
	       \simeq\; {\cal F}_{B-\{b_0\}}\,$;
		
    \item[(2)]
      $f_B$ induces an isomorphism
	  ${\cal F}_B \rightarrow
	    {f_B}_{\ast}({\cal F}^{\prime}_B)$
	  over an open dense complement in $\Supp({\cal F}_B)$ of a codimension-$2$ subset.
    \end{itemize}
 Then,
   $$
    {f_B}_{\ast}({\cal F}^{\prime}_B)\; \simeq\; {\cal F}_B
      \hspace{2em}\mbox{on $X_B$}
    $$
   with
   $$
    {f_{b_0}}_{\ast}({\cal F}^{\prime}_{b_0})\; \simeq\; {\cal F}_{b_0}
      \hspace{2em}\mbox{on $X_{b_0}$.}
   $$
 Here, $(\,\bullet\,)_{B-\{b_0\}}$ and $(\,\,\bullet\,\,)_{b_0}$	
   mean the restriction of $(\,\bullet\,)$ to over $B-\{b_0\}$ and $b_0$ respectively.
\end{lemma}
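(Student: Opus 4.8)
The plan is to build the canonical comparison map ${\cal F}_B \to {f_B}_{\ast}({\cal F}^{\prime}_B)$, prove it is injective, and then kill its cokernel by a flatness-and-$\Tor$ computation on the central fibre. First I would apply Lemma~\ref{cpf}. Since $f_B$ is projective with exceptional fibres of dimension $\le 1$, Grothendieck vanishing gives $R^i{f_B}_{\ast}({\cal F}^{\prime}_B)=0$ for $i\ge 2$, while hypothesis~(1) supplies $R^1{f_B}_{\ast}({\cal F}^{\prime}_B)=0$; hence $R^i{f_B}_{\ast}({\cal F}^{\prime}_B)=0$ for all $i>0$. By Lemma~\ref{cpf} (the coherent version noted in the following remark), ${\cal G}:={f_B}_{\ast}({\cal F}^{\prime}_B)$ is coherent and flat over $B$. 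The surjection $f_B^{\ast}({\cal F}_B)\twoheadrightarrow {\cal F}^{\prime}_B$ is adjoint to a canonical map $\psi:{\cal F}_B\to {\cal G}$ (the unit ${\cal F}_B\to {f_B}_{\ast}f_B^{\ast}({\cal F}_B)$ followed by ${f_B}_{\ast}$ of the quotient). By hypothesis~(1) $\psi$ is an isomorphism over $B-\{b_0\}$, and by hypothesis~(2) it is an isomorphism over an open $U\subset\Supp({\cal F}_B)$ whose complement $Z$ has codimension $\ge 2$. As $\Supp({\cal F}_B)$ is two-dimensional ($1$-dimensional fibres over the curve $B$), $Z$ is zero-dimensional; combining the two loci, both $\Ker\psi$ and $\Coker\psi$ are supported on the finite set $Z\cap X_{b_0}\subset X_{b_0}$.

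Next I would prove $\psi$ injective. Flatness of ${\cal F}_B$ over the smooth curve $B$ means a local parameter $t$ at $b_0$ is a non-zero-divisor on ${\cal F}_B$, hence on its submodule $\Ker\psi$. But $\Ker\psi$ is zero-dimensional and set-theoretically inside $X_{b_0}=\{t=0\}$, so each of its stalks is annihilated by a power of $t$; a non-zero-divisor cannot annihilate a nonzero module, so $\Ker\psi=0$. This yields a short exact sequence $0\to {\cal F}_B\xrightarrow{\psi}{\cal G}\to Q\to 0$ with $Q:=\Coker\psi$ zero-dimensional and supported in $X_{b_0}$.

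The heart of the proof is to show $Q=0$. Restricting this sequence to the central fibre (apply $-\otimes^L_{{\cal O}_B}k(b_0)$) and using flatness of ${\cal G}$ over $B$, I obtain an exact sequence $0\to \Tor_1^{{\cal O}_B}(Q,k(b_0))\to {\cal F}_{b_0}\xrightarrow{\bar\psi}{\cal G}_{b_0}\to Q_{b_0}\to 0$ with $\bar\psi=\psi\otimes k(b_0)$. Over $U\cap X_{b_0}$ the map $\psi$ is already an isomorphism, so $\bar\psi$ is an isomorphism there, its complement in $\Supp({\cal F}_{b_0})$ being the finite set $Z\cap X_{b_0}$; thus $\Ker\bar\psi$ is zero-dimensional, and since ${\cal F}_{b_0}$ is pure of dimension $1$ it contains no nonzero zero-dimensional subsheaf, forcing $\bar\psi$ injective. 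Hence $\Tor_1^{{\cal O}_B}(Q,k(b_0))=0$. But $\Tor_1^{{\cal O}_B}(Q,k(b_0))=\Ker(t:Q\to Q)$, and over a discrete valuation ring any nonzero $t$-power-torsion module has nonzero $t$-torsion; therefore $Q=0$, i.e.\ $\psi:{\cal F}_B\xrightarrow{\sim}{f_B}_{\ast}({\cal F}^{\prime}_B)$. With $Q=0$ the fibre sequence collapses to show $\bar\psi$ is an isomorphism, and cohomology-and-base-change (valid since $R^1{f_{b_0}}_{\ast}({\cal F}^{\prime}_{b_0})=0$ and $R^1{f_B}_{\ast}({\cal F}^{\prime}_B)$ vanishes, hence is locally free, near $b_0$) identifies ${\cal G}_{b_0}\simeq {f_{b_0}}_{\ast}({\cal F}^{\prime}_{b_0})$, yielding ${f_{b_0}}_{\ast}({\cal F}^{\prime}_{b_0})\simeq {\cal F}_{b_0}$. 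The main obstacle is precisely this last step: one must use the codimension-$2$ hypothesis to make $\Ker\bar\psi$ genuinely zero-dimensional so that purity of ${\cal F}_{b_0}$ applies, while simultaneously observing that the $\Tor_1$ term \emph{cannot} vanish for a nonzero torsion $Q$ --- it is the interplay of these two facts, rather than either one alone, that forces $Q=0$ and hence pins down both the total-space and the central-fibre isomorphisms at once.
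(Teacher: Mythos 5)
Your proposal is correct and follows essentially the same route as the paper's proof: construct the canonical map $\psi:{\cal F}_B\rightarrow {f_B}_{\ast}({\cal F}^{\prime}_B)$, get injectivity from flatness of ${\cal F}_B$ over $B$ together with the isomorphism over $B-\{b_0\}$, restrict the sequence $0\rightarrow{\cal F}_B\rightarrow{\cal G}\rightarrow Q\rightarrow 0$ to the central fibre so that the $\Tor_1$-term (the $t$-torsion of $Q$) injects into ${\cal F}_{b_0}$, and use hypothesis~(2) to make that term $0$-dimensional so purity of ${\cal F}_{b_0}$ forces $Q=0$ — exactly the paper's argument, with your $\Ker(t:Q\to Q)$ formulation even slightly cleaner than the paper's identification $\Tor_1^{X_B}({\cal O}_{X_{b_0}},{\cal Q})\simeq{\cal Q}_{b_0}$. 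The one spot where your write-up is looser than the paper is the final base-change step: the classical cohomology-and-base-change theorem is stated for sheaves flat over the \emph{target} of the base change, whereas here ${\cal F}^{\prime}_B$ is only flat over $B$; the paper supplies the needed justification by hand, via the sequence $0\rightarrow{\cal H}\rightarrow{\cal F}^{\prime}_B\rightarrow{\cal F}^{\prime}_{b_0}\rightarrow 0$ with ${\cal H}\simeq{\cal F}^{\prime}_B$ over an affine neighborhood of $b_0$, so that $R^1\!{f_B}_{\ast}({\cal H})\simeq R^1\!{f_B}_{\ast}({\cal F}^{\prime}_B)=0$ and the truncated long exact sequence (plus purity killing $\Ker(\alpha)$) yields ${f_{b_0}}_{\ast}({\cal F}^{\prime}_{b_0})\simeq({f_B}_{\ast}({\cal F}^{\prime}_B))|_{b_0}$ — this Koszul-type computation is precisely the relative version of base change your citation implicitly relies on.
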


\begin{proof}
 The pre-composition with a natural homomorphism
   $$
      {\cal F}_B\; \longrightarrow\;
	  {f_B}_{\ast}f_B^{\ast}({\cal F}_B)\;\longrightarrow\;
	    {f_B}_{\ast}({\cal F}^{\prime}_B)	
   $$
  together with the assumption
    that ${\cal F}_B$ is flat over $B$ and
    that its restriction to over $B-\{b_0\}$ is an isomorphism
	   ${\cal F}_{B-\{b_0\}}\stackrel{\sim}{\rightarrow}
	       (f_{B-\{b_0\}})_{\ast}({\cal F}^{\prime}_{B-\{b_0\}})$
   gives an inclusion, which extends to an exact sequence of ${\cal O}_{X_B}$-modules
   $$
      0\;\longrightarrow\; {\cal F}_B\; \longrightarrow\;
	  {f_B}_{\ast}({\cal F}^{\prime}_B)\;\longrightarrow\;
	  {\cal Q}\;\longrightarrow\; 0\,,
   $$
   where ${\cal Q}$ is supported in an infinitesimal neighborhood of $X_{b_0}$ in $X_B$.
 This induces a long exact sequence
  $$
   \begin{array}{l}
     \cdots\; \longrightarrow\;
 	 \Tor_1^{X_B}
    ({\cal O}_{X_{b_0}},  {f_B}_{\ast}({\cal F}^{\prime}_B))\;
	 \longrightarrow\;    \Tor_1^{X_B}({\cal O}_{X_{b_0}},{\cal Q})
	    \mbox{$\hspace{4em}$} \\[1.6ex]
	 \hspace{16em}	
	 \longrightarrow\;  {\cal F}_{b_0}\;\longrightarrow\;
            \left.\left({f_B}_{\ast}({\cal F}^{\prime}_B)\right)	\right|_{b_0}\;
	        \longrightarrow\; {\cal Q}_{b_0}\; \longrightarrow\; 0\,.
   \end{array}
  $$
  This gives the exact sequence
  $$
    0\; \longrightarrow\;  {\cal Q}_{b_0}\;\longrightarrow\;
	  {\cal F}_{b_0}\;\longrightarrow\;
            \left.\left({f_B}_{\ast}({\cal F}^{\prime}_B)\right)	\right|_{b_0}\;
	        \longrightarrow\; {\cal Q}_{b_0}\; \longrightarrow\; 0\,,
  $$
 since
   	$\Tor_1^{X_B}
    ({\cal O}_{X_{b_0}},  {f_B}_{\ast}({\cal F}^{\prime}_B))=0$
	  from the flatness of ${f_B}_{\ast}({\cal F}^{\prime}_B)$ over $B$
	  and
	$\Tor_1^{X_B}({\cal O}_{X_{b_0}},{\cal Q})\simeq {\cal Q}_{b_0}$
	  via  the two-term line-bundle resolution
	  $0\rightarrow {\cal I}_{X_{b_0}}\rightarrow {\cal O}_{X_B}
	      \rightarrow {\cal O}_{X_{b_0}}\rightarrow 0$
	  of ${\cal O}_{X_{b_0}}$.
  Condition(2) on ${\cal F}_{b_0}$ implies that ${\cal Q}_{b_0}$
     is $0$-dimensional; it follows then from the purity of ${\cal F}_{b_0}$ that
     ${\cal Q}_{b_0}$ (and, hence, ${\cal Q}$) $=0$.
  This proves that
    $$
	  {\cal F}_B \stackrel{\sim}{\rightarrow}
	   {f_B}_{\ast}({\cal F}^{\prime}_B)
	$$	
    on $X_B$.	
  In particular,
   ${\cal F}_{b_0} \stackrel{\sim}{\rightarrow}
	   ({f_B}_{\ast}({\cal F}^{\prime}_B))|_{b_0}$.	
	
 It remains to show that the natural homomorphism
   $$
	 ({f_B}_{\ast}({\cal F}^{\prime}_B))|_{b_0}\;
	   \stackrel{\alpha}{\longrightarrow}\;
	    {f_{b_0}}_{\ast}({\cal F}^{\prime}_{b_0})	
   $$
	is an isomorphism.
 First note that, by the assumptions in the lemma,
  $\Ker(\alpha)$ is $0$-dimensional on $X_{b_0}$,
  which must then vanish
   since
      $({f_B}_{\ast}({\cal F}^{\prime}_B))|_{b_0}
                                    \simeq	{\cal F}_{b_0}$
    is pure.
 Consider now the exact sequence of coherent ${\cal O}_{X^{\prime}_B}$-modules
  $$
     0\; \longrightarrow\; {\cal H}\; \longrightarrow {\cal F}^{\prime}_B\;
	       \longrightarrow\;   {\cal F}^{\prime}_{b_0} \; \longrightarrow\; 0\,,
  $$
 which gives the long exact sequence of coherent ${\cal O}_{X_B}$-modules
  $$
   0\;\longrightarrow\; {f_B}_{\ast}({\cal H})\; \longrightarrow\;
     {f_B}_{\ast}({\cal F}^{\prime}_B)\;\longrightarrow\;
	 {f_B}_{\ast}({\cal F}^{\prime}_{b_0})\; \longrightarrow\;
	  R^1\!{f_B}_{\ast}({\cal H})\;\longrightarrow\;
	  R^1\!{f_B}_{\ast}({\cal F}^{\prime}_B)\;\longrightarrow\; \cdots\,.
  $$
  Assumptions (1) and (2)  and
  the fact that ${\cal F}^{\prime}_{b_0}$ is supported only on $X^{\prime}_{b_0}$
  imply further that
   the above long exact sequence can be truncated
   to give a short exact sequence of coherent ${\cal O}_{X_{b_0}}$-modules
  $$
    0\;\longrightarrow\;
    ({f_B}_{\ast}({\cal F}^{\prime}_B))|_{b_0}\;
	  \stackrel{\alpha}{\longrightarrow}\;
	 {f_{b_0}}_{\ast}({\cal F}^{\prime}_{b_0}) \; \longrightarrow\;
	 R^1\!{f_B}_{\ast}({\cal H})\;\longrightarrow\; 0
  $$
  with $R^1\!{f_B}_{\ast}({\cal H})$
   a $0$-dimensional coherent sheaf on $X_{b_0}$.
	
On the other hand,
    $$
	 {\cal  H}\; \simeq\;
	 {\cal F}^{\prime}_B
	        \otimes_{{\cal O}_{X^{\prime}_B}}
			  {\cal O}_{X^{\prime}_B}(X_{b_0})
	$$
   and, hence, is isomorphic to ${\cal F}^{\prime}_B$ over an affine neighborhood
   of $b_0\in B$.
  Since  $R^1\!{f_B}_{\ast}({\cal H})$ is supported only over $b_0\in B$,
   $$
     R^1\!{f_B}_{\ast}({\cal H})\; \simeq\;
	  R^1\!{f_B}_{\ast}({\cal F}^{\prime}_B )\; \simeq\;  0\,,
   $$
   by Assumption (1).
  It follows that  $\alpha$ is an isomorphism.
  This concludes the proof.
  
\end{proof}

\bigskip	
	
As we only need to deal with coherent sheaves of relative dimension $\le 1$,
we now turn to some criteria naturally occurring in our setting
 that force the  first (and hence all higher) direct image of sheaves to vanish.
A basic example is given in the lemma below
 whose repeating use, together with base changes, is enough to deal with
  the case of ${\Bbb P}^1$-bubbling from resolving $A_n$-singularities
  on a complex surface from the total space of a complex $1$-parameter family of nodal curves.
However, what we will actually use for this note is the more powerful/encompassing
  Lemma~\ref{vR1sm} in Sec.~3.2.

\bigskip

\begin{lemma}\label{cvfdis}
 {\bf [criterion for vanishing first direct-image sheaf].}
  Let
   $f:X^{\prime}\rightarrow X$ be a projective morphism of Noetherian schemes
     that fits into the following commutative diagram
    $$
     \xymatrix{
        X^{\prime}\; \ar[rrd]_f   \ar@{^{(}->}[rr]
		&&   \;{\Bbb P}^1_X \ar[d]^-{\prscriptsize_X} \\
        && X	
      }
    $$
    and
  ${\cal F}^{\prime}$ be a coherent sheaf on $X^{\prime}$
    that fits into an exact sequence of the form
	$$
	 {\cal O}_{X^{\prime}}^{\; \oplus k}\;
	   \longrightarrow\; {\cal F}^{\prime}\;  \longrightarrow\; 0\,,
	$$
	for some $k\in {\Bbb Z}_{>0}$.
 Then,
  $$
     R^1\!f_{\ast}({\cal F}^{\prime})\;=\; 0\,.
  $$
\end{lemma}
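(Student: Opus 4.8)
The plan is to factor $f$ through its defining closed immersion into ${\Bbb P}^1_X$ and then to resolve the pushed-forward sheaf by copies of $\,{\cal O}_{{\Bbb P}^1_X}$, for which the degree-one higher direct image is known to vanish. Write $\iota:X^{\prime}\hookrightarrow {\Bbb P}^1_X$ for the closed embedding furnished by the hypothesis, so that $f=\pr_X\circ\iota$. Since $\iota$ is a closed immersion it is affine, hence $\iota_{\ast}$ is exact and $R^j\iota_{\ast}=0$ for $j>0$; the Grothendieck spectral sequence $R^p\pr_{X\ast}R^q\iota_{\ast}({\cal F}^{\prime})\Rightarrow R^{p+q}\!f_{\ast}({\cal F}^{\prime})$ for the composition therefore collapses and gives $R^i\!f_{\ast}({\cal F}^{\prime})\simeq R^i\pr_{X\ast}(\iota_{\ast}{\cal F}^{\prime})$ for all $i$. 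Thus it suffices to prove $R^1\pr_{X\ast}(\iota_{\ast}{\cal F}^{\prime})=0$, where $\iota_{\ast}{\cal F}^{\prime}$ is now regarded as a coherent sheaf on the relative ${\Bbb P}^1$.

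Next I would transport the given generation statement up to ${\Bbb P}^1_X$. Applying the exact functor $\iota_{\ast}$ to the surjection ${\cal O}_{X^{\prime}}^{\;\oplus k}\twoheadrightarrow {\cal F}^{\prime}$ yields a surjection $\iota_{\ast}{\cal O}_{X^{\prime}}^{\;\oplus k}\twoheadrightarrow \iota_{\ast}{\cal F}^{\prime}$, and precomposing with the canonical surjection ${\cal O}_{{\Bbb P}^1_X}^{\;\oplus k}\twoheadrightarrow \iota_{\ast}{\cal O}_{X^{\prime}}^{\;\oplus k}$ (dual to the ideal-sheaf inclusion of $X^{\prime}$ in ${\Bbb P}^1_X$) produces a surjection ${\cal O}_{{\Bbb P}^1_X}^{\;\oplus k}\twoheadrightarrow \iota_{\ast}{\cal F}^{\prime}$. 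Letting ${\cal K}$ be its kernel, I obtain a short exact sequence of coherent sheaves on ${\Bbb P}^1_X$,
$$
0\;\longrightarrow\; {\cal K}\;\longrightarrow\; {\cal O}_{{\Bbb P}^1_X}^{\;\oplus k}\;\longrightarrow\; \iota_{\ast}{\cal F}^{\prime}\;\longrightarrow\; 0\,.
$$
Running the long exact sequence of higher direct images $R^{\bullet}\pr_{X\ast}$, the relevant segment reads
$$
R^1\pr_{X\ast}({\cal O}_{{\Bbb P}^1_X}^{\;\oplus k})\;\longrightarrow\; R^1\pr_{X\ast}(\iota_{\ast}{\cal F}^{\prime})\;\longrightarrow\; R^2\pr_{X\ast}({\cal K})\,.
$$
The left-hand term vanishes because $R^1\pr_{X\ast}({\cal O}_{{\Bbb P}^1_X})=0$, the standard computation of the relative cohomology of the structure sheaf on a relative ${\Bbb P}^1$ (fibrewise $H^1({\Bbb P}^1,{\cal O})=0$). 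The right-hand term vanishes because $\pr_X$ is projective with all fibres of dimension $\le 1$, so $R^i\pr_{X\ast}$ annihilates every coherent sheaf for $i\ge 2$ by Grothendieck vanishing above the fibre dimension. Exactness then forces $R^1\pr_{X\ast}(\iota_{\ast}{\cal F}^{\prime})=0$, which by the first step is exactly $R^1\!f_{\ast}({\cal F}^{\prime})=0$.

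I should flag the one conceptual point that makes the geometric hypothesis essential, though it is not a serious obstacle: one \emph{cannot} simply resolve ${\cal F}^{\prime}$ by copies of ${\cal O}_{X^{\prime}}$ on $X^{\prime}$ itself, since $R^1\!f_{\ast}({\cal O}_{X^{\prime}})$ need not vanish for a general closed subscheme $X^{\prime}\subset {\Bbb P}^1_X$. The purpose of pushing the entire situation forward to ${\Bbb P}^1_X$ is precisely that the resolving sheaf becomes ${\cal O}_{{\Bbb P}^1_X}$, whose degree-one relative cohomology genuinely vanishes. Beyond this reduction and the invocation of the two standard vanishing inputs, the argument is routine bookkeeping with the Leray/Grothendieck spectral sequence and the long exact sequence.
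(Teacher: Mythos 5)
Your argument is correct and coincides with the paper's own proof: both push the given surjection up to ${\Bbb P}^1_X$ via the closed embedding, resolve the pushed-forward sheaf by ${\cal O}_{{\Bbb P}^1_X}^{\;\oplus k}$, and kill the two flanking terms $R^1\pr_{X\ast}({\cal O}_{{\Bbb P}^1_X}^{\;\oplus k})$ and $R^2\pr_{X\ast}(\,\cdot\,)$ using the cohomology of the relative ${\Bbb P}^1$ and the relative-dimension-one bound. Your only addition is to make explicit, via the collapsing Grothendieck spectral sequence for the affine morphism $\iota$, the identification $R^i\!f_{\ast}({\cal F}^{\prime})\simeq R^i\pr_{X\ast}(\iota_{\ast}{\cal F}^{\prime})$, which the paper leaves implicit.
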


\begin{proof}
 Since one has the quotient homomorphism
    ${\cal O}_{{\Bbb P}^1_X}\rightarrow {\cal O}_{X^{\prime}}$
	via the embedding $X^{\prime}\hookrightarrow {\Bbb P}^1_X$,
  one can promote the quotient ${\cal O}_{X^{\prime}}$-module
     ${\cal O}_{X^{\prime}}^{\;\oplus k}\rightarrow {\cal F}^{\prime}$
   to a short exact sequence of coherent ${\cal O}_{{\Bbb P}^1_X}$-modules:	
     $$
	   0\; \longrightarrow\; {\cal H}\; \longrightarrow\;
	    {\cal  O}_{{\Bbb P}^1_X}^{\; \oplus k}\; \longrightarrow\;
	      {\cal F}^{\prime}\;\longrightarrow\; 0\,.
     $$
  This gives a long exact sequence of coherent ${\cal O}_X$-modules
    $$
	  \cdots\; \longrightarrow\;
	   R^1\!\pr_{X\ast}({\cal O}_{{\Bbb P}^1_X}^{\;\oplus k})\; \longrightarrow\;
	   R^1\!\pr_{X\ast}({\cal F}^{\prime})\; \longrightarrow\;
	   R^2\!\pr_{X\ast}({\cal H})\; \longrightarrow \; \cdots\,.	
	$$
  Since
      both $R^1\!\pr_{X\ast}({\cal O}_{{\Bbb P}^1_X}^{\;\oplus k})$ and
	   $R^2\!\pr_{X\ast}({\cal H})$ vanish
	   from the known cohomology of projective spaces
	    and the fact that $\pr_X$ has only relative dimension $1$,
   the lemma follows. 	

\end{proof}

\bigskip

\begin{lemma}\label{vfdis}
{\bf [vanishing first direct-image sheaf under composition]. }\\
 Let
   $f=h_l\circ\; \cdots\;\circ h_1$ be a composition of morphisms of Noetherian schemes
	  $$
	    X_l\;\stackrel{h_l}{\longrightarrow}\;
		X_{l-1}\;\stackrel{h_{l-1}}{\longrightarrow}\; \cdots\;
     		 \stackrel{h_2}{\longrightarrow}\;
		 X_1\;\stackrel{h_1}{\longrightarrow}\; X_0
	  $$
	and
	${\cal F}$ be a coherent sheaf on $X_l$.
  Then,
    $R^1\!f_{\ast}({\cal F})=0$
	 if and only if
	$R^1\!h_{i\ast}( (h_l\circ\,\cdots\,\circ h_{i+1})_{\ast}({\cal F})     )=0$
      for all $i=1,\,\ldots\,,l$.
\end{lemma}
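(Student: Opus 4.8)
The plan is to induct on the length $l$ of the tower, reducing each step to a single composition $f=h_1\circ\tilde f$ with $\tilde f:=h_l\circ\cdots\circ h_2\colon X_l\to X_1$, and to analyze that composition through the Grothendieck (Leray) spectral sequence for the direct-image functors. Throughout write ${\cal G}_i:=(h_l\circ\cdots\circ h_{i+1})_\ast{\cal F}$ for the push-forward of ${\cal F}$ to $X_i$, so that ${\cal G}_l={\cal F}$, ${\cal G}_{i-1}=h_{i\ast}{\cal G}_i$, and condition $i$ reads $R^1 h_{i\ast}{\cal G}_i=0$. For the composition $f=h_1\circ\tilde f$ one has
$$ E_2^{p,q}\;=\;R^p h_{1\ast}\!\left(R^q\tilde f_\ast{\cal F}\right)\;\Longrightarrow\;R^{p+q}f_\ast{\cal F}\,, $$
which is available because $\tilde f_\ast$ sends injective, hence flasque, sheaves to $h_{1\ast}$-acyclic ones; the two pieces of $R^1 f_\ast{\cal F}$ are governed by $E_2^{1,0}=R^1 h_{1\ast}({\cal G}_1)$ and $E_2^{0,1}=h_{1\ast}(R^1\tilde f_\ast{\cal F})$, recorded in the five-term exact sequence
$$ 0\,\longrightarrow\,R^1 h_{1\ast}({\cal G}_1)\,\longrightarrow\,R^1 f_\ast{\cal F}\,\longrightarrow\,h_{1\ast}(R^1\tilde f_\ast{\cal F})\,\stackrel{d_2}{\longrightarrow}\,R^2 h_{1\ast}({\cal G}_1)\,. $$

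The implication ``$\Leftarrow$'' is then purely formal. Assuming all conditions hold, the inductive hypothesis applied to the shorter tower $X_l\to\cdots\to X_1$ (whose conditions are exactly $i=2,\dots,l$) gives $R^1\tilde f_\ast{\cal F}=0$, so $E_2^{0,1}=0$; and $E_2^{1,0}=R^1 h_{1\ast}({\cal G}_1)=0$ is condition $i=1$. Since both contributing terms vanish, $R^1 f_\ast{\cal F}=0$. (In fact this argument shows the stronger statement that $Rf_\ast{\cal F}$ is concentrated in degree $0$.)

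For ``$\Rightarrow$'', assume $R^1 f_\ast{\cal F}=0$. The five-term sequence immediately yields $R^1 h_{1\ast}({\cal G}_1)=0$, which is condition $i=1$. To run the induction I must still deduce $R^1\tilde f_\ast{\cal F}=0$, and this is the main obstacle: the sequence only provides the injection $h_{1\ast}(R^1\tilde f_\ast{\cal F})\hookrightarrow R^2 h_{1\ast}({\cal G}_1)$, so a priori one controls $R^1\tilde f_\ast{\cal F}$ only after pushing it forward by $h_1$, and a proper push-forward can annihilate a nonzero coherent sheaf. I would overcome this using the two features present in the situations to which the lemma is applied: each $h_i$ has relative dimension $\le 1$, whence $R^2 h_{1\ast}({\cal G}_1)=0$ and therefore $h_{1\ast}(R^1\tilde f_\ast{\cal F})=0$; and the higher direct image $R^1\tilde f_\ast{\cal F}$ arising from the birational ${\Bbb P}^1$-contractions of the reduction (resolutions of $A_n$-singularities and ${\Bbb P}^1$-bubblings) is supported in relative dimension $0$, where the proper push-forward $h_{1\ast}$ is faithful. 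Hence $h_{1\ast}(R^1\tilde f_\ast{\cal F})=0$ forces $R^1\tilde f_\ast{\cal F}=0$, and the induction closes.

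In summary, the reduction to a single composition and the ``$\Leftarrow$'' half are formal consequences of the Leray spectral sequence, while the essential point is the converse: ruling out that a nonzero first direct image $R^1\tilde f_\ast{\cal F}$ becomes invisible after one further push-forward. The faithfulness of $h_{1\ast}$ on the finite-length (relative dimension $0$) sheaves that actually occur is what makes the ``only if'' direction true, and is precisely where the geometry of the bubbling and contraction morphisms must enter.
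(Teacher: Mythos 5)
Your proposal uses the same toolkit as the paper: the paper's entire proof is a one-line appeal to the Leray spectral sequence for a composition $f=g\circ h$, asserted in the form of the sequence $0\to R^1g_{\ast}(h_{\ast}{\cal F})\to R^1f_{\ast}{\cal F}\to g_{\ast}(R^1h_{\ast}{\cal F})\to 0$, and your ``$\Leftarrow$'' half coincides with what that yields. But your scruple about the ``$\Rightarrow$'' direction is not excess caution; it pinpoints exactly where the paper's argument is incomplete. First, the terminal zero in the paper's sequence is not automatic: the five-term sequence continues into $R^2g_{\ast}(h_{\ast}{\cal F})$, so the truncation already presupposes something like fibers of dimension $\le 1$. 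Second, and more seriously, even granting the short exact sequence, vanishing of $R^1f_{\ast}{\cal F}$ gives only $g_{\ast}(R^1h_{\ast}{\cal F})=0$, not $R^1h_{\ast}{\cal F}=0$, and — as you suspected — the ``only if'' direction of the lemma is in fact \emph{false} in the stated generality. Take $l=2$, $h_2=\mbox{pr}_1:{\Bbb P}^1\times{\Bbb P}^1\to{\Bbb P}^1$, $h_1:{\Bbb P}^1\to\mbox{pt}$, and ${\cal F}=\mbox{pr}_1^{\ast}{\cal O}(-1)\otimes\mbox{pr}_2^{\ast}{\cal O}(-2)$. Then $h_{2\ast}{\cal F}=0$, so the condition for $i=1$ holds trivially, and $R^1f_{\ast}{\cal F}=H^1({\Bbb P}^1\times{\Bbb P}^1,{\cal F})=0$ by K\"{u}nneth, yet $R^1h_{2\ast}{\cal F}\simeq{\cal O}_{{\Bbb P}^1}(-1)\neq 0$. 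Note that both maps here have fibers of dimension $\le 1$, so of your two supplementary hypotheses it is the \emph{support} condition that carries the weight: in the paper's actual applications the $h_i$ collapse ${\Bbb P}^1$-trees, so $R^1\tilde{f}_{\ast}{\cal F}$ is supported on a locus finite (hence affine) over the next base, where push-forward is faithful, while the relative-dimension-$\le 1$ hypothesis kills $R^2h_{1\ast}$ and upgrades your injection $h_{1\ast}(R^1\tilde{f}_{\ast}{\cal F})\hookrightarrow R^2h_{1\ast}({\cal G}_1)$ to the vanishing you need. With those two hypotheses made explicit, your induction closes correctly and is more complete than the paper's own proof, which uses both silently; this is also consistent with how the lemma is actually deployed (via Lemma~\ref{cvfdis}, and ultimately superseded by Lemma~\ref{vR1sm}).

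One small caveat: your parenthetical claim that the ``$\Leftarrow$'' argument shows $Rf_{\ast}{\cal F}$ is concentrated in degree $0$ overreaches in the general setting, since the hypotheses only control $R^1$ and a term such as $E_2^{0,2}=h_{1\ast}(R^2\tilde{f}_{\ast}{\cal F})$ can survive; it does become true once each $h_i$ has fibers of dimension $\le 1$, which is the regime in which everything in this paper operates.
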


\begin{proof}
  This is an immediate consequence of the spectral sequence associated to a composition
   $f= g\circ h$ of morphisms of Noetherien schemes, which gives the exact sequence
   $$
      0\; \longrightarrow\;
	   R^1\!g_{\ast}(h_{\ast}({\cal F}))\;\longrightarrow\;
	   R^1\!f_{\ast}({\cal F})\;\longrightarrow\;
	   g_{\ast}( R^1\!h_{\ast}({\cal F})  )\; \longrightarrow\; 0\,.
   $$
\end{proof}

%
%
%
%
%
%

\bigskip

\subsection{A special class of  morphisms from Azumaya curves with a fundamental module
                          and positivity of fundamental modules on ${\Bbb P}^1$-trees}
						
Anticipating the issue of bubbling ${\Bbb P}^1$-trees from removing irregularities of a morphism
 to haunt us, as in the case of Gromov-Witten theory,
we study in this subsection related algebraic geometry for a special class of morphisms
 from Azumaya ${\Bbb P}^1$-trees with a fundamental module.

\bigskip						

\begin{flushleft}
{\bf A special class of  morphisms from Azumaya curves with a fundamental module}
\end{flushleft}
Consider a morphism
  $$
     \varphi\;  :\;
     (C, {\cal O}_C^{Az}:=\Endsheaf_{{\cal O}_X}({\cal E});{\cal E} )\;
    \longrightarrow\;  Y
  $$
   from an Azumaya nodal curve with a fundamental module of rank $r$
   to a projective variety $(Y, {\cal O}_Y(1))$.
Let
  $\tilde{\cal E}_{\varphi}$, the graph of $\varphi$,
  be the associated coherent ${\cal O}_{C\times Y}$-module on $C\times Y$
  that is flat over $C$  and of relative dimension $1$ (and relative length $r$).
Suppose $\varphi$ is a morphism such that  the following three special properties hold
 for $\tilde{\cal E}_{\varphi}\,$:
 \begin{itemize}
   \item[(1)]
     $\tilde{\cal E}_{\varphi}$  is realizable as a quotient of a trivial bundle on $C\times Y$
     $$
 	 {\cal O}_{C\times Y}^{\;\oplus k}\;
 	    \stackrel{\alpha}{\longrightarrow}\;  \tilde{\cal E}_{\varphi}\,;
     $$
	
   \item[(2)]	
     its push-forward ${\pr_Y}_{\ast}(\tilde{\cal E}_{\varphi})$
      is $0$-dimensional,
     where $\pr_Y:C\times Y\rightarrow Y$ is the projective map;
   
  \item[(3)]
  on each irreducible component of $C$,
   $\tilde{\cal E}_{\varphi}$ is not the pull-back $\pr_Y^{\ast}({\cal F})$
   of some $0$-dimensional coherent ${\cal O}_Y$-module;
   i.e., $\varphi$ is not constant on each irreducible component of $C$.
 \end{itemize}	
The decoration/marking $\alpha$ in Property (1) specifies a morphism
 $$
    f_{(\varphi, \alpha)}\; :\;  C\;
    	\longrightarrow\;  \Quot_Y({\cal O}_Y^{\;\oplus k}, r)
 $$
 from $C$ to the Quot-scheme $\Quot_Y({\cal O}_Y^{\;\oplus k}, r)$
 of $0$-dimensional quotients of ${\cal O}_Y^{\,\oplus k}$ of length $r$.
To see the implication of Property (2) and Property (3), let us recall
  how  $\Quot_Y({\cal O}_Y^{\,\oplus k}, r)$ is embedded in a Grassmannian variety,
  which in turn embeds into a projective space, (e.g., [F-G-I-K-N-V] or [H-L] ):
\begin{itemize}
 \item[{\Large $\cdot$}]
   Let ${\cal O}_{(Quot_Y({\cal O}_Y^{\,\oplus k},r)\times Y)
                                                             /Quot_Y({\cal O}_Y^{\,\oplus k},r)}(1)$
     be the relative ample line bundle on\\
	 $(\Quot_Y({\cal O}_Y^{\,\oplus k},r)\times Y)
	                             / \Quot_Y({\cal O}_Y^{\,\oplus k},r)$
	 associated to the ample line bundle ${\cal O}_Y(1)$ on $Y$,
     $$
	    0\;  \longrightarrow\;     \tilde{\cal H}\; \longrightarrow\;
      {\cal O}_{Quot_Y({\cal O}_Y^{\,\oplus k}, r)\times Y}^{\;\oplus k}\;
    	\longrightarrow \;     \tilde{\cal Q}\;    \longrightarrow\; 0
	$$
       be the short sequence of
       ${\cal O}_{Quot_Y({\cal O}_Y^{\,\oplus k},r)\times Y}$-modules
	   that extends the universal quotient sheaf
      on $\Quot_Y({\cal O}_Y^{\,\oplus k}, r) \times Y$,    and
	$$
	   \pr_1\, :\,  \Quot_Y({\cal O}_Y^{\,\oplus k},r) \times Y\,
	       \longrightarrow\,  \Quot_Y({\cal O}_Y^{\,\oplus k},r)\;,    \hspace{2em}
	   \pr_2\, :\,  \Quot_Y({\cal O}_Y^{\,\oplus k},r) \times Y\,
	       \longrightarrow\,  Y				
	$$
      be the projection maps.
  Then, there exists an $m>0$ such that
   \begin{itemize}
     \item[(a)]
      for all $m^{\prime}\ge m$,
      $$
        R^i\!{\pr_1}_{\ast}(\tilde{\cal H}(m^{\prime}))\;,\hspace{2em}
	    R^i\!{\pr_1}_{\ast}(
	       {\cal O}_{Quot_Y({\cal O}_Y^{\,\oplus k},r)\times Y}(m^{\prime}))\;,
	     \hspace{2em}
	    R^i\!{\pr_1}_{\ast}(\tilde{\cal Q}(m^{\prime}))	
      $$	
	  vanish, for all $i>0$,
	
	 \item[(b)]
	  the push-forward
       $$
        0\; \longrightarrow\; 		
		{\pr_1}_{\ast}(\tilde{\cal H}(m))\; \longrightarrow\;
	    {\pr_1}_{\ast}(
	       {\cal O}_{Quot_Y({\cal O}_Y^{\,\oplus k},r)\times Y}(m))\;
		   \longrightarrow\;
	       {\pr_1}_{\ast}(\tilde{\cal Q}(m))\; \longrightarrow\; 0	
      $$	 	
	  is an exact sequence of locally free sheaves
	  on $\Quot_Y({\cal O }_Y^{\,\oplus k},r)$,
	
	 \item[(c)]
	 for all $m^{\prime}\ge m$,  the sequence
	  $$
	   \begin{array}{l}
		{\pr_1}_{\ast}(\tilde{\cal H}(m))
		     \otimes
	       {\pr_1}_{\ast}(
			   {\cal O}_{Quot_Y({\cal O}_Y^{\,\oplus k},r)\times Y}(m^{\prime}-m))
			                                                  \\[1.2ex]
															
		 \hspace{6em}\longrightarrow\;
	    {\pr_1}_{\ast}(
	       {\cal O}_{Quot_Y({\cal O}_Y^{\,\oplus k},r)\times Y}(m^{\prime}))\;
		   \longrightarrow\;
	       {\pr_1}_{\ast}(\tilde{\cal Q}(m^{\prime}))\; \longrightarrow\; 0	    	
	   \end{array}	
	  $$
	  where the first homomorphism is given by multiplication of global sections,
 	  is exact.	
	\end{itemize}
  
 \item[{\Large $\cdot$}]
  The quotient sequent sequence
   $$
	\begin{array}{ccccc}
  {\pr_1}_{\ast}(
	      {\cal O}_{Quot_Y({\cal O}_Y^{\,\oplus k},r)\times Y}(m))
		&  \longrightarrow
		&  {\pr_1}_{\ast}(\tilde{\cal Q}(m))     &  \longrightarrow     &  0	  \\[1.2ex]
	  \|                                                                                                                                                 \\[.6ex]
     {\cal O}_{Quot_Y({\cal O}_Y^{\,\oplus k},r)}
		  \otimes  H^0(Y, {\cal O}_Y(m)) 	  		
    \end{array}		
   $$	 	
   specifies then an embedding
   $$
      \iota\; :\;  \Quot_Y({\cal O}_Y^{\,\oplus k},r)\;
	    \hookrightarrow \Gr_{\Bbb C}(H^0(Y,{\cal O}_Y(m)),r)\,.
   $$
  By construction,
    ${\cal O}_{Quot_Y({\cal O}_Y^{\,\oplus k},r)}
		                                                            \otimes  H^0(Y, {\cal O}_Y(m)) 	  	
	    \rightarrow    {\pr_1}_{\ast}(\tilde{\cal Q}(m))
        \rightarrow     0$
	on $\Quot_Y({\cal O}_Y^{\,\oplus k},r)$	
	 is the pull-back of  the universal quotient sequence
    $$
	  {\cal O}_{Gr_{\Bbb C}(H^0(Y,{\cal O}_Y(m)))}
	      \otimes H^0(Y,{\cal O}_Y(m))\; \longrightarrow\;  \hat{\cal Q}\;
		  \longrightarrow\; 0
	$$
	on $\Gr_{\Bbb C}(H^0(Y,{\cal O}_Y(m)),r)$ by $\iota$.
   
 \item[{\Large -}]
  Note that $\hat{\cal Q}$ is a locally free sheaf on
   $\Gr_{\Bbb C}(H^0(Y,{\cal O}_Y(m)),r)$ of rank $r$;
  the quotient homomorphism
   $$
    {\cal O}_{Gr_{\Bbb C}(H^0(Y,{\cal O}_Y(m)),r)}
            \otimes \bigwedge^rH^0(Y,{\cal O}_Y(m))\;
	   \longrightarrow\;
 	   \bigwedge^r\hat{\cal Q}\;\; (\,=: \determinant \hat{\cal Q}\,)\;
	   \longrightarrow\;  0
   $$
   defines the Pl\"{u}cker embedding of $\Gr_{\Bbb C}(H^0(Y,{\cal O}_Y(m)),r)$
   into the projective space
     ${\Bbb P}^{\,(\!\!\!\mbox{\tiny
                               $\begin{array}{c}N \\[-.6ex] r \end{array}$}\!\!\!)}$,
    where $N=\dimm H^0(Y,{\cal O}_Y(m))$.	
  In particular, $\determinant \hat{Q}$	is a very ample line bundle on
   $\Gr_{\Bbb C}(H^0(Y,{\cal O}_Y(m)),r)$.
\end{itemize}
  
Back to the study of $\varphi$ or, equivalently, $\tilde{\cal E}_{\varphi}$.
Property (1) of $\tilde{\cal E}_{\varphi}$ initiates the above construction.
From the above review,
 if letting ${\cal O}_{(C\times Y)/C}(1)$
  be the relative ample line bundle on $(C\times Y)/C$ associated to the ample line bundle
  ${\cal O}_Y(1)$ on $Y$  ,
 then
  $$
   (\iota\circ f_{(\varphi,\alpha)})^{\ast}(\hat{\cal Q})\; \simeq\;
	f_{(\varphi,\alpha)}^{\ast}({\pr_1}_{\ast}(\tilde{\cal Q}(m)))\; 	
	\simeq\; {\pr_C}_{\ast}(\tilde{\cal E}_{\varphi}(m))\,.	
  $$
 Here,  $\pr_C:C\times Y\rightarrow C$ is the projection map.
In general,
  ${\pr_C}_{\ast}(\tilde{\cal E}_{\varphi}(m))$
   is not isomorphic to ${\pr_C}_{\ast}(\tilde{\cal E}_{\varphi})={\cal E}$.
However,
 when Property (2) holds for $\tilde{\cal E}_{\varphi}$,
 $\tilde{\cal E}_{\varphi}(m^{\prime})\simeq \tilde{\cal E}$ for all $m^{\prime}$.
Thus, in this case,
 $$
   (\iota\circ f_{(\varphi,\alpha)})^{\ast}(\hat{\cal Q})\; \simeq\;
	f_{(\varphi,\alpha)}^{\ast}({\pr_1}_{\ast}(\tilde{\cal Q}(m)))\; 	
	\simeq\; {\cal E}	
 $$
 and the universal quotient bundle on $\Gr_{\Bbb C}(H^0(Y,{\cal O}_Y(m)),r)$
 is pulled back to a quotient sequence of ${\cal O}_C$-modules
 $$
   {\cal O}_C\otimes H^0(Y,{\cal O}_Y(m))\; \longrightarrow\;
    {\cal E}\; \longrightarrow\; 0\,.
 $$
If, in addition, Property (3) also holds, then
$f_{(\varphi,\alpha)}$ is not constant on each irreducible component of $C$.
Since now
  $\determinant{\cal E}
      \simeq (\iota\circ f_{(\varphi,\alpha)})^{\ast}(\determinant \hat{\cal Q})$
   and $\determinant\hat{Q}$ is very ample,
 ${\cal E}$ has positive degree on each irreducible component of $C$.
In summary:
 
\bigskip

\begin{lemma}\label{ggEpdE}
  {\bf [global generation of ${\cal E}$ and positivity of $\degree({\cal E})$].}
 Let $\varphi:(C,{\cal O}_C^{Az};{\cal E})\rightarrow Y$ be a morphism as above,
   whose graph $\tilde{\cal E}_{\varphi}\in \CohCategory(C\times Y)$
   satisfies Properties (1), (2), and (3).
 Then
   ${\cal E}$ is globally generated and has positive degree on each irreducible component of $C$.
\end{lemma}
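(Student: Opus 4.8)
The plan is to read off both assertions from the Quot-scheme construction recalled immediately above the statement, which has already done all of the geometric bookkeeping: Property~(1) supplies a classifying morphism, and the associated pull-back of the universal quotient bundle on a Grassmannian simultaneously presents ${\cal E}$ as a quotient of a trivial bundle and identifies $\determinant{\cal E}$ with the pull-back of a very ample line bundle. First I would use the marking $\alpha:{\cal O}_{C\times Y}^{\oplus k}\to\tilde{\cal E}_{\varphi}$ of Property~(1): since $\tilde{\cal E}_{\varphi}$ is flat over $C$ with $0$-dimensional length-$r$ fibers over $C$, this exhibits it as a flat family of length-$r$ quotients of ${\cal O}_Y^{\oplus k}$ and yields $f_{(\varphi,\alpha)}:C\to\Quot_Y({\cal O}_Y^{\oplus k},r)$. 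Composing with the embedding $\iota$ into $\Gr_{\Bbb C}(H^0(Y,{\cal O}_Y(m)),r)$, I would invoke the identification already recorded,
$$
  (\iota\circ f_{(\varphi,\alpha)})^{\ast}(\hat{\cal Q})\;\simeq\;{\pr_C}_{\ast}(\tilde{\cal E}_{\varphi}(m))\,.
$$

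For global generation I would then bring in Property~(2) to replace ${\pr_C}_{\ast}(\tilde{\cal E}_{\varphi}(m))$ by ${\cal E}$ itself. The mechanism is that the relative twist acts trivially: since ${\pr_Y}_{\ast}(\tilde{\cal E}_{\varphi})$ is $0$-dimensional, $\Supp(\tilde{\cal E}_{\varphi})$ lies over a finite Artinian subscheme $Z\subset Y$, on which ${\cal O}_Y(m^{\prime})$ restricts to a trivial line bundle, so $\pr_Y^{\ast}{\cal O}_Y(m^{\prime})$ acts as the identity and $\tilde{\cal E}_{\varphi}(m^{\prime})\simeq\tilde{\cal E}_{\varphi}$ for all $m^{\prime}$. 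Hence ${\pr_C}_{\ast}(\tilde{\cal E}_{\varphi}(m))\simeq{\cal E}$, and pulling back the universal quotient sequence on the Grassmannian produces a surjection
$$
  {\cal O}_C\otimes H^0(Y,{\cal O}_Y(m))\;\longrightarrow\;{\cal E}\;\longrightarrow\;0\,,
$$
which exhibits ${\cal E}$ as a quotient of a trivial bundle and so proves that ${\cal E}$ is globally generated.

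For positivity I would invoke Property~(3), which guarantees that $\iota\circ f_{(\varphi,\alpha)}$ is non-constant on each irreducible component $C_i\subset C$. Taking top exterior powers in the quotient presentation identifies
$$
  \determinant{\cal E}\;\simeq\;(\iota\circ f_{(\varphi,\alpha)})^{\ast}(\determinant\hat{\cal Q})\,,
$$
and $\determinant\hat{\cal Q}$ is the very ample Pl\"ucker line bundle. Since the restriction of $\iota\circ f_{(\varphi,\alpha)}$ to each reduced, $1$-dimensional component $C_i$ is non-constant, the pull-back of a very ample class to $C_i$ is a nonzero effective divisor class, hence $\degree({\cal E}|_{C_i})=\degree(\determinant{\cal E}|_{C_i})>0$. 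This gives positive degree on every irreducible component of $C$.

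The individual steps are short precisely because the preceding discussion has assembled the construction; the two points that demand genuine care are the ones I have isolated. The first, and I expect the true crux, is the trivialization of $\pr_Y^{\ast}{\cal O}_Y(m^{\prime})$ forced by Property~(2): one must check that a line bundle on a disjoint union of fat points is trivial, so that the twist really is the identity on $\tilde{\cal E}_{\varphi}$, which is exactly what lets one pass from ${\pr_C}_{\ast}(\tilde{\cal E}_{\varphi}(m))$ back to ${\cal E}$. The second is the step from \emph{non-constant on $C_i$} to \emph{positive degree}, where one uses that the image is $1$-dimensional so that the pulled-back very ample divisor restricts nontrivially. Neither is deep, but both are where Properties~(2) and~(3) are actually consumed.
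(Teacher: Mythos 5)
Your proposal is correct and takes essentially the same route as the paper: Lemma~\ref{ggEpdE} is stated there as a summary of precisely this Quot-scheme/Grassmannian discussion, with Property~(1) furnishing $f_{(\varphi,\alpha)}$, Property~(2) giving $\tilde{\cal E}_{\varphi}(m^{\prime})\simeq\tilde{\cal E}_{\varphi}$ so that $(\iota\circ f_{(\varphi,\alpha)})^{\ast}(\hat{\cal Q})\simeq{\cal E}$ is presented as a quotient of ${\cal O}_C\otimes H^0(Y,{\cal O}_Y(m))$, and Property~(3) together with the very ampleness of $\determinant\hat{\cal Q}$ yielding positive degree on each irreducible component. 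The two points you isolate as needing care --- that $\Supp(\tilde{\cal E}_{\varphi})$ maps into an Artinian subscheme of $Y$ on which ${\cal O}_Y(m^{\prime})$ is trivial, so the twist acts trivially, and that a non-constant morphism from an irreducible component pulls a very ample class back to positive degree --- are exactly the steps the paper asserts without proof, and your justifications of both are sound.
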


\bigskip

\begin{flushleft}
{\bf When $C$ is a  ${\Bbb P}^1$-tree: positivity of fundamental modules}
\end{flushleft}
\begin{definition}\label{PtPc}
  {\bf [${\Bbb P}^1$-tree and ${\Bbb P}^1$-chain].}  {\rm
  A compact (not necessarily connected) nodal curve $C$ that is simply-connected
    is called a {\it ${\Bbb P}^1$-tree}
  since in this case all its irreducible components are ${\Bbb P}^1$'s
  and the dual graph to $C$ is a tree
  (i.e.\ a simply-connected graph with possibly more-than-one connected components ).
  A ${\Bbb P}^1$-tree that is connected
        and such that each ${\Bbb P}^1$-component intersects with
	    at most two other ${\Bbb P}^1$-components
	is called  a {\it ${\Bbb P}^1$-chain}.
  The number of ${\Bbb P}^1$-components in the chain is called the {\it length} of the chain.
  For a ${\Bbb P}^1$-chain of length $>1$, there are exactly two ${\Bbb P}^1$-components,
   each of which intersects with only one other ${\Bbb P}^1$-component.
  Each is called a {\it ${\Bbb P}^1$ at an end} of the chain.
  Cf.\ {\sc Figure}~2-2-1.
 \begin{figure} [htbp]
  \bigskip
  \centering
  \includegraphics[width=0.80\textwidth]{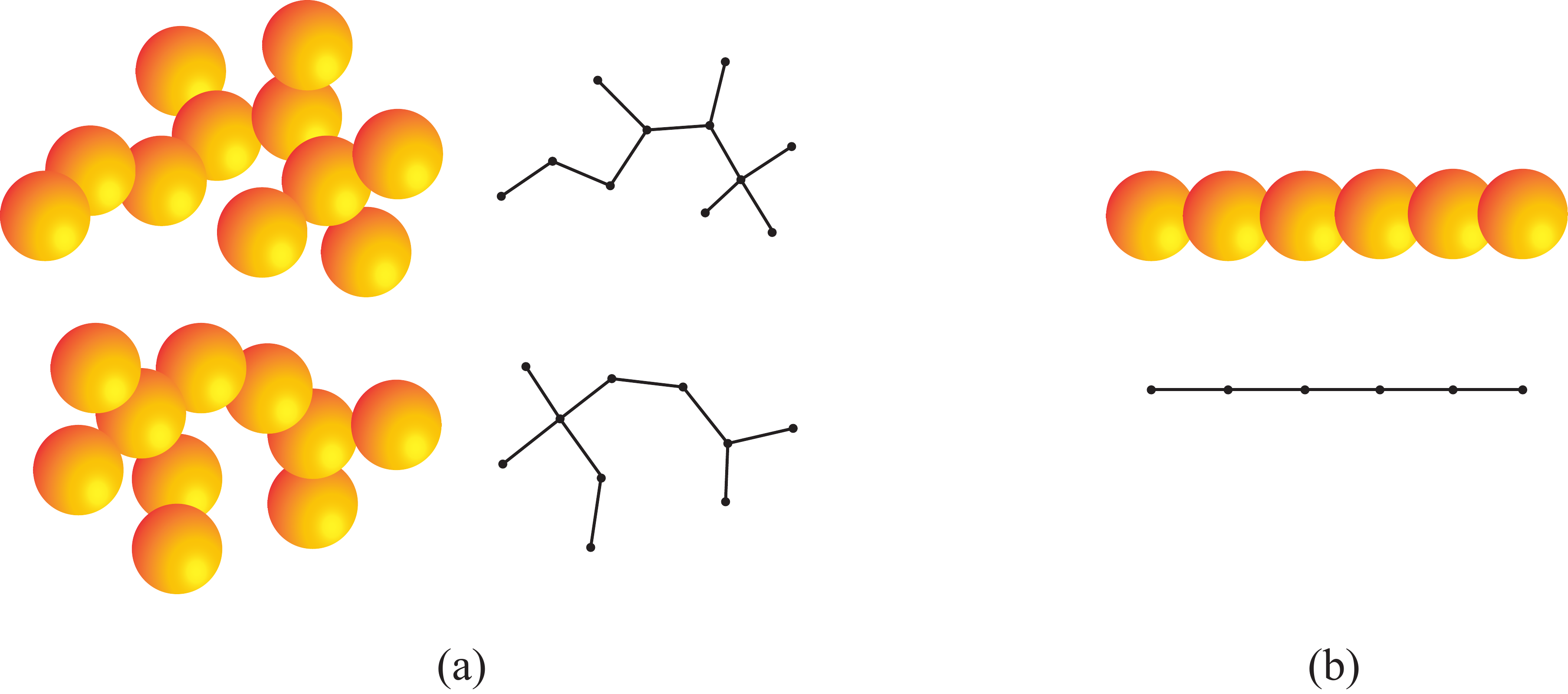}
 
  \vspace{2em}
  \centerline{\parbox{13cm}{\small\baselineskip 12pt
  {\sc Figure}~2-2-1.
    Two examples of ${\Bbb P }^1$-trees and their dual graph are illustrated.
	  In (a), the ${\Bbb P}^1$-tree has two connected components.
	  In (b), a ${\Bbb P }^1$-chain of length $6$ is illustrated.
	Note that bubbling ${\Bbb P}^1$-trees that arise from resolving/absorving irregularities
    	of morphisms from a filling (cf.\ Sec.~4.3) can be complicated. 	
      }}
  \bigskip
\end{figure}	
}\end{definition}

\bigskip

We now consider the case when $C$ is a ${\Bbb P}^1$-tree.
As in the Gromov-Witten theory for world-sheet instantons created by fundamental strings,
 such ${\Bbb P}^1$-trees occur from bubbling off of domains to keep the regularity
 of morphisms in our problem.

Recall the structure theorem of Grothendieck that
 a locally free sheaf of rank $r$ on ${\Bbb P}^1$ must be of the form
 $\oplus_{i=1}^r{\cal O}_{{\Bbb P}^1}(a_i)$.
 
\bigskip

\begin{definition}\label{nnpsp}
 {\bf [nonnegative/positive/strictly-positive locally-free sheaf on ${\Bbb P}^1$-tree].} {\rm
 A locally free sheaf ${\cal E}$ of rank $r$ on a ${\Bbb P }^1$-tree $C$
  is called
   \begin{itemize}
    \item[{\Large $\cdot$}]
    {\it nonnegative}
      if for each ${\Bbb P}^1$-component of $C$,
        ${\cal E}|_{{\Bbb P}^1}\simeq
          \oplus_{i=1}^r{\cal O}_{{\Bbb P}^1}(a_i)$
       for some non-negative integers $0\le a_1\le\,\cdots\,\le a_r $;
	
    \item[{\Large $\cdot$}]	
  {\it positive}	
	 if it is nonnegative  and
	 each connected component of $C$ has at least one ${\Bbb P}^1$-component such that
         ${\cal E}|_{{\Bbb P}^1}\simeq
               \oplus_{i=1}^r{\cal O}_{{\Bbb P}^1}(a_i)$
      for some non-negative integers $0\le a_1\le\,\cdots\,\le a_r $ with $a_r>0$;
	
    \item[{\Large $\cdot$}]	
  {\it strictly positive}
    if for each ${\Bbb P}^1$-component of $C$,
      ${\cal E}|_{{\Bbb P}^1}\simeq
         \oplus_{i=1}^r{\cal O}_{{\Bbb P}^1}(a_i)$
       for some non-negative integers $0\le a_1\le\,\cdots\,\le a_r $ with $a_r>0$.	
 \end{itemize}	
 By definition,
 $\;\; \mbox{\it strictly positive}\;\Rightarrow\; \mbox{\it positive}\; \Rightarrow\;
         \mbox{\it nonnegative}\,$.
}\end{definition}
 
\bigskip

\begin{lemma}\label{csp}
 {\bf [criterion of strict positivity].}
 Let ${\cal E}$ be a locally free sheaf on a ${\Bbb P}^1$-tree $C$.
 Then ${\cal E}$ is strictly positive if and only if
 ${\cal E}$ is globally generated and of positive degree on each irreducible component.
\end{lemma}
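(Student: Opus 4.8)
The plan is to prove the two implications separately, with essentially all the content in the forward direction (strict positivity $\Rightarrow$ globally generated and positive degree); the converse will follow quickly by restricting to a single component and invoking Grothendieck's splitting theorem. For the converse, suppose ${\cal E}$ is globally generated and of positive degree on each irreducible component, and fix a ${\Bbb P}^1$-component $P\subset C$. For every closed point $x\in P$ the evaluation $H^0(C,{\cal E})\to {\cal E}\otimes k(x)$ is surjective and factors through $H^0(P,{\cal E}|_P)$, so ${\cal E}|_P$ is globally generated on $P\simeq {\Bbb P}^1$. Writing ${\cal E}|_P\simeq \oplus_{i=1}^r {\cal O}_{{\Bbb P}^1}(a_i)$, global generation forces every $a_i\ge 0$ (each summand is a quotient of a globally generated sheaf, hence globally generated, hence of nonnegative degree), while $\deg({\cal E}|_P)=\sum_i a_i>0$ together with $a_i\ge 0$ forces $a_r>0$. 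As this holds for every component, ${\cal E}$ is strictly positive.

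The heart of the matter is the forward direction: I would show that a strictly positive ${\cal E}$ is globally generated, after which positivity of the degree on each component is immediate from $\deg({\cal E}|_P)=\sum_i a_i\ge a_r>0$. The essential geometric input is the tree (equivalently, simply-connected / acyclic) structure of $C$, which lets me argue by induction on the number of components by peeling off a leaf. After treating each connected component separately, I may assume $C$ is connected and write $C=C'\cup P$, where $P$ is a leaf component meeting the rest $C'$ in a single node $q$; then $C'$ is again a ${\Bbb P}^1$-tree, ${\cal E}|_{C'}$ is again strictly positive and hence globally generated by the inductive hypothesis, and ${\cal E}|_P\simeq \oplus {\cal O}_{{\Bbb P}^1}(a_i)$ with all $a_i\ge 0$ is globally generated on $P$. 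The mechanism for producing global sections is that a section of ${\cal E}$ on $C$ is exactly a pair of sections on $C'$ and on $P$ whose values agree in the fiber ${\cal E}\otimes k(q)$, combined with the fact that for a nonnegative bundle on ${\Bbb P}^1$ the evaluation at any point is surjective, so a prescribed fiber value on one side of $q$ can always be matched on the other.

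With this in hand I would verify surjectivity of $H^0(C,{\cal E})\to {\cal E}\otimes k(x)$ in three cases. For $x\in C'\setminus\{q\}$, choose by induction a section $s'$ on $C'$ realizing the desired value at $x$, then pick $t$ on $P$ with $t(q)=s'(q)$ and glue. For $x\in P\setminus\{q\}$, choose $t$ on $P$ realizing the value at $x$, then use inductive global generation of ${\cal E}|_{C'}$ (so that evaluation at $q$ is surjective from the $C'$ side) to find $s'$ on $C'$ with $s'(q)=t(q)$ and glue. For $x=q$, match a common fiber value from both sides. This establishes global generation of ${\cal E}$. Cohomologically, the same induction can be packaged through the normalization sequence $0\to {\cal E}\to \nu_\ast\nu^\ast{\cal E}\to \bigoplus_{\text{nodes }p}({\cal E}\otimes k(p))\to 0$, using $H^1(\tilde C,\nu^\ast{\cal E})=0$ which holds because $\nu^\ast{\cal E}$ is nonnegative on each ${\Bbb P}^1$.

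I expect the main obstacle to be the bookkeeping at the nodes in the forward direction: one must ensure that the pointwise positivity on every component propagates \emph{consistently} through the nodes to yield genuine global sections. It is precisely the acyclicity of the dual graph that makes the leaf-peeling induction close up and that rules out the monodromy-type obstruction one would otherwise encounter on a curve whose dual graph contains a cycle; that is where simple-connectedness of $C$ is used in an essential way.
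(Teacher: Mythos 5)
Your proof is correct and follows essentially the same route as the paper: the converse is read off componentwise from Grothendieck's splitting theorem, and the forward direction rests on the two facts the paper uses in Lemma~\ref{h0E} --- that any fiber element of a nonnegative bundle on ${\Bbb P}^1$ extends to a global section, and that simple-connectedness of the dual graph removes all obstructions to matching section values across nodes. Your leaf-peeling induction with the gluing sequence $0\to {\cal E}\to {\cal E}|_{C'}\oplus{\cal E}|_P\to {\cal E}\otimes k(q)\to 0$ is just a more explicit organization of the paper's ``glue one ${\Bbb P}^1$-component at a time'' argument, so the two proofs coincide in substance.
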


\begin{proof}
 That global generatedness and positive degree together imply strict positivity
  follows immediately from the structure theorem of Grothendieck.
 For the converse, we postpone till the end of the next theme
  after the space $H^0(C,{\cal E})$ of global sections of ${\cal E}$ is studied.
  
\end{proof}

\bigskip

%
%

Continuing the discussion from the previous theme.
Thus, as a consequence of Lemma~\ref{ggEpdE} and Lemma~\ref{csp},
  one has the following proposition:
	
\bigskip

\begin{proposition}\label{ncPEasm}
{\bf [necessary condition for $({\Bbb P}^1\mbox{-tree}, {\cal E})$
           to admit special morphism].}	
   Let $C$ be a ${\Bbb P}^1$-tree   and
    $(C,{\cal O}_C^{Az};{\cal E})$ be an Azumaya ${\Bbb P}^1$-tree
        with a fundamental module
     that admits a special morphism $\varphi$ to $Y$
	   whose graph $\tilde{\cal E}_{\varphi}$ satisfies Properties (1), (2), and (3).
  Then ${\cal E}$ must be strictly positive on $C$.
\end{proposition}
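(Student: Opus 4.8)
The plan is to derive the proposition as an immediate consequence of the two lemmas just established, so that the only real task is to verify that their hypotheses hold in the present setting. No new geometric input is needed beyond what is already packaged into Lemma~\ref{ggEpdE} and Lemma~\ref{csp}; the substance of the argument has been done upstream.

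First I would invoke Lemma~\ref{ggEpdE}. By the definition of a special morphism, the graph $\tilde{\cal E}_{\varphi}\in\CohCategory(C\times Y)$ satisfies Properties (1), (2), and (3) --- precisely the hypotheses of that lemma. Hence ${\cal E}={\pr_C}_{\ast}(\tilde{\cal E}_{\varphi})$ is globally generated and of positive degree on each irreducible component of $C$. Second, I would feed this into Lemma~\ref{csp}. Here one uses that ${\cal E}$ is locally free of rank $r$ on $C$ --- forced by the Azumaya structure, since ${\cal O}_C^{Az}=\Endsheaf_{{\cal O}_C}({\cal E})$ is an Azumaya algebra exactly when ${\cal E}$ is locally free --- and that $C$ is a ${\Bbb P}^1$-tree, so that the criterion of strict positivity applies. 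Note that only the direction ``globally generated together with positive degree on each component $\Rightarrow$ strictly positive'' of Lemma~\ref{csp} is needed, and this is exactly the direction already proved from Grothendieck's structure theorem (the converse, whose proof was postponed, is not required here). Applying it to the ${\cal E}$ of the previous step yields that ${\cal E}$ is strictly positive on $C$, which is the assertion.

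The honest remark is that there is no genuine obstacle internal to this proposition: all the weight rests on the preceding two lemmas, namely the embedding $\iota:\Quot_Y({\cal O}_Y^{\,\oplus k},r)\hookrightarrow\Gr_{\Bbb C}(H^0(Y,{\cal O}_Y(m)),r)$, the resulting identification $\determinant{\cal E}\simeq(\iota\circ f_{(\varphi,\alpha)})^{\ast}(\determinant\hat{\cal Q})$ with $\determinant\hat{\cal Q}$ very ample that forces positivity, and the structure-theoretic characterization of strict positivity. The one point deserving explicit care in writing the proof is the bookkeeping that ``special morphism'' unpacks exactly to ``graph satisfies (1)--(3)'' and that the fundamental module ${\cal E}$ is locally free, so that both lemmas may be quoted verbatim; once this is recorded, the proof is a single sentence chaining Lemma~\ref{ggEpdE} into Lemma~\ref{csp}.
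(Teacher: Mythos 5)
Your proposal is correct and takes exactly the paper's route: the paper derives the proposition in one line as ``a consequence of Lemma~\ref{ggEpdE} and Lemma~\ref{csp}'', which is precisely your chain of quoting Lemma~\ref{ggEpdE} for global generation and positive degree on each component and then feeding that into Lemma~\ref{csp}. Your explicit observation that only the direction ``globally generated and positive degree $\Rightarrow$ strictly positive'' of Lemma~\ref{csp} is needed (the direction proved immediately from Grothendieck's structure theorem, not the postponed converse) is accurate and a sound piece of bookkeeping.
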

  
\bigskip

\noindent
This gives us a guide to a part of the stability condition in Sec.~3.1  and
 is the main reason why Definition~\ref{Zssm} there gives rise to  a bounded moduli stack,
 taking into account the preliminary compactness result in Part I [L-Y3]  (D(10.1)) of the work.

\vspace{3em}
\bigskip

\begin{flushleft}
{\bf $H^0(C, {\cal E})$
          and sections of ${\cal E}$ that vanish at specified points on $C$}
\end{flushleft}
Let
  $C$ be a ${\Bbb P}^1$-tree  and
  ${\cal E}$ be a nonnegative locally free sheaf of rank $r$ on $C$.
  
\bigskip

\begin{lemma}\label{h0E}
 {\bf [$h^0({\cal E})$].}
 Let  $\{C_{ij}\simeq {\Bbb P}^1\,:\, i,j\}$ be the set of irreducible components of $C$,
    where
	  $i$ labels the connected components of $C$   and
      $j$ labels the irreducible components in the $i$-th connected component of $C$.
  Suppose that 		
    ${\cal E}|_{C_{ij}}
	      \simeq  \oplus_{k=1}^r{\cal O}_{{\Bbb P}^1}(a_{ijk})$,
	$0\le a_{ij1}\le\, \cdots\, \le a_{ijr}$.	
	Then
   $$
     h^0({\cal E})\;
  	 :=\;   \dimm H^0(C, {\cal E})\;
	  =\;   r\cdot  |\pi_0(C)|\;   +\;   \sum_{i,j,k}a_{ijk}\,,	
   $$
   where $|\pi_0(C)|$ is the number of connected components of $C$.
  In particular,  except the number of connected components of $C$,
  it is independent of how this collection of nonnegative locally free sheaves on ${\Bbb P}^1$'s
   are  glued to give a nonnegative locally free sheaf ${\cal E}$ on a ${\Bbb P}^1$-tree $C\;$
  (i.e., independent of the isomorphism class of the pair $(C,{\cal E})$ from gluing
            except $|\pi_0(C)|$).
\end{lemma}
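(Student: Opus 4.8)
The plan is to reduce to the connected case and then induct on the number of ${\Bbb P}^1$-components. First I would note that $C$ is the disjoint union of its connected components, and $H^0$ sends disjoint unions to direct sums, so
$$
 H^0(C,{\cal E})\; =\; \bigoplus_i H^0(C_i, {\cal E}|_{C_i})\,,
$$
where $C_i$ runs over the connected components. Hence it suffices to prove, for a \emph{connected} ${\Bbb P}^1$-tree $C$, the formula $h^0({\cal E}) = r + \sum_{j,k}a_{jk}$; summing this over the $|\pi_0(C)|$ connected components then produces the stated formula, and in particular exhibits the dependence on the gluing data only through $|\pi_0(C)|$.

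Next I would induct on the number $n$ of ${\Bbb P}^1$-components of a connected tree $C$. For the base case $n=1$, $C={\Bbb P}^1$ and ${\cal E}\simeq \oplus_{k=1}^r{\cal O}_{{\Bbb P}^1}(a_k)$ with $a_k\ge 0$, so $h^0({\cal E}) = \sum_k(a_k+1) = r + \sum_k a_k$. For the inductive step, since the dual graph is a tree with $n\ge 2$ vertices it has a leaf: a component $C^{\prime}\simeq{\Bbb P}^1$ meeting the rest $C^{\prime\prime}$ (a connected ${\Bbb P}^1$-tree with $n-1$ components) in a single node $p$. The partial normalization at $p$, tensored with the locally free sheaf ${\cal E}$, yields the short exact sequence of ${\cal O}_C$-modules
$$
 0\;\longrightarrow\; {\cal E}\;\longrightarrow\;
   {\cal E}|_{C^{\prime}}\oplus {\cal E}|_{C^{\prime\prime}}\;
   \longrightarrow\; {\cal E}_p\;\longrightarrow\; 0\,,
$$
where ${\cal E}_p$ is the fiber of ${\cal E}$ at $p$ (an $r$-dimensional vector space) and the second map records the difference of the two values at $p$.

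The key point, and the one place where nonnegativity of ${\cal E}$ enters, is that ${\cal E}|_{C^{\prime}}$ is globally generated (being a sum of ${\cal O}_{{\Bbb P}^1}(a_k)$ with $a_k\ge 0$), so the evaluation $H^0({\cal E}|_{C^{\prime}})\to{\cal E}_p$ at the single point $p$ is already surjective. Thus the global-section map $H^0({\cal E}|_{C^{\prime}})\oplus H^0({\cal E}|_{C^{\prime\prime}})\to{\cal E}_p$ is surjective, the connecting homomorphism ${\cal E}_p\to H^1({\cal E})$ vanishes, and the long exact sequence truncates to
$$
 0\;\longrightarrow\; H^0({\cal E})\;\longrightarrow\;
   H^0({\cal E}|_{C^{\prime}})\oplus H^0({\cal E}|_{C^{\prime\prime}})\;
   \longrightarrow\; {\cal E}_p\;\longrightarrow\; 0\,.
$$
Counting dimensions, using the base case on $C^{\prime}$ and the induction hypothesis on $C^{\prime\prime}$, gives
$$
 h^0({\cal E}) = \Bigl(r+\textstyle\sum_{C^{\prime}}a\Bigr)
   + \Bigl(r+\textstyle\sum_{C^{\prime\prime}}a\Bigr) - r
   = r + \textstyle\sum_{j,k}a_{jk}\,,
$$
since the $r$ subtracted at each of the $n-1$ nodes exactly cancels the extra $r$ contributed by each of the $n$ components beyond the first. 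The same truncation simultaneously shows $H^1({\cal E})=0$; alternatively one could first establish $H^1({\cal E})=0$ by this induction and then read off $h^0$ from $\chi({\cal E})=\deg{\cal E}+r=\sum_{j,k}a_{jk}+r$, a connected ${\Bbb P}^1$-tree having arithmetic genus $0$.

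I expect the main (and essentially the only) obstacle to be the surjectivity of the evaluation map at the gluing node. This is exactly where the hypothesis $a_k\ge 0$ is indispensable and is precisely what makes $h^0({\cal E})$ blind to the gluing beyond $|\pi_0(C)|$: without nonnegativity the connecting map ${\cal E}_p\to H^1$ need not vanish, and $h^0$ would then depend on how the summands on the ${\Bbb P}^1$'s are identified at the nodes.
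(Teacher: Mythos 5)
Your proof is correct and takes essentially the same route as the paper's: both arguments rest on the single fact that a nonnegative locally free sheaf on ${\Bbb P}^1$ is globally generated, so that any prescribed fiber value at a node extends to a global section, and both count dimensions by attaching one ${\Bbb P}^1$-component of the tree at a time. Your leaf-induction via the partial-normalization sequence $0\to{\cal E}\to{\cal E}|_{C^{\prime}}\oplus{\cal E}|_{C^{\prime\prime}}\to{\cal E}_p\to 0$ is just a rigorous cohomological packaging of the paper's informal extension-of-sections count (with the small bonus that it also records $H^1(C,{\cal E})=0$ and the Euler-characteristic shortcut, which the paper's proof does not state).
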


\begin{proof}
 Recall that $h^0({\cal O}_{{\Bbb P}^1}(a))=1+a$ for $a\ge 0$.
 As a consequence of the structure theorem of Grothendieck,
  any element in a fiber of a nonnegative locally free sheaf
   ${\cal F}\simeq \oplus_{k=1}^r{\cal O}(a_k)$   on ${\Bbb P}^1$
  extends to a global section of ${\cal F}$.
 The dimension of the space of such extensions is given by the sum $\sum_{k=1}^ra_k$.
    
 The fact that a tree is simply connected implies that there is no constraint on extending sections
   to over a new ${\Bbb P}^1$-component
  when building a nonnegative locally free sheaf on $C$
   by gluing a nonnegative locally free sheaf on ${\Bbb P}^1$ one at a time
   through an isomorphism between paired fibers.
  For each connected component,  the extension-of-global-sections is indifferent
    to the isomorphism class of the whole sheaf from gluing as well.
  It follows that, for the $i$-connected component of $C$,
   the dimension $h^0$ of the space of global sections is given by $r+\sum_{j,k}a_{ijk}$.
 The lemma follows.

\end{proof}
 
\bigskip

\begin{remark}\label{cstcpt}
 {$[$constrained sections and torsions under collapsing ${\Bbb P}^1$-tree$]$.} {\rm
 Let $A\subset C$ be a finite set of $a$-many distinct points on the ${\Bbb P}^1$-tree $C$.
 Then, the linear space
  $$
    \{s\in H^0(C,{\cal E})\,|\,
	     \mbox{$s$ vanishes at all the points in $A\subset C$ }\}
  $$
  has dimension $\ge h^0({\cal E})-ra= r(|\pi_0(C)|  - a)+\sum_{i,j,k}a_{ijk}$.
 When $A$ is the set of attached points of a ${\Bbb P}^1$-tree to the rest of a nodal curve,
 such sections generate torsions when pushing forward a locally free sheaf on the nodal curve
  by a collapsing morphism that contracts the ${\Bbb P}^1$-tree.
}\end{remark}	

\bigskip

\noindent
{\it Finishing the proof of Lemma~\ref{csp}.}
  Let
    $C$ be a ${\Bbb P}^1$-tree  and
	${\cal E}$ be a strictly positive locally-free sheaf on $C$.
   It's clear from the definition of strict positivity of ${\cal E}$ that $\determinant{\cal E}$
   has positive degree on each irreducible component of $C$.
  Furthermore, as in the proof of Lemma~\ref{h0E},
   any element in a fiber of ${\cal E}$ extends to a global section of ${\cal E}$.
  It follows that the natural homomorphism of ${\cal O}_C$-modules
    $$
      {\cal O}_C\otimes H^0(C,{\cal E})\;\longrightarrow\; {\cal E}
    $$	
	is surjective.
  This shows the other direction of the lemma. 	

\noindent\hspace{40.76em}$\square$

\bigskip

\subsection{Remarks on nonnegative torsion-free sheaves on a ${\Bbb P}^1$-tree.}

As our main interest is semistable morphisms from Azumaya  nodal curves
  with a fundamental module, the objects in Sec.~2.2 are well anticipated.
However, somewhere along the path toward our goal
   we need also to understand the basics of nonnegative torsion-free coherent sheaves
   on a ${\Bbb P}^1$-tree.
We devote this subsection to this, with details that are similar to  Sec.~2.2 omitted.

\bigskip

\begin{convention}\label{csrr}
{$[$coherent sheaf of rank $r$$]$.} {\rm
By convention, a {\it (coherent) sheaf of rank $r$} on a curve $C$ means
 a coherent ${\cal O}_C$-module that has rank $r$ on every irreducible component of $C$.
}\end{convention}

\bigskip

\begin{flushleft}
{\bf Discrepancy to flatness and positive sheaves on a ${\Bbb P}^1$-tree}
\end{flushleft}
\begin{definition}\label{dtf}
 {\bf [discrepancy to flatness]. } {\rm
 Let ${\cal F}$ be a torsion-free sheaf of rank $r$ on a nodal curve $C$.
 For $p\in C$, define the {\it discrepancy to flatness of ${\cal F}$ at $p$} to be
  $$
   \delta_{\flatscriptsize}({\cal F};p)\;
     :=\;   \dimm({\cal F}|_p) \, -\, r\,.
  $$
 Note that ${\cal F}$ is flat at smooth points of $C$.
 Thus, $\delta_{\flatscriptsize}({\cal F};\,\bullet\,)=0$  except possibly at nodes of $C$
   and
  ${\cal F}$ is flat
      if and only of $\delta_{\flatscriptsize}({\cal F};\,\bullet\,)$ is identically $0$.
 Also, it follows from [Se1: Chapter~8] that
  $0\; \le \delta_{\flatscriptsize}({\cal F};p)\le r$ for all $p$.	
 Thus, one may define the {\it discrepancy to flatness of ${\cal F}$ on $C$} to be
  $$
   \delta_{\flatscriptsize}({\cal F})\;
     :=\;  \sum_{p\in C} \delta_{\flatscriptsize}({\cal F};p)\,.
  $$
} \end{definition}

\bigskip

It's clear that
  a torsion-free sheaf ${\cal F}$ of a fixed rank on $C$ is locally free
  if and only if $\delta_{\flatscriptsize}({\cal F})=0$.
This justifies the name.
  
\bigskip
 
\begin{definition}\label{nnpsptfsPt}
 {\bf [nonnegative/positive/strictly-positive torsion-free sheaf on ${\Bbb P}^1$-tree].}
{\rm {(Cf.\ Definition~\ref{nnpsp}.)}
 A torsion-free sheaf ${\cal F}$ of rank $r$ on a ${\Bbb P }^1$-tree $C$
  is called
  \begin{itemize}
   \item[{\Large $\cdot$}]
  {\it nonnegative}
    if for each ${\Bbb P}^1$-component of $C$,
    $$
      ({\cal F}|_{{\Bbb P}^1})_{\torsionfreescriptsize}\;
	    :=\;   {\cal F}|_{{\Bbb P}^1}/
	                ({\cal F}|_{{\Bbb P}^1})_{\torsionscriptsize}\;
        \simeq\;     \oplus_{i=1}^r{\cal O}_{{\Bbb P}^1}(a_i)
    $$
     for some non-negative integers $0\le a_1\le\,\cdots\,\le a_r $,
	 where
    	 $({\cal F}|_{{\Bbb P}^1})_{\torsionscriptsize}$
		 is the torsion subsheaf of ${\cal F}|_{{\Bbb P}^1}$ involved;
	
    \item[{\Large $\cdot$}]
   {\it positive}
    if it is nonnegative  and
	each connected component of $C$ has at least one ${\Bbb P}^1$-component such that
    $({\cal F}|_{{\Bbb P}^1})_{\torsionfreescriptsize}
        \simeq     \oplus_{i=1}^r{\cal O}_{{\Bbb P}^1}(a_i)$
     for some non-negative integers $0\le a_1\le\,\cdots\,\le a_r $ with $a_r>0$;
	
   \item[{\Large $\cdot$}]
  {\it strictly positive}
    if for each ${\Bbb P}^1$-component of $C$,
    $({\cal F}|_{{\Bbb P}^1})_{\torsionfreescriptsize}
        \simeq     \oplus_{i=1}^r{\cal O}_{{\Bbb P}^1}(a_i)$
     for some non-negative integers $0\le a_1\le\,\cdots\,\le a_r $ with $a_r>0$.	
  \end{itemize}	
  By definition,
    $\;\; \mbox{\it strictly positive}\;\Rightarrow\; \mbox{\it positive}\; \Rightarrow\;
           \mbox{\it nonnegative}\,$. 	
}\end{definition}

\bigskip

\begin{definition}\label{wgCF}
 {\bf [weighted graph associated to $(C,{\cal F})$].} {\rm
 Let
  ${\cal F}$ be a torsion-free sheaf on a ${\Bbb P}^1$-tree curve $C$  and
  $\{C_{ij}\simeq {\Bbb P}^1\,:\, i,j\}$ be the set of irreducible components of $C$,
    where
	  $i$ labels the connected components of $C$   and
      $j$ labels the irreducible components in the $i$-th connected component of $C$.
  Suppose that 		
    $({\cal F}|_{C_{ij}})_{\torsionfreescriptsize}
	      \simeq  \oplus_{k=1}^r{\cal O}_{{\Bbb P}^1}(a_{ijk})$,
	$a_{ij1}\le\, \cdots\, \le a_{ijr}$.
  Define the {\it weighted graph $\Gamma_{(C,{\cal F})}$
    associated to $(C,{\cal F})$} to be the (unoriented) graph
	that associates
      to each $C_{ij}$  a vertex $v_{ij}$,
	     with weight $\{a_{ij1},\,\ldots\,a_{ijr}\}$,      and	
	  to each node $p$, say connecting $C_{ij}$ and $C_{ij^{\prime}}$,	
       an edge $e_{i,jj^{\prime}}$ connecting $v_{ij}$ and $v_{ij^{\prime}}$,
	     with weight $\delta_{\flatscriptsize}({\cal F};p)$.
  (By convention, $e_{i,jj^{\prime}}=e_{i,j^{\prime}j}$.)		
}\end{definition}

\bigskip

\noindent
Note that, forgetting the weights,
  $\Gamma_{(C,{\cal F})}$ is simply the dual graph/tree to $C$.

\bigskip

\begin{definition}\label{degree}
 {\bf [degree].} {\rm
 Continuing the setting in Definition~\ref{wgCF}.
  The {\it degree} $\degree({\cal F})$   of a torsion-free sheaf ${\cal F}$
     on a ${\Bbb P}^1$-tree curve $C$ is defined to be $\sum_{i,j,k}a_{ijk}$.
}\end{definition}
  
\bigskip

\begin{lemma}\label{h0F-2}
 {\bf [$h^0({\cal F})$].}
  {\rm (Cf.\ Lemma~\ref{h0E}.)}
 Continuing the setting in Definition~\ref{wgCF}
  with the additional assumption that ${\cal F}$ is nonnegative.
 Then
   $$
     h^0({\cal F})\;
  	 :=\;   \dimm H^0(C, {\cal F})\;
	  =\;   r\cdot  |\pi_0(C)|\;   +\;   \degree({\cal F})\;
	         +\; \delta_{\flatscriptsize}({\cal F})\,.
   $$
   where $|\pi_0(C)|$ is the number of connected components of $C$.
  Again, except the data already encoded in the weighted graph $\Gamma_{(C,{\cal F})}$
    associated to $(C,{\cal F})$,
   $h^0({\cal F})$ is independent of the isomorphism class of the pair $(C,{\cal F})$.
\end{lemma}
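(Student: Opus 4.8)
The plan is to reduce the torsion-free case to the locally free case treated in Lemma~\ref{h0E} by peeling off one ${\Bbb P}^1$-component at a time and bookkeeping the change in each of the four quantities $h^0$, $\degree$, $\delta_{\flatscriptsize}$ and $|\pi_0|$ across the single node that gets separated. Since all four are additive over the connected components of $C$, I may assume $C$ is connected, so $|\pi_0(C)|=1$, and induct on the number $N$ of irreducible components. For $N=1$ one has $C\simeq{\Bbb P}^1$, on which ${\cal F}$ is automatically locally free, $\delta_{\flatscriptsize}({\cal F})=0$, and the claim is the elementary identity $h^0(\oplus_{k=1}^r{\cal O}_{{\Bbb P}^1}(a_k))=\sum_k(a_k+1)=r+\degree({\cal F})$.

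For the inductive step, choose a leaf of the tree $C$: an irreducible component $C^{\prime}\simeq{\Bbb P}^1$ meeting the closure $C^{\prime\prime}$ of its complement in a single node $p$ (a tree with $\ge 2$ vertices always has such a leaf). Let $\nu_p:C^{\prime\prime}\sqcup C^{\prime}\to C$ be the partial normalization separating only $p$, put ${\cal G}^{\prime}:=({\cal F}|_{C^{\prime}})_{\torsionfreescriptsize}\simeq\oplus_k{\cal O}_{{\Bbb P}^1}(a_{C^{\prime}k})$ and ${\cal F}^{\prime\prime}:=(\nu_p^{\ast}{\cal F})_{\torsionfreescriptsize}|_{C^{\prime\prime}}$. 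Since ${\cal F}$ is torsion-free, the adjunction map gives a short exact sequence of ${\cal O}_C$-modules
$$0\;\longrightarrow\;{\cal F}\;\longrightarrow\;\nu_{p\ast}\big((\nu_p^{\ast}{\cal F})_{\torsionfreescriptsize}\big)\;\longrightarrow\;{\cal T}_p\;\longrightarrow\;0\,,$$
with ${\cal T}_p$ a skyscraper supported at $p$. The key local input is the classification of torsion-free modules over the local ring $R={\cal O}_{C,p}$ of a node (recalled in [Se1: Chapter~8]): ${\cal F}_p\simeq R^{\oplus a}\oplus\mathfrak{m}^{\oplus b}$ with $a+b=r$ and $b=\delta_{\flatscriptsize}({\cal F};p)$, where $\mathfrak{m}\subset R$ is the indecomposable non-free rank-one summand. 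Normalizing and killing torsion sends each free summand $R$ to $\widetilde{R}=\nu_{p\ast}{\cal O}_{\widetilde{C}}$, whose cokernel $\widetilde{R}/R\simeq k$ has length $1$, while it sends each $\mathfrak{m}$ isomorphically onto $\widetilde{R}$ (the adjunction map $\mathfrak{m}\to\nu_{p\ast}(\nu_p^{\ast}\mathfrak{m})_{\torsionfreescriptsize}$ is an isomorphism, because $\mathfrak{m}$ is already of the non-free rank-one type), contributing $0$. Hence $\length({\cal T}_p)=a=r-\delta_{\flatscriptsize}({\cal F};p)$.

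Taking the long exact cohomology sequence and noting $H^0\big(\nu_{p\ast}((\nu_p^{\ast}{\cal F})_{\torsionfreescriptsize})\big)=H^0(C^{\prime\prime},{\cal F}^{\prime\prime})\oplus H^0(C^{\prime},{\cal G}^{\prime})$, the only point to check is that the induced map to $H^0({\cal T}_p)$ is surjective. This is immediate from nonnegativity: ${\cal G}^{\prime}$ is globally generated on $C^{\prime}\simeq{\Bbb P}^1$, so $H^0(C^{\prime},{\cal G}^{\prime})$ already surjects onto the fiber ${\cal G}^{\prime}|_p$, hence onto its quotient ${\cal T}_p$. Therefore $h^0({\cal F})=h^0(C^{\prime\prime},{\cal F}^{\prime\prime})+h^0(C^{\prime},{\cal G}^{\prime})-\big(r-\delta_{\flatscriptsize}({\cal F};p)\big)$. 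Now ${\cal F}^{\prime\prime}$ is again nonnegative torsion-free of rank $r$ on the tree $C^{\prime\prime}$ with $|\pi_0(C^{\prime\prime})|=1$, $\degree({\cal F}^{\prime\prime})=\degree({\cal F})-\degree({\cal G}^{\prime})$ and $\delta_{\flatscriptsize}({\cal F}^{\prime\prime})=\delta_{\flatscriptsize}({\cal F})-\delta_{\flatscriptsize}({\cal F};p)$, so the induction hypothesis applies to it; together with $h^0(C^{\prime},{\cal G}^{\prime})=r+\degree({\cal G}^{\prime})$, the terms $\degree({\cal G}^{\prime})$, $\delta_{\flatscriptsize}({\cal F};p)$ and the extra $r$ all cancel, yielding $h^0({\cal F})=r+\degree({\cal F})+\delta_{\flatscriptsize}({\cal F})=r\,|\pi_0(C)|+\degree({\cal F})+\delta_{\flatscriptsize}({\cal F})$.

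The main obstacle is the local length computation at the separated node: one must invoke the node-local structure theorem to write ${\cal F}_p\simeq R^{\oplus a}\oplus\mathfrak{m}^{\oplus b}$ with $b=\delta_{\flatscriptsize}({\cal F};p)$ and verify that the non-free summands $\mathfrak{m}$ contribute nothing to $\length({\cal T}_p)$ — this is precisely the mechanism by which $\delta_{\flatscriptsize}({\cal F})$, rather than a fixed multiple of the number of nodes, enters the formula — together with the routine check that ${\cal F}^{\prime\prime}$ inherits nonnegativity and the stated invariants under the partial normalization.
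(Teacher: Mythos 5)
Your proof is correct, and it reaches the formula by a genuinely different mechanism than the paper's, even though both are tree inductions resting on the same local input, Seshadri's classification [Se1: Chapter~8]. The paper makes the sheaf itself explicit: it reconstructs ${\cal F}$ from the branch restrictions $({\cal F}|_{C_{ij}})_{\torsionfreescriptsize}$ via gluing data at each node (a pair of codimension-$\delta$ subspaces $H_{i,jj^{\prime}}$, $H_{i,j^{\prime}j}$ of the fibers, projections $\pi_{\bullet}$, and an isomorphism $h_{\bullet}$ between them), then identifies $H^0(C,{\cal F})$ with tuples of sections $(s_{ij})$ subject to the matching condition $(h_{i,jj^{\prime}}\circ\pi_{i,jj^{\prime}})(s_{ij}(p_{i,jj^{\prime}}))=\pi_{i,j^{\prime}j}(s_{ij^{\prime}}(p_{i,j^{\prime}j}))$; since matching is imposed in a space of dimension $r-\delta_{\flatscriptsize}({\cal F};p)$ rather than $r$, each node contributes $\delta_{\flatscriptsize}({\cal F};p)$ extra dimensions relative to the locally free count of Lemma~\ref{h0E}. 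You instead peel a leaf and run the partial-normalization sequence $0\to{\cal F}\to\nu_{p\ast}\bigl((\nu_p^{\ast}{\cal F})_{\torsionfreescriptsize}\bigr)\to{\cal T}_p\to 0$, reducing the whole correction to the single length computation $\length({\cal T}_p)=r-\delta_{\flatscriptsize}({\cal F};p)$ — which is the same local fact, ${\cal F}_p\simeq R^{\oplus a}\oplus\mathfrak{m}^{\oplus b}$ with $b=\delta_{\flatscriptsize}({\cal F};p)$ and $\mathfrak{m}\simeq\nu_{\ast}{\cal O}$ of the two branches, repackaged as an Euler-characteristic defect of a skyscraper — plus one surjectivity check onto $H^0({\cal T}_p)$ supplied by global generation of the nonnegative ${\cal G}^{\prime}$ on the leaf ${\Bbb P}^1$. (That check is fine, though you should spell out why ${\cal T}_p$ is a quotient of the fiber ${\cal G}^{\prime}|_{p^{\prime}}$: sections vanishing at $p^{\prime}$ die in $\tilde{R}/R$ on the free summands, and the $\mathfrak{m}$-summands contribute zero — both again immediate from the local decomposition.) What the paper's route buys is the explicit description of $H^0(C,{\cal F})$ as a matching locus, which it reuses downstream, e.g.\ in the proof of Lemma~\ref{csptf} and in the torsion/flatness bookkeeping of Proposition~\ref{ddwbfTt}; what yours buys is brevity and locality — the tree hypothesis enters only through the existence of a leaf and the vanishing of the relevant connecting map, nonnegativity is used at exactly one evaluation step, and the cancellation of $\degree({\cal G}^{\prime})$, $\delta_{\flatscriptsize}({\cal F};p)$ and $r$ in the induction is mechanical. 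The independence assertion in the statement also falls out of your argument, since the resulting formula depends only on the data recorded in $\Gamma_{(C,{\cal F})}$.
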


\begin{proof}
 It follows from the classification of the germs of a torsion-free sheaf at a node of a nodal curve,
  given by C.S.~Seshadri in [Se1: Chapter 8],
 that the torsion-free sheaf ${\cal F}$  is reconstructible from
  $({\cal F}|_{C_{ij}})_{\torsionfreescriptsize}$
  as follows:
  \begin{itemize}
   \item[{\Large $\cdot$}]
    Let $e$ be an edge in $\Gamma_{(C,{\cal F})}$
 	of weight $\delta_{i,jj^{\prime}}$   and
	connecting vertices $v_{ij}$ and $v_{ij^{\prime}}$.
	
   \item[{\Large $\cdot$}]
    At the level of curves,
	 $e$ means there are a pair of points,
	 $p_{i,jj^{\prime}}\in C_{ij}$ and  $p_{i,j^{\prime}j}\in C_{ij^{\prime}}$,
     which are glued to give a node $p$ in $C$.

   \item[{\Large $\cdot$}]
    At the level of sheaves, 	
	 $e$ means there are a pair of codimension-$\delta$ subspaces,
	   $$
	      H_{i,jj^{\prime}}\subset
	       ({\cal F}|_{C_{ij}})_{\torsionfreescriptsize}
		                                                                                           |_{p_{i,jj^{\prime}}}
		 \hspace{2em}\mbox{and}\hspace{2em}
	      H_{i,j^{\prime}j}\subset
	       ({\cal F}|_{C_{ij^{\prime}}})_{\torsionfreescriptsize}
		                                                                                           |_{p_{i,j^{\prime}j}}\,,
	   $$
      together with
        projection maps
		$$
		  \pi_{i,jj^{\prime}}:
		       ({\cal F}|_{C_{ij}})_{\torsionfreescriptsize}
		                                                                                           |_{p_{i,jj^{\prime}}}
            \longrightarrow     H_{i,jj^{\prime}}
		   	 \hspace{1em}\mbox{and}\hspace{1em}
	      \pi_{i,j^{\prime}j}:
		       ({\cal F}|_{C_{ij}})_{\torsionfreescriptsize}
		                                                                                           |_{p_{i,j^{\prime}j}}
            \longrightarrow     H_{i,j^{\prime}j}\,,
		$$
        and an isomorphism
		$$
		   h_{i,jj^{\prime}}\; :\;
		        H_{i,jj^{\prime}}\;\longrightarrow\; H_{i,j^{\prime}j}\,.				
		$$
	
    \item[{\Large $\cdot$}]
	 The data
	    $$
		      \{(H_{i,jj^{\prime}},\, \pi_{i,jj^{\prime}}\,;\,
                                  H_{i,j^{\prime}j},\, \pi_{i,j^{\prime}j}\,;\,
          							h_{i,jj^{\prime}})\}
	                           _{\{ij, ij^{\prime}\}\,
	                                                \mbox{\scriptsize corresponds to an edge
                													in $\Gamma_{C,{\cal F}}$}}
		$$
      recovers the torsion-free sheaf ${\cal F}$ on $C$
	   by gluing through the isomorphism $h_{\bullet}$
	   of codimension-$\delta_{\bullet}$ subspaces
        in the paired fibers of $({\cal F}|_{C_{ij}})_{\torsionfreescriptsize}$'s
        at paired points in $C_{ij}$'s
	 with a sheaf-of-local-sections structure through those
	    of  $({\cal F}|_{C_{ij}})_{\torsionfreescriptsize}$'s 	
		and the projection maps $\pi_{\bullet}$.	
	 In particular, it specifies the ${\Bbb C}$-vector space structure on the fiber
        at the node $p$ after gluing as the fibered product
		$$
		 \begin{array}{l}
		   ({\cal F}|_{C_{ij}})_{\torsionfreescriptsize}
		                                                                                           |_{p_{i,jj^{\prime}}}\;\;
			  _{\raisebox{-.8ex}
			                            {\scriptsize $\pi_{i,jj^{\prime}}, h_{i,jj^{\prime}}$}}\!\!
			  \oplus_{\raisebox{-.8ex}{\scriptsize $\pi_{i,jj^{\prime}}$}}
			 ({\cal F}|_{C_{ij}})_{\torsionfreescriptsize}
		                                                                                           |_{p_{i,j^{\prime}j}}\\[2ex]	
		 \hspace{1em}
		 :=\; \mbox{\footnotesize $
		         \Ker\left(
		       (-\,h_{i,jj^{\prime}}\circ \pi_{i,jj^{\prime}}\,,\,
                         \pi_{i,j^{\prime}j})\;:\;
		        ({\cal F}|_{C_{ij}})_{\mbox{\tiny\it torsion-free}}
		                                                                                    |_{p_{i,jj^{\prime}}}	
			      \oplus
			    ({\cal F}|_{C_{ij}})_{\mbox{\tiny\it torsion-free}}
		                                                                                    |_{p_{i,j^{\prime}j}}\;
				   \longrightarrow\;  H_{i,j^{\prime}j}		               		
		               \right)\,.$}
		\end{array}
	   $$
  \end{itemize}
 In terms of this gluing data,
  $$
    H^0(C,{\cal F})\; \simeq\;
	 \left\{
	   (s_{ij})_{ij}\;    \left|\;
	        \begin{array}{cl}
			 & \hspace{-1em}\mbox{\Large $\cdot$ }
			      s_{ij}\; \in\;
  				  H^0(C_{ij},({\cal F}|_{C_{ij}})_{\torsionfreescriptsize}) \\[1.2ex]
			 & \hspace{-1em}\mbox{\Large $\cdot$}\;
                 (h_{i,jj^{\prime}}\circ \pi_{i,jj^{\prime}})
				              (s_{ij}(p_{i,jj^{\prime}}))\;
				    =\; \pi_{i,j^{\prime}j}(s_{ij^{\prime}}(p_{i,j^{\prime}j}))\\[.8ex]
             & \mbox{for $\{ij, ij^{\prime}\}$ corresponding to an edge
                													                 in $\Gamma_{(C,{\cal F})}$}					
	        \end{array}
	                                            \right.
	  \right\}\,.
  $$
 Compared with the special case
   when ${\cal F}$ is locally free (i.e.\ $\delta_{\flatscriptsize}({\cal F})=0$)
   and the procedure of extending an existing global setion to cross a node to over another irreducible
   component of $C$, as studied in Sec.~2.2,
  here the same goes through except that each node $p\in C$ now contributes
     an additional $\delta_{\flatscriptsize}({\cal F},p) $-many dimensions
	 to $H^0(C,{\cal F})$.
 The lemma follows.
 
\end{proof}

\bigskip

\begin{lemma}\label{csptf}
 {\bf [criterion of strict positivity].}
 {\rm (Cf.\ Lemma~\ref{csp}.)}
 Let ${\cal F}$ be a torsion-free sheaf on a ${\Bbb P}^1$-tree $C$.
 Then ${\cal F}$ is strictly positive
   if and only if
    ${\cal F}$ is globally generated   and
    the locally-free quotient $({\cal F}|_{{\Bbb P}^1})_{\torsionfreescriptsize}$
  	of its restriction to each irreducible component ${\Bbb P}^1$ has positive degree.
\end{lemma}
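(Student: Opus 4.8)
The plan is to prove the two implications separately, modeling the argument on the locally free case of Lemma~\ref{csp} but systematically replacing each restriction ${\cal F}|_{{\Bbb P}^1}$ by its locally free quotient $({\cal F}|_{{\Bbb P}^1})_{\torsionfreescriptsize}$ and invoking the gluing description of ${\cal F}$ and of $H^0(C,{\cal F})$ supplied by Lemma~\ref{h0F-2}. As in the locally free case, the degree condition will be essentially immediate and the whole content of the statement will sit in the global-generation half.

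For the direction in which global generation and positivity of the quotient degrees are assumed, suppose ${\cal F}$ is globally generated and $({\cal F}|_{{\Bbb P}^1})_{\torsionfreescriptsize}$ has positive degree on each component. Restricting the surjection ${\cal O}_C\otimes H^0(C,{\cal F})\to {\cal F}$ to a component, i.e.\ applying $\otimes_{{\cal O}_C}{\cal O}_{{\Bbb P}^1}$ (right exact) and then passing to the torsion-free quotient (again surjective), yields a surjection ${\cal O}_{{\Bbb P}^1}\otimes H^0(C,{\cal F})\to ({\cal F}|_{{\Bbb P}^1})_{\torsionfreescriptsize}$. Hence $({\cal F}|_{{\Bbb P}^1})_{\torsionfreescriptsize}\simeq\oplus_{i=1}^r{\cal O}_{{\Bbb P}^1}(a_i)$ is a globally generated bundle on ${\Bbb P}^1$, forcing all $a_i\ge 0$; positivity of the degree $\sum_i a_i$ then forces $a_r>0$ on each component, which is exactly strict positivity in the sense of Definition~\ref{nnpsptfsPt}.

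For the converse, assume ${\cal F}$ is strictly positive. Positivity of $\degree(({\cal F}|_{{\Bbb P}^1})_{\torsionfreescriptsize})=\sum_i a_i$ is immediate from $a_r>0$ and $a_i\ge 0$, and nonnegativity is built into the definition, so the only thing to establish is that ${\cal F}$ is globally generated. I would check surjectivity of ${\cal O}_C\otimes H^0(C,{\cal F})\to {\cal F}$ fiberwise (Nakayama), distinguishing smooth points from nodes. At a smooth point of a component $C_{ij}$ the fiber of ${\cal F}$ equals the fiber of the nonnegative bundle $({\cal F}|_{C_{ij}})_{\torsionfreescriptsize}$, any element of which extends to a section over $C_{ij}\simeq{\Bbb P}^1$; since $C$ is a tree, this section propagates across each successive node—at each step one need only prescribe, on a globally generated component quotient, a value compatible with the gluing at the new node, which is possible—so the chosen fiber value is realized by a global section of ${\cal F}$.

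The main work, and the step I expect to be the genuine obstacle, is global generation at a node $p$ joining $C_{ij}$ and $C_{ij'}$, where by Lemma~\ref{h0F-2} the fiber ${\cal F}|_p$ is the fibered product of $({\cal F}|_{C_{ij}})_{\torsionfreescriptsize}|_{p_{i,jj'}}$ and $({\cal F}|_{C_{ij'}})_{\torsionfreescriptsize}|_{p_{i,j'j}}$ over $H_{i,j'j}$. Given an element $(v,w)$ of this fibered product, I would use the nonnegativity (hence global generation) of each component's torsion-free quotient to choose sections $s_{ij},s_{ij'}$ attaining $v$ and $w$ on the two branches of $p$; the defining relation $h_{i,jj'}(\pi_{i,jj'}(v))=\pi_{i,j'j}(w)$ of the fibered product is precisely the gluing compatibility at $p$, so the pair glues across $p$, and simple-connectedness of $C$ again lets me extend over the remaining components without obstruction. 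This shows $H^0(C,{\cal F})\to {\cal F}|_p$ is surjective at nodes as well, completing global generation and hence the lemma. The only delicate bookkeeping is accounting for the several nodes lying on a single component when extending, but the tree structure guarantees that these extensions never come into conflict.
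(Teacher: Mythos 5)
Your proposal is correct and takes essentially the same route as the paper's proof: the paper likewise deduces strict positivity from global generation by composing the restricted evaluation surjection with the passage to the quotient $({\cal F}|_{C_{ij}})_{\torsionfreescriptsize}$ and invoking Grothendieck's splitting, and it deduces global generation from strict positivity by appealing to the gluing description of $H^0(C,{\cal F})$ established in the proof of Lemma~\ref{h0F-2} --- precisely the fibered-product-at-nodes plus tree-extension argument you spell out. The only difference is presentational: the paper cites the proof of Lemma~\ref{h0F-2} wholesale, whereas you re-derive its node-level details explicitly.
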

 
\begin{proof}
 We shall adopt the notation from Definition~\ref{wgCF}.
								
 Given a strictly positive torsion-free sheaf ${\cal F}$ on a ${\Bbb P}^1$-tree $C$,
  by definition
    the locally-free quotient $({\cal F}|_{{\Bbb P}^1})_{torsionfreescriptsize}$
  	of its restriction to each irreducible component ${\Bbb P}^1$ has positive degree.
  Furthermore, the proof of Lemma~\ref{h0F-2} implies that
    the sequence from the natural evaluation homomorphism											
    ${\cal O}_C\times H^0(C,{\cal F})\rightarrow {\cal F}\rightarrow  0$ is exact.
	
 Conversely, suppose that
   ${\cal O}_C\times H^0(C,{\cal F}) \rightarrow {\cal F}\rightarrow  0$ is exact.
 Then, so are
   ${\cal O}_{C_{ij}}\times H^0(C,{\cal F})
          \rightarrow {\cal F}|_{C_{ij}}\rightarrow 0$.
  Thus, the composition
    ${\cal O}_{C_{ij}}\times H^0(C,{\cal F})
	      \rightarrow {\cal F}|_{C_{ij}}\rightarrow
	    ({\cal F}|_{C_{ij}})_{\torsionfreescriptsize}$
    is surjective.
 Since, furthermore, $({\cal F}|_{C_{ij}})_{\torsionfreescriptsize}$
   has positive degree,
   ${\cal F}$ must be strictly positive.
   
\end{proof}

\bigskip

\begin{flushleft}
{\bf Decrease of $\delta_{\flatscriptsize}(\,\bullet\,)$
          when bubbling off a ${\Bbb P}^1$-tree with a positive sheaf}
\end{flushleft}
\begin{proposition}\label{ddwbfTt}
 {\bf [decrease of $\delta_{\flatscriptsize}(\,\bullet\,)$
            when bubbling off ${\Bbb P}^1$-tree with positive sheaf].}
  Let
    $h:C^{\prime}\rightarrow C$ be a collapsing morphism of nodal curves that contracts
    	a ${\Bbb P}^1$-tree subcurve $C^{\prime}_u$ of $C^{\prime}$,
	${\cal F}$ be a torsion-free sheaf on $C$ of rank $r$,   and
	${\cal F}^{\prime}$ be a torsion-free sheaf on $C^{\prime}$, also of rank $r$,
	such that
	 \begin{itemize}
	   \item[(1)]
	   $({\cal F}^{\prime}|_{C^{\prime}_u})_{\torsionfreescriptsize}$
  		  is positive on $C^{\prime}_u$;
		
	 \item[(2)]
        there exists a surjective homomorphism
		 $\beta: h_{\ast}({\cal F}^{\prime})\rightarrow {\cal F}$
		 of ${\cal O}_C$-modules.		 		
     \end{itemize}
 Then,
    $$
      \delta_{\flatscriptsize}({\cal F}^{\prime})\,
	    -\,\delta_{\flatscriptsize}({\cal F})\;
	  =\;  -\, \degree(({\cal F}^{\prime}|_{C^{\prime}_u})
	                                                                                   _{\torsionfreescriptsize})\;
	  <\; 0\,.	
    $$	
\end{proposition}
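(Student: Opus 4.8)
The plan is to reduce the global identity to a local computation at the image of the contracted tree and to convert everything into dimension counts governed by the $h^0$-formula of Lemma~\ref{h0F-2} and the section-vanishing count of Remark~\ref{cstcpt}. Since $\mathcal{F}'$ and $\mathcal{F}$ both have rank $r$ and $\beta$ is surjective, $\beta$ is generically an isomorphism, so $\mathcal{K}:=\kker\,\beta$ is a $0$-dimensional sheaf supported on the finite image set $h(C'_u)$. Away from $h(C'_u)$ the morphism $h$ is an isomorphism and $\beta$ identifies $\mathcal{F}$ with $\mathcal{F}'$, so $\delta_{\flatscriptsize}(\,\bullet\,)$ gets equal contributions from the nodes there; and $\degree((\mathcal{F}'|_{C'_u})_{\torsionfreescriptsize})$ is additive over the connected components of $C'_u$. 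Hence I would first fix one connected component of $C'_u$, attached to the rest of $C'$ at points $\{P_j\}$ and contracted to a single $q\in C$, and reduce to showing that the change in discrepancy produced at $q$ equals minus the degree of the tree-part there.

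Next I would record the vanishing $R^1\!h_{\ast}(\mathcal{F}')=0$. By hypothesis (1) the sheaf $(\mathcal{F}'|_{C'_u})_{\torsionfreescriptsize}$ is positive, hence nonnegative, so its $H^1$ on the ${\Bbb P}^1$-tree $C'_u$ vanishes (Grothendieck splitting on each ${\Bbb P}^1$ together with the tree having no loops), while the torsion subsheaf contributes no $H^1$; cohomology-and-base-change, or Lemma~\ref{vfdis} applied to the contraction componentwise, then gives $R^1\!h_{\ast}(\mathcal{F}')=0$ and therefore $\chi(C',\mathcal{F}')=\chi(C,h_{\ast}\mathcal{F}')$. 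Combining this with the exact sequence $0\rightarrow\mathcal{K}\rightarrow h_{\ast}\mathcal{F}'\rightarrow\mathcal{F}\rightarrow 0$ yields the numerical relation $\chi(C',\mathcal{F}')-\chi(C,\mathcal{F})=\length(\mathcal{K})$, which is the global accounting I will later match against the local data.

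The heart of the argument is the explicit local structure of $h_{\ast}\mathcal{F}'$ at $q$. I would describe its stalk as the fibre product of the germs of $\mathcal{F}'$ along the branches through the $P_j$ with the global sections of $(\mathcal{F}'|_{C'_u})_{\torsionfreescriptsize}$, matched at the attaching points; the controlling datum is then the evaluation map $\mathrm{ev}\colon H^0(C'_u,(\mathcal{F}'|_{C'_u})_{\torsionfreescriptsize})\rightarrow\bigoplus_j (\mathcal{F}'|_{C'_u})_{\torsionfreescriptsize}|_{P_j}$. Strict positivity forces $(\mathcal{F}'|_{C'_u})_{\torsionfreescriptsize}$ to be globally generated (Lemma~\ref{csptf}), so $\mathrm{ev}$ is surjective; its kernel is exactly the space of tree-sections vanishing at all the attaching points, which by Remark~\ref{cstcpt} become the torsion of $h_{\ast}\mathcal{F}'$ at $q$ and are annihilated by $\beta$ because $\mathcal{F}$ is torsion-free. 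Feeding the dimension count of Lemma~\ref{h0F-2}, namely $h^0(C'_u,\mathcal{F}'|_{C'_u})=r\,|\pi_0(C'_u)|+\degree((\mathcal{F}'|_{C'_u})_{\torsionfreescriptsize})+\delta_{\flatscriptsize}(\mathcal{F}'|_{C'_u})$, into this fibre-product picture computes both $\dimm(\mathcal{F}|_q)$, hence $\delta_{\flatscriptsize}(\mathcal{F};q)$, and $\length(\mathcal{K})$, and identifies the increment in discrepancy at $q$ with $\degree((\mathcal{F}'|_{C'_u})_{\torsionfreescriptsize})$, the internal- and attaching-node discrepancies of $\mathcal{F}'$ on $C'_u$ being precisely the terms already subtracted on the $C'$ side.

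I expect the main obstacle to be exactly this last bookkeeping at $q$: separating the part of the pushed-forward tree-sections that \emph{flattens} into the fibre of $\mathcal{F}$, and so raises $\delta_{\flatscriptsize}(\mathcal{F};q)$, from the part that becomes torsion and is discarded by $\beta$, and then checking that the positivity hypothesis makes these two counts recombine into exactly $-\degree$ rather than something strictly larger. Concretely, the delicate point is that hypotheses (1) and (2) must be used \emph{in tandem}: positivity (global generation) makes $\mathrm{ev}$ surjective so that every branch-value is realized, while the existence of the surjection $\beta$ onto the torsion-free rank-$r$ sheaf $\mathcal{F}$ is what prevents any of the tree's degree from being ``wasted'' as surviving torsion, forcing the full degree of $(\mathcal{F}'|_{C'_u})_{\torsionfreescriptsize}$ to be converted into discrepancy at $q$. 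This is the step where I would spend the most care, and where I would verify the sign and the strict inequality $<0$.
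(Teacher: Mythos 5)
Your overall skeleton is the paper's: localize at the images of the contracted components, note that $\Ker(\beta)=(h_{\ast}({\cal F}^{\prime}))_{\torsionscriptsize}$ by the rank and torsion-freeness hypotheses, and run Seshadri-type gluing data through the $h^0$-count of Lemma~\ref{h0F-2}. But the step you lean on to compute $\dimm({\cal F}|_q)$ fails. First, hypothesis (1) is \emph{positivity}, not strict positivity, so Lemma~\ref{csptf} is not available to you; what nonnegativity on a ${\Bbb P}^1$-tree does give (the extension argument inside Lemma~\ref{h0E}/\ref{h0F-2}) is global generation, i.e.\ surjectivity of evaluation at each \emph{single} point. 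That does not make the simultaneous evaluation $\mathrm{ev}\colon H^0(C^{\prime}_u,T)\rightarrow \oplus_j T|_{P_j}$ surjective: already for $T$ containing a trivial summand ${\cal O}$ on the relevant chain (perfectly allowed under positivity, e.g.\ $T|_{{\Bbb P}^1}\simeq{\cal O}\oplus{\cal O}(1)$) the image of that summand in two attaching fibers is a diagonal. And a connected component of $C^{\prime}_u$ contracted to a node of $C$ attaches at exactly two points, which is precisely the case you must handle. The paper's own local count (Case (b) of its proof) gives, with $\eta_q:=\degree(T)+\delta_{\flatscriptsize}(T)+\delta_{\flatscriptsize}({\cal F}^{\prime};p^{\prime}_-)+\delta_{\flatscriptsize}({\cal F}^{\prime};p^{\prime}_+)$, the fiber dimension $\eta_q+r$ when $\eta_q\le r$ and $2r$ when $\eta_q>r$, with torsion of length $\max\{0,\eta_q-r\}$ — not the answer your surjectivity claim would produce.

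The deeper gap is your closing mechanism, that the surjection $\beta$ ``prevents any of the tree's degree from being wasted as surviving torsion'' and forces the full degree into discrepancy at $q$. Since $\Ker(\beta)$ \emph{is} the torsion, $\beta$ discards torsion rather than converting it; and $\delta_{\flatscriptsize}({\cal F};q)=\dimm({\cal F}|_q)-r\le r$ is capped at a node, while $\degree(T)$ is unbounded, so no surjection can transmute excess degree into discrepancy. Concretely, the local difference of discrepancies is $\bigl(\eta_q-\degree(T)\bigr)-\min\{\eta_q,r\}$, which equals $-\degree(T)$ exactly when $\eta_q\le r$; under your own (over-strong) ev-surjectivity it would instead come out as $-\degree(T)+(\eta_q-r)$. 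So the bookkeeping you defer to ``the step where I would spend the most care'' is not merely delicate — as planned it produces a correction term $\max\{0,\eta_q-r\}$ that hypotheses (1) and (2) do not kill, and your argument cannot close without bounding $\eta_q\le r$. (Two further points to watch: a component attached at a single point and contracted to a \emph{smooth} point of $C$ contributes its entire $h^0$ to torsion and nothing to $\delta_{\flatscriptsize}({\cal F})$, so the local increment there is not $-\degree$ either — this is the paper's Case (a), which its final summation, taken ``over the nodes in Case (b)'', quietly sets aside; and your $\chi$-accounting via $R^1\!h_{\ast}({\cal F}^{\prime})=0$ is fine but redundant, since the paper's fiber counts never use Euler characteristics.)
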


\begin{proof}
 Since
   ${\cal F}$ is torsion-free and
   $h_{\ast}({\cal F}^{\prime})$ and ${\cal F}$ have the same rank,
  $$
      \Ker(\beta)\; =\;  (h_{\ast}({{\cal F}^{\prime}}))_{\torsionscriptsize}
         \hspace{2em}\mbox{and}\hspace{2em}
    (h_{\ast}({\cal F}^{\prime}))_{\torsionfreescriptsize}\;
	    \simeq\; {\cal F}\,.
  $$
 Furthermore,
 since
    the restriction
	 $h:C^{\prime}-C^{\prime}_u\rightarrow C-h(C^{\prime}_u)$
	 is an isomorphism,
    ${\cal F}^{\prime}$ is torsion-free,
   and the scheme-theoretical preimage  $h^{-1}(h(C^{\prime}_u))$
     is exactly $C^{\prime}_u$,
  $\, \Ker(\beta)$ is supported at the finite set $h(C^{\prime}_u)$ of points on $C$.
 As we are concerned only with the difference
  $$
     \delta_{\flatscriptsize}({\cal F}^{\prime})\,
            -\,  \delta_{\flatscriptsize}({\cal F})\;
     =\;   \delta_{\flatscriptsize}({\cal F}^{\prime})\,
               -\,  \delta_{\flatscriptsize}
			         ((h_{\ast}({\cal F}^{\prime}))_{\torsionfreescriptsize})\,, 	
  $$
  we may assume, without loss of generality, that
   ${\cal F}^{\prime}$ is locally free on $C^{\prime}-C^{\prime}_u$
     (and, hence, ${\cal F}$ and $h_{\ast}({\cal F}^{\prime})$
     	 are locally free on $C-h(C^{\prime}_u)$ ).
    
 With this additional assumption,
   let $C^{\prime}=C^{\prime}_0\cup C^{\prime}_u$  be the decomposition fo $C$
   into a union of the contracted ${\Bbb P}^1$-tree $C^{\prime}_u$
   and the subcurve $C^{\prime}_0$ that's not contracted by $h$.
 Note that the restriction $h: C^{\prime}_0\rightarrow C$
     is birational, affine, surjective, and of relative dimension $0$.
 Let
    $p \in h(C^{\prime}_u)$   and
	$C^{\prime}_{u,p}:= h^{-1}(p)$,
	  a connected ${\Bbb P}^1$-tree subcurve in $C^{\prime}_u$.

 \begin{itemize}
  \item[{\Large $\cdot$}]
 {\it Case $(a):$  $p$ is a smooth point on $C$.}\\[.6ex]
   Then, $C^{\prime}_{u,p}$ has only one contact point
      $p^{\prime}$ with $C^{\prime}_0$.
    Under $h_{\ast}$,
      ${\cal F}^{\prime}|_{C^{\prime}_{u,p}}$
      contributes only to $\Ker(\beta)\subset h_{\ast}({\cal F}^{\prime})$
	  as the torsion subsheaf supported at the smooth point $p$.
	It has length
    $$
	  \degree
	      (({\cal F}^{\prime}|_{C^{\prime}_{u,p}})_{\torsionfreescriptsize})\,
	  +\, \delta_{\flatscriptsize}
	      (({\cal F}^{\prime}|_{C^{\prime}_{u,p}})_{\torsionfreescriptsize})\,
	  +\, \delta_{\flatscriptsize}({\cal F}^{\prime}; p^{\prime})\;
	  >\; 0\,,	
    $$
	as a consequence of Lemma~\ref{h0F-2}, 
    though irrelevant to 
	  $\delta_{\flatscriptsize}({\cal F}^{\prime})
	       - \delta_{\flatscriptsize}({\cal F})$.
 \end{itemize}
	
 \begin{itemize}
   \item[{\Large $\cdot$}]
 {\it Case $(b):$  $p$ is a node on $C$.} \\[.6ex]
  Then, $C^{\prime}_{u,p}$ has two contact points,
   $p^{\prime}_-$ and $p^{\prime}_+$  with $C^{\prime}_0$.
  Under $h_{\ast}$,
   $\,{\tilde{\cal F}}^{\prime}|_{C^{\prime}_{u,p}}$  may contribute to
	both $\Ker(\beta)$ and
    $\delta_{\flatscriptsize}(
       (h_{\ast}({\cal F}^{\prime}))_{\torsionfreescriptsize}; p)$.      \\[.6ex]
   \mbox{$\hspace{1.2em}$}
   For the $\Ker(\beta)$ part, 	again as a consequence of Lemma~\ref{h0F-2},
   ${\cal F}^{\prime}|_{C^{\prime}_{u,p}}$
     contributes to $\Ker(\beta)$
	  as the torsion subsheaf supported at the node $p$ of length $\max\{0, \eta_p-r\}$,
	  where
    $$
	  \begin{array}{l}
	     \eta_p\;   :=\;
	        \degree
	       (({\cal F}^{\prime}|_{C^{\prime}_{u,p}})
		                                                                               _{\torsionfreescriptsize})\,
	        +\,  \delta_{\flatscriptsize}
	            (({\cal F}^{\prime}|_{C^{\prime}_{u,p}})
				                                                                       _{\torsionfreescriptsize})\\[1.2ex]
        \hspace{3.2em}																					
	     +\,  \delta_{\flatscriptsize}({\cal F}^{\prime}; p^{\prime}_-)\,
	     +\,  \delta_{\flatscriptsize}({\cal F}^{\prime}; p^{\prime}_+)\,
	       \hspace{9.2em},
	  \end{array}	
    $$	
	 which can be either $\le r$ or $> r$.
	(By convention, a torsion sheaf of length $0$ is the $0$-sheaf.)   \\[.6ex]
    \mbox{$\hspace{1.2em}$}
   For the
     $\delta_{\flatscriptsize}
       ((h_{\ast}({\cal F}^{\prime}))_{\torsionfreescriptsize}; p)$
	 part,
	since all local sections in the stalk of $h_{\ast}({\cal F}^{\prime})$ at $p$
	  contribute
	  to the fiber
	  $$
	     (h_{\ast}({\cal F }^{\prime}))|_{p}\;
		   \simeq\;
    		   \Ker(\beta)|_{p}\,
		       \oplus\,
 		      ((h_{\ast}({\cal F }^{\prime}))_{\torsionfreescriptsize})|_p
	  $$
	  of $h_{\ast}({\cal F})$ at $p$,
	  it follows from Lemma~\ref{h0F-2} and the above that
     $$
      \dimm  ((h_{\ast}({\cal F }^{\prime}))_{\torsionfreescriptsize})|_p\;
	    =\; \left\{
		       \begin{array}{ccl}
			     \eta_p+r  && \mbox{if $\,\eta_p\le r\,$,}
			      \\[1.2ex]
			     2r                 &&  \mbox{if $\,\eta_p>r\,$.}
			   \end{array}
		     \right.
     $$
	 Note that, when $\eta_p\le r$, $r< \eta_p+r \le 2r$.
	 Thus,
	   $\,{\tilde{\cal F}}^{\prime}|_{C^{\prime}_{u,p}}$ always contributes to\\
        $\delta_{\flatscriptsize}(
       (h_{\ast}({\cal F}^{\prime}))_{\torsionfreescriptsize})$:
     $$
       \delta_{\flatscriptsize}(
       (h_{\ast}({\cal F}^{\prime}))_{\torsionfreescriptsize}; p)\;
	    =\; \left\{
		       \begin{array}{ccl}
			     \eta_p  && \mbox{if $\,\eta_p\le r\,$,}
			      \\[1.2ex]
			     r                 &&  \mbox{if $\,\eta_p>r\,$,}
			   \end{array}
		     \right.
     $$	
	 which is always positive.
  \end{itemize}
 Summing over the node $p\in h(C^{\prime}_u)\subset C$ in Case (b) above,
 one concludes that
  $$
    \delta_{\flatscriptsize}({\cal F}^{\prime})\,
	 -\, \delta_{\flatscriptsize}({\cal F})\;
	 =\;  -\, \degree(({\cal F}^{\prime}|_{C^{\prime}_u})
	       _{\torsionfreescriptsize})\;
     <\; 0\,.	
  $$
 This proves the proposition.
  
\end{proof}

\bigskip

\section{The space of D-string world-sheet instantons:
  The moduli stack of $Z$-semistable morphisms from Azumaya nodal curves
  with a fundamental module to a Calabi-Yau 3-fold}

With the preparations from [L-L-S-Y] (D(2)), [L-Y3] (D(10.1)),
and Sec.~\ref{preliminary}  of this note,
 we are now ready to define the notion of $Z$-semistable morphisms of a fixed type
 from general Azumaya nodal curves with a fundamental module to a Calabi-Yau $3$-fold
 and the stack of such objects, Sec.~3.1.
A natural morphism from this stack to the stack
  $\FM_g^{1,[0];\scriptsizeZss}(Y;c)$
  of Fourier-Mukai transforms is explained in Sec.~3.2.

\bigskip
  
\subsection{The moduli stack of $Z$-semistable morphisms from Azumaya nodal curves
        with a fundamental module to a Calabi-Yau 3-fold}
        \label{msssm}
We now bring out the main character of the D(10)-series of the project.

\bigskip
  
\begin{flushleft}
{\bf  $Z$-Semistable morphisms from Azumaya nodal curves with a fundamental module}
\end{flushleft}
For the current Sec.~3 and the next Sec.~4, we fix the following data:
 \begin{itemize}
   \item[{\Large $\cdot$}] {\it Domain data}$\,$:
	 \begin{itemize}
	  \item[{\Large $\cdot$}]
       \parbox{5em}{$\overline{\cal M}_g$}:
 	    the  moduli stack of stable curves of genus $g$,
	
      \item[{\Large $\cdot$}]
      \parbox{5em}{$C_{\overline{\cal M}_g}/\overline{\cal M}_g$}:
	    the universal curve over $\overline{\cal M}_g$,
		
	  \item[{\Large $\cdot$}]	
      \parbox{5em}{$[L]$}:
       a relative positive degree class on $C_{\overline{\cal M}_g}/\overline{\cal M}_g$.
	\end{itemize}
 
   \item[{\Large $\cdot$}]{\it Target data}$\,$:
    \begin{itemize}
	 \item[{\Large $\cdot$}]
      $(Y, B+\sqrt{-1}J)$: a projective Calabi-Yau 3-fold with a complexified K\"{a}hler class.
    \end{itemize}
 \end{itemize}
 
\bigskip

\begin{definition}\label{Zssm}
  {\bf [$Z$-(semi)stable morphism].} {\rm
 Let
   \begin{itemize}
    \item[{\Large $\cdot$}]
	 $(C^{\prime},{\cal E}^{\prime})$
      be a (connected) nodal curve $C^{\prime}$ of genus $g$
	  with a locally free sheaf ${\cal E}^{\prime}$ of rank $r$ and Euler characteristic $\chi$,
   
    \item[{\Large $\cdot$}]
     $\varphi\,:\,
	   (C^{\prime},
	    {\cal O}_{C^{\prime}}^{Az}
		     =\Endsheaf_{{\cal O}_{C^{\prime}}}({\cal E}^{\prime});
		{\cal E}^{\prime})\,
	    \rightarrow\,  Y	\;$
     be a morphism from an Azumaya nodal curve with a fundamental module
     to a projective Calabi-Yau $3$-fold $Y$
	 such that $[\varphi_{\ast}({\cal E}^{\prime})]=\beta\in A_1(Y)$,
	 (i.e.\ $\varphi$ is a morphism from an Azumaya nodal curve with a fundamental module to $Y$
	            of type $(g;r,\chi;\beta)$)
	
	\item[{\Large $\cdot$}]
	$\tilde{\cal E}^{\prime}_{\varphi}\in \CohCategory(C^{\prime}\times Y)$
	be the graph of $\varphi$, which is a $1$-dimensional coherent sheaf on
	$C^{\prime}\times Y$ that is flat over $C^{\prime}$ and
	of relative dimension $0$ and relative length $r$ over $C^{\prime}$,

   \item[{\Large $\cdot$}]	
    $\rho:C^{\prime}\rightarrow C$ be the collapsing morphism from $C^{\prime}$
	  to the stable curve $C$ associated to $C^{\prime}$
	 (i.e.\ $\rho$ stabilizes $C^{\prime}$ ),
	
   \item[{\Large $\cdot$}]
    $\Id_Y: Y\rightarrow Y$ be the identity map.
  \end{itemize}
 We say that $\varphi$ is {\it $Z$-semistable of type $(g;r,\chi;\beta,c)$}
  if the following additional conditions hold:
 \begin{itemize}
   \item[(1)]
	 $(\rho\times \Id_Y)_{\ast}(\tilde{\cal E}^{\prime}_{\varphi})
	     =: \tilde{\cal F}$
   	 is a $Z$-semistable Fourier-Mukai transform from $C$ to $Y$
	 with twisted central chagre $Z^{B+\sqrt{-1},[L]}(\tilde{\cal F})=c$.
   (Note that
	    $[{\pr_Y}_{\ast}(\tilde{\cal F})]
	         =[{\pr_Y}_{\ast}(\tilde{\cal E}^{\prime}_{\varphi})]\;
		     = [\varphi_{\ast}({\cal E}^{\prime})] =\beta$.)  	

   \item[(2)]
     The natural sequence of homomorphisms of ${\cal O}_{C^{\prime}\times Y}$-modules
	   $\;(\rho\times\Id_Y)^{\ast}(\tilde{\cal F})
	       \rightarrow \tilde{\cal E}^{\prime}_{\varphi}\rightarrow  0$\\
	  is exact.

   \item[(3)]
     For each ${\Bbb P}^1$-component (denoted by ${\Bbb P}^1$)
	    of the ${\Bbb P}^1$-tree subcurve of $C^{\prime}$
	  that is collapsed by $\rho$ to points in $C$,
     if $\varphi|_{{\Bbb P}^1}$	  is a constant morphism,
     then ${\Bbb P}^1$ has at least three special points
	  (i.e.\ points where ${\Bbb P}^1$ intersects with other components of $C^{\prime}$).
  %
  \end{itemize}

 We say that $\varphi$ is {\it $Z$-stable} if Condition (1) above is replaced by:
   \begin{itemize}
    \item[(1$^{\prime}$)]
   	 $(\rho\times \Id_Y)_{\ast}(\tilde{\cal E}^{\prime}_{\varphi})
	     =: \tilde{\cal F}$
   	 is a $Z$-stable Fourier-Mukai transform from $C$ to $Y$.
	\end{itemize}
	
 When the central charge functional  $Z$ is known and fixed either explicitly or implicitly,
  we may use the terminology {\it semistable morphism, stable morphism} for simplicity.	
}\end{definition}

\bigskip

\begin{remark}\label{C123}
 {$[$Conditions (1), (2), (3) in Definition~\ref{Zssm}$\,]$.} {\rm
 The meaning behind Conditions (1), (2), and (3)
   in Definition~\ref{Zssm} is illuminated below:
  %
  \begin{itemize} 	
    \item[(1)] {\it Condition (1) }$\;$ says that,
	 though the notion of (semi)stability of morphisms in our problem cannot be defined
	   completely just using twisted central charge,
	 that for the restriction of the morphism on the ``{\it main part}" of a general nodal curves
      {\it remains following the notion of (semi)stability associated to the twisted central charges}.
	
    \item[(2)] {\it Condition (2)}$\; $
  	is a ``{\it positivity condition}" on the restriction of morphisms
	  to the unstable bubbling ${\Bbb P}^1$-tree of nodal curves.
	It compensates for the fact that such positivity can be lost and undetectable
   	  from the twisted central charge due to ``overspreading of the twisted central charge".
	This condition says that, while possibly undetectable by twisted central charge,
	   the restriction of a morphism to such ${\Bbb P}^1$-tree can still cost
	   some ``positive internal rotating/winding energy".
	From this aspect, Condition (2) might be weakened to:
	  \begin{itemize}
	   \item[(2$^{\prime}$)]
        The natural sequence of homomorphisms of ${\cal O}_{C^{\prime}\times Y}$-modules
	      $\;(\rho\times\Id_Y)^{\ast}(\tilde{\cal F})
	       \rightarrow \tilde{\cal E}^{\prime}_{\varphi}\rightarrow  0$\\
	    is exact at every $1$-dimensional generic points of
		$\Supp(\tilde{\cal E}^{\prime}_{\varphi})$.
	  \end{itemize}
	 I.e.\ requiring only that
       $\Coker(
	   (\rho\times\Id_Y)^{\ast}(\tilde{\cal F})
	       \rightarrow \tilde{\cal E}^{\prime}_{\varphi}  )$
	    is $0$-dimensional, instead of $0$.	
	
    \item[(3)] {\it Condition (3)}$\;$ reminds one of a similar condition
	  for stable maps in algebraic Gromov-Witten theory ([Ko] and, e.g.\ [Be] for a survey).
	 In our case, even with Conditions (1) and (2),
	 there remains no control of the size of ${\Bbb P}^1$-chains
	  for which the restriction of the morphism is connected-componentwise constant.
	 As such chain takes no twisted central charge and costs no ``winding energy' of any sort,
      they are completely undetectable and uncontrollable.
     Fortunately, they are on the other hand
	  the only kind of ${\Bbb P}^1$-trees in a nodal curve
	   such that collapsing the ${\Bbb P}^1$-tree gives rise to a curve that remains nodal.
    From the above reasoning, to get a bounded family of morphisms,
	  one has no choice but to collapse all such ${\Bbb P}^1$-chains.
    What's left is exactly described in Condition (3).	
  (See Sec.~4.3, Theme `{\it
        Step $(d):\,$  Termination of the reduction
        -- Recovery of a regular morphism in our category over $t\in T$}'
      for a more technical and precise discussion.)
  \end{itemize}
}\end{remark}

\bigskip

\begin{definition}\label{mbZssm}
 {\bf [morphism between $Z$-semistable morphisms].} {\rm
 Let
  $$
	 \varphi_1:
      (C^{\prime}_1,
        {\cal O}^{Az}_{C^{\prime}_1}
		        :=\Endsheaf_{{\cal O}_{C^{\prime}}}({\cal E}^{\prime}_1);
		 {\cal E}^{\prime}_1)\;\longrightarrow\; Y
     \hspace{1.6em}\mbox{and}\hspace{1.6em}
	 \varphi_2:
      (C^{\prime}_2,
        {\cal O}^{Az}_{C^{\prime}_2}
		        :=\Endsheaf_{{\cal O}_{C^{\prime}}}({\cal E}^{\prime}_2);
		 {\cal E}^{\prime}_2)\;\longrightarrow\; Y	
  $$
   be two $Z$-semistable morphisms from Azumaya nodal curves with a fundamental module
    to $Y$ of type $(g;r,\chi;\beta,c)$.
 Then,
    a {\it morphism from $\varphi_1$ to $\varphi_2$},
	phrased directly in terms of their graph $\tilde{\cal E}^{\prime}_{\varphi_1}$
	 and $\tilde{\cal E}^{\prime}_{\varphi_2}$ respectively,
	is a pair  $(h^{\prime},\tilde{h}^{\prime})$,
	where
	 \begin{itemize}
	  \item[{\Large $\cdot$}]
	   $h^{\prime}:C^{\prime}_1\rightarrow C^{\prime}_2$
    	   is an isomorphism of nodal curves,
	
	  \item[{\Large $\cdot$}]
	   $\tilde{h}^{\prime}:
   	   (h^{\prime}\times \Id_Y)^{\ast}
	                  (\tilde{\cal E}^{\prime}_{\varphi_2})
	       \rightarrow   \tilde{\cal E}^{\prime}_{\varphi_1}$
		is an isomorphism of coherent sheaves on $C^{\prime}_1\times Y$.
	 \end{itemize}
 Note that,
   with the collapsing morphisms $\rho_1:C^{\prime}_1\rightarrow C_1$ and
   $\rho_2:C^{\prime}_2\rightarrow C_2$ specified,
 $h^{\prime}$ induces a unique isomorphism $h:C_1\rightarrow C_2$
   so that the following diagram commutes
   $$
     \xymatrix{
       C^{\prime}_1  \ar[rr]^-{h^{\prime}}  \ar[d]_-{\rho_1}
	     && C^{\prime}_2  \ar[d]^-{\rho_2}\\
       C_1 \ar[rr]^-{h}  && C_2 	
     }
   $$
   and
   $\tilde{h}^{\prime}$ induces further an isomorphism
   $\tilde{h}:
     (h\times\Id_Y)^{\ast}(\tilde{\cal F}_2)\rightarrow \tilde{\cal F}_1$
	so that the following diagram commutes
	$$
	 \xymatrix{
	  & (h\times\Id_Y)^{\ast}
	        ((\rho_2\times \Id_Y)_{\ast}(\tilde{\cal E}^{\prime}_{\varphi_2}))
		    \ar@{=}[d]	   \ar[rr]^-{\sim}	
           &&(\rho_1\times\Id_Y)_{\ast}
	            ((h^{\prime}\times\Id_Y)^{\ast}
	                      (\tilde{\cal E}^{\prime}_{\varphi_2}))
                  \ar[d]^-{(\rho_1\times I\!d_Y)_{\ast}(\tilde{h}^{\prime})} 	\\
     & (h\times\Id_Y)^{\ast}(\tilde{\cal F}_2)  \ar[rr]^{\tilde{h}}
         && \tilde{\cal F}_1   &,              	
	  }
    $$	
	where
     the isomorphism	
	   $(h\times\Id_Y)^{\ast}
	      ((\rho_2\times \Id_Y)_{\ast}(\tilde{\cal E}^{\prime}_{\varphi_2}))
		    \stackrel{\sim}{\rightarrow}
           (\rho_1\times\Id_Y)_{\ast}
	            ((h^{\prime}\times\Id_Y)^{\ast}
	                      (\tilde{\cal E}^{\prime}_{\varphi_2}))$
		on the top is the natural homomorphism (isomorphism in the current case)
         from interchanging the order of push and pull.
 These induced data of isomorphisms are automatically assumed when in need.
}\end{definition}

\bigskip

\begin{flushleft}
{\bf The moduli stack of $Z$-semistable morphisms of type $(g;r,\chi;\beta,c)$ }
\end{flushleft}
\begin{definition}\label{fZssm}
 {\bf [family of $Z$-semistable morphisms].} {\rm
 Let $S$ be a Noetherian scheme$/{\Bbb C}$.
 An {\it $S$-family $\varphi_S$ of $Z$-semistable morphisms of type $(g;r,\chi;\beta,c)$}
  from Azumaya nodal curves with a fundamental module to the Calabi-Yau $3$-fold $Y$
  is given, in terms of their graphs, by the following data:
  \begin{itemize}
   \item[{\Large $\cdot$}]
     \parbox{9em}{$C^{\prime}_S/S$}:
	 a flat family of nodal curves of genus $g$ over $S$;
	
   \item[{\Large $\cdot$}]	
	 \parbox{9em}{$\tilde{\cal E}^{\prime}_S
	                                       \in \CohCategory(C^{\prime}_S\times Y)$}:
     \parbox[t]{28em}{a coherent sheaf on $C^{\prime}_S\times Y$
	    that is flat over $C^{\prime}_S$   and
        		of relative dimension $0$ and relative length $r$ over $C^{\prime}_S$
	    such that
		\begin{itemize}
		  \item[{\Large $\cdot$}]
		   as a coherent sheaf on $C^{\prime}_S/S$,
		   ${\pr_{C^{\prime}_S}}_{\ast}(\tilde{\cal E}^{\prime}_S)$
		   is a flat family of locally free sheaves of rank $r$ and Euler characteristic $\chi$
		   on nodal curves over $S$,
		
		  \item[{\Large $\cdot$}]
		   for each $s\in S$,
		   $[{\pr_Y}_{\ast}(\tilde{\cal E}^{\prime}_s)]=\beta\in A_1(Y)$
		\end{itemize}}
  \end{itemize}
 that satisfy the following properties:
 \begin{itemize}
   \item[(1)]
    Let $\rho_S:C^{\prime}_S/S \rightarrow C_S/S$
	  be the collapsing $S$-morphism  that defines a morphism
	  $S\rightarrow \overline{\cal M}_g$.
    Then,		
	 $(\rho_S\times \Id_Y)_{\ast}(\tilde{\cal E}^{\prime}_S)
	     =: \tilde{\cal F}_S$
   	 is a flat family of $Z$-semistable Fourier-Mukai transforms from fibers of $C_S/S$ to $Y$
	 with twisted central charge $Z^{B+\sqrt{-1},[L]}(\tilde{\cal F}_s)=c$ for all $s\in S$.

   \item[(2)]
     The natural sequence of homomorphisms of
  	 ${\cal O}_{C^{\prime}_S\times Y}$-modules
	   $\;(\rho_S\times\Id_Y)^{\ast}(\tilde{\cal F}_S)
	       \rightarrow \tilde{\cal E}^{\prime}_S\rightarrow  0$\\
	  is exact.

   \item[(3)]
     For each ${\Bbb P}^1$-component (denoted by ${\Bbb P}^1$)
	    of the ${\Bbb P}^1$-tree subcurve of $C^{\prime}_s$, $s\in S$,
	  that is collapsed by $\rho_s$ to points in $C_s$,
     if $\tilde{\cal E}^{\prime}|_{{\Bbb P}^1\times Y}$
	    is isomorphic to $\pr_Y^{\ast}(\,\bullet\,)$ for some $0$-dimensional sheaf $\bullet$
		of length $r$ on $Y$
	 (i.e.\ $\varphi_s|_{{\Bbb P}^1}$	  is a constant morphism),
     then ${\Bbb P}^1$ has at least three special points in $C^{\prime}_s$.
 \end{itemize}	
} \end{definition}

\bigskip

\begin{definition}\label{mbSfZssm}
 {\bf [morphism between $S$-families of $Z$-semistable morphisms].}  {\rm
Let
  $$
	 \varphi_{1,S}:
      (C^{\prime}_{1,S},
        {\cal O}^{Az}_{C^{\prime}_{1,S}}
		        :=\Endsheaf_{{\cal O}_{C^{\prime}_{1,S}}}
				                     ({\cal E}^{\prime}_{1,S});
		 {\cal E}^{\prime}_{1,S})/S\;
	 \longrightarrow\; Y
  $$
   and
  $$
	 \varphi_{2,S}:
      (C^{\prime}_{2,S},
        {\cal O}^{Az}_{C^{\prime}_{2,S}}
		        :=\Endsheaf_{{\cal O}_{C^{\prime}_{2,S}}}
				                              ({\cal E}^{\prime}_{2,S});
		 {\cal E}^{\prime}_{2,S})/S\;
	   \longrightarrow\; Y	
  $$
   be two $S$-families of  $Z$-semistable morphisms
   from Azumaya nodal curves with a fundamental module to $Y$ of type $(g;r,\chi;\beta,c)$.
 Then,
    a {\it morphism from $\varphi_{1,S}$ to $\varphi_{2,S}$},
	phrased directly in terms of their graph
	 $\tilde{\cal E}^{\prime}_{\varphi_{1,S}}$
	 and $\tilde{\cal E}^{\prime}_{\varphi_{2,S}}$ respectively,
	is a pair  $(h^{\prime}_S, \tilde{h}^{\prime}_S)$,
	where
	 \begin{itemize}
	  \item[{\Large $\cdot$}]
	   $h^{\prime}_S : C^{\prime}_{1,S}/S
	        \rightarrow C^{\prime}_{2,S}/S$
    	   is an $S$-isomorphism of nodal curves,
	
	  \item[{\Large $\cdot$}]
	   $\tilde{h}^{\prime}_S:
   	   (h^{\prime}_S\times \Id_Y)^{\ast}
	                  (\tilde{\cal E}^{\prime}_{\varphi_{2,S}})
	       \rightarrow   \tilde{\cal E}^{\prime}_{\varphi_{1,S}}$
		is an $S$-isomorphism of coherent sheaves on $(C^{\prime}_{1,S}\times Y)/S$.
	 \end{itemize}
 As before,
   with the collapsing morphisms
     $\rho_{1,S}:C^{\prime}_{1,S}\rightarrow C_{1,S}$ and
     $\rho_{2,S}:C^{\prime}_{2,S}\rightarrow C_{2,S}$ specified,
 $h^{\prime}_S$ induces a unique isomorphism $h_S:C_{1,S}\rightarrow C_{2,S}$
   so that the following diagram commutes
   $$
     \xymatrix{
       C^{\prime}_{1,S}  \ar[rr]^-{h^{\prime}_S}  \ar[d]_-{\rho_{1,S}}
	     && C^{\prime}_{2,S}  \ar[d]^-{\rho_{2,S}}\\
       C_{1,S} \ar[rr]^-{h_S}  && C_{2,S} 	
     }
   $$
   and
   $\tilde{h}^{\prime}_S$ induces further an isomorphism
   $\tilde{h}_S:
     (h_S\times\Id_Y)^{\ast}(\tilde{\cal F}_{2,S})
	     \rightarrow \tilde{\cal F}_{1,S}$
	so that the following diagram commutes
	$$
	 \xymatrix{
	  & (h_S\times\Id_Y)^{\ast}
	        ((\rho_{2,S}\times \Id_Y)_{\ast}
			      (\tilde{\cal E}^{\prime}_{\varphi_{2,S}}))
		    \ar@{=}[d]	   \ar[rr]^-{\sim}	
           &&(\rho_{1,S}\times\Id_Y)_{\ast}
	            ((h^{\prime}_S\times\Id_Y)^{\ast}
	                      (\tilde{\cal E}^{\prime}_{\varphi_{2,S}}))
               \ar[d]^-{(\rho_{1,S}\times I\!d_Y)_{\ast}(\tilde{h}_S^{\prime})}\\
     & (h_S\times\Id_Y)^{\ast}(\tilde{\cal F}_{2,S})  \ar[rr]^{\tilde{h}_S}
         && \tilde{\cal F}_{1,S}   &.
	  } 	
    $$		
}\end{definition}

\bigskip

\begin{definition}\label{stack-Zssm}
  {\bf [stack of $Z$-semistable morphisms].} {\rm
 Define the {\it stack ${\frak M}_{Az^{\!f}\!(g;r,\chi)}^{\tinyZss}(Y; \beta, c)$
   of $Z$-semistable morphisms from Azumaya nodal curves with a fundamental module
   to the Calabi-Yau $3$-fold $Y$ of type $(g;r,\chi;\beta,c)$ }
   to be  the sheaf of groupoids that associates to each scheme $S$ over ${\Bbb C}$
   the groupoid ${\frak M}_{Az^{\!f}\!(g;r,\chi)}^{\tinyZss}(Y; \beta, c)(S)$
   whose objects are $S$-families of $Z$-semistable morphisms
     from Azumaya nodal curves with a fundamental to $Y$ of type $(g;r,\chi;\beta,c)$
	  in Definition~\ref{fZssm},
	 and whose morphisms are morphisms between $S$-families of $Z$-semistable morphisms
      in Definition~\ref{mbSfZssm}.
} \end{definition}

\bigskip

\subsection{A natural morphism
	  from ${\frak M}_{Az^{\!f}\!(g;r,\chi)}^{\tinyZss}(Y; \beta, c)$
	  to $\FM_g^{1,[0];\scriptsizeZss}(Y;c)$}

\begin{lemma}\label{vR1sm}
{\bf [vanishing of $R^1\!(\rho\times \Id_Y)_{\ast}(\,\mbox{graph}\,)$
           for semistable morphism].}
   Let
    $S$ be an affine base scheme,
    $$
	    \varphi_S\; :\;
  		(C^{\prime}_S,
		 {\cal O}_{C_S}^{Az}
		     :=\Endsheaf_{{\cal O}_{C_S}}({\cal E}^{\prime}_S)/S;
		   {\cal E}^{\prime}_S)\; \longrightarrow\;   Y
	 $$
      be an $S$-family of semistable morphisms
	    from Azumaya nodal curves with a fundamental module to $Y$	 and
	 $\tilde{\cal E}^{\prime}_{\varphi_S}
	     \in \CohCategory((C^{\prime}_S\times Y)/S)$  	
       be its graph.
  Let  $\, \rho_S : C^{\prime}_S  \rightarrow C_S\,$
   be the built-in contracting homomorphism from nodal curves to stable curves.
  Then, Condition (2) required of semistable morphisms implies that
    $$
	   R^1\!(\rho_S\times \Id_Y)_{\ast}(\tilde{\cal E}^{\prime}_{\varphi_S})\;
	    =\; 0
	$$	
     in $\CohCategory((C_S\times Y)/S)$.
\end{lemma}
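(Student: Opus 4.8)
The plan is to use Condition (2) to dominate $\tilde{\cal E}^{\prime}_{\varphi_S}$ by a pulled-back sheaf, and then reduce the vanishing to the genus-$0$ geometry of the contracted fibers. Write $g:=\rho_S\times\Id_Y:C^{\prime}_S\times Y\to C_S\times Y$ and $\tilde{\cal F}_S:=g_{\ast}(\tilde{\cal E}^{\prime}_{\varphi_S})$. First I would record that $g$ is projective and that all its fibers have dimension $\le 1$ (the nontrivial ones being the ${\Bbb P}^1$-trees contracted by $\rho_S$), so that $R^ig_{\ast}=0$ for $i\ge 2$ on every coherent sheaf. Condition (2) provides a short exact sequence $0\to{\cal K}\to g^{\ast}(\tilde{\cal F}_S)\to\tilde{\cal E}^{\prime}_{\varphi_S}\to 0$; applying $Rg_{\ast}$ and using $R^2g_{\ast}({\cal K})=0$ gives a surjection $R^1g_{\ast}(g^{\ast}\tilde{\cal F}_S)\twoheadrightarrow R^1g_{\ast}(\tilde{\cal E}^{\prime}_{\varphi_S})$. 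Hence it suffices to prove $R^1g_{\ast}(g^{\ast}\tilde{\cal F}_S)=0$.

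The heart of the matter is this vanishing, and the key geometric input is that every fiber $T$ of $g$ is a ${\Bbb P}^1$-tree, hence of arithmetic genus $0$, so $H^1(T,{\cal O}_T^{\oplus m})=0$; moreover $g^{\ast}\tilde{\cal F}_S$ restricted to such a fiber is pulled back from the image point and is therefore a trivial bundle ${\cal O}_T^{\oplus m}$. To upgrade this fiberwise observation to the sheaf-level statement I would first establish $Rg_{\ast}({\cal O}_{C^{\prime}_S\times Y})={\cal O}_{C_S\times Y}$: this follows from $R\rho_{S\,\ast}({\cal O}_{C^{\prime}_S})={\cal O}_{C_S}$, the defining property of a genus-$0$ contraction (valid in families, since $\rho_{S\,\ast}{\cal O}={\cal O}$ and $R^1\rho_{S\,\ast}{\cal O}=0$ are stable under base change), together with flat base change along the projection $C_S\times Y\to C_S$, of which $g$ is the pullback of $\rho_S$.

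The remaining obstacle, and the step I expect to be most delicate, is that $g$ is a contraction and so is \emph{not} flat; one therefore cannot pass from $Rg_{\ast}{\cal O}={\cal O}$ to $R^1g_{\ast}(g^{\ast}\tilde{\cal F}_S)=0$ by naive cohomology-and-base-change. I would circumvent this in one of two equivalent ways. The conceptual route is the projection formula $Rg_{\ast}(Lg^{\ast}\tilde{\cal F}_S)\simeq\tilde{\cal F}_S\otimes^{L}Rg_{\ast}{\cal O}_{C^{\prime}_S\times Y}\simeq\tilde{\cal F}_S$, combined with the truncation triangle comparing $Lg^{\ast}\tilde{\cal F}_S$ with the ordinary pullback $g^{\ast}\tilde{\cal F}_S={\cal H}^0(Lg^{\ast}\tilde{\cal F}_S)$: since $Rg_{\ast}$ has cohomological amplitude $[0,1]$ while the higher terms ${\cal H}^{-i}(Lg^{\ast}\tilde{\cal F}_S)$ ($i\ge 1$) sit in negative degrees, a degree count forces the hypercohomology in degree $2$ of the truncated part to vanish, so that $R^1g_{\ast}(g^{\ast}\tilde{\cal F}_S)={\cal H}^1(Rg_{\ast}g^{\ast}\tilde{\cal F}_S)$ receives a contribution only from ${\cal H}^1(\tilde{\cal F}_S)=0$; hence it vanishes. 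The more elementary route, matching the tools of Sec.~\ref{preliminary}, is to note the question is local on $C_S\times Y$ and trivial away from the finitely many contracted fibers; near such a fiber I would twist by a large power $\pr_Y^{\ast}{\cal O}_Y(m)$ so that $\tilde{\cal F}_S(m)$ is globally generated, making $\tilde{\cal E}^{\prime}_{\varphi_S}(m)$ a quotient of a trivial bundle ${\cal O}^{\oplus k}$ via Condition (2), and then invoke Lemma~\ref{cvfdis} together with Lemma~\ref{vfdis} (decomposing the tree contraction into a chain of single-${\Bbb P}^1$ blow-downs) to get $R^1g_{\ast}(\tilde{\cal E}^{\prime}_{\varphi_S}(m))=0$, whence $R^1g_{\ast}(\tilde{\cal E}^{\prime}_{\varphi_S})=0$ after untwisting by the ample class.
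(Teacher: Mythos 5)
Your proposal is correct, but your primary route is genuinely different from the paper's. The paper's own proof is, in essence, your \emph{second} route run directly on the composite map: after a base change of $S$ it chooses a relative ample divisor $H_{C_S/S}$ disjoint from the image $\Sigma_S$ of the contracted trees (so that twisting by the resulting ${\cal O}_{(C_S\times Y)/S}(m)$ does not change $R^1$), twists until $\tilde{\cal F}_S(m)$ is globally generated, uses Condition (2) and right-exactness of pullback to present $\tilde{\cal E}^{\prime}_{\varphi_S}(m)$ as a quotient of ${\cal O}_{C^{\prime}_S\times Y}^{\,\oplus k}$, and then kills $R^1$ by the long exact sequence, using $R^2(\rho_S\times\Id_Y)_{\ast}=0$ (relative dimension $\le 1$) and $R^1(\rho_S\times\Id_Y)_{\ast}({\cal O}^{\oplus k})=0$ (collapse of ${\Bbb P}^1$-trees); finally it descends from the base-changed $S$ to the original $S$ by the Theorem on Formal Functions. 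Your conceptual route eliminates the global-generation apparatus entirely: Condition (2) plus $R^2 g_{\ast}=0$ reduces the claim to $R^1 g_{\ast}(g^{\ast}\tilde{\cal F}_S)=0$, and then the projection formula $Rg_{\ast}Lg^{\ast}\tilde{\cal F}_S\simeq \tilde{\cal F}_S\otimes^{L}Rg_{\ast}{\cal O}\simeq \tilde{\cal F}_S$ together with the truncation triangle for $Lg^{\ast}\tilde{\cal F}_S$ and the amplitude bound $[0,1]$ for $Rg_{\ast}$ gives the vanishing; the degree count there is sound. Both arguments rest on exactly the same two geometric inputs --- fibers of $g$ of dimension $\le 1$, and $Rg_{\ast}{\cal O}={\cal O}$ for the genus-$0$ stabilization contraction (which the paper invokes with the same brevity you do, via ``$R^1(\rho_S\times\Id_Y)_{\ast}({\cal O}^{\oplus k})=0$ since $\rho_S\times\Id_Y$ collapses ${\Bbb P}^1$-trees'') --- but your route buys independence from the auxiliary polarization, the base change of $S$, and the formal-functions descent, all of which exist in the paper only to manufacture a presentation by a trivial bundle.

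One caution about your fallback ``elementary route.'' Its localization trick (twist by $\pr_Y^{\ast}{\cal O}_Y(m)$ over an affine $U\subset C_S$, which is harmless under $R^1g_{\ast}$ because the twist is pulled back through $g$) is fine and in fact simpler than the paper's $H_{C_S/S}$ construction. But invoking Lemma~\ref{cvfdis} through Lemma~\ref{vfdis} by factoring the tree contraction into single-${\Bbb P}^1$ blow-downs is not quite available as stated: Lemma~\ref{cvfdis} requires the source to embed in ${\Bbb P}^1_X$ over the target, which fails already for a chain of length $\ge 2$, and to iterate per stage you would need each intermediate pushforward to be re-presented as a quotient of a trivial bundle --- pushing forward the presentation is obstructed by $R^1$ of the kernel, which can be nonzero since the kernel may have negative degree on the exceptional ${\Bbb P}^1$'s. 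This is precisely why the paper flags Lemma~\ref{vR1sm} as the ``more powerful/encompassing'' statement and proves it by running the cvfdis-style long-exact-sequence argument directly on $g$ rather than by iteration. Since that repaired version \emph{is} the paper's proof, and your main route is complete on its own, this wrinkle does not damage the proposal.
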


\begin{proof}
 Let $\Sigma_S\subset C_S$ be the image of contracted ${\Bbb P}^1$-trees
  on fibers of $C^{\prime}/S$  under $\rho_S$.
 Then $\Sigma_S$ has relative dimension $0$ over $S$.
 It follows that, after passing to a base change (still denoted by $S$) if necesary,
  there exists a relative ample Cartier divisor $H_{C_S/S}$  on $C_S/S$
    that has no intersection with $\Sigma_S$.
 Together with any ample Cartier divisor $H_Y$ on $Y$,
  they define a relative ample line bundle ${\cal O}_{(C_S\times Y)/S}(1)$
   on $(C_S\times Y)/S$
   that has the following property:
   $$
      R^1\!(\rho_S\times\Id_Y)_{\ast}(\tilde{\cal E}^{\prime}_{\varphi}
          \otimes_{{\cal O}_{C^{\prime}_S\times Y}}
		  (\rho_S\times \Id_Y)^{\ast}{\cal O}_{(C_S\times Y)/S}(m))\;
	  \simeq\;
	  R^1\!(\rho_S\times\Id_Y)_{\ast}(\tilde{\cal E}^{\prime}_{\varphi})
   $$
	for all $m$.
 
 Consider now the push-forward
   $\tilde{\cal F}_S
      := (\rho_S\times\Id_Y)_{\ast}(\tilde{\cal E}^{\prime}_{\varphi_S})$
   on $(C_S\times Y)/Y$.
 Then, there is an $m>0$ such that $\tilde{\cal F}_S(m)$ is globally generated.
 Let
     $$
	  {\cal O}_{C_S\times Y}^{\,\oplus k}\;\longrightarrow\;
	      \tilde{\cal F}_S(m)\;\longrightarrow\; 0
	 $$
  be the associated exact sequence of ${\cal O}_{(C_S\times Y)/S}$-modules.
 It follows from Condition (2) and right exactness of tensoring/pulling-back that
   this induces an exact sequence of ${\cal O}_{C^{\prime}_S\times Y}$-modules
   $$
     0;\longrightarrow\; \tilde{\cal H}^{\prime}_S\;    \longrightarrow\;
	   {\cal O}_{C^{\prime}_S\times Y }^{\,\oplus k}\;    \longrightarrow\;
	   \tilde{\cal E}^{\prime}_{\varphi_S}
	      \otimes _{{\cal O}_{C^{\prime}_S\times Y}}
	    (\rho_S\times \Id_Y)^{\ast}({\cal O}_{(C_S\times Y)/S}(m))\;
		\longrightarrow\; 0\,,
   $$
  and its associated long exact sequence of ${\cal O}_{C_S\times Y}$-modules
   $$
    \begin{array}{l}
      \cdots\; \longrightarrow\;
    	 R^1\!(\rho_S\times\Id_Y)_{\ast}
    	  ({\cal O}_{C^{\prime}_S\times Y }^{\,\oplus k})\;
        \longrightarrow\;
	   R^1\!(\rho_S\times\Id_Y)_{\ast}
	    (\tilde{\cal E}_{\varphi_S}
              \otimes_{{\cal O}_{C^{\prime}_S\times Y}}
		    (\rho_S\times \Id_Y)^{\ast}{\cal O}_{(C_S\times Y)/S}(m))\\[1.8ex]
       \longrightarrow\;
	   R^2\!(\rho_S\times\Id_Y)_{\ast}(\tilde{\cal H}^{\prime}_S)\;
	  \longrightarrow\;   \cdots\,.
	\end{array}
   $$
  Observe that
    $R^2\!(\rho\times\Id_Y)_{\ast}(\tilde{\cal H}^{\prime})=0$
	since $\rho_S\times\Id_S:C^{\prime}_S\times Y\rightarrow C_S\times Y$
	 has relative dimension $\le 1$.
  In addition,
    $R^1\!(\rho_S\times\Id_Y)_{\ast}
	  ({\cal O}_{C^{\prime}_S\times Y }^{\,\oplus k})=0$
	as well since $\rho_S\times\Id _Y$ collapses ${\Bbb P}^1$-trees.	
  It follows that 	
   $ R^1\!(\rho\times\Id_Y)_{\ast}
	  (\tilde{\cal E}_{\varphi}
            \otimes_{{\cal O}_{C^{\prime}\times Y}}
		  (\rho\times \Id_Y)^{\ast}{\cal O}_{C\times Y}(m))=0$
   and, hence, $R^1\!(\rho\times\Id_Y)_{\ast}(\tilde{\cal E}_{\varphi})=0$.
   
  So far, this is over the base-changed $S$ to guarantee the existence of
    the special $H_{C_S/S}$ at the beginning of the proof.
  However, the Theorem on Formal Functions implies that
   the same vanishing statement must hold over the original $S$.
 This proves the lemma.
   
\end{proof}

%
%
%
%
%
%
%

\bigskip

Continuing the setting in the above lemma.
It follows then from Lemma~\ref{cpf}  that
 $$
    \tilde{\cal F}_S\;
     :=\;
	  (\rho_S\times\Id_Y )_{\ast}(\tilde{\cal E}^{\prime}_{\varphi_S})
 $$
 is flat over $S$ and, hence,
defines a morphism
 $$
    S\; \longrightarrow\;  \FM_g^{1,[0];\scriptsizeZss}(Y;c)\,.
 $$
This shows that
   the built-in contracting morphisms $\rho\times\Id_Y$ in Definition~\ref{Zssm}
 induce a natural morphism
 $$
   \Xi\; :\; {\frak M}_{Az^{\!f}\!(g;r,\chi)}^{\tinyZss}(Y; \beta, c)\;
     \longrightarrow\;   \FM_g^{1,[0];\scriptsizeZss}(Y;c)
 $$
 from the moduli stack of semistable morphisms from general Azumaya nodal curves
    with a fundamental module
  to the moduli stack of Fourier-Mukai transforms from stable curves.

\bigskip

Altogether, one has the following diagram of natural morphisms between moduli stacks:
$$
 \xymatrix{
  & {\frak M}_{Az^{\!f}\!(g;r,\chi)}^{\tinyZss}(Y; \beta, c)
           \ar[rr]^-{\Xi} \ar[d]_-{\Phi}
        && \FM_g^{1,[0];\scriptsizeZss}(Y;c)
                 \ar[d]	 	\\
  & {\cal M}_{Az^{\!f}\!(g;r,\chi)}\ar[rr]
        &&   \overline{\cal M}_g  & .
   }
$$
Here,
 $\Phi:{\frak M}_{Az^{\!f}\!(g;r,\chi)}^{\scriptsizeZss}(Y;\beta,c)
        \rightarrow  {\cal M}_{Az^{\!f}\!(g;r,\chi)}$
 is the forgetful morphism
 $$
   [\varphi:(C^{\prime},
                     {\cal O}_C^{Az}
					       := \Endsheaf_{{\cal O}_{C^{\prime}}}({\cal E}^{\prime});
                       {\cal E}^{\prime})\rightarrow Y ]    \;
    \longmapsto\;
	[(C^{\prime},
                     {\cal O}_C^{Az}
					       := \Endsheaf_{{\cal O}_{C^{\prime}}}({\cal E}^{\prime});
                       {\cal E}^{\prime})]\,
	=\, [(C^{\prime},{\cal E}^{\prime})]\,,	
 $$
 ${\cal M}_{Az^{\!f}\!(g;r,\chi)}\rightarrow  \overline{\cal M}_g$
    comes from the collapsing morphism that stabilizes a nodal curve, and
  $\FM_g^{1,[0];\scriptsizeZss}(Y;c)  \rightarrow \overline{\cal M}_g$
      is the forgetful morphism
	 $[(C, \tilde{\cal F}\in \CohCategory(C\times Y))]  \mapsto [C]$.

\bigskip

\section{Compactness of the moduli stack
        ${\frak M}_{Az^{\!f}\!(g;r,\chi)}^{\scriptsizeZss}(Y;\beta,c)$\\
		of $Z$-semistable morphisms}

We now state the main theorem of the current notes, whose proof takes this whole section:

\bigskip

\noindent
{\bf Theorem 4.0.
 [${\frak M}_{Az^{\!f}\!(g;r,\chi)}^{\scriptsizeZss}(Y;\beta,c)$ compact].} 
{\it 
  The stack ${\frak M}_{Az^{\!f}\!(g;r,\chi)}^{\tinyZss}(Y; \beta, c)$
   of $Z$-semistable morphisms from Azumaya nodal curves with a fundamental module
   to the Calabi-Yau $3$-fold $Y$ of type $(g;r,\chi;\beta,c)$ 
  is bounded and complete (i.e.\ compact).
} 

\bigskip

\bigskip

\subsection{Boundedness of
         ${\frak M}_{Az^{\!f}\!(g;r,\chi)}^{\scriptsizeZss}(Y;\beta,c)$}

Recall from [L-L-S-Y: Proposition~2.3.1]  (D(2)) (with Si Li and Ruifang Song):

\bigskip

\begin{proposition}\label{bounded-mbfd}
{\bf [boundedness of morphisms from bounded family of domains].}
 Let $(C^{Az}_S,{\cal E}_S)/S$ be a bounded family of Azumaya nodal curves
  with a fundamental module of type $(g;r,\chi)$ over a base scheme $S$ (of finite type).
 Then the stack ${\frak M}_{(C^{Az}_S\!,\,{\cal E}_S)/S}(Y;\beta)$
  of  morphisms of type $(g;r, \chi; \beta)$ from fibers of
  $(C^{Az}_S,{\cal E}_S)/S$ to a projective scheme $Y$ is bounded.
\end{proposition}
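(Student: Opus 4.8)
The plan is to pass to graphs and reduce the whole question to a single relative Quot scheme. By the graph correspondence recalled in Section~1, giving a morphism $\varphi:(C,{\cal O}_C^{Az};{\cal E})\rightarrow Y$ of type $(g;r,\chi;\beta)$ from a fibre of $(C^{Az}_S,{\cal E}_S)/S$ is the same as giving its graph $\tilde{\cal E}_\varphi$, a coherent sheaf on $C\times Y$ flat over $C$ of relative dimension $0$ and relative length $r$, with $\pr_{C\,\ast}\tilde{\cal E}_\varphi={\cal E}$ and $[\pr_{Y\,\ast}\tilde{\cal E}_\varphi]=\beta$. Since the family of domains $(C_S,{\cal E}_S)/S$ is assumed bounded, it suffices to exhibit the totality of graphs $\{\tilde{\cal E}_\varphi\}$ as a bounded family of coherent sheaves on the fibres of $(C_S\times Y)/S$. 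After replacing $S$ by a scheme of finite type I may assume $C_S/S$ carries a relative polarization $L$ and that ${\cal E}_S$ is an honest $S$-flat coherent sheaf; I fix a polarization ${\cal O}_Y(1)$ on $Y$ and set ${\cal O}_{(C_S\times Y)/S}(1):=\pr_{C_S}^{\ast}L\otimes\pr_Y^{\ast}{\cal O}_Y(1)$.

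First I would pin down the Hilbert polynomial. Because $\Supp(\tilde{\cal E}_\varphi)$ is finite over $C$, one has $R^{>0}\pr_{C\,\ast}\tilde{\cal E}_\varphi=0$, so $\chi(\tilde{\cal E}_\varphi)=\chi(C,{\cal E})=\chi$ is fixed. As $\tilde{\cal E}_\varphi$ is $1$-dimensional, its Hilbert polynomial is $P(n)=\deg\cdot n+\chi$, whose leading coefficient is the intersection of the $1$-cycle class $\tilde\beta(\tilde{\cal E}_\varphi)$ with $c_1(\pr_{C_S}^{\ast}L)+c_1(\pr_Y^{\ast}{\cal O}_Y(1))$. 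Projecting the cycle to each factor gives $\pr_{C\,\ast}\tilde\beta=r[C]$ (relative length $r$ on each component) and $\pr_{Y\,\ast}\tilde\beta=\beta$, so this leading coefficient equals $r\deg_L C+\beta\cdot c_1({\cal O}_Y(1))$, which takes only finitely many values over the bounded family of domains. Thus every graph has Hilbert polynomial lying in a fixed finite set $\{P\}$.

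The crucial step is to present every graph as a quotient of one fixed sheaf. I claim the adjunction counit $\pr_C^{\ast}{\cal E}=\pr_C^{\ast}\pr_{C\,\ast}\tilde{\cal E}_\varphi\rightarrow\tilde{\cal E}_\varphi$ is surjective. Surjectivity may be checked fibrewise on $\{c\}\times Y$; since $\pr_C$ is finite on the support and $\tilde{\cal E}_\varphi$ is $C$-flat, pushforward commutes with base change, whence $H^0(\tilde{\cal E}_\varphi|_{\{c\}\times Y})={\cal E}\otimes k(c)$, while $\tilde{\cal E}_\varphi|_{\{c\}\times Y}$, being $0$-dimensional, is generated by its global sections; hence the fibrewise evaluation, and therefore the counit, is onto. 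Relatively over $S$ this says that each graph is a quotient of the single fixed sheaf $\pr_{C_S}^{\ast}{\cal E}_S$ on $(C_S\times Y)/S$. Consequently every $\tilde{\cal E}_\varphi$ defines a point of a relative Quot scheme $\Quot_{(C_S\times Y)/S}(\pr_{C_S}^{\ast}{\cal E}_S;P)$ for one of the finitely many polynomials $P$ above; each such Quot scheme is projective over $S$, hence of finite type over ${\Bbb C}$, and their universal quotients furnish a bounding family. This establishes boundedness of the graphs, and with it of ${\frak M}_{(C^{Az}_S,{\cal E}_S)/S}(Y;\beta)$.

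I expect the crucial step to be the main obstacle: the whole argument hinges on converting the a priori unbounded datum ``a morphism to $Y$'' into quotients of a \emph{fixed} sheaf, and this is exactly where boundedness of the domains is indispensable, since it is what makes $\pr_{C_S}^{\ast}{\cal E}_S$ a single fixed sheaf. Once the counit surjectivity and the uniform Hilbert-polynomial bound are in hand, landing in finitely many components of a relative Quot scheme (equivalently, invoking Kleiman's boundedness criterion) is formal.
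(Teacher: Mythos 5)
The paper does not actually prove this proposition here: it is imported verbatim from [L-L-S-Y: Proposition~2.3.1] (D(2)), so there is no in-paper argument to compare against line by line. Your reconstruction is correct, and it is the standard Quot-scheme mechanism that underlies the cited proof (the same device reappears in Sec.~2.2 of this paper, where graphs realized as quotients are fed into $\Quot_Y({\cal O}_Y^{\,\oplus k},r)$). The three pillars of your argument all check out: (i) the counit $\pr_C^{\ast}\pr_{C\,\ast}\tilde{\cal E}_\varphi\rightarrow\tilde{\cal E}_\varphi$ is surjective --- your fibrewise argument works since $\pr_C$ restricted to $\Supp(\tilde{\cal E}_\varphi)$ is affine, so pushforward commutes with base change and surjectivity on fibres upgrades to surjectivity by Nakayama; alternatively, the counit of an affine morphism is always surjective, which gives the claim directly; (ii) the Hilbert polynomial with respect to $\pr_{C_S}^{\ast}L\otimes\pr_Y^{\ast}{\cal O}_Y(1)$ is $\bigl(r\deg_L C_s+\beta\cdot c_1({\cal O}_Y(1))\bigr)n+\chi$, using $\pr_{C\,\ast}\tilde\beta=r[C_s]$ (flatness of relative length $r$) and the observation that components of the support vertical over $Y$ contribute $0$ both to $c_1({\cal O}_Y(1))\cdot\pr_{Y\,\ast}\tilde\beta$ and to $\beta$, so only finitely many polynomials occur over the bounded domain family; (iii) projectivity of the relative Quot scheme over the finite-type base then bounds the family of graphs, hence the stack of morphisms via the graph correspondence of Sec.~1. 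Two harmless elisions you might make explicit: a relative polarization on $C_S/S$ exists only after a stratification or finite cover of $S$ (which you acknowledge), and not every point of the Quot scheme is a graph of a morphism --- but for boundedness it suffices that every graph occurs among the fibres of the universal quotient, which is exactly what you use.
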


%
%
 
\bigskip

\noindent
It follows that to show that
 ${\frak M}_{Az^{\!f}\!(g;r,\chi)}^{\scriptsizeZss}(Y;\beta,c)$
  is bounded,
 we only need to show that the image stack of the forgetful morphism
   $\Phi:{\frak M}_{Az^{\!f}\!(g;r,\chi)}^{\scriptsizeZss}(Y;\beta,c)
        \rightarrow  {\cal M}_{Az^{\!f}\!(g;r,\chi)}$
   is bounded.
In other words,
consider the following commutative diagram of schemes and coherent sheaves thereupon:
 $$
   \xymatrix{
   &{\tilde{\cal E}}^{\prime}\ar@{.}[rd]
       &&&&&&&&  \tilde{\cal F}\ar@{.}[ld]                                                              \\
   && C^{\prime}\times Y
		     \ar[rrrrrr]^-{\rho\times\scriptsizeId_Y }
		     \ar[ddd]^-{pr_{C^{\prime}}}
       &&&&&
	   & C \times Y     \ar[ddd] _-{pr_C}                                                                       \\  \\
   &  {\cal E}^{\prime}\ar@{.}[rd]    &&&&&&&& {\cal F}\ar@{.}[ld]  \\
   && C^{\prime}\ar[rrrrrr]^-{\rho}       &&&&&&	 C   &&.
   }
 $$
Here,
\begin{itemize}
 \item[{\LARGE $\cdot$}]	
  $\tilde{\cal E}^{\prime}$ is a coherent ${\cal O}_{C^{\prime}\times Y}$-module
   that gives a $Z$-semistable morphism\\
    $[\varphi_{\tilde{\cal E}^{\prime}}]
	   \in {\frak M}_{Az^f(g;r,\chi)}^{\scriptsizeZss}(Y; \beta, c)$;

 \item[{\LARGE $\cdot$}]
  $\rho:C^{\prime}\rightarrow C$ is the contraction of unstable ${\Bbb P}^1$-trees
    in the nodal curve $C^{\prime}$ that renders it a stable curve $C$;

 \item[{\LARGE $\cdot$}]	
  $\tilde{\cal F}=(\rho\times \Id_Y)_{\ast}(\tilde{\cal E}^{\prime})$
   on $C\times Y$
    gives a $Z$-semistable Fourier-Mukai transform\\
	$[\tilde{\cal F}]  \in \FM_g^{1,[0];\scriptsizeZss}(Y;c)$;
	
 \item[{\LARGE $\cdot$}]	
  ${\cal E}^{\prime}
      ={\pr_{C^{\prime}}}_{\ast}(\tilde{\cal E}^{\prime})$
	is locally free on $C^{\prime}$;    and
	
 \item[{\LARGE $\cdot$}]	
  ${\cal F}={\pr_C}_{\ast}(\tilde{\cal F})
     = \rho_{\ast}({\cal E}^{\prime})$
   is a coherent ${\cal O}_C$-module, possibly with torsion.
\end{itemize}
Then,

\bigskip

\begin{theorem}\label{bounded-sZssm}
 {\bf [boundedness of stack of $Z$-semistable morphisms of fixed type].}
 The substack
   $$
    \Image\Phi\; :=\;
    \left\{
	   (C^{\prime},
	  {\cal O}_{C^{\prime}}^{Az}
	      (:=\Endsheaf_{{\cal O}_{C^{\prime}}}({\cal E}^{\prime}));
	  {\cal E }^{\prime})\;
	  \left|\;\:  \parbox{17.6em}{$[\varphi_{\tilde{\cal E}^{\prime}}]$
	           runs over all 
			  ${\frak M}_{Az^{\!f}\!(g;r,\chi)}^{\scriptsizeZss}(Y;\beta,c)$ and
	          $(C^{\prime},{\cal E}^{\prime})$ arises from the bottom-left corner
              of the above diagram. }
	     \right.\ \right\}
   $$
   of ${\cal M}_{Az^{\!f}\!(g;r,\chi)}$ is bounded    and, hence,
 the moduli stack
   ${\frak M}_{Az^{\!f}\!(g;r,\chi)}^{\scriptsizeZss}(Y; \beta, c)$
  of $Z$-semistable morphisms of type $(g;r,\chi;\beta,c)$
  from Azumaya nodal curves with a fundamental module to the Calabi-Yau $3$-fold $Y$
  is bounded.
\end{theorem}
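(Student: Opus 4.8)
The plan is to deduce boundedness of the whole stack from boundedness of its image $\Image\Phi$ under the forgetful morphism: once $\Image\Phi\subset{\cal M}_{Az^{\!f}\!(g;r,\chi)}$ is shown bounded it is cut out by some bounded family $(C^{Az}_S,{\cal E}_S)/S$ of Azumaya nodal curves with a fundamental module of type $(g;r,\chi)$, and Proposition~\ref{bounded-mbfd} then gives boundedness of ${\frak M}_{Az^{\!f}\!(g;r,\chi)}^{\scriptsizeZss}(Y;\beta,c)$ at once. So the entire task is to bound the domain pairs $(C^{\prime},{\cal E}^{\prime})$. For the ``main part'' this is immediate: by Theorem~\ref{FM-compact} the transform $\tilde{\cal F}=(\rho\times\Id_Y)_{\ast}(\tilde{\cal E}^{\prime})$ ranges in the compact stack $\FM_g^{1,[0];\scriptsizeZss}(Y;c)$, so the stable curve $C\in\overline{\cal M}_g$ and all discrete invariants of $(C,\tilde{\cal F})$ — in particular the number of nodes of $C$ (at most $3g-3$), the torsion length of ${\cal F}={\pr_C}_{\ast}(\tilde{\cal F})$, and the multidegree of ${\cal F}_{\torsionfreescriptsize}$ on $C$ — lie in a bounded set.

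Next I would control the contracted ${\Bbb P}^1$-trees $C^{\prime}_1\subset C^{\prime}$. Since $\tilde{\cal E}^{\prime}$ has relative dimension $0$ over $C^{\prime}$, Lemma~\ref{vR1sm} gives $R^1\rho_{\ast}{\cal E}^{\prime}=0$ and hence $\chi({\cal E}^{\prime})=\chi({\cal F})=\chi$; as $C^{\prime}$ has arithmetic genus $g$, the total degree $\degree({\cal E}^{\prime})=\chi-r(1-g)$ is fixed. To bound the degree carried by $C^{\prime}_1$ I would invoke Proposition~\ref{ddwbfTt} with ${\cal F}^{\prime}={\cal E}^{\prime}$, the collapsing $\rho$, and the tautological surjection $\rho_{\ast}{\cal E}^{\prime}\twoheadrightarrow{\cal F}_{\torsionfreescriptsize}$: its hypothesis that each contracted tree be \emph{positive} (Definition~\ref{nnpsp}) is guaranteed by Condition~(3) of Definition~\ref{Zssm}, because an end (leaf) ${\Bbb P}^1$ of a contracted tree has fewer than three special points and so cannot be constant, i.e. must carry positive degree. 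Since ${\cal E}^{\prime}$ is locally free, $\delta_{\flatscriptsize}({\cal E}^{\prime})=0$, and the proposition yields
$$
  \degree\bigl(({\cal E}^{\prime}|_{C^{\prime}_1})_{\torsionfreescriptsize}\bigr)\;
    =\; \delta_{\flatscriptsize}({\cal F}_{\torsionfreescriptsize})\;
    \le\; r\cdot\#\{\mbox{nodes of }C\}\;\le\; r(3g-3)\,,
$$
the middle bound coming from $0\le\delta_{\flatscriptsize}({\cal F}_{\torsionfreescriptsize};p)\le r$ at each node (Definition~\ref{dtf}). Thus the degree on the bubbling trees, and therefore the degree on the main part $C^{\prime}_0$ as well, is bounded.

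With the tree-degree bounded I would bound the combinatorial type of $C^{\prime}$. By Proposition~\ref{ncPEasm} together with Lemma~\ref{ggEpdE}, every non-constant ${\Bbb P}^1$-component of $C^{\prime}_1$ is strictly positive, hence of degree $\ge 1$, so the number of such components is at most $r(3g-3)$. For the constant components I would run the usual stable-map tree count: Condition~(3) forces every constant component to have valence $\ge 3$ in the dual graph, while all leaves and all valence-$2$ vertices are necessarily non-constant; the tree identity $\sum_v(\deg v-2)=-2$ then bounds the number of valence-$\ge 3$ (in particular constant) components in each contracted tree by its number of leaves plus attaching points, and the number of attaching points is itself bounded (bridges map to the $\le 3g-3$ nodes of $C$, and each rational tail contains a positive component). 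Hence $C^{\prime}_1$ has boundedly many components, the combinatorial type of $C^{\prime}$ takes finitely many values, and the multidegree of the rank-$r$ locally free ${\cal E}^{\prime}$ lies in a finite set; for each fixed type and multidegree the locally free sheaves of rank $r$ on the (bounded) nodal curves $C^{\prime}$ form a bounded family, and taking the union gives boundedness of $\Image\Phi$, whence Proposition~\ref{bounded-mbfd} finishes the theorem.

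I expect the main obstacle to be the combinatorial bound on the constant (degree-$0$) ${\Bbb P}^1$-components: unlike the positive ones, these are invisible both to the twisted central charge and to the degree, so they must be controlled purely through the three-special-point Condition~(3) together with a careful accounting of how many attaching points the trees can carry on the stable curve $C$. Verifying that Proposition~\ref{ddwbfTt} genuinely applies — i.e. that no contracted tree is \emph{entirely} constant — is the conceptual crux that ties Condition~(3) to the decisive degree bound $\degree(({\cal E}^{\prime}|_{C^{\prime}_1})_{\torsionfreescriptsize})=\delta_{\flatscriptsize}({\cal F}_{\torsionfreescriptsize})\le r(3g-3)$.
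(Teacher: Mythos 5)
Your overall skeleton (reduce to boundedness of $\Image\Phi$ and conclude with Proposition~\ref{bounded-mbfd}; use compactness of $\FM_g^{1,[0];\scriptsizeZss}(Y;c)$ to bound the stable-curve side; use Condition~(3) plus tree combinatorics to count constant components) matches the paper, but there is a genuine gap at your decisive step. You apply Proposition~\ref{ddwbfTt} to the \emph{entire} contracted tree $C^{\prime}_1=C^{\prime}_u$, justifying its positivity hypothesis by arguing that leaves cannot be constant and then invoking Proposition~\ref{ncPEasm}/Lemma~\ref{ggEpdE} to get strict positivity of every nonconstant contracted component. Those two results, however, require Property~(2) of Sec.~2.2 --- that ${\pr_Y}_{\ast}(\tilde{\cal E}^{\prime}|_{{\Bbb P}^1})$ be $0$-dimensional --- and this fails exactly on the components the paper collects into $C^{\prime(1)}_u$: contracted ${\Bbb P}^1$'s whose image in $Y$ is an honest curve (they carry part of $\beta$, and are contracted by $\rho$ because they lie over points of the stable model $C$, not because they map to points of $Y$). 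On such a component ${\cal E}^{\prime}|_{{\Bbb P}^1}\simeq\oplus_{k}{\cal O}_{{\Bbb P}^1}(a_{ijk})$ can have \emph{negative} summands (e.g.\ the pushforward of a negative line bundle under the graph of a nonconstant map; twisting by ${\cal O}_Y(m)$ restores positivity of ${\pr}_{\ast}(\tilde{\cal E}(m))$ but not of ${\cal E}$ itself, which is where the proof of Lemma~\ref{ggEpdE} genuinely uses $0$-dimensionality). So ``positive on $C^{\prime}_u$'' is unavailable, Proposition~\ref{ddwbfTt} does not apply, and both your degree bound $\le r(3g-3)$ and your count ``nonconstant components have degree $\ge 1$, hence at most $r(3g-3)$ of them'' break down. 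This is precisely why the paper splits $C^{\prime}_u$ into $C^{\prime(0)}_{u,+}\cup C^{\prime(0)}_{u,0}\cup C^{\prime(1)}_u$, bounds the number of $C^{\prime(1)}_u$-components by $H\cdot\beta$ (Lemma~\ref{bounded-ctC0Cp1u}), and then proves an upper bound for the $a_{ijk}$ by torsion-length comparison and, separately, a \emph{lower} bound by $\chi$-bookkeeping across the decomposition; with no analogue of this in your proposal, the multidegree of ${\cal E}^{\prime}$ is not confined to a finite set, since large positive and negative $a_{ijk}$ can cancel inside the fixed total degree.

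A second, independent flaw sits in your key display: even where positivity holds, the identity $\degree(({\cal E}^{\prime}|_{C^{\prime}_1})_{\torsionfreescriptsize})=\delta_{\flatscriptsize}({\cal F}_{\torsionfreescriptsize})\le r(3g-3)$ cannot carry the whole bound, because trees contracted to \emph{smooth} points of $C$ deposit their degree entirely into the torsion ${\cal F}_{\torsionscriptsize}$ (Case~(a) in the proof of Proposition~\ref{ddwbfTt}), not into $\delta_{\flatscriptsize}$; that part is uniformly bounded only because $l({\cal F}_{\torsionscriptsize})$ is controlled by the compactness of $\FM_g^{1,[0];\scriptsizeZss}(Y;c)$. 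This is the content of the paper's Lemma~\ref{lFtfbCp0u+} (via the $h^0$-count of Lemma~\ref{h0EpCp0ua} and the injection of sections vanishing at attaching points into ${\cal F}_{\torsionscriptsize}$), including its delicate exceptional case of short chains of length $\le r$ that create no torsion and are instead absorbed into $\delta_{\flatscriptsize}\le r\cdot\#\{\mbox{nodes of }C\}$. You mention the torsion-length bound in your first paragraph but your inequality never uses it, so the degree bound is incorrect as stated. To repair the proposal you would essentially have to reinstate the paper's tripartite treatment of the contracted trees.
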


\noindent
The proof of the theorem takes the rest of this subsection.
For the convenience of referral, we will call the above commutative diagram
 {\it the basic diagram of four}.

\bigskip

\begin{flushleft}
{\bf Reduction to boundedness of the family of pairs for stable and unstable subcurves.}
\end{flushleft}
Let
 $$
   C^{\prime}= C^{\prime}_0\cup C^{\prime}_u
 $$
  be a decomposition of $C^{\prime}$ into a union of two subcurves,
   where $C^{\prime}_u$ is the union of all connected ${\Bbb P}^1$-trees
      that are contracted by $\rho$,
  and
  $$
    C^{\prime}_u\;
	 =\;  \left(C^{\prime (0)}_{u,+}\cup C^{\prime(0)}_{u,0}\right)
	          \bigcup\,  C^{\prime (1)}_u
  $$
  be the further decomposition of $C^{\prime}_u$
   into the union of three ${\Bbb P}^1$ sub-trees such that
  \begin{itemize}
    \item[{\Large $\cdot$}]
	 for each ${\Bbb P}^1$-component of $C^{\prime (0)}_{u,+}$,
	  ${\pr_Y}_{\ast}(\tilde{\cal E}^{\prime}|_{{\Bbb P}^1})$
	  is $0$-dimensional\\  with
	  $\varphi_{\tilde{\cal E}^{\prime}}|_{{\Bbb P}^1}$ a nonconstant morphism;
	
    \item[{\Large $\cdot$}]
	 for each ${\Bbb P}^1$-component of $C^{\prime (0)}_{u,0}$,
	  ${\pr_Y}_{\ast}(\tilde{\cal E}^{\prime}|_{{\Bbb P}^1})$
	  is $0$-dimensional\\  with
	  $\varphi_{\tilde{\cal E}^{\prime}}|_{{\Bbb P}^1}$ a constant morphism;	
		
    \item[{\Large $\cdot$}]
 	 for each ${\Bbb P}^1$-component of $C^{\prime (1)}_u$,
	  ${\pr_Y}_{\ast}(\tilde{\cal E}^{\prime}|_{{\Bbb P}^1})$
	  is $1$-dimensional.
  \end{itemize}
For convenience,
 denote also the union as subcurves in $C^{\prime}$
 $$
    C^{\prime(0)}_u\;
       :=\;  C^{\prime(0)}_{u,+}\cup C^{\prime(0)}_{u,0}\,.
 $$
 
Our goal is to prove the following theorem:

\bigskip

\begin{theorem}\label{bounded-fpsc}
 {\bf [boundedness for family of pairs for subcurves].}
 The family of isomorphism classes of all possible pairs
    $(C^{\prime}_0,{\cal E}^{\prime}|_{C^{\prime}_0})$
    (resp.\  $(C^{\prime(0)}_{u,+},
                    	{\cal E}^{\prime}|_{C^{\prime(0)}_{u,+}})$,
                    $(C^{\prime(0)}_{u,0},
                    	{\cal E}^{\prime}|_{C^{\prime(0)}_{u,0}})$,							
                    $(C^{\prime(1)}_u, {\cal E}^{\prime}|_{C^{\prime(1)}_u})$)
     in the problem is bounded.
\end{theorem}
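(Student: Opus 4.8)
The plan is to bound each of the four families of pairs by controlling two quantities separately: the discrete combinatorial type of the underlying (sub)curve — its arithmetic genus, number of nodes, and number of $\mathbb{P}^1$-components — and the degree of the restricted fundamental module on each irreducible component, the rank being fixed at $r$ throughout. Once the combinatorial type lies in a finite set and the component-degrees are uniformly bounded, the corresponding locally free sheaves form a bounded family, which is exactly what the theorem asserts.

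First I would dispose of the main part $(C^{\prime}_0,\mathcal{E}^{\prime}|_{C^{\prime}_0})$. By Condition~(2) of Definition~\ref{Zssm} together with Lemma~\ref{vR1sm} one has $R^1(\rho\times\Id_Y)_{\ast}(\tilde{\mathcal{E}}^{\prime}_{\varphi})=0$, so Lemma~\ref{cpf} guarantees that $\tilde{\mathcal{F}}=(\rho\times\Id_Y)_{\ast}(\tilde{\mathcal{E}}^{\prime}_{\varphi})$ is a genuine $Z$-semistable Fourier--Mukai transform of central charge $c$; hence $[\tilde{\mathcal{F}}]$ ranges over the compact, and therefore bounded, stack $\FM_g^{1,[0];\scriptsizeZss}(Y;c)$ of Theorem~\ref{FM-compact}. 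Since $\rho$ restricts to a birational, affine, relative-dimension-$0$ map $C^{\prime}_0\to C$ that is an isomorphism off the finitely many contracted points, Condition~(2) and Lemma~\ref{rpf} identify $\tilde{\mathcal{E}}^{\prime}_{\varphi}|_{C^{\prime}_0\times Y}$ with the pullback of $\tilde{\mathcal{F}}$ away from those points; boundedness of $\tilde{\mathcal{F}}$ thus bounds $\tilde{\mathcal{E}}^{\prime}_{\varphi}|_{C^{\prime}_0\times Y}$ and hence $\mathcal{E}^{\prime}|_{C^{\prime}_0}={\pr_{C^{\prime}}}_{\ast}(\tilde{\mathcal{E}}^{\prime}_{\varphi}|_{C^{\prime}_0})$. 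The combinatorial type of $C^{\prime}_0$ is bounded because $C^{\prime}_0$ stabilizes onto the genus-$g$ stable curve $C$, whose type is fixed by $\overline{\mathcal{M}}_g$.

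Next I would turn to the three families of contracted $\mathbb{P}^1$-trees, for which the key input is that along $C^{\prime}_u$ the sheaf $(\rho\times\Id_Y)^{\ast}\tilde{\mathcal{F}}$ is constant on each contracted tree, so by Condition~(2) the graph $\tilde{\mathcal{E}}^{\prime}_{\varphi}|_{C^{\prime}_u\times Y}$ is a quotient of a sheaf constant along each tree --- precisely the situation of the special class studied in Sec.~2.2. For the nonconstant $0$-dimensional-image part $C^{\prime(0)}_{u,+}$, Properties~(1), (2), (3) all hold, so Proposition~\ref{ncPEasm} (via Lemmas~\ref{ggEpdE} and~\ref{csp}) forces $\mathcal{E}^{\prime}|_{C^{\prime(0)}_{u,+}}$ to be strictly positive; in particular its degree on every component is nonnegative. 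For the $1$-dimensional-image part $C^{\prime(1)}_u$, each $\mathbb{P}^1$ maps nonconstantly onto a curve in $Y$ and so contributes a strictly positive amount to $\beta\cdot H_Y$; since $\beta$ is fixed this bounds both the number of components of $C^{\prime(1)}_u$ and their image degrees, and the relative-length-$r$ flatness of the graph then controls $\deg(\mathcal{E}^{\prime})$ there.

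The final ingredient --- and the step I expect to be the \emph{main obstacle} --- is converting nonnegativity into an actual bound and controlling the constant trees. Using Lemma~\ref{h0E} (or Lemma~\ref{h0F-2}) one has $\chi(\mathcal{E}^{\prime}|_{\text{tree}})=r\,|\pi_0|+\deg$ on each nonnegative tree, so the fixed total $\chi(\mathcal{E}^{\prime})=\chi$, decomposed additively over $C^{\prime}_0\cup C^{\prime}_u$ with node-correction terms, writes $\chi$ as the already-bounded contribution of $C^{\prime}_0$ plus a sum of nonnegative tree-contributions; nonnegativity then forces the total tree-degree, and hence the number of positive-degree components of $C^{\prime(0)}_{u,+}$ together with their individual degrees, to be bounded. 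It remains to bound the constant-morphism trees $C^{\prime(0)}_{u,0}$, on which $\mathcal{E}^{\prime}$ restricts to the trivial bundle $\mathcal{O}^{\oplus r}$ (degree $0$) and which therefore escape both the $\chi$- and $\beta$-counts; here Condition~(3) of Definition~\ref{Zssm} is decisive, since every contracted constant $\mathbb{P}^1$ carries at least three special points, so the dual graph of these trees has all vertices of valence $\ge 3$ and their number is bounded by the number of attaching points to the already-bounded pieces $C^{\prime}_0$, $C^{\prime(0)}_{u,+}$, $C^{\prime(1)}_u$. Making precise the bookkeeping of these attaching points, the node-correction terms in the Euler-characteristic decomposition, and a uniform lower bound for the degree on $C^{\prime(1)}_u$ is the delicate part; modulo this, all four families have bounded combinatorial type and bounded component-degrees, which proves the theorem.
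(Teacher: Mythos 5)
Your graph-theoretic treatment of $(C^{\prime(0)}_{u,0},{\cal E}^{\prime}|_{C^{\prime(0)}_{u,0}})$ via the valence-$\ge 3$ condition matches the paper, and invoking Proposition~\ref{ncPEasm} for strict positivity on $C^{\prime(0)}_{u,+}$ is correct. But the mechanism you propose for converting positivity into actual bounds --- the Euler-characteristic decomposition with ``nonnegative tree-contributions'' --- has a genuine gap: nonnegativity of ${\cal E}^{\prime}$ is available only on $C^{\prime(0)}_u$, where Proposition~\ref{ncPEasm} applies because the graph has $0$-dimensional push-forward to $Y$; it fails on $C^{\prime(1)}_u$, where the degrees $a_{ijk}$ can a priori be arbitrarily negative (the paper must prove a uniform \emph{lower} bound for them as a separate final step, and can do so only after upper bounds are already in place). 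Consequently your identity ``fixed $\chi$ = bounded $C^{\prime}_0$-part plus nonnegative tree parts'' does not hold: arbitrarily large positive degrees on $C^{\prime(0)}_{u,+}$ can be cancelled in the $\chi$-count by arbitrarily negative degrees on the boundedly many (at most $H\cdot\beta$) components of $C^{\prime(1)}_u$, so the fixed $\chi$ bounds neither side. Your fallback for $C^{\prime(1)}_u$ --- that ``relative-length-$r$ flatness of the graph controls $\deg({\cal E}^{\prime})$ there'' --- is an assertion, not an argument: flatness fixes the fiber length of $\tilde{\cal E}^{\prime}$, not the degree of its push-forward to $C^{\prime}$.

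The missing idea is the paper's torsion-length mechanism (Remark~\ref{cstcpt}, Lemma~\ref{h0EpCp0ua}, Lemma~\ref{lFtfbCp0u+}). Since ${\cal E}^{\prime}|_{C^{\prime(0)}_{u,a}}$ is strictly positive, $h^0=r\,|\pi_0|+\sum_{i,j,k}a_{ijk}$, and every section vanishing at the (boundedly many) attachment points injects under $\rho_{\ast}$ into the torsion subsheaf ${\cal F}_{\torsionscriptsize}$ of ${\cal F}=\rho_{\ast}({\cal E}^{\prime})$; because ${\cal F}$ arises from an object of the compact stack $\FM_g^{1,[0];\scriptsizeZss}(Y;c)$, the length $l({\cal F}_{\torsionscriptsize})$ is uniformly bounded, and this is what bounds both the number of components of $C^{\prime(0)}_{u,+}$ and the degrees $a_{ijk}$ there. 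The exceptional short ${\Bbb P}^1$-chains of total length and degree $\le r$, which create no torsion under push-forward, are caught by the discrepancy bound $\delta_{\flatscriptsize}({\cal F}_{\torsionfreescriptsize})\le r\cdot\#\{\mbox{nodes of }C\}$ --- a case your proposal does not address. The same torsion comparison supplies the \emph{upper} bound for $a_{ijk}$ on $C^{\prime(1)}_u$; only then does the $\chi$-bookkeeping you sketch yield the lower bound. Note finally that the paper's bound on $|\chi({\cal E}^{\prime}|_{C^{\prime}_0})|$ uses $|C^{\prime}_0\cap C^{\prime}_u|$, which requires the tree pieces to be bounded first, so your ordering ``main part first'' quietly assumes a bound on the number of attachment points that is only available after the torsion step.
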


\bigskip

\noindent
Once this is justified, then since
   the gluing of the four locally free sheaves
     $$
	 (C^{\prime}_0,{\cal E}^{\prime}|_{ C^{\prime}_0})\,, \hspace{1em}
	 (C^{\prime(0)}_{u,a},{\cal E}^{\prime}|_{C^{\prime(0)}_{u,+}})\,,
	     \hspace{1em}
     (C^{\prime(0)}_{u,b},{\cal E}^{\prime}|_{C^{\prime(0)}_{u,0}})\,,	
      	 \hspace{1em}\mbox{and}\hspace{1em}
	 (C^{\prime(1)}_u,{\cal E}^{\prime}|_{C^{\prime(1)}_u})
	 $$
	 along fibers at attached points
	contributes only a finite-dimensional moduli to the resulting pair
	  $(C^{\prime},{\cal E}^{\prime})$ the problem,
 the resulting family of pairs $(C^{\prime},{\cal E}^{\prime})$
   would then be bounded.
 This would thus prove Theorem~\ref{bounded-sZssm}.
 We have thus reduced the proof of Theorem~\ref{bounded-sZssm}
   to the justification of Theorem~\ref{bounded-fpsc}.

\bigskip
 
Before further study in details, note that
if letting $H$ be the hyperplane class on $Y$ associated to the very ample line bundle
   ${\cal O}_Y(1)$ on $Y$,
then, for each ${\Bbb P}^1$-component ${\Bbb P}^1$ of $C^{\prime(1)}_u$,
 the curve class
   $[{\pr_Y}_{\ast}(\tilde{\cal E}^{\prime}|_{{\Bbb P}^1})]$ on $Y$
    is effective and hence has a positive intersection number with $H$.
Since
 $$
   \sum_{\mbox{\scriptsize ${\Bbb P}^1\, :\,
                                    {\Bbb P}^1$-components of $C^{\prime(1)}_u$ }}
     H\cdot [{\pr_Y}_{\ast}(\tilde{\cal E}^{\prime}|_{{\Bbb P}^1})]\;
  =\; H \cdot
         [{\pr_Y}_{\ast}(\tilde{\cal E}^{\prime}|_{C^{\prime(1)}_u})]\;
  <\; H\cdot \beta\,,
 $$
 the number of irreducible components of $C^{\prime(1)}_u$  is uniformly bounded by
  $H\cdot \beta$.
Together with the fact that $C^{\prime}_0$ is a partial normalization of a subcurve
 of a stable curve of genus $g$,  one has the following lemma:

\bigskip

\begin{lemma}\label{bounded-ctC0Cp1u} {\bf
[boundedness of combinatorial types of $C^{\prime}_0$ and $C^{\prime(1)}_u$].}
 The set of combinatorial type (and hence homeomorphism classes)
  of $C^{\prime}_0$ and ${\Bbb P}^1$-trees $C^{\prime(1)}_u$
  that can occur in our problem is finite,
  depending only on the type $(g; r,\chi;\beta,c)$ of morphisms considered.
\end{lemma}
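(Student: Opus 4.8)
The plan is to treat the two finiteness assertions separately, since each reduces to the elementary principle that a uniformly bounded discrete invariant forces only finitely many combinatorial (equivalently, topological) types. Both of the required bounds have in fact already been isolated in the discussion immediately preceding the statement: the number of irreducible components of $C^{\prime(1)}_u$ is at most $H\cdot\beta$, and $C^{\prime}_0$ is a partial normalization of a subcurve of a stable curve of genus $g$. So the work is almost entirely bookkeeping, once these two inputs are in hand.

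For the ${\Bbb P}^1$-trees $C^{\prime(1)}_u$, I would argue as follows. Since $C^{\prime(1)}_u$ is a ${\Bbb P}^1$-tree (Definition~\ref{PtPc}), its combinatorial type is recorded completely by its dual graph, which is a forest. From the displayed inequality $H\cdot[{\pr_Y}_{\ast}(\tilde{\cal E}^{\prime}|_{C^{\prime(1)}_u})]<H\cdot\beta$, together with the fact that each ${\Bbb P}^1$-component of $C^{\prime(1)}_u$ contributes an effective curve class of strictly positive intersection with the ample $H$, the number of vertices of this forest is bounded above by $N:=H\cdot\beta$. As there are only finitely many isomorphism classes of forests on at most $N$ vertices, there are only finitely many combinatorial types of $C^{\prime(1)}_u$, the bound depending only on $\beta$ and the fixed polarization $H$ of $Y$.

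For the subcurve $C^{\prime}_0$, I would first invoke the standard fact that a stable curve of genus $g$ has at most $3g-3$ nodes and $2g-2$ irreducible components, so that the boundary stratification of $\overline{\cal M}_g$ by dual-graph type is finite; hence the combinatorial type of the stabilization $C=\rho(C^{\prime})$ ranges over a finite set determined by $g$ alone. The curve $C^{\prime}_0$ is then obtained from a subcurve of such a $C$ by partial normalization at a subset of its nodes --- this being exactly the inverse of the operation that inserts ${\Bbb P}^1$-chains at nodes of $C$ and ${\Bbb P}^1$-trees at its smooth points, which $\rho$ subsequently contracts. Since a fixed $C$ of bounded combinatorial type has only finitely many subcurves and only finitely many subsets of nodes to normalize, the combinatorial type (and homeomorphism class) of $C^{\prime}_0$ likewise ranges over a finite set depending only on $(g;r,\chi;\beta,c)$. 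Combining the two paragraphs gives the lemma.

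The step I expect to be the main obstacle is not either finiteness count --- each is routine --- but rather pinning down precisely the structural relationship $C^{\prime}_0\to C$ asserted above, i.e.\ that $\rho|_{C^{\prime}_0}$ realizes $C^{\prime}_0$ as a partial normalization of a subcurve of the stable model $C$ and does not secretly introduce unbounded combinatorics (for instance through how the contracted trees $C^{\prime}_u$ are attached). This requires a careful analysis of the stabilization morphism $\rho$, distinguishing attachment of trees at smooth points of $C$ (which leaves $C^{\prime}_0$ locally isomorphic to $C$ there) from insertion of ${\Bbb P}^1$-chains at nodes of $C$ (which separates the two branches, producing the partial normalization). I would emphasize that this lemma deliberately bounds only $C^{\prime}_0$ and $C^{\prime(1)}_u$; the remaining trees $C^{\prime(0)}_{u,+}$ and $C^{\prime(0)}_{u,0}$, on which the curve class vanishes and the $H\cdot\beta$ bound gives no control, must be handled by a different mechanism, ultimately Condition~(3) of Definition~\ref{Zssm} and the reduction of Sec.~4.3.
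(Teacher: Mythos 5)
Your proposal is correct and takes essentially the same approach as the paper, which deduces the lemma directly from the two inputs you identify: the bound $H\cdot\beta$ on the number of ${\Bbb P}^1$-components of $C^{\prime(1)}_u$ (from positivity of the effective classes $[{\pr_Y}_{\ast}(\tilde{\cal E}^{\prime}|_{{\Bbb P}^1})]$ against the ample class $H$) and the fact that $C^{\prime}_0$ is a partial normalization of a subcurve of a stable curve of genus $g$. Your explicit bookkeeping (finitely many forests on at most $H\cdot\beta$ vertices, finitely many subcurves and node subsets of a stable model of bounded combinatorial type) and your closing observation that $C^{\prime(0)}_{u,+}$ and $C^{\prime(0)}_{u,0}$ lie outside the scope of this lemma merely make explicit what the paper leaves implicit.
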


\bigskip

\noindent
In particular, both numbers of connected components,
   $|\pi_0(C^{\prime}_0)|$ and  $|\pi_0(C^{\prime(1)}_u)|$,
  are uniformly bounded, depending only on $g$  and $\beta$ respectively.

\bigskip

\begin{flushleft}
{\bf Boundedness of the family of isomorphisms classes of pairs
   $(C^{\prime (0)}_{u,+},
        {\cal E}^{\prime}|_{C^{\prime (0)}_{u,+}})$}
\end{flushleft}
$(a)$
{\it Strict positivity of
          ${\cal E}^{\prime}|_{C^{\prime(0)}_{u,+}}$}
		
\medskip

\noindent
Observe that from the way we define a semistable morphism in Definition~\ref{Zssm},
 the restriction of $\tilde{\cal E}^{\prime}$ to over a ${\Bbb P}^1$-component,
   denoted simply  as ${\Bbb P}^1$,
  of $C^{\prime(0)}_{u,+}$ satisfies the three properties:
   \begin{itemize}
   \item[(1)]
     $\tilde{\cal E}^{\prime}|_{{\Bbb P}^1}$
     	 is realizable as a quotient of a trivial bundle on $C^{\prime}\times Y$,
		 	
   \item[(2)]	
     ${\pr_Y}_{\ast}(\tilde{\cal E}^{\prime}|_{{\Bbb P}^1})$
      is $0$-dimensional,
   
  \item[(3)]
    $\varphi_{\tilde{\cal E}^{\prime}}$ is not constant on this ${\Bbb P}^1$.
 \end{itemize}	
  in the beginning of Sec.~2.2.
In other words, $\varphi_{\tilde{\cal E}^{\prime}}|_{C^{\prime (0)}_{u,+}}$
  is a special morphism studied in Sec.~2.2.
It follows from  Proposition~\ref{ncPEasm}
  that:
  
\bigskip
  
\begin{lemma}\label{spEpCp0u+}
{\bf [strict positivity of ${\cal E}^{\prime}|_{C^{\prime(0)}_{u,+}}$].}\\
     ${\cal E}^{\prime}|_{C^{\prime(0)}_{u,+}}$
	 is a strictly positive locally-free sheaf on $C^{\prime(0)}_{u,+}$.
\end{lemma}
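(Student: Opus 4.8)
The plan is to recognize the restriction $\varphi_{\tilde{\cal E}^{\prime}}|_{C^{\prime(0)}_{u,+}}$ as a \emph{special morphism} in the precise sense of Sec.~\ref{preliminary} (Sec.~2.2) and then to invoke Proposition~\ref{ncPEasm} verbatim. First I would record the structural facts: $C^{\prime(0)}_{u,+}$ is a ${\Bbb P}^1$-tree, being a union of irreducible components of the ${\Bbb P}^1$-tree $C^{\prime}_u$ contracted by $\rho$; its associated Azumaya datum is $(C^{\prime(0)}_{u,+}, {\cal O}^{Az}_{C^{\prime(0)}_{u,+}}; {\cal E}^{\prime}|_{C^{\prime(0)}_{u,+}})$ with ${\cal E}^{\prime}|_{C^{\prime(0)}_{u,+}} = {\pr_{C^{\prime(0)}_{u,+}}}_{\ast}(\tilde{\cal E}^{\prime}|_{C^{\prime(0)}_{u,+}\times Y})$. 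Here I would note that push-forward along $\pr_{C^{\prime}}$ commutes with restriction to the subcurve (the support of $\tilde{\cal E}^{\prime}$ is affine, indeed finite, over $C^{\prime}$), so that ${\cal E}^{\prime}|_{C^{\prime(0)}_{u,+}}$ is genuinely the fundamental module of the restricted morphism and $\tilde{\cal E}^{\prime}|_{C^{\prime(0)}_{u,+}\times Y}$ is its graph.

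Next I would verify Properties~(1), (2), (3) of Sec.~2.2 for this graph. Properties~(2) and (3) are immediate from the definition of the decomposition $C^{\prime}_u = (C^{\prime(0)}_{u,+}\cup C^{\prime(0)}_{u,0})\cup C^{\prime(1)}_u$: on each ${\Bbb P}^1$-component of $C^{\prime(0)}_{u,+}$ the direct image ${\pr_Y}_{\ast}(\tilde{\cal E}^{\prime}|_{{\Bbb P}^1})$ is $0$-dimensional and $\varphi$ is nonconstant. Property~(1)---realizability of $\tilde{\cal E}^{\prime}|_{C^{\prime(0)}_{u,+}\times Y}$ as a quotient of a trivial bundle---is the one point requiring genuine argument, and is where I expect the only real work. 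For this I would use Conditions~(1) and (2) of Definition~\ref{Zssm}: by the boundedness underlying the compactness of $\FM_g^{1,[0];\scriptsizeZss}(Y;c)$ in Theorem~\ref{FM-compact}, there is an $m$ with $\tilde{\cal F}(m)$ globally generated, giving a surjection ${\cal O}_{C\times Y}^{\oplus k} \rightarrow \tilde{\cal F}(m) \rightarrow 0$. Pulling back along $\rho\times\Id_Y$ and composing with the surjection of Condition~(2) yields ${\cal O}_{C^{\prime}\times Y}^{\oplus k} \rightarrow \tilde{\cal E}^{\prime}\otimes(\rho\times\Id_Y)^{\ast}{\cal O}_{(C\times Y)/C}(m) \rightarrow 0$; since $\rho$ contracts the components of $C^{\prime(0)}_{u,+}$, the pulled-back relative twist is trivial on $C^{\prime(0)}_{u,+}\times Y$, so restricting there gives Property~(1).

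With all three properties in hand, Proposition~\ref{ncPEasm} applies directly to the Azumaya ${\Bbb P}^1$-tree $(C^{\prime(0)}_{u,+}, {\cal E}^{\prime}|_{C^{\prime(0)}_{u,+}})$ admitting the special morphism $\varphi_{\tilde{\cal E}^{\prime}}|_{C^{\prime(0)}_{u,+}}$, and concludes that ${\cal E}^{\prime}|_{C^{\prime(0)}_{u,+}}$ is strictly positive; equivalently, one may route the conclusion through Lemma~\ref{ggEpdE} (global generation and positive degree on each component) followed by the criterion of Lemma~\ref{csp}. The main obstacle, as noted, is the verification of Property~(1): concretely, that the relative-ample twist needed to globally generate $\tilde{\cal F}$ becomes trivial after pullback by the contraction $\rho$, so that no hidden twist is required on the contracted tree. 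This is exactly the phenomenon isolated in Sec.~2.2 (the observation that $\tilde{\cal E}_{\varphi}(m^{\prime})\simeq\tilde{\cal E}$ for all $m^{\prime}$ once Property~(2) holds), so once Property~(2) is available the twist issue dissolves and Property~(1) follows.
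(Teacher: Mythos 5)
Your proposal is correct and takes essentially the same route as the paper: the paper's proof likewise observes that, by the way Definition~\ref{Zssm} and the decomposition of $C^{\prime}_u$ are set up, the restriction $\varphi_{\tilde{\cal E}^{\prime}}|_{C^{\prime(0)}_{u,+}}$ is a special morphism satisfying Properties (1), (2), (3) of Sec.~2.2, and then quotes Proposition~\ref{ncPEasm} (which is itself Lemma~\ref{ggEpdE} combined with the criterion Lemma~\ref{csp}). Your explicit verification of Property (1) fills in what the paper leaves implicit, and is sound provided you phrase the twist point carefully: the relative ample ${\cal O}_{(C\times Y)/C}(m)$ is pulled back from $Y$, so its pullback under $\rho\times\Id_Y$ is \emph{not} trivial on all of $C^{\prime(0)}_{u,+}\times Y$ but only on $\Supp(\tilde{\cal E}^{\prime})$ there, by the $0$-dimensionality of ${\pr_Y}_{\ast}$ from Property (2) (equivalently, $\tilde{\cal E}(m^{\prime})\simeq\tilde{\cal E}$ as in Sec.~2.2) --- which is exactly the mechanism your closing sentence invokes, so the argument stands.
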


\bigskip
\bigskip

\noindent
$(b)$
{\it $H^0({\cal E}^{\prime}|_{C^{\prime(0)}_{u,+}}   )$
          and torsions and non-locally-free-ness of ${\cal F}$}

\medskip

\noindent
We now study how the size of the ${\Bbb P}^1$-tree $C^{\prime(0)}_{u,+}$
 influences the push-forward ${\cal F}:= \rho_{\ast}({\cal E}^{\prime})$ on $C$
 in the basic diagram of four.
 
Consider the decomposition	
 $$
   C^{\prime(0)}_u\;  =\; C^{\prime(0)}_{u,a}\cup C^{\prime(0)}_{u,b}\,,
 $$
 where
  $C^{\prime(0)}_{u,a}$ is the union of connected components of $C^{\prime(0)}_u$
    each of which contains some connected components of $C^{\prime(0)}_{u,+}$   and
  $C^{\prime(0)}_{u,b}\subset C^{\prime(0)}_{u,0}$. 	
Since
  ${\cal E}^{\prime}|_{C^{\prime(0)}_{u,0}}
       \simeq {\cal O}_{C^{\prime(0)}_{u,-}}^{\;\oplus r}$,
 one can rephrase Lemma~\ref{h0E} in the current notations as:

\bigskip

\begin{lemma}\label{h0EpCp0ua}
 {\bf [$h^0({\cal E}^{\prime}|_{C^{\prime(0)}_{u,a}})$].}
 {\rm(Cf.\ Lemma~\ref{h0E}.) }
 Let $C^{\prime(0)}_{u,a;ij}\simeq {\Bbb P}^1$ be the irreducible components
    of $C^{\prime(0)}_{u,a}$,
    where
	  $i$ labels the connected components of $C^{\prime(0)}_{u,a}$   and
      $j$ labels the irreducible components in the $i$-th connected component
    	  of $C^{\prime(0)}_{u,a}$ that lie in $C^{\prime(0)}_{u,+}$.
  Suppose that 		
    ${\cal E}|_{C^{\prime(0)}_{u,a;ij}}
	      \simeq  \oplus_{k=1}^r{\cal O}_{{\Bbb P}^1}(a_{ijk})$.
  Recall from Part (a) that, up to a relabeling,
     $0\le a_{ij1}\le \cdots \le a_{ijr}$ with $a_{ijr}>0$.		
  Then
   $$
     h^0({\cal E}^{\prime}|_{C^{\prime(0)}_{u,a}})\;
  	 :=\;   \dimm H^0(C^{\prime(0)}_{u,a},
	                                {\cal E}^{\prime}|_{C^{\prime(0)}_{u,a}})\;
	  =\;   r\cdot  |\pi_0( C^{\prime(0)}_{u,a})|\;   +\;   \sum_{i,j,k}a_{ijk}\,,	
   $$
   where $|\pi_0(C^{\prime(0)}_{u,a})|$
     is the number of connected components of $C^{\prime(0)}_{u,a}$.
  In particular,  except the number of connected components of $C^{\prime(0)}_{u,a}$,
  it is independent of how this collection of locally free sheaves on ${\Bbb P}^1$'s
   are  glued to give ${\cal E}^{\prime}|_{C^{\prime(0)}_{u,a}}$
   on $C^{\prime(0)}_{u,a}$.
\end{lemma}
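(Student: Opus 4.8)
The plan is to recognize Lemma~\ref{h0EpCp0ua} as nothing more than the specialization of Lemma~\ref{h0E} to the pair $(C^{\prime(0)}_{u,a},\,{\cal E}^{\prime}|_{C^{\prime(0)}_{u,a}})$, so that the entire content is inherited from that earlier computation. First I would verify the hypotheses of Lemma~\ref{h0E}. Being a union of connected components of the contracted locus $C^{\prime(0)}_u$, all of whose irreducible components are ${\Bbb P}^1$'s glued in a simply-connected pattern, $C^{\prime(0)}_{u,a}$ is again a ${\Bbb P}^1$-tree in the sense of Definition~\ref{PtPc}; and since ${\cal E}^{\prime}$ is locally free on $C^{\prime}$ by hypothesis, its restriction to $C^{\prime(0)}_{u,a}$ is locally free as well.

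The one point that requires checking is nonnegativity of ${\cal E}^{\prime}|_{C^{\prime(0)}_{u,a}}$, which I would establish component-by-component. Each irreducible component of $C^{\prime(0)}_{u,a}$ lies either in $C^{\prime(0)}_{u,+}$ or in $C^{\prime(0)}_{u,0}$, since $C^{\prime(0)}_u = C^{\prime(0)}_{u,+}\cup C^{\prime(0)}_{u,0}$. On a component in $C^{\prime(0)}_{u,+}$ the restriction is strictly positive by Lemma~\ref{spEpCp0u+}, hence in particular of the form $\oplus_k {\cal O}_{{\Bbb P}^1}(a_{ijk})$ with all $a_{ijk}\ge 0$; on a component in $C^{\prime(0)}_{u,0}$ the restriction is $\simeq {\cal O}_{{\Bbb P}^1}^{\oplus r}$, i.e.\ all $a_{ijk}=0$. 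In either case the restriction is nonnegative, so Lemma~\ref{h0E} applies verbatim and yields $h^0({\cal E}^{\prime}|_{C^{\prime(0)}_{u,a}}) = r\cdot|\pi_0(C^{\prime(0)}_{u,a})| + \sum a_{ijk}$, where the sum runs over all irreducible components.

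Finally I would reconcile the indexing conventions: the present statement restricts $j$ to the components lying in $C^{\prime(0)}_{u,+}$, whereas Lemma~\ref{h0E} sums over all irreducible components. These two sums agree because the omitted components are precisely those in $C^{\prime(0)}_{u,0}$, on which ${\cal E}^{\prime}$ is trivial and so contributes $a_{ijk}=0$. The concluding independence assertion, that $h^0$ depends on the gluing only through $|\pi_0(C^{\prime(0)}_{u,a})|$, is inherited word-for-word from Lemma~\ref{h0E}. I do not expect a genuine obstacle here: the substance is carried entirely by Lemma~\ref{h0E}, and the only care required is the nonnegativity check via Lemma~\ref{spEpCp0u+} together with the matching of the two indexing conventions.
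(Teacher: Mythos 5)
Your proposal is correct and matches the paper's own treatment: the paper likewise observes that ${\cal E}^{\prime}|_{C^{\prime(0)}_{u,0}}\simeq {\cal O}^{\,\oplus r}$ (so the components omitted from the index $j$ contribute nothing to the degree sum) and then simply rephrases Lemma~\ref{h0E} in the new notation. Your explicit verification of the hypotheses of Lemma~\ref{h0E} (tree structure, local freeness, and nonnegativity via Lemma~\ref{spEpCp0u+}) just spells out what the paper leaves implicit.
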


\bigskip

As a consequence, one has the following lemma:

\bigskip

\begin{lemma}\label{lFtfbCp0u+}
{\bf [length of ${\cal F}_{\torsionscriptsize}$
            and bound on $(C^{\prime(0)}_{u,+},
			                            {\cal E}^{\prime}|_{C^{\prime(0)}_{u,+}})$].}
 Let
  $$
    n_a\; :=\; |C^{\prime(0)}_{u,a}\cap (C^{\prime}_0\cup C^{\prime(1)}_u)|
  $$
    be the number of attached points on $C^{\prime(0)}_u$ to the rest of $C^{\prime}$.
 Note that since
  $C^{\prime(0)}_{u,a}\cup (C^{\prime}_0\cup C^{\prime(1)}_u)$
   is a nodal curve of genus $\le g$,
 it follows from Lemma~\ref{bounded-ctC0Cp1u} that
  $n_a$ is uniformly bounded, depending only on $(g;r,\chi;\beta,c)$.
 Let
   ${\cal F}_{\torsionscriptsize}$ be the torsion subsheaf of
      ${\cal F}= \rho_{\ast}({\cal E}^{\prime})$   and
  $l({\cal F}_{\torsionscriptsize})$ be its length.
 Then,
   $$
     l({\cal F}_{\torsionscriptsize})\;
	 \ge \;  h^0({\cal E}^{\prime}|_{C^{\prime(0)}_{u,a}})\,-\, r\,n_a\,.
   $$
 In particular,
 if there is a uniform upper bound for $l({\cal F}_{\torsionscriptsize})$
  then there are
    a uniform upper bound for the number of irreducible components
	of $C^{\prime(0)}_{u,+}$ and
    a uniform upper bound for $a_{ijk} \ge 0$ in Lemma~\ref{h0EpCp0ua}.
\end{lemma}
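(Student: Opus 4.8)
The plan is to produce enough global sections of ${\cal E}^{\prime}$ that are supported entirely on the contracted tree $C^{\prime(0)}_{u,a}$, and to show each such section becomes a distinct torsion section of ${\cal F}=\rho_{\ast}({\cal E}^{\prime})$. Write $U:=C^{\prime(0)}_{u,a}$ and let $A:=U\cap(C^{\prime}_0\cup C^{\prime(1)}_u)$ be the set of its $n_a$ attaching points to the remainder $C^{\prime}-U$. First I would consider the evaluation-at-$A$ map
$$
  \mbox{\it ev}_A\; :\; H^0(U,{\cal E}^{\prime}|_U)\;\longrightarrow\;
     \bigoplus_{q\in A}{\cal E}^{\prime}|_q\,,
$$
whose target has dimension $r\,n_a$. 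Its kernel
$$
  V\; :=\; \{\,s\in H^0(U,{\cal E}^{\prime}|_U)\,:\, s|_A=0\,\}
$$
then satisfies $\dimm V\ge h^0({\cal E}^{\prime}|_U)-r\,n_a$, which is exactly the counting of Remark~\ref{cstcpt} (vanishing at each point of $A$ imposes at most $r$ linear conditions).

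Next I would check that each $s\in V$ extends by zero across the nodes of $A$ to a global section $\bar{s}\in H^0(C^{\prime},{\cal E}^{\prime})$: since ${\cal E}^{\prime}$ is locally free and $U$ meets $C^{\prime}-U$ only along $A$, where $s$ already vanishes, the branch data glue to a section vanishing identically off $U$. The assignment $s\mapsto\bar{s}$ is injective, and $\bar{s}$ is supported on $U$. Using $H^0(C^{\prime},{\cal E}^{\prime})=H^0(C,{\cal F})$ and pushing forward by $\rho$, the section $\bar{s}$ goes to a section of ${\cal F}$ supported on $\rho(U)\subset\Sigma$, which is $0$-dimensional; hence $\rho_{\ast}\bar{s}$ is a torsion section, i.e.\ lies in $H^0(C,{\cal F}_{\torsionscriptsize})$. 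Because $\rho$ is birational with generic fiber a point, $\bar{s}\mapsto\rho_{\ast}\bar{s}$ is injective, so $V$ embeds into $H^0(C,{\cal F}_{\torsionscriptsize})$. As ${\cal F}_{\torsionscriptsize}$ is $0$-dimensional, $l({\cal F}_{\torsionscriptsize})=\dimm H^0(C,{\cal F}_{\torsionscriptsize})\ge\dimm V\ge h^0({\cal E}^{\prime}|_U)-r\,n_a$, giving the asserted inequality.

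For the ``in particular'' clause I would feed this into Lemma~\ref{h0EpCp0ua}. A uniform bound $l({\cal F}_{\torsionscriptsize})\le N$ forces $h^0({\cal E}^{\prime}|_U)\le N+r\,n_a$, and $n_a$ is uniformly bounded by Lemma~\ref{bounded-ctC0Cp1u}; hence
$$
  \sum_{i,j,k}a_{ijk}\; =\; h^0({\cal E}^{\prime}|_U)-r\,|\pi_0(U)|\;
     \le\; N+r\,n_a\,.
$$
Since every $a_{ijk}\ge 0$, each individual $a_{ijk}$ is bounded by $N+r\,n_a$; and since strict positivity (Lemma~\ref{spEpCp0u+}) gives an integer $a_{ijr}\ge 1$ on every ${\Bbb P}^1$-component of $C^{\prime(0)}_{u,+}$, the number of such components is at most $\sum_{i,j,k}a_{ijk}\le N+r\,n_a$, which are precisely the two uniform bounds claimed.

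The step requiring the most care is the middle one: confirming that the extended-by-zero sections land \emph{injectively in the torsion subsheaf}, rather than merely in ${\cal F}$. This rests on two points that I would spell out, namely that $\rho$ contracts $U$ to the $0$-dimensional set $\Sigma$ (so the image sections are $0$-dimensional and thus torsion) and that local freeness of ${\cal E}^{\prime}$ legitimizes extension by zero at the attaching nodes. Since the lemma asks only for a lower bound on $l({\cal F}_{\torsionscriptsize})$, I can deliberately ignore the additional torsion coming from failure of surjectivity at images of nodes and the non-locally-free contributions to ${\cal F}$, which would only be needed for an exact count.
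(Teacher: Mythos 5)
Your proof of the displayed inequality is correct and is essentially the paper's own, just written out in full: the paper compresses it to the single assertion that $\rho_{\ast}$ carries the sections of ${\cal E}^{\prime}|_{C^{\prime(0)}_{u,a}}$ vanishing at the attached points injectively into ${\cal F}_{\torsionscriptsize}$, while you supply the extension-by-zero across $A$ (legitimate since ${\cal E}^{\prime}$ is locally free, so a section is a pair of branch-sections agreeing in the fiber at each node) and the dimension count from Remark~\ref{cstcpt}. One small correction: injectivity of $\bar{s}\mapsto \rho_{\ast}\bar{s}$ has nothing to do with birationality of $\rho$ (your $\bar{s}$ is supported entirely on the contracted locus); it is automatic from the identification $H^0(C^{\prime},{\cal E}^{\prime})=H^0(C,\rho_{\ast}{\cal E}^{\prime})=H^0(C,{\cal F})$, which you in fact also invoke. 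Where you genuinely diverge from the paper is the ``in particular'' clause. You feed $l({\cal F}_{\torsionscriptsize})\le N$ and the uniform bound on $n_a$ into the formula $h^0=r\,|\pi_0|+\sum_{i,j,k}a_{ijk}$ of Lemma~\ref{h0EpCp0ua}, observe all terms are nonnegative, and use strict positivity (Lemma~\ref{spEpCp0u+}) to convert the resulting bound on $\sum a_{ijk}$ into bounds on both the number of components and the individual $a_{ijk}$. The paper argues differently: it asks when $h^0-r\,n_a$ fails to grow, isolates the exceptional configurations --- connected components of $C^{\prime}_{u,a}$ consisting of ${\Bbb P}^1$-chains of total degree $\le r$ attached at both ends, whose collapse creates \emph{no} torsion --- and bounds their number separately via the discrepancy to flatness, $\delta_{\flatscriptsize}({\cal F}_{\torsionfreescriptsize})\le r\cdot\#\{\mbox{nodes of }C\}$, in the spirit of Proposition~\ref{ddwbfTt}.

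The reason the paper takes the longer road is worth noting: your shortcut is only as strong as the statement's preamble asserting that $n_a$ is uniformly bounded. Lemma~\ref{bounded-ctC0Cp1u} bounds the combinatorial types of $C^{\prime}_0$ and $C^{\prime(1)}_u$, but not by itself the number of connected components of $C^{\prime(0)}_{u,a}$ --- one-point-attached trees, and two-point-attached chains sandwiched between contracted $C^{\prime(1)}_u$-components inside a single contracted tree, are combinatorially unconstrained by genus, and controlling them is precisely what the torsion count and the $\delta_{\flatscriptsize}$ argument achieve. In other words, the paper's exceptional-case analysis makes the conclusion self-supporting in exactly the regime (many short two-point chains) where $h^0-r\,n_a$ is uninformative, whereas your argument outsources those configurations to the preamble. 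Taking the lemma's stated setup as given, your proof is complete and appreciably cleaner; if the boundedness of $n_a$ is to be justified rather than assumed, you would need to supplement it with the paper's $\delta_{\flatscriptsize}$-control of the degree-$\le r$ two-point chains.
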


\begin{proof}
 Note that $\rho_{\ast}$ takes
  $$
    \{s\in H^0({\cal E}^{\prime}|_{C^{\prime(0)}_{u,a}},)\,|\,
	     \mbox{$s$ vanishes at all the attached points on $C^{\prime(0)}_{u,a}$ }\}
  $$
  injectively to the torsion subsheaf ${\cal F}_{\torsionscriptsize}$ of ${\cal F}$.
 Recall Remark~\ref{cstcpt}
    and observe from the explicit formula in Lemma~\ref{h0EpCp0ua} that
   $$
      h^0({\cal E}^{\prime}|_{C^{\prime(0)}_{u,a}})\,-\,r\,n_a \;
	   \longrightarrow\;  \infty
   $$
   if either the number of irreducible components of $C^{\prime(0)}_{u,+}$ goes to infinity
      or some $a_{ijk}$ in Lemma~\ref{h0EpCp0ua} goes to infinity
   {\it except} the situation of
     the connected components $C^{\prime\prime}$ of $C^{\prime}_{u,a}$
	 that contain only ${\Bbb P}^1$-chain connected component
      	   of $C^{\prime(0)}_{u,+}$
        that has total length $\le r$,  with an attached point on each ${\Bbb P}^1$- end, and 	
	    with the total degree of ${\cal E}^{\prime}|_{{\Bbb P }^1}$'s,
	   where ${\Bbb P}^1$ runs over the ${\Bbb P}^1$-components of $C^{\prime\prime}$,
	   $\le r$
	since collapsing such $C^{\prime\prime}$ may not create torsions from pushing forward.
 However, when such collection of ${\Bbb P}^1$-chains in $C^{\prime\prime}$
   do not contribute to ${\cal F}_{\torsionscriptsize}$,
   they then contribute positively to
  $\delta_{\flatscriptsize}({\cal F}_{\torsionfreescriptsize})$,
  where ${\cal F}_{\torsionfreescriptsize}:= {\cal F}/{\cal F}_{\flatscriptsize}$.
 Since
  $$
    \delta_{\flatscriptsize}({\cal F}_{\torsionfreescriptsize})\;
	 \le\; r \cdot \mbox{(number of nodes of $C$)}
  $$
  and the number of nodes of a stable curve of genus $g$ is uniformly bounded above,
 the total number of ${\Bbb P}^1$-chains in the union of all such $C^{\prime\prime}$
    is uniformly bounded
     and, hence, the family of isomorphism classes of
   $(C^{\prime\prime}\cap C^{\prime(0)}_{u,+},
     {\cal E}^{\prime}|_{C^{\prime\prime}\cap C^{\prime(0)}_{u,+}})$
    is also bounded.
  %
  %
  %
  %
  %
  %
  %
  %
  %
  %
  %
  %
  %
  %
  %
 This proves the lemma.
 
\end{proof}	

\bigskip

Now $l({\cal F}_{\torsionscriptsize})$ does have a uniform upper bound
 since ${\cal F}$ comes from the push-forward of the elements in the compact stack
 $\FM_g^{1,[0];\scriptsizeZss}(Y;c)$
 of coherent sheaves.
This proves:

\bigskip

\begin{proposition}\label{bounded-Cp0u+}
{\bf [boundedness of
            $\{(C^{\prime(0)}_{u,+},
			   {\cal E}^{\prime}|_{C^{\prime(0)}_{u,+}})\}$].}
  The family of isomorphisms classes of pairs
   $(C^{\prime (0)}_{u,+}, {\cal E}^{\prime}|_{C^{\prime (0)}_{u,+}})$
   is bounded.		
\end{proposition}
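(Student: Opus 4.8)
The plan is to derive boundedness of the pairs $(C^{\prime(0)}_{u,+},\mathcal{E}^{\prime}|_{C^{\prime(0)}_{u,+}})$ from a single uniform estimate — an upper bound on $l(\mathcal{F}_{\torsionscriptsize})$ — and then to feed this estimate into the structural lemmas already in place. First I would recall that by Lemma~\ref{spEpCp0u+} the restriction $\mathcal{E}^{\prime}|_{C^{\prime(0)}_{u,+}}$ is a strictly positive locally free sheaf, so on each $\mathbb{P}^1$-component it splits as $\oplus_{k}\mathcal{O}_{\mathbb{P}^1}(a_{ijk})$ with $0\le a_{ij1}\le\cdots\le a_{ijr}$ and $a_{ijr}>0$. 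Hence, to pin down the isomorphism class of the pair it suffices to bound (i) the number of $\mathbb{P}^1$-components of $C^{\prime(0)}_{u,+}$ and (ii) each splitting datum $a_{ijk}$; the gluing of these sheaves across the nodes then contributes only finite-dimensional moduli, exactly as in the reduction already performed for the basic diagram of four. By Lemma~\ref{lFtfbCp0u+}, both (i) and (ii) follow the moment $l(\mathcal{F}_{\torsionscriptsize})$ is uniformly bounded.

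The crux is therefore to establish such a bound. For this I would invoke Theorem~\ref{FM-compact}: by Condition~(1) of Definition~\ref{Zssm} the transform $\tilde{\mathcal{F}}=(\rho\times\Id_Y)_{\ast}(\tilde{\mathcal{E}}^{\prime})$ is a point of the compact stack $\FM_g^{1,[0];\scriptsizeZss}(Y;c)$, so the family of all such $\tilde{\mathcal{F}}$ is bounded. Since $\pr_C$ is proper and boundedness is preserved under proper push-forward, the sheaves $\mathcal{F}={\pr_C}_{\ast}(\tilde{\mathcal{F}})$ form a bounded family of coherent sheaves of dimension $\le 1$ on fibers of the universal stable curve $C_{\overline{\mathcal{M}}_g}/\overline{\mathcal{M}}_g$. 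In any bounded family the subsheaves range over a relative Quot scheme of finite type, so their Hilbert polynomials take only finitely many values; applied to the torsion subsheaf $\mathcal{F}_{\torsionscriptsize}\subset\mathcal{F}$ this yields a uniform upper bound on $l(\mathcal{F}_{\torsionscriptsize})$, as required.

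Feeding this bound back through Lemma~\ref{lFtfbCp0u+} then gives a uniform upper bound on the number of irreducible components of $C^{\prime(0)}_{u,+}$ and on the integers $a_{ijk}$. Combined with Lemma~\ref{bounded-ctC0Cp1u}, which already confines the combinatorial type of the surrounding nodal curve to a finite list, this places both the curve $C^{\prime(0)}_{u,+}$ and the splitting type of $\mathcal{E}^{\prime}$ on each of its $\mathbb{P}^1$-components into a finite set of possibilities. The only remaining freedom — the choice of gluing isomorphisms of the paired fibers at the attaching nodes — lives in a finite-dimensional space, so the total family of pairs is bounded, proving the proposition.

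I expect the genuine content to lie not in the boundedness of $l(\mathcal{F}_{\torsionscriptsize})$ itself, which is a formal consequence of the compactness of $\FM_g^{1,[0];\scriptsizeZss}(Y;c)$, but in the careful handling (already carried out in the proof of Lemma~\ref{lFtfbCp0u+}) of the exceptional $\mathbb{P}^1$-chains of total length $\le r$ and total degree $\le r$. On collapsing, these chains produce \emph{no} torsion in $\mathcal{F}$ and are thus invisible to $l(\mathcal{F}_{\torsionscriptsize})$; they register instead in the complementary discrepancy invariant $\delta_{\flatscriptsize}(\mathcal{F}_{\torsionfreescriptsize})$, whose own bound $\le r\cdot(\text{number of nodes of }C)$ comes from the bounded combinatorics of stable curves of genus $g$. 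The main obstacle is precisely ensuring that this torsion-invisible portion of $C^{\prime(0)}_{u,+}$ is nonetheless controlled through $\delta_{\flatscriptsize}$, so that the two invariants together account for \emph{all} of $C^{\prime(0)}_{u,+}$.
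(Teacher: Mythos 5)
Your proposal is correct and takes essentially the same route as the paper: strict positivity of ${\cal E}^{\prime}|_{C^{\prime(0)}_{u,+}}$ (Lemma~\ref{spEpCp0u+}), the reduction through Lemma~\ref{lFtfbCp0u+} to a uniform bound on $l({\cal F}_{\torsionscriptsize})$, that bound obtained from compactness of $\FM_g^{1,[0];\scriptsizeZss}(Y;c)$, and the $\delta_{\flatscriptsize}$-control of the torsion-invisible short ${\Bbb P}^1$-chains, which you rightly identify as the real content and which the paper indeed handles inside the proof of Lemma~\ref{lFtfbCp0u+}. Your elaboration of the torsion-length bound (stratifying the bounded family of push-forwards ${\cal F}={\pr_C}_{\ast}(\tilde{\cal F})$ so that $l({\cal F}_{\torsionscriptsize})$ takes finitely many values) merely fills in what the paper asserts in one sentence, so it is the same argument, not a different one.
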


\bigskip

\begin{flushleft}
{\bf Boundedness of the family of  isomorphisms classes of pairs
   $(C^{\prime(0) }_{u,0}, {\cal E}^{\prime}|_{C^{\prime(0) }_{u,0}})$}
\end{flushleft}
Since ${\cal E}^{\prime}|_{C^{\prime(0) }_{u,0}}
    \simeq  {\cal O}_{C^{\prime(0) }_{u,0}}^{\;\oplus r}$,
 we only need to show that
 there is a uniform bound on the combinatorial types of $C^{\prime(0)}_{u,0}$.
Consider the dual graph $\Gamma_{C^{\prime(0)}_{u,0}}$
  of $C^{\prime(0)}_{u,0}$, which is a tree-subgraph of the dual graph
  $\Gamma_{C^{\prime}}$ of  $C^{\prime}$.
Since
 \begin{itemize}
  \item[{\Large $\cdot$}]
    the genus of $C^{\prime}$ is fixed to be $g$,
	
  \item[{\Large $\cdot$}]	
   the sets of combinatorial types of
    $C^{\prime}_0$, $C^{\prime(1)}_u$, and $C^{\prime(0)}_{u,+}$
    respectively are all bounded from Lemma~\ref{bounded-ctC0Cp1u} and the previous theme,  and
	
  \item[{\Large $\cdot$}]	
   each vertex of the tree $\Gamma_{C^{\prime(0)}_{u,0}}$
     has valance at least $3$ as a vertex in $\Gamma_{C^{\prime}}$,
 \end{itemize}
 the number of  $1$-valance vertices of $\Gamma_{C^{\prime(0)}_{u,0}}$
  must be uniformly bounded.
This implies that the number of vertices of $\Gamma_{C^{\prime(0)}_{u,0}}$
  whose valance is $\ge 3$ must also be bounded,
 since
  $$
    |\{\mbox{$1$-valance vertics of a tree $\Gamma$}\}|   \;
	 \ge \;   |\{\mbox{vertics of valance $\ge 3$ in $\Gamma$}\}| \,+\, 2  \,.
  $$
Which implies in turn that
  the number of $2$-valance vertices of $\Gamma_{C^{\prime(0)}_{u,0}}$
  musty also be uniformly bounded
  since the genus $g(\Gamma_{C^{\prime}})\le g(C^{\prime})=g$.
In conclusion:

\bigskip

\begin{proposition}\label{bounded-Cp0u0}
{\bf [boundedness of
            $\{(C^{\prime(0)}_{u,0},
			   {\cal E}^{\prime}|_{C^{\prime(0)}_{u,0}})\}$].}
  The family of isomorphisms classes of pairs
   $(C^{\prime (0)}_{u,0},
       {\cal E}^{\prime}|_{C^{\prime (0)}_{u,0}})$
   is bounded.		
\end{proposition}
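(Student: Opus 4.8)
The plan is to split the statement into a sheaf-theoretic part and a combinatorial part, observe that the sheaf part is completely rigid, and then bound the combinatorial type of $C^{\prime(0)}_{u,0}$ by a graph-theoretic argument on its dual tree, feeding in the already-established boundedness results for the neighbouring pieces.

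\emph{Reduction to combinatorics.} First I would note that, by the very definition of $C^{\prime(0)}_{u,0}$ (the ${\Bbb P}^1$-components on which $\varphi$ is constant with $0$-dimensional $\pr_Y$-push-forward), the graph restricts on each such ${\Bbb P}^1$ to a pull-back $\pr_Y^{\ast}(\bullet)$ of a length-$r$, $0$-dimensional sheaf on $Y$; pushing this forward along the projection to ${\Bbb P}^1$ gives ${\cal E}^{\prime}|_{{\Bbb P}^1}\simeq {\cal O}_{{\Bbb P}^1}^{\oplus r}$. Hence ${\cal E}^{\prime}|_{C^{\prime(0)}_{u,0}}$ is trivial on every irreducible component, and the only remaining moduli of the pair comes from the finite-dimensional gluing data at the nodes. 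It therefore suffices to bound the number of combinatorial (equivalently, homeomorphism) types of $C^{\prime(0)}_{u,0}$.

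\emph{The dual tree.} Next I would pass to the dual graph $\Gamma:=\Gamma_{C^{\prime(0)}_{u,0}}$, a forest-subgraph of the dual graph $\Gamma_{C^{\prime}}$ of $C^{\prime}$, and invoke the key input supplied by Condition~(3) of Definition~\ref{Zssm}: every ${\Bbb P}^1$-component carrying a constant $\varphi$ has at least three special points, so \emph{every} vertex of $\Gamma$ has valence $\ge 3$ in $\Gamma_{C^{\prime}}$. Combined with Lemma~\ref{bounded-ctC0Cp1u} and Proposition~\ref{bounded-Cp0u+}, which bound the combinatorial types of $C^{\prime}_0$, $C^{\prime(1)}_u$ and $C^{\prime(0)}_{u,+}$, this bounds the number of \emph{external} edges of $\Gamma$ (edges of $\Gamma_{C^{\prime}}$ with exactly one endpoint in $C^{\prime(0)}_{u,0}$), since every such edge attaches $C^{\prime(0)}_{u,0}$ to one of those three already-bounded pieces. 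A leaf of $\Gamma$ then carries at least two external edges and a $2$-valent vertex of $\Gamma$ at least one, so the numbers of leaves and of $2$-valent vertices of $\Gamma$ are uniformly bounded; the elementary inequality $|\{\text{leaves}\}|\ge |\{\text{vertices of valence}\ge 3\}|+2$, applied to each connected component, then bounds the branch vertices too. Thus the total number of vertices and edges of $\Gamma$ is uniformly bounded, depending only on $(g;r,\chi;\beta,c)$.

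\emph{Conclusion and main obstacle.} Finitely many combinatorial types of $C^{\prime(0)}_{u,0}$, together with the rigid trivial sheaf and the bounded gluing data, yield a bounded family of pairs, proving the proposition. I expect the main obstacle to be the control of the $2$-valent (chain) vertices: a tree can have arbitrarily long chains, so the leaf count and the tree inequality alone are insufficient, and one genuinely needs the valence-$\ge 3$ property of Condition~(3) to force each such vertex to emit an external edge into the already-bounded pieces $C^{\prime}_0\cup C^{\prime(1)}_u\cup C^{\prime(0)}_{u,+}$; keeping this bookkeeping of external edges consistent with the global constraint $g(\Gamma_{C^{\prime}})\le g$ is the delicate point of the argument.
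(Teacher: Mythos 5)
Your overall architecture coincides with the paper's: reduce to bounding the combinatorial type of $C^{\prime(0)}_{u,0}$ after observing ${\cal E}^{\prime}|_{C^{\prime(0)}_{u,0}}\simeq {\cal O}_{C^{\prime(0)}_{u,0}}^{\;\oplus r}$ (your derivation of this via flat base change along $\pr_Y^{\ast}$ is fine, and on a ${\Bbb P}^1$-tree componentwise triviality does globalize), then run a valence count on the dual tree $\Gamma:=\Gamma_{C^{\prime(0)}_{u,0}}\subset\Gamma_{C^{\prime}}$ using Condition~(3) of Definition~\ref{Zssm} (every vertex of $\Gamma$ has valence $\ge 3$ in $\Gamma_{C^{\prime}}$), the tree inequality, and the boundedness of the complementary pieces. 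But the first link of your chain has a genuine gap: you claim that Lemma~\ref{bounded-ctC0Cp1u} and Proposition~\ref{bounded-Cp0u+} bound the number of external edges of $\Gamma$ ``since every such edge attaches $C^{\prime(0)}_{u,0}$ to one of those three already-bounded pieces.'' Boundedness of the combinatorial/homeomorphism types of $C^{\prime}_0$, $C^{\prime(1)}_u$, $C^{\prime(0)}_{u,+}$ bounds their numbers of irreducible and connected components, but the attachment points of $C^{\prime(0)}_{u,0}$ are \emph{smooth} points of those pieces and are invisible to their combinatorial types: a single component of bounded type can a priori carry arbitrarily many attaching nodes. So nothing established so far caps the external edges, and since your bounds on leaves and on $2$-valent vertices are all derived from that cap, the argument collapses at this step.

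The missing input is precisely what you relegate to ``bookkeeping'' in your closing paragraph: the fixed arithmetic genus. Since $C^{\prime}$ is connected of genus $g$, one has $b_1(\Gamma_{C^{\prime}})\le g$, i.e.\ $E\le V+g-1$ for the edge and vertex counts of $\Gamma_{C^{\prime}}$. Writing $n$ for the number of vertices of $\Gamma$, $c$ for its number of connected components, and $m$ for the number of remaining vertices of $\Gamma_{C^{\prime}}$ (uniformly bounded by the cited results), the valence-$\ge 3$ condition gives at least $3n-2(n-c)=n+2c$ external edges, while $E\le n+m+g-1$ caps them by $m+g-1+c$; hence $n+c\le m+g-1$, which bounds all vertices and all external edges at once. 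This is exactly where the paper leans on the genus: it lists the fixed genus of $C^{\prime}$ among the hypotheses when bounding the $1$-valent vertices, and it invokes $g(\Gamma_{C^{\prime}})\le g(C^{\prime})=g$ explicitly to bound the $2$-valent vertices (its order is leaves, then branch vertices via the tree inequality, then $2$-valent vertices via genus; yours inverts this, which would be harmless if the external-edge bound stood on its own --- it does not). Conceptually: each external edge beyond the boundedly many needed to connect the complementary components creates an independent cycle in $\Gamma_{C^{\prime}}$, and there are at most $g$ of those; the genus constraint is the load-bearing input, not a consistency check.
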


\vspace{3em}
\bigskip

\begin{flushleft}
{\bf Boundedness of the family of  isomorphisms classes of pairs
   $(C^{\prime }_0, {\cal E}^{\prime}|_{C^{\prime }_0})$}
\end{flushleft}
$(a)$
{\it Bound for  $|\chi({\cal E}^{\prime}|_{C^{\prime}_0})|$}

\medskip

\noindent
Let ${\cal I}_{C^{\prime}_u}$ be the ideal sheaf of $C^{\prime}_u$ in $C$.
Then one has the short exact sequence of ${\cal O}_C$-modules
 $$
   0\;\longrightarrow\; {\cal I}_{C^{\prime}_u}\cdot{\cal E}^{\prime}\;
        \longrightarrow\;  {\cal E}^{\prime}\; \longrightarrow\;
		{\cal E}^{\prime}|_{C^{\prime}_u}\; \longrightarrow\; 0\,.
 $$
Since
  ${\cal I}_{C^{\prime}_u}\cdot{\cal E}^{\prime}$
      is supported on $C^{\prime}_0$    and
  $\rho|_{C^{\prime}_0}\rightarrow C$  is affine birational of relative dimension $0$,
 one has the short exact sequence
 $$
   \begin{array}{ccccccccc}
     0   & \longrightarrow
	      & \rho_{\ast}({\cal I}_{C^{\prime}_u}\cdot{\cal E}^{\prime})
		  & \longrightarrow
		  & \rho_{\ast}({\cal E}^{\prime})
		  & \longrightarrow
		  & \rho_{\ast}({\cal E}^{\prime}|_{C^{\prime}_u})
		  & \longrightarrow
		  & R^1\!\rho_{\ast}({\cal I}_{C^{\prime}_u}\cdot{\cal E}^{\prime})\\[1.2ex]
    &&&&    \|                  &&&& \|                             \\[.8ex]
    &&&& {\cal F}    &&&& 0	
   \end{array}
 $$
   and
 $$
    \chi({\cal I}_{C^{\prime}_u}\cdot{\cal E}^{\prime})\;      =\;
	\chi(\rho_{\ast}({\cal I}_{C^{\prime}_u}\cdot{\cal E}^{\prime}))\,.
 $$
Since ${\cal I}_{C^{\prime}_u}|_{C^{\prime}_0}$
 is the ideal sheaf of the nodes $C^{\prime}_0\cap C^{\prime}_u$ in $C^{\prime}_0$,
one has also a short exact sequence of ${\cal O}_{C^{\prime}_0}$-modules
 $$
    0\; \longrightarrow\; {\cal I}_{C^{\prime}_u}\cdot{\cal E }^{\prime}\;
	      \longrightarrow\; {\cal E}^{\prime}|_{C^{\prime}_0}\;
		  \longrightarrow\; {\cal G}^{\prime}\; \longrightarrow\; 0\,,
 $$
 where
  ${\cal G}^{\prime}$ is $0$-dimensional,
     supported at $C^{\prime}_0\cap C^{\prime}_u$,  and
     of length $r\,|C^{\prime}_0\cap C^{\prime}_u|$,
   which is uniformly bounded by Lemma~\ref{bounded-ctC0Cp1u}
       and Lemma~\ref{lFtfbCp0u+}.
	
Since
    both $\rho_{\ast}({\cal I}_{C^{\prime}_u}\cdot{\cal E}^{\prime})$
	and  $\rho_{\ast}({\cal E}^{\prime}|_{C^{\prime}_0})$
    are torsion free on $C$,
one has a sequence of  inclusions of ${\cal O}_C$-modules
 $$
   0\;\hookrightarrow\;
    \rho_{\ast}({\cal I}_{C^{\prime}_u}\cdot{\cal E}^{\prime})\;
   \hookrightarrow\;
{\cal F}/{\cal F}_{\torsionscriptsize}
	\hookrightarrow\;
	\rho_{\ast}({\cal E}^{\prime}|_{C^{\prime}_0})\,.
 $$
From which one deduces that
 $$
    0\; \le \;
	l({\cal F}/
	     ({\cal F}_{\torsionscriptsize}
		      + \rho_{\ast}({\cal I}_{C^{\prime}_u}{\cal E}^{\prime}    )))\;
	 \le\;  l(\rho_{\ast}(
	      {\cal E}^{\prime}|_{C^{\prime}_0})
	         /\rho_{\ast}({\cal I}_{C^{\prime}_u}\cdot{\cal E}^{\prime}))\;
     \le\;  r\,|C^{\prime}_0\cap C^{\prime}_u|	\,,		
 $$
  which is, again, uniformly bounded by Lemma~\ref{bounded-ctC0Cp1u}
                                                             and Lemma~\ref{lFtfbCp0u+}.
It follows that
 $$
   \begin{array}{ccl}
    |\,  \chi({\cal E}^{\prime}|_{C^{\prime}_0})\,
	                -\,\chi({\cal F}/{\cal F}_{\torsionscriptsize})\, |
	  &   =
	  &  |\, \chi({\cal I}_{C^{\prime}_u}\cdot{\cal E}^{\prime})\,
	         +\, l({\cal G}^{\prime})
	         -\,\chi({\cal F}/{\cal F}_{\torsionscriptsize})\, |                 \\[1.2ex]
	& =
      & |\,	\chi(\rho_{\ast}({\cal I}_{C^{\prime}_u}\cdot{\cal E}^{\prime}))\,
	      +\,  l({\cal G}^{\prime})
		  -\,\chi({\cal F}/{\cal F}_{\torsionscriptsize}) \, |                   \\[1.2ex]
	&\le
      &  2\,r\,|C^{\prime}_0\cap C^{\prime}_u|	\,.
  \end{array}	
 $$
Together with Lemma~\ref{lFtfbCp0u+} and Lemma~\ref{bounded-ctC0Cp1u},
 this proves:
   
\bigskip
 
\begin{lemma}\label{bEulerEpCp0}
  {\bf [bound on $|\chi({\cal E}^{\prime}|_{C^{\prime}_0})|$].}
  There is a uniform upper bound for
     $|\chi({\cal E}^{\prime}|_{C^{\prime}_0})|$
	that depends only on  $(g;r,\chi;\beta,c)$.
\end{lemma}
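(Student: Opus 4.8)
The plan is to compare the Euler characteristic $\chi({\cal E}^{\prime}|_{C^{\prime}_0})$, an invariant of the non-contracted part $C^{\prime}_0$ of the domain, with the Euler characteristic of the Fourier--Mukai transform ${\cal F}=\rho_{\ast}({\cal E}^{\prime})$ on the stable curve $C$, which is pinned down by the type $(\beta,c)$, and to show that the difference is concentrated at the (uniformly bounded) finite set $C^{\prime}_0\cap C^{\prime}_u$ of nodes where $C^{\prime}_0$ meets the contracted ${\Bbb P}^1$-trees. Concretely, I would first introduce the ideal sheaf ${\cal I}_{C^{\prime}_u}$ of $C^{\prime}_u$ and record two restriction sequences: on $C^{\prime}$ the sequence $0\to {\cal I}_{C^{\prime}_u}\cdot{\cal E}^{\prime}\to {\cal E}^{\prime}\to {\cal E}^{\prime}|_{C^{\prime}_u}\to 0$, and on $C^{\prime}_0$ the companion $0\to {\cal I}_{C^{\prime}_u}\cdot{\cal E}^{\prime}\to {\cal E}^{\prime}|_{C^{\prime}_0}\to {\cal G}^{\prime}\to 0$, where ${\cal G}^{\prime}$ is $0$-dimensional, supported at $C^{\prime}_0\cap C^{\prime}_u$, of length $r\,|C^{\prime}_0\cap C^{\prime}_u|$. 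Additivity of $\chi$ then yields $\chi({\cal E}^{\prime}|_{C^{\prime}_0})=\chi({\cal I}_{C^{\prime}_u}\cdot{\cal E}^{\prime})+l({\cal G}^{\prime})$.

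Next I would push forward along $\rho$. Since ${\cal I}_{C^{\prime}_u}\cdot{\cal E}^{\prime}$ is supported on $C^{\prime}_0$ and $\rho|_{C^{\prime}_0}\colon C^{\prime}_0\to C$ is affine, birational, and of relative dimension $0$, its higher direct images vanish, so $\chi({\cal I}_{C^{\prime}_u}\cdot{\cal E}^{\prime})=\chi(\rho_{\ast}({\cal I}_{C^{\prime}_u}\cdot{\cal E}^{\prime}))$. The same finiteness lets me sandwich the torsion-free quotient of ${\cal F}$ in a chain of inclusions of torsion-free ${\cal O}_C$-modules $\rho_{\ast}({\cal I}_{C^{\prime}_u}\cdot{\cal E}^{\prime})\hookrightarrow {\cal F}/{\cal F}_{\torsionscriptsize}\hookrightarrow \rho_{\ast}({\cal E}^{\prime}|_{C^{\prime}_0})$, whose successive quotients are $0$-dimensional of length at most $r\,|C^{\prime}_0\cap C^{\prime}_u|$. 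Combining this with the identity of the previous paragraph gives the estimate $|\chi({\cal E}^{\prime}|_{C^{\prime}_0})-\chi({\cal F}/{\cal F}_{\torsionscriptsize})|\le 2r\,|C^{\prime}_0\cap C^{\prime}_u|$.

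To close the argument I would invoke two uniform inputs. First, $|C^{\prime}_0\cap C^{\prime}_u|$ is uniformly bounded by Lemma~\ref{bounded-ctC0Cp1u}, since the combinatorial types of $C^{\prime}_0$ and of $C^{\prime(1)}_u$ occurring in the problem form a finite set. Second, for the fixed anchor I write $\chi({\cal F}/{\cal F}_{\torsionscriptsize})=\chi({\cal F})-l({\cal F}_{\torsionscriptsize})$, where $\chi({\cal F})=\Real(c)+B\cdot\beta$ is determined by the type via the explicit central-charge formula of Lemma~\ref{tccef} (using $\chi({\cal F})=\chi(\tilde{\cal F})$, as $\tilde{\cal F}$ is finite over $C$), and $l({\cal F}_{\torsionscriptsize})$ is uniformly bounded because ${\cal F}$ runs over push-forwards of objects of the \emph{compact} stack $\FM_g^{1,[0];\scriptsizeZss}(Y;c)$ --- this being exactly the boundedness already exploited in Lemma~\ref{lFtfbCp0u+}. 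Assembling the three bounds produces the desired uniform estimate on $|\chi({\cal E}^{\prime}|_{C^{\prime}_0})|$ depending only on $(g;r,\chi;\beta,c)$.

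The main obstacle is the sandwiching step: I must be sure that \emph{all} of the discrepancy between the upstairs invariant $\chi({\cal E}^{\prime}|_{C^{\prime}_0})$ and the fixed downstairs invariant $\chi({\cal F})$ is genuinely confined to the contraction nodes. This relies on $\rho$ being finite on $C^{\prime}_0$ (so no $R^1\rho_{\ast}$-term enters and Euler characteristics are preserved there) together with the fact that the only torsion and failure of local freeness created downstairs arise from collapsing $C^{\prime}_u$, so that the comparison sheaves differ by $0$-dimensional pieces of controlled length; it is precisely the purity and torsion bookkeeping of ${\cal F}$ supplied by the compactness of $\FM_g^{1,[0];\scriptsizeZss}(Y;c)$ that upgrades this comparison from a pointwise statement to a uniform bound.
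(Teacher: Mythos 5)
Your proposal reproduces the paper's own argument essentially verbatim: the same two restriction sequences built from ${\cal I}_{C^{\prime}_u}\cdot{\cal E}^{\prime}$, the same use of $\rho|_{C^{\prime}_0}$ being affine, birational and of relative dimension $0$ to preserve $\chi$ under push-forward, the same sandwich $\rho_{\ast}({\cal I}_{C^{\prime}_u}\cdot{\cal E}^{\prime})\hookrightarrow {\cal F}/{\cal F}_{\torsionscriptsize}\hookrightarrow \rho_{\ast}({\cal E}^{\prime}|_{C^{\prime}_0})$ yielding $|\chi({\cal E}^{\prime}|_{C^{\prime}_0})-\chi({\cal F}/{\cal F}_{\torsionscriptsize})|\le 2r\,|C^{\prime}_0\cap C^{\prime}_u|$, and the same two uniform inputs (Lemma~\ref{bounded-ctC0Cp1u} for the node count, and the compactness of $\FM_g^{1,[0];\scriptsizeZss}(Y;c)$ via Lemma~\ref{lFtfbCp0u+} for the anchor). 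One parenthetical slip: the identity $\chi({\cal F})=\chi(\tilde{\cal F})$ justified by ``$\tilde{\cal F}$ is finite over $C$'' is not valid in general, since $\Supp(\tilde{\cal F})$ can contain components vertical over points of $C$ (the images of the ${\Bbb P}^1$-components in $C^{\prime(1)}_u$ whose push-forward to $Y$ is $1$-dimensional), so an $R^1{\pr_C}_{\ast}$-correction can enter; this is harmless, however, because the uniform bound on $\chi({\cal F}/{\cal F}_{\torsionscriptsize})$ that you actually need already follows from the boundedness of the family of sheaves ${\cal F}$ supplied by the compactness of $\FM_g^{1,[0];\scriptsizeZss}(Y;c)$, which is exactly how the paper closes the argument.
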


\bigskip

\noindent
$(b)$
{\it Boundedness for the family of pairs
        $(C^{\prime}_0,{\cal E}^{\prime}|_{C^{\prime}_0})$ }

\medskip

\noindent
From Part (a), one has an exact sequence of ${\cal O}_C$-modules
 $$
    0 \; \longrightarrow\; {\cal F}/{\cal F}_{\torsionscriptsize}\;
	      \longrightarrow\;   \rho_{\ast}({\cal E}^{\prime}|_{C^{\prime}_0})\;
		  \longrightarrow\;  {\cal G}\; \longrightarrow\; 0\,,
 $$
 where ${\cal G}$ is a $0$-dimensional ${\cal O}_C$-module of length
 $\le r\,|C^{\prime}_0\cap C^{\prime}_u|$.
This shows that  the pairs
  $(C, \rho_{\ast}({\cal E}^{\prime}|_{C^{\prime}_0}))$
  lie in a bounded family
 since both ${\cal F}/{\cal F}_{\torsionscriptsize}$ and ${\cal G}$ do.
On the other hand,
  the pairs $(C^{\prime}_0,{\cal E}^{\prime}|_{C^{\prime}_0})$
     lie in a bounded family
 if and only if
  the pairs $(C, \rho_{\ast}({\cal E}^{\prime}|_{C^{\prime}_0}))$
   lie in a bounded family.
This proves:
 
\bigskip

\begin{proposition}\label{bounded-Cp0Ep}
{\bf [boundedness of
            $\{(C^{\prime}_0, {\cal E}^{\prime}|_{C^{\prime}_0})\}$].}
  The family of isomorphisms classes of pairs
   $(C^{\prime }_0, {\cal E}^{\prime}|_{C^{\prime}_0})$
   is bounded.		
\end{proposition}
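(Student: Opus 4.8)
The plan is to transfer the boundedness question from the partial normalization $C^{\prime}_0$ to the stable curve $C$, where the relevant sheaves are already under control. First I would use Lemma~\ref{bounded-ctC0Cp1u}, which tells us that the combinatorial type of $C^{\prime}_0$ ranges over a finite set; consequently the collapsing map $\rho|_{C^{\prime}_0}:C^{\prime}_0\rightarrow C$ (affine, finite, birational, a partial normalization at the attaching nodes) itself varies in a bounded family. For a finite morphism, push-forward is an equivalence between coherent sheaves on $C^{\prime}_0$ and coherent $\rho_{\ast}{\cal O}_{C^{\prime}_0}$-modules on $C$, and this equivalence preserves boundedness in both directions precisely because $\rho|_{C^{\prime}_0}$ is not fixed but stays bounded. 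Hence the family of pairs $(C^{\prime}_0,{\cal E}^{\prime}|_{C^{\prime}_0})$ is bounded if and only if the family $(C,\rho_{\ast}({\cal E}^{\prime}|_{C^{\prime}_0}))$ is bounded, and the Proposition reduces to a boundedness statement for sheaves on the fixed-genus, bounded family of stable curves $C$.

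Next I would realize $\rho_{\ast}({\cal E}^{\prime}|_{C^{\prime}_0})$ as an extension of two pieces that are already known to move in bounded families. Using the short exact sequence of ${\cal O}_C$-modules produced in Part~(a), namely $0\rightarrow {\cal F}/{\cal F}_{\torsionscriptsize}\rightarrow \rho_{\ast}({\cal E}^{\prime}|_{C^{\prime}_0})\rightarrow {\cal G}\rightarrow 0$, the sub-object ${\cal F}/{\cal F}_{\torsionscriptsize}$ is the torsion-free quotient of ${\cal F}={\pr_C}_{\ast}(\tilde{\cal F})$ with $[\tilde{\cal F}]$ running over the compact (hence bounded) stack $\FM_g^{1,[0];\scriptsizeZss}(Y;c)$, so it lies in a bounded family; the quotient ${\cal G}$ is $0$-dimensional of length $\le r\,|C^{\prime}_0\cap C^{\prime}_u|$, uniformly bounded by Lemma~\ref{bounded-ctC0Cp1u} and Lemma~\ref{lFtfbCp0u+}, so ${\cal G}$ is bounded as well.

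It then remains to argue that an extension of two bounded families of sheaves on the bounded family of curves $C$ is again bounded. Here I would fix a relative polarization on $C$, observe that the Hilbert polynomials of ${\cal F}/{\cal F}_{\torsionscriptsize}$ and of ${\cal G}$ are uniformly bounded (the rank is the fixed $r$ and $|\chi({\cal E}^{\prime}|_{C^{\prime}_0})|$ is uniformly bounded by Lemma~\ref{bEulerEpCp0}), so that $\rho_{\ast}({\cal E}^{\prime}|_{C^{\prime}_0})$ has uniformly bounded Hilbert polynomial and uniformly bounded Castelnuovo--Mumford regularity; a family of sheaves with fixed Hilbert polynomial and uniformly bounded regularity is bounded. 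Tracing back through the finite-morphism equivalence of the first paragraph then yields boundedness of $\{(C^{\prime}_0,{\cal E}^{\prime}|_{C^{\prime}_0})\}$.

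The hard part will be the last step, controlling the extension: boundedness of the sub and the quotient does not by itself preclude the extension from acquiring unbounded regularity. The cleanest route is to bound the regularity of $\rho_{\ast}({\cal E}^{\prime}|_{C^{\prime}_0})$ directly from the bounded regularities of ${\cal F}/{\cal F}_{\torsionscriptsize}$ and ${\cal G}$ via the twisted long exact cohomology sequence, pinning down the Hilbert polynomial through the uniform bound on $|\chi({\cal E}^{\prime}|_{C^{\prime}_0})|$ from Lemma~\ref{bEulerEpCp0}. A secondary subtlety is the ``only if'' direction of the finite-morphism reduction, which genuinely needs the bounded combinatorial type of $C^{\prime}_0$ so that the partial normalization varies in a bounded family rather than being treated as fixed.
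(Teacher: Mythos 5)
Your proposal is correct and follows essentially the same route as the paper's proof: the same short exact sequence $0\rightarrow {\cal F}/{\cal F}_{\torsionscriptsize}\rightarrow \rho_{\ast}({\cal E}^{\prime}|_{C^{\prime}_0})\rightarrow {\cal G}\rightarrow 0$ with both outer terms lying in bounded families, followed by the same transfer of boundedness between the pairs $(C,\rho_{\ast}({\cal E}^{\prime}|_{C^{\prime}_0}))$ and $(C^{\prime}_0,{\cal E}^{\prime}|_{C^{\prime}_0})$ along the affine birational collapsing map $\rho|_{C^{\prime}_0}$. The only difference is that you spell out what the paper asserts without comment — the Castelnuovo--Mumford regularity bound for extensions of bounded families and the role of the bounded combinatorial type of $C^{\prime}_0$ in making the finite-morphism equivalence work in both directions — and those fillers are correct.
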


\bigskip

\begin{flushleft}
{\bf Boundedness of the family of isomorphisms classes of pairs
   $(C^{\prime (1)}_u, {\cal E}^{\prime}|_{C^{\prime (1)}_u})$}
\end{flushleft}
Let
  $C^{\prime(1)}_{u;ij}\simeq {\Bbb P}^1$ be the irreducible components
     of $C^{\prime(1)}_u$,
	   with $i$ labeling the connected components of $C^{\prime(1)}_u$  and
	           $j$ labeling the irreducible components in the $i$-th connected component
    			   of $C^{\prime(1)}_u$,    and 				
  ${\cal E}^{\prime}|_{C^{\prime(1)}_{u;ij}}
       \simeq \oplus_{k=1}^r{\cal O}_{{\Bbb P}^1}(a_{ijk})$.				   	
Recall Lemma~\ref{bounded-ctC0Cp1u}
 that there is a uniform bound on the number of combinatorial types of the ${\Bbb P}^1$-tree
   $C^{\prime(1)}_u$.
It follows that to show that
 the pairs $(C^{\prime(1)}_u,{\cal E}^{\prime}|_{C^{\prime(1)}_u})$
  lie in a bounded family,
we only need to show that the $a_{ijk}$'s that can appear in the above decomposition
   must lie in a bounded interval of ${\Bbb Z}$ that depends only on $(g;r,\chi;\beta,c)$.

\bigskip

\noindent
$(a)$ {\it Upper bound for $a_{ijk}$}
  
\medskip

\noindent
That $a_{ijk}$ has a uniform upper bound follows from the same argument as in
  $\;${\sl Theme `Boundedness of the family of isomorphisms classes of pairs
   $(C^{\prime (0)}_{u,+},
        {\cal E}^{\prime}|_{C^{\prime (0)}_{u,+}})$'}$\;$
  by comparing
    $l((\rho_{\ast}({\cal E}^{\prime}|_{C^{\prime(1)}_u}))
                                   _{\torsionscriptsize})$
  and  $l({\cal F}_{\torsionscriptsize})$.

\bigskip

\noindent
$(b)$ {\it Lower bound for $a_{ijk}$}
  
\medskip

\noindent
Let
  $$
     \iota_0\; :\;  C^{\prime}_0\;\hookrightarrow\; C\,, \hspace{2em}
	 \iota_u^{(0)}\; :\;  C^{\prime(0)}_u\;\hookrightarrow\; C\,, \hspace{2em}
     \iota_u^{(1)}\; :\;  C^{\prime(1)}_u\;\hookrightarrow\; C
  $$
 be the inclusion of subcurves.
Then
  the quotient homomorphisms of ${\cal O}_C$-modules
  $$
     {\cal E}^{\prime}\;\longrightarrow\;
	      {\cal E}^{\prime}|_{C^{\prime}_0}\,,   \hspace{2em}
     {\cal E}^{\prime}\;\longrightarrow\;
      	  {\cal E}^{\prime}|_{C^{\prime(0)}_u}\,,    \hspace{2em}
     {\cal E}^{\prime}\;\longrightarrow\;
          {\cal E}^{\prime}|_{C^{\prime(1)}_u} 		
  $$
  induce a  short exact sequence of ${\cal O}_C$-modules
 $$
   0\; \longrightarrow\;  {\cal E}^{\prime}\; \longrightarrow\;
       {\iota_0}_{\ast}({\cal E}^{\prime}|_{C^{\prime}_0})
	    \oplus   {\iota_u^{(0)}}_{\ast}
		                      ({\cal E}^{\prime}|_{C^{\prime(0)}_u})
	    \oplus   {\iota_u^{(1)}}_{\ast}
		                      ({\cal E}^{\prime}|_{C^{\prime(1)}_u})\; 	    		
        \longrightarrow\; {\cal G}^{\prime}\;\longrightarrow 0\,,							
 $$
 where ${\cal G}^{\prime}$ is $0$-dimensional of length bounded above
 by $\,r\,(|C^{\prime}_0\cap C^{\prime(0)}_u|
                   +|C^{\prime}_0\cap C^{\prime(1)}_u|
				   +|C^{\prime(0)}_u\cap C^{\prime(1)}_u|)$,
 which, in turn, is uniformly bounded above.
It follows that
 $$
   \chi({\cal E}^{\prime}|_{C^{\prime(1)}_u})\;
   =\; \chi({\cal E}^{\prime})\, +\, l({\cal G}^{\prime})\,
          -\, \chi({\cal E}^{\prime}|_{C^{\prime}_0})   \,
		  -\, \chi({\cal E}^{\prime}|_{C^{\prime(0)}_u})
 $$
 is uniformly bounded below
 since
   $\chi({\cal E}^{\prime})=\chi$ is fixed and
   all $l({\cal G}^{\prime})$,
        $-\, \chi({\cal E}^{\prime}|_{C^{\prime}_0})$, and
		$-\, \chi({\cal E}^{\prime}|_{C^{\prime(0)}_u})$
	  are uniformly bounded below
	 by the above argument, Proposition~\ref{bounded-Cp0u+},
	                                           Proposition~\ref{bounded-Cp0Ep}, and
											   Proposition~\ref{bounded-Cp0u0}.
	      %
	
On the other hand,  one has the short exact sequence
 $$
   0\;\longrightarrow\;
       {\cal E}^{\prime}|_{C^{\prime(1)}_u}\;  \longrightarrow\;
        \oplus_{i,j}{\cal E}^{\prime}|_{C^{\prime(1)}_{u;ij}}\;
	   \longrightarrow\; {\cal G}^{\prime\prime}\;           \longrightarrow\; 0\,,	
 $$
  where ${\cal G}^{\prime\prime}$ is $0$-dimensional of length bounded above
  by $\,r\,|(C^{\prime(1)}_u)_{\singularscriptsize}|$,
  which, in turn, is uniformly bounded.
It follows that
 $$
   \sum_{i,j,k}(1+a_{ijk})\;
    =\; \sum_{i,j}
	          \chi(\oplus_{i,j}{\cal E}^{\prime}|_{C^{\prime(1)}_{u;ij}})\,
    =\; \chi({\cal E}^{\prime}|_{C^{\prime(1)}_u})\,			
		   +\, l({\cal G}^{\prime\prime})\;	  	
	\ge\;  	  \chi({\cal E}^{\prime}|_{C^{\prime(1)}_u})
 $$
 is uniformly bounded below.
Since
  the number of terms in the summation $\sum_{i,j,k}$ is bounded above by
    $r\,(H\cdot \beta)$  and
  the $a_{ijk}$'s are uniformly bounded above by Part (a),
 $a_{ijk}$ must be uniformly bounded below as well.
Together with Part (a), this proves that:

\bigskip

\begin{proposition}\label{bounded-Cp1uEp}
  {\bf [boundedness of
              ${(C^{\prime(1)}_u,{\cal E}^{\prime}|_{C^{\prime(1)}_u})}$]}
     The family of isomorphisms classes of pairs
        $(C^{\prime (1)}_u, {\cal E}^{\prime}|_{C^{\prime (1)}_u})$
        is bounded.
\end{proposition}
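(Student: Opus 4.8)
The plan is to reduce the assertion to a uniform two-sided bound on the integers $a_{ijk}$ appearing in the splitting types ${\cal E}^{\prime}|_{C^{\prime(1)}_{u;ij}}\simeq\oplus_{k=1}^r{\cal O}_{{\Bbb P}^1}(a_{ijk})$ of the restriction of ${\cal E}^{\prime}$ to the ${\Bbb P}^1$-components of $C^{\prime(1)}_u$. This reduction is legitimate: by Lemma~\ref{bounded-ctC0Cp1u} the combinatorial type of the ${\Bbb P}^1$-tree $C^{\prime(1)}_u$ ranges over a finite set depending only on $(g;r,\chi;\beta,c)$ — in particular the number of components is bounded in terms of $H\cdot\beta$ — so once each $a_{ijk}$ is confined to a fixed interval of ${\Bbb Z}$, the remaining gluing data at the nodes contribute only a finite-dimensional family. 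First I would establish an upper bound on the $a_{ijk}$, and then the harder lower bound.

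For the upper bound I would repeat the argument used for $C^{\prime(0)}_{u,+}$. Global sections of ${\cal E}^{\prime}|_{C^{\prime(1)}_u}$ vanishing at the points where $C^{\prime(1)}_u$ is attached to the rest of $C^{\prime}$ are carried, under the contraction $\rho$, injectively into the torsion of $\rho_{\ast}({\cal E}^{\prime}|_{C^{\prime(1)}_u})$, which in turn embeds into ${\cal F}_{\torsionscriptsize}$. Since ${\cal F}$ is the pushforward of a sheaf lying in the compact stack $\FM_g^{1,[0];\scriptsizeZss}(Y;c)$, the length $l({\cal F}_{\torsionscriptsize})$ is uniformly bounded; feeding this into the dimension count of Lemma~\ref{h0E} exactly as in Lemma~\ref{lFtfbCp0u+} shows that an unbounded $a_{ijk}$ would force $l({\cal F}_{\torsionscriptsize})$ to grow without bound, a contradiction.

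The lower bound is where I expect the real obstacle, since a negative summand on one component could in principle be offset by positive ones elsewhere, so it can be controlled only by a global Euler-characteristic accounting rather than any local positivity statement, and moreover it uses that the other three subcurve families are already bounded. Starting from the Mayer--Vietoris-type sequence
$$
 0\longrightarrow {\cal E}^{\prime}\longrightarrow
   {\iota_0}_{\ast}({\cal E}^{\prime}|_{C^{\prime}_0})
   \oplus{\iota_u^{(0)}}_{\ast}({\cal E}^{\prime}|_{C^{\prime(0)}_u})
   \oplus{\iota_u^{(1)}}_{\ast}({\cal E}^{\prime}|_{C^{\prime(1)}_u})
   \longrightarrow {\cal G}^{\prime}\longrightarrow 0\,,
$$
whose cokernel ${\cal G}^{\prime}$ is $0$-dimensional of length bounded in terms of the (bounded) numbers of intersection points, additivity of $\chi$ gives
$$
 \chi({\cal E}^{\prime}|_{C^{\prime(1)}_u})
   =\chi+l({\cal G}^{\prime})
     -\chi({\cal E}^{\prime}|_{C^{\prime}_0})
     -\chi({\cal E}^{\prime}|_{C^{\prime(0)}_u})\,.
$$
As $\chi({\cal E}^{\prime})=\chi$ is fixed, and $l({\cal G}^{\prime})$, $-\chi({\cal E}^{\prime}|_{C^{\prime}_0})$, $-\chi({\cal E}^{\prime}|_{C^{\prime(0)}_u})$ are all bounded below by Lemma~\ref{bounded-ctC0Cp1u}, Proposition~\ref{bounded-Cp0Ep}, Proposition~\ref{bounded-Cp0u+} and Proposition~\ref{bounded-Cp0u0}, the quantity $\chi({\cal E}^{\prime}|_{C^{\prime(1)}_u})$ is bounded below. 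Normalizing $C^{\prime(1)}_u$ into its components then yields
$$
 0\longrightarrow {\cal E}^{\prime}|_{C^{\prime(1)}_u}
   \longrightarrow \oplus_{i,j}{\cal E}^{\prime}|_{C^{\prime(1)}_{u;ij}}
   \longrightarrow {\cal G}^{\prime\prime}\longrightarrow 0
$$
with ${\cal G}^{\prime\prime}$ again $0$-dimensional of bounded length, so that $\sum_{i,j,k}(1+a_{ijk})=\chi({\cal E}^{\prime}|_{C^{\prime(1)}_u})+l({\cal G}^{\prime\prime})$ is bounded below. Since the number of summands is bounded and each $a_{ijk}$ is bounded above by the previous step, every $a_{ijk}$ is bounded below as well; combined with the finitely many combinatorial types, this gives the asserted boundedness.
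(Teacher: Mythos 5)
Your proposal is correct and takes essentially the same route as the paper's own proof: the same reduction via Lemma~\ref{bounded-ctC0Cp1u} to a two-sided bound on the $a_{ijk}$, the same torsion-length comparison against $l({\cal F}_{\torsionscriptsize})$ (as in Lemma~\ref{lFtfbCp0u+}) for the upper bound, and the identical pair of short exact sequences with $0$-dimensional cokernels ${\cal G}^{\prime}$ and ${\cal G}^{\prime\prime}$ yielding the Euler-characteristic lower bound from Propositions~\ref{bounded-Cp0u+}, \ref{bounded-Cp0u0}, and \ref{bounded-Cp0Ep}. No gaps to report.
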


\bigskip
 
All together, this proves the boundedness of $\Image\Phi$.
It follows now from Proposition~\ref{bounded-mbfd} that
 ${\frak M}_{Az^{\!f}\!(g;r,\chi)}^{\scriptsizeZss}(Y;\beta,c)$
   is also bounded.
This concludes the proof of Theorem~\ref{bounded-sZssm}.

\bigskip

\subsection{The error charge of a Fourier-Mukai transform.}
		
Similar to the technique used in the work [L-W] of Jun Li and Baosen Wu,
 we define the notion of error charge that suits our problem in this subsection.
It will be used in Sec.~4.3
  to show that the bubbling-off procedure, which resolves irregularities of  a morphism
   created by filling-in the central fiber of a $1$-dimensional family of morphisms,
 must terminate in finitely many steps to recover a regular morphism within our category.
This proves then the completeness of the stack
 ${\frak M}_{Az^{\!f}\!(g;r,\chi)}^{\scriptsizeZss}(Y;\beta,c)$.

\bigskip
 
\begin{flushleft}
{\bf The error charge and its basic properties}
\end{flushleft}		
\begin{definition}\label{error-charge}
 {\bf [error charge].} {\rm		
 Let
    $C^{\prime}$ be a nodal curve,
    $\rho:C^{\prime}\rightarrow C\in \overline{\cal M}_g$ be the collapsing morphism,
	 and
	$\tilde{\cal F}^{\prime}$
    	be a $1$-dimensional coherent ${\cal O}_{C^{\prime}\times Y}$-module
	such that ${\pr_{C^{\prime}}}_{\ast}(\tilde{\cal F}^{\prime})$
      has rank $r$ on each irreducible component of $C^{\prime}$.			
 Then, from the existence of a flattening stratification for a coherent sheaf,
   there exist a finite set $D^{\prime}$ of points on $C^{\prime}$
    such that
    \begin{itemize}
	   \item[{\Large $\cdot$}]
	     $\tilde{\cal F}^{\prime}$ is flat, of relative length $r$,
		  over $U^{\prime}:= C^{\prime}-D^{\prime}$;
	
	   \item[{\Large $\cdot$}]
	     as coherent sheaf over $C^{\prime}$,
    		 $\tilde{\cal F}^{\prime}$ is not flat over points in $D^{\prime}$.
	\end{itemize}
 Define the {\it error charge of $\tilde{\cal F}^{\prime}$ over $C^{\prime}$,
    with respect to the central charge function $Z$},
   to be the following complex number
   $\Err^Z_{C^{\prime}}(\tilde{\cal F}^{\prime})$ :
   \begin{itemize}
     \item[{\Large $\cdot$}]
	  For $p^{\prime}\in C^{\prime}$,
	  let
	    $$
		   \tilde{\cal F}^{\prime}_{(p^{\prime})}\;
		    =\;  \{s^{\prime}\in \tilde{\cal F}^{\prime}\;|\;
			               \Ann(v)\subset {\cal I}_{p^{\prime}}^k\,,\;
						    \mbox{for some $k\ge 1$}
			            \}\,.
		$$
	  Define
       $$
         \tilde{\cal F}^{\prime}_{\torsionscriptsize/C^{\prime}}\;
	        :=\;  \sum_{p^{\prime}\in C^{\prime}}
	           \tilde{\cal F}^{\prime}_{(p^{\prime})}
       $$ 	
	  and
       $$
         \tilde{\cal F}^{\prime}_{\torsionfreescriptsize/C^{\prime}}\;
		    :=\;  \tilde{\cal F}^{\prime}/
			            \tilde{\cal F}^{\prime}_{\torsionscriptsize/C^{\prime}}\,. 		
       $$
  	  Note that
	    $\tilde{\cal F}^{\prime}_{(p^{\prime})}=0$
       		except for $p^{\prime}\in D^{\prime}$
			(hence $\sum_{p^{\prime}\in C^{\prime}}$ is well-defined)
	    and that
    	 $\tilde{\cal F}^{\prime}_{\torsionfreescriptsize/C^{\prime}}$
	    is now of relative dimension $0$ over $C^{\prime}$;
      points in $C^{\prime}$ over which
	    $\tilde{\cal F}^{\prime}_{\torsionfreescriptsize/C^{\prime}}$
	    is not flat is now a subset of $D^{\prime}\cap C^{\prime}_{\singularscriptsize}$.
		
	 \item[{\Large $\cdot$}]
	 (Cf.\ Definition~\ref{dtf})\\
     For $p^{\prime}\in C^{\prime}$,
	 define the {\it discrepancy to flatness
	 of $\tilde{\cal F}^{\prime}_{\torsionfreescriptsize/C^{\prime}}$
	 over $p^{\prime}$} to be
     $$
	  \begin{array}{rrl}
        \delta_{\flatscriptsize/C^{\prime}}
	     (\tilde{\cal F}^{\prime}_{\torsionfreescriptsize/C^{\prime}};p^{\prime})
        &  :=
        &	 l (\tilde{\cal F}^{\prime}_{\torsionfreescriptsize/C^{\prime}}
				       |_{p^{\prime}})\,
		            -\, r                                                                                                            \\[1.2ex]
	  & =
        & \delta_{\flatscriptsize}
		           ({\pr_{C^{\prime}}}_{\ast}
		                  (\tilde{\cal F}^{\prime}_{\torsionfreescriptsize/C^{\prime}}),
                             p^{\prime})\,.
      \end{array}								
     $$ 	
	 The last equality follows from the fact that
	    $\Supp(\tilde{\cal F}^{\prime}_{\torsionfreescriptsize/C^{\prime}})$
       is affine,  of relative dimension $0$	 over $C^{\prime}$.
	 Also, note that
	  $\,\delta_{\flatscriptsize/C^{\prime}}
	     (\tilde{\cal F}^{\prime}_{\torsionfreescriptsize/C^{\prime}};p^{\prime})
		=0\,$
		except for a subset of points
		in $D^{\prime}\cap C^{\prime}_{\singularscriptsize}$.
	 Thus, we may define the {\it  discrepancy to flatness
 	   of $\tilde{\cal F}^{\prime}_{\torsionfreescriptsize/C^{\prime}}$
	   over $C^{\prime}$} to be
	  $$
        \delta_{\flatscriptsize/C^{\prime}}
		   (\tilde{\cal F}^{\prime}_{\torsionfreescriptsize/C^{\prime}})\;
		:=\; \sum_{p^{\prime}\in C^{\prime}}
		        \delta_{\flatscriptsize/C^{\prime}}
	          (\tilde{\cal F}^{\prime}
			                                _{\torsionfreescriptsize/C^{\prime}};p^{\prime})\,.	
      $$	
		
     \item[\Large $\cdot$]	
	 Define the error charge $\Err^Z_{C^{\prime}}(\tilde{\cal F}^{\prime})$
       of $\tilde{\cal F}^{\prime}/C^{\prime}$ to be
      $$
        \Err^Z_{C^{\prime}}(\tilde{\cal F}^{\prime})\;
		:=\;   Z \left( (\rho\times \Id_Y)_{\ast}(
			                   \tilde{\cal F}^{\prime}_{\torsionscriptsize/C^{\prime}}
			                                                                    )      \right)\,
		       +\, \delta_{\flatscriptsize/C^{\prime}}
	                (\tilde{\cal F}^{\prime}_{\torsionfreescriptsize/C^{\prime}})\,.
      $$	
	 Note that
      $$
          \Err^Z_{C^{\prime}}(\tilde{\cal F}^{\prime})\;
	      =\;  \Err^Z_{C^{\prime}}
		               (\tilde{\cal F}^{\prime}_{\torsionscriptsize/C^{\prime}})
                +\, \Err^Z_{C^{\prime}}
	                   (\tilde{\cal F}^{\prime}_{\torsionfreescriptsize/C^{\prime}})\,.
      $$							
   \end{itemize}
}\end{definition}

\bigskip

\begin{lemma}\label{vecacfCp}
{\bf [vanishing of error charge as criterion of flatness$\,/C^{\prime}$].}
 Continuing the set-up in Definition~\ref{error-charge}.
 Then,
    $\tilde{\cal F}^{\prime}$ is flat over $C^{\prime}$
    if and only if  $\Err^Z_{C^{\prime}}(\tilde{\cal F}^{\prime})=0\,$.
\end{lemma}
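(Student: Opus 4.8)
The plan is to read off the statement from the additive decomposition recorded at the end of Definition~\ref{error-charge},
$$
 \Err^Z_{C^{\prime}}(\tilde{\cal F}^{\prime})\;
 =\; Z\!\left((\rho\times\Id_Y)_{\ast}(\tilde{\cal F}^{\prime}_{\torsionscriptsize/C^{\prime}})\right)\,
  +\, \delta_{\flatscriptsize/C^{\prime}}(\tilde{\cal F}^{\prime}_{\torsionfreescriptsize/C^{\prime}})\,,
$$
and to argue that (i) $\Err^Z_{C^{\prime}}(\tilde{\cal F}^{\prime})=0$ forces each of the two summands to vanish separately, and (ii) simultaneous vanishing of the two summands is exactly flatness of $\tilde{\cal F}^{\prime}$ over $C^{\prime}$. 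The key observation is that the two summands occupy complementary regions of ${\Bbb C}$: by Lemma~\ref{tccef} the value $Z((\rho\times\Id_Y)_{\ast}(\tilde{\cal F}^{\prime}_{\torsionscriptsize/C^{\prime}}))$ on the (at most $1$-dimensional) push-forward of the relative torsion lies in the partially completed lower-half plane $\hat{\Bbb H}_-$ — with $\Imaginary<0$ when that sheaf carries a $1$-dimensional vertical component and $\Real>0$ when it is nonzero and $0$-dimensional — and equals $0$ exactly when the sheaf itself is $0$; while $\delta_{\flatscriptsize/C^{\prime}}(\tilde{\cal F}^{\prime}_{\torsionfreescriptsize/C^{\prime}})$ is a nonnegative integer by the bound $0\le\delta_{\flatscriptsize}(\,\cdot\,;p)\le r$ quoted after Definition~\ref{dtf}.

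From this the numerical half is immediate. If $\Err^Z_{C^{\prime}}(\tilde{\cal F}^{\prime})=0$, comparing imaginary parts (the $\delta$-term being real) forces $\Imaginary Z=0$, and then comparing real parts forces both the nonnegative $\Real Z$ and $\delta_{\flatscriptsize/C^{\prime}}(\tilde{\cal F}^{\prime}_{\torsionfreescriptsize/C^{\prime}})$ to vanish. Since $\rho\times\Id_Y$ is finite on the relative torsion (of relative dimension $0$ over $C^{\prime}$), $(\rho\times\Id_Y)_{\ast}$ is faithful there, so $Z((\rho\times\Id_Y)_{\ast}(\tilde{\cal F}^{\prime}_{\torsionscriptsize/C^{\prime}}))=0$ is equivalent to $\tilde{\cal F}^{\prime}_{\torsionscriptsize/C^{\prime}}=0$; and by the criterion following Definition~\ref{dtf}, $\delta_{\flatscriptsize/C^{\prime}}(\tilde{\cal F}^{\prime}_{\torsionfreescriptsize/C^{\prime}})=0$ is equivalent to ${\pr_{C^{\prime}}}_{\ast}(\tilde{\cal F}^{\prime}_{\torsionfreescriptsize/C^{\prime}})$ being locally free on $C^{\prime}$. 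The converse direction simply reverses these equivalences.

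It then remains to identify ``$\tilde{\cal F}^{\prime}_{\torsionscriptsize/C^{\prime}}=0$ together with ${\pr_{C^{\prime}}}_{\ast}(\tilde{\cal F}^{\prime}_{\torsionfreescriptsize/C^{\prime}})$ locally free'' with flatness of $\tilde{\cal F}^{\prime}$ over $C^{\prime}$. Here I would apply ${\pr_{C^{\prime}}}_{\ast}$ to the defining sequence $0\to\tilde{\cal F}^{\prime}_{\torsionscriptsize/C^{\prime}}\to\tilde{\cal F}^{\prime}\to\tilde{\cal F}^{\prime}_{\torsionfreescriptsize/C^{\prime}}\to0$. Because $\Supp(\tilde{\cal F}^{\prime}_{\torsionfreescriptsize/C^{\prime}})$ is affine of relative dimension $0$ over $C^{\prime}$, this functor is exact on the relevant sheaves and flatness of $\tilde{\cal F}^{\prime}$ over $C^{\prime}$ is equivalent to local freeness of ${\pr_{C^{\prime}}}_{\ast}(\tilde{\cal F}^{\prime})$; the pushed-forward sequence exhibits ${\pr_{C^{\prime}}}_{\ast}(\tilde{\cal F}^{\prime}_{\torsionscriptsize/C^{\prime}})$ as the ${\cal O}_{C^{\prime}}$-torsion subsheaf of ${\pr_{C^{\prime}}}_{\ast}(\tilde{\cal F}^{\prime})$, so the latter is locally free precisely when its torsion subsheaf is $0$ and its torsion-free quotient is locally free.

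The step I expect to require the most care — the main obstacle — is the node analysis underlying this last equivalence. One must verify that over ${\cal O}_{C^{\prime},p}\cong k[[x,y]]/(xy)$ a section annihilated by a power ${\cal I}_p^k=(x^k,y^k)$ is genuinely ${\cal O}_{C^{\prime}}$-torsion (it is killed by the nonzerodivisor $x^k+y^k$), so that the relatively vertical sections isolated in Definition~\ref{error-charge} coincide with the honest torsion subsheaf of the push-forward, and that over the Cohen--Macaulay nodal curve $C^{\prime}$ a finite module is flat (locally free) iff it is torsion-free of the expected rank $r$ with $\delta_{\flatscriptsize}=0$. Once these local facts at the nodes are secured, they combine with the $\hat{\Bbb H}_-$-positivity argument above to give the asserted equivalence.
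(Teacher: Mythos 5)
Your proof is correct and takes essentially the same route as the paper's: both rest on the decomposition of $\Err^Z_{C^{\prime}}(\tilde{\cal F}^{\prime})$ into the $Z$-value of the pushed-forward relative torsion plus the nonnegative integer $\delta_{\flatscriptsize/C^{\prime}}(\tilde{\cal F}^{\prime}_{\torsionfreescriptsize/C^{\prime}})$, the $\hat{\Bbb H}_-$-definiteness of $Z$ (so the two summands cannot cancel), faithfulness of $(\rho\times\Id_Y)_{\ast}$ on the relative torsion, and the pointwise fact that flatness over $C^{\prime}$ amounts to vanishing relative torsion together with constant fiber length $r$. One small slip to fix in your wording: $\tilde{\cal F}^{\prime}_{\torsionscriptsize/C^{\prime}}$ can be $1$-dimensional (vertical over points of $C^{\prime}$), so it has relative dimension $0$ over $C\times Y$ — not over $C^{\prime}$ as your parenthetical asserts — and it is precisely this finiteness over the target that makes the push-forward faithful, so your conclusion is unaffected.
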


\begin{proof}
  Note that the subsheaf $\tilde{\cal F}^{\prime}_{(p^{\prime})}$
       of $\tilde{\cal F}^{\prime}$ in Definition~\ref{error-charge}
   can be defined for all $p^{\prime}\in C^{\prime}$.
  With this, the lemma is an immediate consequence of the fact
    that $\tilde{\cal F}^{\prime}$ is flat over $C^{\prime}$
	   if and only if
	     $\tilde{\cal F}^{\prime}_{(p^{\prime})}=0$
		 and $l(\tilde{\cal F}^{\prime}|_{p^{\prime}})=r$
	  for all $p^{\prime}\in C^{\prime}$,
	that $\tilde{\cal F}^{\prime}_{(p^{\prime})}$
            	is of relative dimension $0$ over $C\times Y$   and hence
            $(\rho\times\Id_Y)_{\ast}
			       (\tilde{\cal F}^{\prime}_{(p^{\prime})})=0$
		       if and only if
			     $\tilde{\cal F}^{\prime}_{(p^{\prime})}=0$,
	  and
    that $Z(\,\bullet\,)=0$ if and only if $\bullet=0$.

\end{proof}

\bigskip

The following lemma follows from Lemma~\ref{tccef} ([L-Y3: Lemma~2.1.2] (D(10.1)):

\bigskip

\begin{lemma}\label{definity-ec}
  {\bf [definity of $\Err^Z_{C^{\prime}}(\tilde{\cal F}^{\prime})$].}
  Continuing the set-up in Definition~\ref{error-charge}.
  Then:
     \begin{itemize}
       \item[(1)]
	    As a function on $\CohCategory_1(C^{\prime}\times Y)$,
		 $\Err^Z_{C^{\prime}}$ takes its value
		 in a locally-finite rank-$2$ lattice in ${\Bbb C}$.

       \item[(2)]
        $-\,\Imaginary\Err^Z_{C^{\prime}}(\tilde{\cal F}^{\prime})\; \ge\; 0\,$.
		
	   \item[(3)]
        $-\,\Imaginary\Err^Z_{C^{\prime}}(\tilde{\cal F}^{\prime})\; =\; 0\,$
	    if and only
	                $\tilde{\cal F}^{\prime}_{(p^{\prime})}$ is $0$-dimensional
    	               for all $p^{\prime}\in C^{\prime}$.

       \item[(4)]					
        When $-\Imaginary\Err^Z_{C^{\prime}}
                   (\tilde{\cal F}^{\prime}_{(p^{\prime})})=0$,
               $\Err^Z_{C^{\prime}}(\tilde{\cal F}^{\prime})\in {\Bbb Z}_{\ge 0}$.
    \end{itemize}			
\end{lemma}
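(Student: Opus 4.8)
The plan is to read everything off the explicit form of the twisted central charge in Lemma~\ref{tccef}, combined with the decomposition
$\Err^Z_{C^{\prime}}(\tilde{\cal F}^{\prime}) = Z((\rho\times\Id_Y)_{\ast}(\tilde{\cal F}^{\prime}_{\torsionscriptsize/C^{\prime}})) + \delta_{\flatscriptsize/C^{\prime}}(\tilde{\cal F}^{\prime}_{\torsionfreescriptsize/C^{\prime}})$
that is built into Definition~\ref{error-charge}. Writing $\tilde{\cal G} := (\rho\times\Id_Y)_{\ast}(\tilde{\cal F}^{\prime}_{\torsionscriptsize/C^{\prime}})$, a coherent sheaf on $C\times Y$ of dimension $\le 1$, Lemma~\ref{tccef} gives $Z(\tilde{\cal G}) = (\chi(\tilde{\cal G}) - B\cdot\tilde{\beta}(\tilde{\cal G})) - \sqrt{-1}\,((J+L)\cdot\tilde{\beta}(\tilde{\cal G}))$, while the second summand $\delta_{\flatscriptsize/C^{\prime}}(\tilde{\cal F}^{\prime}_{\torsionfreescriptsize/C^{\prime}})$ is, by Definition~\ref{dtf} and its nonnegativity, a nonnegative integer lying entirely on the real axis. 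Thus the imaginary part of $\Err^Z_{C^{\prime}}$ comes only from $\tilde{\cal G}$, which is the lever for parts (2)--(4).

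For parts (2) and (3) I would note that $-\Imaginary\Err^Z_{C^{\prime}}(\tilde{\cal F}^{\prime}) = -\Imaginary Z(\tilde{\cal G}) = (J+L)\cdot\tilde{\beta}(\tilde{\cal G})$. If $\tilde{\cal G}=0$ this is $0$; if $\tilde{\cal G}\ne 0$, the $\hat{\Bbb H}_-$-location asserted in Lemma~\ref{tccef} gives $-\Imaginary Z(\tilde{\cal G})\ge 0$, with equality precisely when $\tilde{\cal G}$ is $0$-dimensional (equivalently $\tilde{\beta}(\tilde{\cal G})=0$, using positivity of $J+L$ on effective $1$-cycles). This already yields (2). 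For (3) it then suffices to check that $\tilde{\cal G}$ is $0$-dimensional if and only if every $\tilde{\cal F}^{\prime}_{(p^{\prime})}$ is $0$-dimensional, which is the one genuinely geometric step: each $\tilde{\cal F}^{\prime}_{(p^{\prime})}$ is supported inside $\{p^{\prime}\}\times Y$, and $\rho\times\Id_Y$ maps $\{p^{\prime}\}\times Y$ isomorphically onto $\{\rho(p^{\prime})\}\times Y$ in the $Y$-direction, so it neither raises nor lowers the dimension of $\tilde{\cal F}^{\prime}_{(p^{\prime})}$; since $\tilde{\cal F}^{\prime}_{\torsionscriptsize/C^{\prime}} = \sum_{p^{\prime}}\tilde{\cal F}^{\prime}_{(p^{\prime})}$ is a finite sum over $p^{\prime}\in D^{\prime}$, $\tilde{\cal G}$ is $0$-dimensional exactly when each summand is. For part (4), under the vanishing hypothesis the same reasoning forces $\tilde{\cal G}$ to be $0$-dimensional, so $\tilde{\beta}(\tilde{\cal G})=0$ and $Z(\tilde{\cal G}) = \chi(\tilde{\cal G}) = l(\tilde{\cal G})\in{\Bbb Z}_{\ge 0}$; adding the nonnegative integer $\delta_{\flatscriptsize/C^{\prime}}(\tilde{\cal F}^{\prime}_{\torsionfreescriptsize/C^{\prime}})$ gives $\Err^Z_{C^{\prime}}(\tilde{\cal F}^{\prime})\in{\Bbb Z}_{\ge 0}$.

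Part (1) is where I expect the real work to lie. The invariants $\chi(\,\bullet\,)$ and $\tilde{\beta}(\,\bullet\,)$ are additive, taking values in ${\Bbb Z}$ and in the finitely generated group of numerical $1$-cycle classes, and $\delta_{\flatscriptsize/C^{\prime}}(\,\bullet\,)\in{\Bbb Z}_{\ge 0}$; hence $\Err^Z_{C^{\prime}}$ factors through a group homomorphism from a fixed finitely generated lattice into ${\Bbb C}$, whose image is a finitely generated subgroup. The nontrivial point is local finiteness: in any bounded region of ${\Bbb C}$ the imaginary coordinate $-(J+L)\cdot\tilde{\beta}$ is bounded, and positivity of $J+L$ together with boundedness of the set of effective curve classes of bounded degree on the projective $Y$ confines $\tilde{\beta}$ to finitely many classes, while for each such class the real coordinate $\chi - B\cdot\tilde{\beta} + \delta_{\flatscriptsize/C^{\prime}}$ moves only by integer steps, leaving finitely many lattice points. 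Rank $2$ then follows because there are sheaves with nonzero $\tilde{\beta}$ (nonzero imaginary part) as well as purely real positive values (e.g.\ from the $\delta$-term or from a $0$-dimensional $\tilde{\cal G}$). Making the discreteness airtight -- in particular excluding dense accumulation of the image -- is the main obstacle, and I would resolve it by appealing to finiteness of bounded-degree effective classes on $Y$ rather than to any genericity assumption on $B+\sqrt{-1}J$.
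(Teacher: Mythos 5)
Your proposal is correct and takes essentially the same approach as the paper, which in fact gives no written proof at all beyond the remark that the lemma ``follows from Lemma~\ref{tccef}'' --- i.e.\ from the explicit form $Z(\tilde{\cal F})=(\chi(\tilde{\cal F})-B\cdot\tilde{\beta}(\tilde{\cal F}))-\sqrt{-1}\,(J+L)\cdot\tilde{\beta}(\tilde{\cal F})$ applied to the two summands $Z((\rho\times\Id_Y)_{\ast}(\tilde{\cal F}^{\prime}_{\torsionscriptsize/C^{\prime}}))$ and $\delta_{\flatscriptsize/C^{\prime}}(\tilde{\cal F}^{\prime}_{\torsionfreescriptsize/C^{\prime}})\in{\Bbb Z}_{\ge 0}$, exactly as you do. The details you supply beyond that citation --- preservation of the dimension of $\tilde{\cal F}^{\prime}_{(p^{\prime})}$ under the map $\rho\times\Id_Y$, which is an isomorphism on each slice $\{p^{\prime}\}\times Y$, for part (3), and finiteness of effective classes of bounded $(J+L)$-degree on the projective $C\times Y$ for the local finiteness in part (1) --- are precisely the right way to make the paper's one-line justification airtight.
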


\bigskip

\begin{flushleft}
{\bf Decrease of the error charge under a bubbling-off of $C^{\prime}$}
\end{flushleft}
\begin{definition}\label{order-C}
 {\bf [order on ${\Bbb C}$].} {\rm
  Define an {\it order  $\prec$} on ${\Bbb C}$ as follows:
  For $z_1$, $z_2\in {\Bbb C}$, we say that
   {\it $z_1$ precedes $z_2$},
    in notation $z_1\prec z_2$ (or equivalently $z_2\succ z_1$),
   if either $-\Imaginary z_1 < -\Imaginary z_2$
     or $-\Imaginary z_1= -\Imaginary z_2$ and $\Real z_1 < \Real z_2$.
  When either $z_1\prec z_2$ or $z_1=z_2$ holds,
     one denotes $z_1\preccurlyeq z_2$ (or equivalently $z_2\succcurlyeq z_1$).
}\end{definition}

\bigskip

\begin{proposition}\label{decrease-ec}
 {\bf [decrease of $\Err^Z_{\bullet}(\,\,\bullet)$ under bubbling off].}
 Continuing the situation in Definition~\ref{error-charge}.
 Let
   $C^{\prime\prime}$ be a nodal curve that admits  a collapsing morphism
   $$
     h\;  :\;  C^{\prime\prime}\longrightarrow\; C^{\prime}
   $$
   that contracts a connected ${\Bbb P}^1$-tree subsurve $C^{\prime\prime}_{u_h}$
   in $C^{\prime\prime}$.
 Note the commutative diagram
   $$
     \xymatrix{
      C^{\prime\prime}\ar[rr]^-{h}\ar[rd]_-{\rho^{\prime\prime}}
      && C^{\prime}\ar[ld]^{\rho^{\prime}}\\
     & C 		
     }
   $$	
    where $\rho^{\prime}$ and $\rho^{\prime\prime}$ are collapsing morphisms
    that stabilize the nodal curves.	
  Let $\tilde{\cal F}^{\prime\prime}\in \CohCategory_1(C^{\prime\prime})$
   be a $1$-dimensional coherent sheaf on $C^{\prime\prime}\times Y$
   with the following properties:
   \begin{itemize}
    \item[(0)]
     ${\pr_{C^{\prime\prime}}}_{\ast}(\tilde{\cal F}^{\prime\prime})$
	 is of rank $r$ on each irreducible component of $C^{\prime\prime}$.
	
    \item[(1)]
     $(h\times \Id_Y)_{\ast}(\tilde{\cal F}^{\prime\prime})
	    \simeq   \tilde{\cal F}^{\prime}$.
		
    \item[(2)]
    The natural sequence of homomorphisms of
     ${\cal O}_{C^{\prime\prime}\times Y}$-modules
     $(h\times\Id_Y)^{\ast}(\tilde{\cal F}^{\prime})
		       \rightarrow\tilde{\cal F}^{\prime\prime}\rightarrow 0$
      is exact.		
		
     \item[(3)]
      For at least one of the ${\Bbb P}^1$-components of the ${\Bbb P}^1$-tree subcurve
	   of $C^{\prime\prime}$ that is collapsed by $h$ to points in $C^{\prime}$,
       $(\tilde{\cal F}^{\prime\prime}|_{{\Bbb P}^1})
		     _{\torsionfreescriptsize/{\Bbb P}^1}$	
      is not the pull-back of a coherent sheaf on $C^{\prime}\times Y$ via $h\times \Id_Y$.		
   \end{itemize}	
 Then,
  $$
    \Err^Z_{C^{\prime\prime}}(\tilde{\cal F}^{\prime\prime})\;
	   \prec\;     \Err^Z_{C^{\prime}}(\tilde{\cal F}^{\prime})\,.
  $$
\end{proposition}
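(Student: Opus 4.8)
The plan is to read off the two summands of the error charge through the contraction $h$ and then to split into two cases according to whether the contracted $\mathbb{P}^1$-tree carries any curve class in $Y$. Recall from Definition~\ref{error-charge} that
$$
 \Err^Z_{C^{\prime}}(\tilde{\cal F}^{\prime})\;
  =\; Z\big((\rho^{\prime}\times\Id_Y)_{\ast}
              (\tilde{\cal F}^{\prime}_{\torsionscriptsize/C^{\prime}})\big)\;
   +\; \delta_{\flatscriptsize/C^{\prime}}
          (\tilde{\cal F}^{\prime}_{\torsionfreescriptsize/C^{\prime}})\,,
$$
and likewise over $C^{\prime\prime}$ with $\rho^{\prime\prime}=\rho^{\prime}\circ h$; by Lemma~\ref{definity-ec} the imaginary part is carried entirely by the first summand, while $\delta_{\flatscriptsize}$ is a nonnegative integer contributing only to the real part. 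Using Condition (1), $(h\times\Id_Y)_{\ast}\tilde{\cal F}^{\prime\prime}\simeq\tilde{\cal F}^{\prime}$, I would first track how the $C^{\prime\prime}$-decomposition maps to the $C^{\prime}$-decomposition: over components where $h$ is an isomorphism nothing changes, whereas over a contracted ${\Bbb P}^1$ the relative-torsion-free part $(\tilde{\cal F}^{\prime\prime}|_{{\Bbb P}^1})_{\torsionfreescriptsize/{\Bbb P}^1}$ is swept onto the fibre $\{h({\Bbb P}^1)\}\times Y$ and therefore feeds into $\tilde{\cal F}^{\prime}_{\torsionscriptsize/C^{\prime}}$. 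Condition (2) guarantees, via Lemma~\ref{ggEpdE} and Proposition~\ref{ncPEasm}, that such a contracted component carries a strictly positive sheaf precisely when $\varphi$ is non-constant on it, i.e.\ precisely when it is ``not a pull-back'' in the sense of Condition (3).

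For the imaginary parts, Lemma~\ref{tccef} gives $-\Imaginary Z(\tilde{\cal G})=(J+L)\cdot\tilde{\beta}(\tilde{\cal G})$, so $-\Imaginary\Err^Z_{C^{\prime}}(\tilde{\cal F}^{\prime})$ is $(J+L)$ paired with the $Y$-curve class of the $C^{\prime}$-vertical part of $\tilde{\cal F}^{\prime}$. Since the collapse carries the $Y$-class of each contracted component into that vertical part while preserving it, I expect the identity
$$
 -\,\Imaginary\,\Err^Z_{C^{\prime}}(\tilde{\cal F}^{\prime})\;
  =\; -\,\Imaginary\,\Err^Z_{C^{\prime\prime}}(\tilde{\cal F}^{\prime\prime})\;
   +\; (J+L)\cdot
       \sum_{{\Bbb P}^1\subset C^{\prime\prime}_{u_h}}
          [\,{\pr_Y}_{\ast}(\tilde{\cal F}^{\prime\prime}|_{{\Bbb P}^1})\,]\,,
$$
where the last sum is the total class in $Y$ swept out by the contracted tree, which is effective and hence pairs nonnegatively with $J+L$.

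This produces the dichotomy. In the \textbf{first case}, some contracted ${\Bbb P}^1$ maps non-constantly to $Y$ (so ${\pr_Y}_{\ast}(\tilde{\cal F}^{\prime\prime}|_{{\Bbb P}^1})$ is $1$-dimensional); then the last sum is strictly positive, $-\Imaginary\Err^Z_{C^{\prime\prime}}(\tilde{\cal F}^{\prime\prime})<-\Imaginary\Err^Z_{C^{\prime}}(\tilde{\cal F}^{\prime})$, and by Definition~\ref{order-C} this already gives $\Err^Z_{C^{\prime\prime}}(\tilde{\cal F}^{\prime\prime})\prec\Err^Z_{C^{\prime}}(\tilde{\cal F}^{\prime})$. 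In the \textbf{second case}, no contracted component carries a class in $Y$, so the two imaginary parts agree and I must compare real parts. Here Condition (3) together with Condition (2) forces at least one contracted ${\Bbb P}^1$ to carry a locally free quotient ${\cal E}^{\prime}|_{{\Bbb P}^1}$ of strictly positive degree while mapping constantly to $Y$; the $Y$-direction is then inert and the comparison reduces to the pure curve-sheaf bookkeeping of Section~\ref{preliminary}. Invoking Proposition~\ref{ddwbfTt} on the contraction of a $\mathbb{P}^1$-tree carrying a positive sheaf, together with the explicit $h^0$-formula of Lemma~\ref{h0F-2}, I would show that collapsing the tree strictly raises the combined real invariant (length of the point-torsion) $+\,\delta_{\flatscriptsize}$, so that $\Real\Err^Z_{C^{\prime\prime}}(\tilde{\cal F}^{\prime\prime})<\Real\Err^Z_{C^{\prime}}(\tilde{\cal F}^{\prime})$ and again $\prec$ holds.

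The hard part will be this second case: I must verify that the positive degree on the bubbled tree yields a strict \emph{net} decrease of the real part after balancing two competing effects, since collapsing a $\mathbb{P}^1$ over a smooth point of $C^{\prime}$ converts positive degree into point-torsion (raising the $Z$-real contribution) while collapsing over a node converts it partly into discrepancy to flatness. Reassembling the smooth-point versus node bookkeeping of Proposition~\ref{ddwbfTt} into a single monotone quantity, and ruling out that the type-$u,0$ (connected-componentwise constant) chains still permitted by Condition (3) could cancel the gain, is the delicate point on which the whole estimate turns.
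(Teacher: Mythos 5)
Your first case reproduces the paper's Case (a) (where, in the paper's notation below, $\Ker(\alpha)$ is $1$-dimensional): effectivity of the class difference $[\tilde{\cal F}^{\prime}_{\torsionscriptsize/C^{\prime}}]-[(h\times\Id_Y)_{\ast}(\tilde{\cal F}^{\prime\prime}_{\torsionscriptsize/C^{\prime\prime}})]$ forces a strict drop of $-\Imaginary\Err^Z$, and your somewhat imprecise componentwise sum formula is harmless there because only effectivity is used. The genuine gap is your second case, which you yourself flag as unverified --- and your framing of it is off. The conversion of positive degree into point-torsion and its conversion into discrepancy-to-flatness are not \emph{competing} effects: in the $0$-dimensional case both strictly increase the downstairs error charge (a $0$-dimensional torsion increment adds the nonnegative integer $\chi$ to the real part of the $Z$-term, and the discrepancy term adds a nonnegative integer), so there is no balancing act; likewise constant (pullback) chains cannot ``cancel the gain,'' since Condition (2) makes every contracted component nonnegative and hence all contributions have a fixed sign.

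What is missing is the device that turns the comparison into an exact identity rather than an estimate. Push forward $0\rightarrow\tilde{\cal F}^{\prime\prime}_{\torsionscriptsize/C^{\prime\prime}}\rightarrow\tilde{\cal F}^{\prime\prime}\rightarrow\tilde{\cal F}^{\prime\prime}_{\torsionfreescriptsize/C^{\prime\prime}}\rightarrow 0$ under $h\times\Id_Y$: the relative-torsion part is finite over $C^{\prime}\times Y$, so its $R^1$ vanishes, and Condition (1) identifies the middle terms; comparing with the corresponding sequence for $\tilde{\cal F}^{\prime}$ yields an inclusion $\iota$ of torsion parts, a surjection $\alpha$ of torsion-free parts, and $\Coker(\iota)\simeq\Ker(\alpha)$. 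When $\Ker(\alpha)$ is $0$-dimensional the two torsion parts have the same class in $A_1(C\times Y)$, so by Lemma~\ref{tccef} the difference of $Z$-terms is exactly $-\chi(\Ker(\alpha))\le 0$, while Proposition~\ref{ddwbfTt}, applied to the induced surjection $\alpha^{\prime}$ on $C^{\prime}$, gives the discrepancy difference exactly $-\degree({\cal G}^{\prime\prime})$, where ${\cal G}^{\prime\prime}$ is the torsion-free restriction of ${\pr_{C^{\prime\prime}}}_{\ast}(\tilde{\cal F}^{\prime\prime}_{\torsionfreescriptsize/C^{\prime\prime}})$ to the contracted tree. Hence $\Err^Z_{C^{\prime\prime}}(\tilde{\cal F}^{\prime\prime})-\Err^Z_{C^{\prime}}(\tilde{\cal F}^{\prime})=-\chi(\Ker(\alpha))-\degree({\cal G}^{\prime\prime})$ (Lemma~\ref{difference-ec}), and strictness rests solely on $\degree({\cal G}^{\prime\prime})>0$, i.e.\ positivity of ${\cal G}^{\prime\prime}$ on the connected tree $C^{\prime\prime}_{u_h}$ (Lemma~\ref{positivity-CppFpp}): Condition (2) gives nonnegativity on every contracted component via the special-morphism argument of Sec.~2.2 (Lemma~\ref{ggEpdE}), and Condition (3) supplies at least one component of positive degree. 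Your ingredients (Proposition~\ref{ddwbfTt}, positivity from Conditions (2) and (3)) are the right ones, but without the $\Coker(\iota)\simeq\Ker(\alpha)$ bookkeeping the ``hard part'' you flag remains unproved, so the proposal as written does not establish the proposition in the $0$-dimensional case.
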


\bigskip

The proof takes the rest of this theme, which we now proceed.

Note that under the isomorphism
   $(h\times \Id_Y)_{\ast}(\tilde{\cal F}^{\prime\prime})
       \simeq \tilde{\cal F}^{\prime}$ from Property (1) of the statement,
   $$
      (h\times \Id_Y)_{\ast}
       (\tilde{\cal F}^{\prime\prime}_{\torsionscriptsize/C^{\prime\prime}})\;
        \subset\;  \tilde{\cal F}^{\prime}_{\torsionscriptsize/C^{\prime}}\,.
   $$
In particular,
     the curve class
    $\, [\tilde{\cal F}^{\prime}_{\torsionscriptsize/C^{\prime}}]\,
	    -\,  [(h\times\Id_Y)_{\ast}
	           (\tilde{\cal F}^{\prime\prime}_{\torsionscriptsize/C^{\prime\prime}}) ]\,$
	on $C^{\prime}\times Y$ is effective if it is not zero.		
Consider the built-in short exact sequence of ${\cal O}_{C^{\prime\prime}}$-modules
 $$
   0\; \longrightarrow\;
        \tilde{\cal F}^{\prime\prime}_{\torsionscriptsize/C^{\prime\prime}}\;
		  \longrightarrow\;
        \tilde{\cal F}^{\prime\prime}\; \longrightarrow\;
		\tilde{\cal F}^{\prime\prime}_{\torsionfreescriptsize/C^{\prime\prime}}\;
		 \longrightarrow\; 0\,.
 $$
Since
 $\tilde{\cal F}^{\prime\prime}_{\torsionscriptsize/C^{\prime\prime}}$
   has relative dimension $0$ over $C^{\prime}\times Y$ under $h\times\Id_Y$,
  $R^1\!(h\times\Id_Y)_{\ast}(
       \tilde{\cal F}^{\prime\prime}_{\torsionscriptsize/C^{\prime\prime}})=0$.
It follows that one has the following commutative diagram of
  ${\cal O}_{C^{\prime}\times Y}$-modules
  $$
   \xymatrix{
   0 \ar[r]
       & (h\times\Id_Y )_{\ast}
     	    (\tilde{\cal F}^{\prime\prime}_{\torsionscriptsize/C^{\prime\prime}})
			\ar[r]
			\ar@{^{(}->}[d]^-{\iota}
       & (h\times\Id_Y)_{\ast}(\tilde{\cal F}^{\prime\prime})
		   \ar[r]
		   \ar[d]^-{\simeq}
	   & (h\times\Id_Y)_{\ast}
	           (\tilde{\cal F}^{\prime\prime}_{\torsionfreescriptsize/C^{\prime\prime}})
		   \ar[r]
           \ar@{->>}[d]^-{\alpha}		   & 0\\
   0 \ar[r]
       & \tilde{\cal F}^{\prime}_{\torsionscriptsize/C^{\prime}}\ar[r]
       & \tilde{\cal F}^{\prime}\ar[r]
	   & \tilde{\cal F}^{\prime}_{\torsionfreescriptsize/C^{\prime}}\ar[r]      & 0
	}	
 $$
 where both horizontal sequences are exact. 	
This implies that
  $$
     \Ker(\alpha)\;\simeq\; \Coker(\iota)\,.
  $$
$\alpha$ induces  a homomorphism of ${\cal O}_{C^{\prime}}$-modules
 $$
   {\pr_{C^{\prime}}}_{\ast}(\alpha)\;:\;
        {\pr_{C^{\prime}}}_{\ast}
		   ( (h\times\Id_Y)_{\ast}
                    (\tilde{\cal F}^{\prime\prime}
					                  _{\torsionfreescriptsize/C^{\prime\prime}}))\;
      \longrightarrow\; 									  		
    {\pr_{C^{\prime}}}_{\ast}
	   (\tilde{\cal F}^{\prime}_{\torsionfreescriptsize/C^{\prime}})\,.
 $$
Since $  \pr_{C^{\prime}}\circ(h\times \Id_Y)= h\circ \pr_{C^{\prime\prime}}$,
  $\; {\pr_{C^{\prime}}}_{\ast}(\alpha)$, in turn,
  defines a homomorphism
 $$
    \alpha^{\prime}\;:\;
     h_{\ast}( {\pr_{C^{\prime\prime}}}_{\ast}
   (\tilde{\cal F}^{\prime\prime}_{\torsionfreescriptsize/C^{\prime\prime}}))\;
    \longrightarrow\;
  {\pr_{C^{\prime}}}_{\ast}
	    (\tilde{\cal F}^{\prime}_{\torsionfreescriptsize/C^{\prime}})\,.
 $$
%
%
Note that $\Ker(\alpha)$ is either $1$-dimensional or $0$-dimensional.
For the convenience of further discussions, let
 $$
    C^{\prime\prime}\; =\;  C^{\prime\prime}_{0_h} \cup C^{\prime\prime}_{u_h}
 $$
 be the decomposition of $C^{\prime\prime}$
  as a union of the ${\Bbb P}^1$-tree subcurve $C^{\prime\prime}_{u_h}$
     that is contracted by $h$ and the remaining subcurve $C^{\prime\prime}_{0_h}$.

\bigskip

\noindent
{\it Case $(a):$   $\;\Ker(\alpha)$ is $1$-dimensional.}
 
\medskip

\noindent
This happens exactly when
  there exists a ${\Bbb P}^1$-component of $C^{\prime\prime}_{u_h}$
   such that $\Supp(\tilde{\cal F}^{\prime\prime}|_{{\Bbb P}^1})$
   has an irreducible component that is of relative dimension $1$
   over both ${\Bbb P}^1$ and $Y$.
In this case,
   $[\tilde{\cal F}^{\prime}_{\torsionscriptsize/C^{\prime}}]
  -  [(h\times\Id_Y)_{\ast}
             (\tilde{\cal F}^{\prime\prime}_{\torsionscriptsize/C^{\prime\prime}}) ]$
   is effective.
Thus,
   $$
      -\Imaginary\Err^Z_{C^{\prime\prime}}(\tilde{\cal F}^{\prime\prime})\;
    <\;  -\,\Imaginary\Err^Z_{C^{\prime}}(\tilde{\cal F}^{\prime})
   $$
  and, hence,
  $\Err^Z_{C^{\prime\prime}}(\tilde{\cal F}^{\prime\prime})
      \prec \Err^Z_{C^{\prime}}(\tilde{\cal F}^{\prime})$,
   as claimed.

\bigskip

\noindent
{\it Case $(b):$ $\;\Ker(\alpha)$ is $0$-dimensional.}

\medskip

\noindent
I.e., there exists no ${\Bbb P}^1$-component of $C^{\prime\prime}_{u_h}$
   such that $\Supp(\tilde{\cal F}^{\prime\prime}|_{{\Bbb P}^1})$
   has an irreducible component that is of relative dimension $1$
   over both ${\Bbb P}^1$ and $Y$.
Observe first the following lemma:

\bigskip

\begin{lemma}\label{positivity-CppFpp}
 {\bf [positivity of
     ${\pr_{C^{\prime\prime}}}_{\ast}
         (\tilde{\cal F}^{\prime\prime}
		                                           _{\torsionfreescriptsize/C^{\prime\prime}})$
	  on $C^{\prime\prime}_{u_h}$].}
 Let
   $$
    {\cal G}^{\prime\prime}\;
	   := \;
	   	   \left({\pr_{C^{\prime\prime}}}_{\ast}
                          (\tilde{\cal F}^{\prime\prime}
		                                           _{\torsionfreescriptsize/C^{\prime\prime}}))
												   |_{C^{\prime\prime}_{u_h}}
           \right)_{\torsionfreescriptsize}\,.												
   $$												
 Then, when $\Ker(\alpha)$ is $0$-dimensional,
   Properties (2) and (3) in the statement of Proposition~\ref{decrease-ec} imply that
   ${\cal G}^{\prime\prime}$	is a positive torsion-free sheaf
    on the ${\Bbb P}^1$-tree $C^{\prime\prime}_{u_h}$.
\end{lemma}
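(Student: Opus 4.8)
The plan is to transport the global-generation and positivity analysis of Section~2.2 (Lemma~\ref{ggEpdE} and Proposition~\ref{ncPEasm}) into the torsion-free setting of Section~2.3. Since $C^{\prime\prime}_{u_h}$ is connected, by Definition~\ref{nnpsptfsPt} it is enough to establish two facts: that ${\cal G}^{\prime\prime}$ is nonnegative on \emph{every} ${\Bbb P}^1$-component of $C^{\prime\prime}_{u_h}$, and that there is \emph{at least one} ${\Bbb P}^1$-component on which the locally-free quotient $({\cal G}^{\prime\prime}|_{{\Bbb P}^1})_{\torsionfreescriptsize}$ has strictly positive degree. I would treat these two in turn. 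Note that Property~(3) here asserts non-triviality on only one component, so one expects positivity but not strict positivity.

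For nonnegativity, observe that since $h$ contracts the tree $C^{\prime\prime}_{u_h}$, on each ${\Bbb P}^1$-component the restriction $(h\times\Id_Y)^{\ast}(\tilde{\cal F}^{\prime})|_{{\Bbb P}^1\times Y}$ is pulled back from $Y$ along $\pr_Y$. First I would choose $m\gg 0$ so that $\tilde{\cal F}^{\prime}(m)$ is globally generated with vanishing higher direct images; Property~(2) then presents $\tilde{\cal F}^{\prime\prime}(m)$ over the tree as a quotient of a trivial bundle, exactly as in Property~(1) of the special morphisms of Section~2.2. The hypothesis defining Case~$(b)$---that $\Ker(\alpha)$ is $0$-dimensional---means that over the tree $\Supp(\tilde{\cal F}^{\prime\prime})$ carries no component of relative dimension $1$ over $Y$, so that, just as in the identity $\tilde{\cal E}_{\varphi}(m^{\prime})\simeq\tilde{\cal E}_{\varphi}$ of Section~2.2, pushing down to $C^{\prime\prime}_{u_h}$ is insensitive to the twist and exhibits ${\cal G}^{\prime\prime}$ as a quotient of a globally generated sheaf. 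Because a globally generated locally free sheaf on ${\Bbb P}^1$ is of the form $\oplus_i{\cal O}_{{\Bbb P}^1}(a_i)$ with all $a_i\ge 0$, this yields nonnegativity on every component.

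For positivity, let ${\Bbb P}^1$ be the component supplied by Property~(3), on which $(\tilde{\cal F}^{\prime\prime}|_{{\Bbb P}^1})_{\torsionfreescriptsize/{\Bbb P}^1}$ is not a pull-back along $h\times\Id_Y$. I would replay the $\Quot$-scheme-to-Grassmannian construction of Section~2.2: the global generation just obtained produces a morphism $f:{\Bbb P}^1\rightarrow\Quot_Y({\cal O}_Y^{\,\oplus k},r)\hookrightarrow\Gr_{\Bbb C}(H^0(Y,{\cal O}_Y(m)),r)$ whose pull-back of the tautological quotient recovers ${\cal G}^{\prime\prime}|_{{\Bbb P}^1}$, and this $f$ is non-constant precisely because the restricted sheaf fails to be a pull-back from $Y$. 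Since $\det\hat{\cal Q}$ is the very ample Pl\"{u}cker line bundle, a non-constant $f$ pulls it back to a line bundle of positive degree, whence $\deg(({\cal G}^{\prime\prime}|_{{\Bbb P}^1})_{\torsionfreescriptsize})>0$ on that component. Nonnegativity everywhere together with positive degree on this single component is, by Definition~\ref{nnpsptfsPt} and the connectedness of $C^{\prime\prime}_{u_h}$, exactly the assertion that ${\cal G}^{\prime\prime}$ is positive.

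The hard part will be the bookkeeping of the several successive operations hidden inside ${\cal G}^{\prime\prime}$: passing to the torsion-free quotient $\tilde{\cal F}^{\prime\prime}_{\torsionfreescriptsize/C^{\prime\prime}}$ on $C^{\prime\prime}\times Y$, pushing forward by $\pr_{C^{\prime\prime}}$, restricting to $C^{\prime\prime}_{u_h}$, and taking the torsion-free quotient over the curve---while verifying that the quotient and globally-generated structure survives each step. Case~$(b)$ is exactly what makes these operations compatible with the fundamental-module picture of Section~2.2: ruling out support components that are horizontal over $Y$ prevents the appearance of curve-torsion that the degree count cannot see and that would obstruct the identification of ${\cal G}^{\prime\prime}|_{{\Bbb P}^1}$ with a $\Quot$-scheme morphism. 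Pinning down that compatibility, and checking that ``not a pull-back via $h\times\Id_Y$'' really does force $f$ to be non-constant, is where the genuine work lies.
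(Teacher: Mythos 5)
Your proposal is correct and takes essentially the same route as the paper's own proof: the paper likewise observes that when $\Ker(\alpha)$ is $0$-dimensional, each $(\tilde{\cal F}^{\prime\prime}|_{{\Bbb P}^1})_{\torsionfreescriptsize/C^{\prime\prime}}$ defines a special morphism in the sense of Sec.~2.2 (Property (2) supplying the trivial-bundle quotient, Case (b) the $0$-dimensionality of the push-forward to $Y$, and Property (3) the non-constancy on at least one component, since ``not a pull-back via $h\times\Id_Y$'' over a contracted ${\Bbb P}^1$ is exactly ``not $\pr_Y^{\ast}$ of a sheaf on $Y$''), and then invokes the Lemma~\ref{ggEpdE} argument through the $\Quot$-scheme and Pl\"{u}cker embedding to get nonnegativity on every component and positive degree on the distinguished one. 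The bookkeeping you flag as the remaining work---compatibility of the torsion-free quotients with push-forward and restriction---is precisely what the paper compresses into the identification $({\cal G}^{\prime\prime}|_{{\Bbb P}^1})_{\torsionfreescriptsize}=\pr_{C^{\prime\prime}}\bigl((\tilde{\cal F}^{\prime\prime}|_{{\Bbb P}^1})_{\torsionfreescriptsize/C^{\prime\prime}}\bigr)$.
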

  
\begin{proof}
 To show that ${\cal G}^{\prime\prime}$ is positive is to show that
  for every ${\Bbb P}^1$-component ${\Bbb P}^1$ of $C^{\prime\prime}_{u_h}$,\\
    $({\cal G}^{\prime\prime}|_{{\Bbb P}^1})_{\torsionfreescriptsize}$
     is nonnegative and
  for the ${\Bbb P}^1$-components of $C^{\prime\prime}_{u_h}$ in Property (3),\\
   $({\cal G}^{\prime\prime}|_{{\Bbb P}^1})_{\torsionfreescriptsize}$
    is positive.
 In this case,
   $$
   ({\cal G}^{\prime\prime}|_{{\Bbb P}^1})_{\torsionfreescriptsize}\;
	 =\; \pr_{C^{\prime\prime}} \left(
	    (\tilde{\cal F}^{\prime\prime}|_{{\Bbb P}^1})
		                               _{\torsionfreescriptsize/C^{\prime\prime}}
                                                                  \right)
   $$
   is locally free   and
   $(\tilde{\cal F}^{\prime\prime}|_{{\Bbb P}^1})
		                               _{\torsionfreescriptsize/C^{\prime\prime}}
						\in \CohCategory_1({\Bbb P}^1\times Y)$
    defines a special morphism as studied in Sec.~2.2.
 The same argument that gives Lemma~\ref{ggEpdE}
   proves then
    $({\cal G}^{\prime\prime}|_{{\Bbb P}^1})_{\torsionfreescriptsize}$
	is positive.
 This proves the lemma.
  
\end{proof}

\bigskip
      
In the current case,
     $R^1\!{\pr_{C^{\prime}}}_{\ast}(\Ker(\alpha))=0$;
  together with left exactness of push-forward, one has thus:	
  \begin{itemize}
   \item[{\Large $\cdot$}]  {\it
     When $\Ker(\alpha)$ is $0$-dimensional,
	 the homomorphisms
     ${\pr_{C^{\prime}}}_{\ast}(\alpha)$ and, hence,
	 $$ 				
	    \alpha^{\prime}\;:\;
          h_{\ast}( {\pr_{C^{\prime\prime}}}_{\ast}
         (\tilde{\cal F}^{\prime\prime}
		                                           _{\torsionfreescriptsize/C^{\prime\prime}}))\;
         \longrightarrow\;
        {\pr_{C^{\prime}}}_{\ast}
	    (\tilde{\cal F}^{\prime}_{\torsionfreescriptsize/C^{\prime}})
     $$
    are surjective.
  Furthermore,
    $\Ker(\alpha^{\prime})= {\pr_{C^{\prime}}}_{\ast}(\Ker(\alpha))$.
	}
  \end{itemize}
Since
  ${\pr_{C^{\prime}}}_{\ast}
	     (\tilde{\cal F}^{\prime}_{\torsionfreescriptsize/C^{\prime}})$
    is a torsion-free ${\cal O}_{C^{\prime}}$-module
      and
 both
   $h_{\ast}( {\pr_{C^{\prime\prime}}}_{\ast}
         (\tilde{\cal F}^{\prime\prime}
		                                           _{\torsionfreescriptsize/C^{\prime\prime}}))$
       and
   ${\pr_{C^{\prime}}}_{\ast}
	    (\tilde{\cal F}^{\prime}_{\torsionfreescriptsize/C^{\prime}})$
	 are of rank $r$,
  $$
    \Ker(\alpha^{\prime})\;=\;
	  \left( h_{\ast}( {\pr_{C^{\prime\prime}}}_{\ast}
         (\tilde{\cal F}^{\prime\prime}
		                                           _{\torsionfreescriptsize/C^{\prime\prime}}))
     \right)_{\torsionscriptsize}\,.
  $$

\bigskip

\begin{lemma}\label{difference-ec}
 {\bf [difference of error charges].}
 When $\Ker(\alpha)$ is $0$-dimensional,
 (with the notation from Lemma~\ref{positivity-CppFpp})
  $$
    \Err^Z_{C^{\prime\prime}}(\tilde{\cal F}^{\prime\prime})\,
	 -\,  \Err^Z_{C^{\prime}}(\tilde{\cal F}^{\prime})\;
	 =\; -\,\chi(\Ker(\alpha))\,-\, \degree({\cal G}^{\prime\prime})
	     \;  \in\;  {\Bbb Z}_{<0}\,.
  $$
\end{lemma}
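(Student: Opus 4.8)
The plan is to split the difference $\Err^Z_{C^{\prime\prime}}(\tilde{\cal F}^{\prime\prime})-\Err^Z_{C^{\prime}}(\tilde{\cal F}^{\prime})$ into its torsion and torsion-free contributions, using the additivity $\Err^Z_{C^{\prime}}(\tilde{\cal F}^{\prime})=\Err^Z_{C^{\prime}}(\tilde{\cal F}^{\prime}_{\torsionscriptsize/C^{\prime}})+\Err^Z_{C^{\prime}}(\tilde{\cal F}^{\prime}_{\torsionfreescriptsize/C^{\prime}})$ recorded in Definition~\ref{error-charge}, and then to treat the two contributions by separate mechanisms: the first by additivity of $Z$ along the commutative diagram already set up above, the second by Proposition~\ref{ddwbfTt}. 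Throughout I stay in Case $(b)$, where $\Ker(\alpha)$ is $0$-dimensional, since that hypothesis is what makes the required exactness and vanishing statements hold.

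For the torsion part I would use the factorization $\rho^{\prime\prime}=\rho^{\prime}\circ h$, hence $(\rho^{\prime\prime}\times\Id_Y)_{\ast}=(\rho^{\prime}\times\Id_Y)_{\ast}\circ(h\times\Id_Y)_{\ast}$. As $\tilde{\cal F}^{\prime\prime}_{\torsionscriptsize/C^{\prime\prime}}$ is of relative dimension $0$ over $C^{\prime}\times Y$ under $h\times\Id_Y$, its pushforward is clean and the inclusion $\iota$ of the diagram fits into the short exact sequence $0\to(h\times\Id_Y)_{\ast}(\tilde{\cal F}^{\prime\prime}_{\torsionscriptsize/C^{\prime\prime}})\to\tilde{\cal F}^{\prime}_{\torsionscriptsize/C^{\prime}}\to\Ker(\alpha)\to 0$, via $\Coker(\iota)\simeq\Ker(\alpha)$. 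Since $\Ker(\alpha)$ is $0$-dimensional, $R^1(\rho^{\prime}\times\Id_Y)_{\ast}(\Ker(\alpha))=0$, so applying $(\rho^{\prime}\times\Id_Y)_{\ast}$ keeps the sequence exact, and additivity of $Z$ on the Chern-character level gives $Z((\rho^{\prime}\times\Id_Y)_{\ast}(\tilde{\cal F}^{\prime}_{\torsionscriptsize/C^{\prime}}))-Z((\rho^{\prime\prime}\times\Id_Y)_{\ast}(\tilde{\cal F}^{\prime\prime}_{\torsionscriptsize/C^{\prime\prime}}))=Z((\rho^{\prime}\times\Id_Y)_{\ast}(\Ker(\alpha)))$. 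Because $\Ker(\alpha)$ is $0$-dimensional, its pushforward has no higher direct images and $\tilde{\beta}=0$, so by Lemma~\ref{tccef} this last value equals $\chi(\Ker(\alpha))$; the torsion contribution to the difference is therefore $-\chi(\Ker(\alpha))$.

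For the torsion-free part I would apply Proposition~\ref{ddwbfTt} to the collapsing morphism $h:C^{\prime\prime}\to C^{\prime}$ of the connected ${\Bbb P}^1$-tree $C^{\prime\prime}_{u_h}$, taking ${\cal F}^{\prime}:={\pr_{C^{\prime\prime}}}_{\ast}(\tilde{\cal F}^{\prime\prime}_{\torsionfreescriptsize/C^{\prime\prime}})$ and ${\cal F}:={\pr_{C^{\prime}}}_{\ast}(\tilde{\cal F}^{\prime}_{\torsionfreescriptsize/C^{\prime}})$, both torsion-free of rank $r$. Hypothesis (2) of that proposition is the surjectivity of $\alpha^{\prime}$ established above in Case $(b)$, and hypothesis (1) is exactly the positivity of ${\cal G}^{\prime\prime}=({\cal F}^{\prime}|_{C^{\prime\prime}_{u_h}})_{\torsionfreescriptsize}$ granted by Lemma~\ref{positivity-CppFpp}. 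Proposition~\ref{ddwbfTt} then yields $\delta_{\flatscriptsize}({\cal F}^{\prime})-\delta_{\flatscriptsize}({\cal F})=-\degree({\cal G}^{\prime\prime})$, which by the identity $\delta_{\flatscriptsize/C^{\prime}}(\,\bullet\,)=\delta_{\flatscriptsize}({\pr_{C^{\prime}}}_{\ast}(\,\bullet\,))$ of Definition~\ref{error-charge} becomes $\delta_{\flatscriptsize/C^{\prime\prime}}(\tilde{\cal F}^{\prime\prime}_{\torsionfreescriptsize/C^{\prime\prime}})-\delta_{\flatscriptsize/C^{\prime}}(\tilde{\cal F}^{\prime}_{\torsionfreescriptsize/C^{\prime}})=-\degree({\cal G}^{\prime\prime})$. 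Summing the two contributions gives the stated formula, and membership in ${\Bbb Z}_{<0}$ follows since $\chi(\Ker(\alpha))\ge 0$, $\degree({\cal G}^{\prime\prime})>0$ (positivity together with connectedness of $C^{\prime\prime}_{u_h}$), while Lemma~\ref{definity-ec} guarantees integrality because both error charges are real in Case $(b)$.

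The main obstacle I anticipate is the careful verification that ${\pr_{C^{\prime\prime}}}_{\ast}(\tilde{\cal F}^{\prime\prime}_{\torsionfreescriptsize/C^{\prime\prime}})$ is genuinely torsion-free of rank $r$ and that its restricted torsion-free quotient on $C^{\prime\prime}_{u_h}$ coincides with the sheaf ${\cal G}^{\prime\prime}$ of Lemma~\ref{positivity-CppFpp}, so that Proposition~\ref{ddwbfTt} applies verbatim; alongside this, one must keep precise track of the $R^1$-vanishing that feeds the exactness and the $Z$- and $\chi$-additivity across both $h$ and $\rho^{\prime}$, which is exactly where the $0$-dimensionality of $\Ker(\alpha)$ in Case $(b)$ is indispensable.
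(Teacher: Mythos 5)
Your proof is correct and takes essentially the same route as the paper: the paper likewise splits the difference into the torsion contribution, computed via $\Coker(\iota)\simeq\Ker(\alpha)$ and Euler-characteristic additivity under pushforward to give $-\chi(\Ker(\alpha))$, and the flatness-discrepancy contribution, computed via Proposition~\ref{ddwbfTt} together with Lemma~\ref{positivity-CppFpp} to give $-\degree({\cal G}^{\prime\prime})$, with the same positivity/connectedness reasoning for strict negativity. One small correction: for right-exactness of the pushed-forward short exact sequence the relevant vanishing is $R^1(\rho^{\prime}\times\Id_Y)_{\ast}$ of the \emph{subsheaf} $(h\times\Id_Y)_{\ast}(\tilde{\cal F}^{\prime\prime}_{\torsionscriptsize/C^{\prime\prime}})$ (which holds since its support lies in finitely many fibers $\{p^{\prime}\}\times Y$ mapping isomorphically to $C\times Y$), not of the quotient $\Ker(\alpha)$ that you cite -- a harmless slip, as both vanish.
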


\begin{proof}
 When $\Ker(\alpha)$ is $0$-dimensional,
  $[ (\rho^{\prime\prime}\times \Id_Y)_{\ast}
      (\tilde{\cal F}^{\prime\prime}_{\torsionscriptsize/C^{\prime\prime}}) ]
	  = [ (\rho^{\prime}\times \Id_Y)_{\ast}
      (\tilde{\cal F}^{\prime}_{\torsionscriptsize/C^{\prime}}) ]$
	 in $A_1(C\times Y)$.
 Thus,
   $$
     \begin{array}{l}
        \Err^Z_{C^{\prime\prime}}(\tilde{\cal F}^{\prime\prime})\,
	           -\,  \Err^Z_{C^{\prime}}(\tilde{\cal F}^{\prime}) \\[1.2ex]
	     \hspace{2em}
		   =\; \chi((\rho^{\prime\prime}\times \Id_Y)_{\ast}
                  (\tilde{\cal F}^{\prime\prime}_{\torsionscriptsize/C^{\prime\prime}}))\,
		          -\, \chi((\rho^{\prime}\times \Id_Y)_{\ast}
               (\tilde{\cal F}^{\prime}_{\torsionscriptsize/C^{\prime}}))
			   			                                                                                                                                \\[1.2ex]
         \hspace{3.4em}
		  +\, \delta_{\flatscriptsize/C^{\prime\prime}}
            (\tilde{\cal F}^{\prime\prime}_{\torsionfreescriptsize/C^{\prime\prime}})\,
		  -\, \delta_{\flatscriptsize/C^{\prime}}
           (\tilde{\cal F}^{\prime}_{\torsionfreescriptsize/C^{\prime}})
		      		   \hspace{3em}.
    \end{array}			 			
  $$
 Now,
  $$
   \begin{array}{l}
  	  \chi((\rho^{\prime\prime}\times \Id_Y)_{\ast}
                  (\tilde{\cal F}^{\prime\prime}_{\torsionscriptsize/C^{\prime\prime}}))\,
		          -\, \chi((\rho^{\prime}\times \Id_Y)_{\ast}
               (\tilde{\cal F}^{\prime}_{\torsionscriptsize/C^{\prime}}))
			                                                                                                                       \\[1.2ex]
      \hspace{2em}																																						
	  =\; \chi(\tilde{\cal F}^{\prime\prime}_{\torsionscriptsize/C^{\prime\prime}})\,
      	       -\, \chi(\tilde{\cal F}^{\prime}
		                                                   _{\torsionscriptsize/C^{\prime}})\\[1.2ex]
      \hspace{2em}														
	  =\;   \chi((h\times\Id_Y)_{\ast}
			    (\tilde{\cal F}^{\prime\prime}_{\torsionscriptsize/C^{\prime\prime}}))\,
		   -\, \chi(\tilde{\cal F}^{\prime}_{\torsionscriptsize/C^{\prime}})\,
	  	                                                                                                                           \\[1.2ex]
      \hspace{2em}
      =\;  -\, \chi(\Coker(\iota))\; =\; -\, \chi(\Ker(\alpha))\;
	  =\;  -\, \chi(\Ker(\alpha^{\prime}))   \\[1.2ex]
	  \hspace{2em}
	  =\; -\, \chi\left(
	      	      \left( h_{\ast}( {\pr_{C^{\prime\prime}}}_{\ast}
                             (\tilde{\cal F}^{\prime\prime}
		                                           _{\torsionfreescriptsize/C^{\prime\prime}}))
             \right)_{\torsionscriptsize}
	               \right)\;\;\;\;\le\;0\,,
    \end{array}																																						
   $$
 while
   $$
    \begin{array}{l}
      \delta_{\flatscriptsize/C^{\prime\prime}}
            (\tilde{\cal F}^{\prime\prime}_{\torsionfreescriptsize/C^{\prime\prime}})\,
	    -\, \delta_{\flatscriptsize/C^{\prime}}
           (\tilde{\cal F}^{\prime}_{\torsionfreescriptsize/C^{\prime}})
                         		                                                                   \\[1.2ex]
      \hspace{2em}
      =\; \delta_{\flatscriptsize}(   {\pr_{C^{\prime\prime}}}_{\ast}
               (\tilde{\cal F}^{\prime\prime}
			                                 _{\torsionfreescriptsize/C^{\prime\prime}})	  )\,
		   -\,   \delta_{\flatscriptsize}(   {\pr_{C^{\prime}}}_{\ast}
                (\tilde{\cal F}^{\prime}
				               _{\torsionfreescriptsize/C^{\prime}})	  )\\[1.2ex]
      \hspace{2em}
      =\;   -\, \degree({\cal G}^{\prime\prime})\;\;\;\; <\; 0\,.
	\end{array}
   $$
   Here, the last line follows Proposition~\ref{ddwbfTt}  and
                                               Lemma~\ref{positivity-CppFpp}.
       %
	   %
   This proves the lemma.	
    
\end{proof}

\bigskip

It follows that,
when $\Ker(\alpha)$ is $0$-dimensional,
 $\Err^Z_{C^{\prime\prime}}(\tilde{\cal F}^{\prime\prime})
     \prec \Err^Z_{C^{\prime}}(\tilde{\cal F}^{\prime})$ as claimed as well.
   
This proves Proposition~\ref{decrease-ec}.
	

\bigskip

\subsection{Completeness of
        ${\frak M}_{Az^{\!f}\!(g;r,\chi)}^{\scriptsizeZss}(Y;\beta,c)$}
		
We now proceed to prove the  completeness of
           ${\frak M}_{Az^{\!f}\!(g;r,\chi)}^{\scriptsizeZss}(Y; \beta, c)$
		  by stepwise reductions of the error charge of the Fourier-Mukai transforms involved.
		  		
With
  the order $\prec$ on ${\Bbb C}$ in Definition~\ref{order-C}  and
  the properties of the error charge $\Err^Z_{\bullet}(\,\bullet\,)$
  from Lemma~\ref{definity-ec} and Lemma~\ref{vecacfCp},
one has:
  \begin{itemize}
   \item[{\Large $\cdot$}]
     $\Err^Z_{C^{\prime}}(\tilde{\cal F}^{\prime})\succcurlyeq 0$
       for all $\tilde{\cal F}^{\prime}\in \CohCategory_1(C^{\prime}\times Y)$.
	
   \item[{\Large $\cdot$}]	
    The equality  ``$=$" holds
	if and only if $\tilde{\cal F}^{\prime}$ is flat over $C^{\prime}$.	
  \end{itemize}	
 
Let
  $T$ be an affine smooth curve with parameter $t$ and a base-point,
    denoted by $0$ with its ideal sheaf denoted by $m_0=(t)$,   and
  $U:= T-\{0\}$.
Consider a $U$-family of $Z$-semistable morphisms from Azumaya nodal curves
  with a fundamental module to $Y$ of type $(g;r,\chi;\beta,c)$, given by
  $$
   \tilde{\cal E}^{\prime}_U\in \CohCategory (C^{\prime}_U\times Y)\,,
  $$
where
 \begin{itemize}
   \item[{\Large $\cdot$}]
     $C^{\prime}_U$ is a $U$-family of nodal curves with
	  a built-in collapsing morphism
	 $\rho^{\prime}_U: C^{\prime}_U/U\rightarrow C_U/U$,
      where $C_U/U$ is a $U$-family of stable curves of genus $g$;

   \item[{\Large $\cdot$}]	
    $\tilde{\cal E}^{\prime}_U$
     	is flat, of relative dimension $0$ and relative length $r$,
	 over $C^{\prime}_U$;
	
   \item[{\Large $\cdot$}]
    $\tilde{\cal F}_U
	   :=(\rho^{\prime}_U\times\Id_Y)_{\ast}(\tilde{\cal E}^{\prime}_U)
	   \in\CohCategory(C_U\times Y)$
	  is a $U$-family of $Z$-semistable Fourier-Mukai transforms from stable curves of genus $g$
     to $Y$.	
 \end{itemize}
The data is also equipped with a built-in exact sequence of
 ${\cal O}_{C^{\prime}_U\times Y}$-modules
 $$
   (\rho^{\prime}_U\times\Id_Y)^{\ast}(\tilde{\cal F}_U)\;
     \longrightarrow\;  \tilde{\cal E}^{\prime}_U\;
	 \longrightarrow\; 0\,.
 $$

In the discussion below, some procedures may require a finite base change to realize.
To avoid overflow of notations, we set the convention that the new base (with the pulled-back family)
 from a base change will be still denoted by $T$.

\bigskip

\begin{flushleft}
{\bf Step $\boldsymbol{(a):}$ Filling in a Fourier-Mukai transform over $0\in T$}
\end{flushleft}
Possibly after passing to a base change, one may fill the $U$-family $C^{\prime}_U$
 of nodal curves to a $T$-family of nodal curves
 with a collapsing morphism $\rho^{\prime}_T:C^{\prime}_T\rightarrow C_T$ over $T$
 that extends the collapsing morphism
 $\rho^{\prime}_U: C^{\prime}_U\rightarrow  C_U$ over $U$.
For the $T$-family $C_T$ of stable curves,
  it follows from [L-Y3: Sec.~3] (D(10.1)) that
 one may complete the $U$-family $\tilde{\cal F}_U$  to a $T$-family
 $$
    \tilde{\cal F}_T\in \CohCategory(C_T\times Y)
 $$
 of $Z$-semistable Fourier-Mukai transform to stable curves of genus $g$ to $Y$,
 using the technique of elementary modifications by Stacy Langton [La].

\bigskip

\begin{flushleft}
{\bf Step $\boldsymbol{(b):}$ Filling in a quotient sheaf over $0\in T$}
\end{flushleft}
It follows from the properness of the Quot-scheme
 $\Quot_{C^{\prime}_T\times Y)/T}
      (  (\rho^{\prime}_T\times \Id_Y)^{\ast}(\tilde{\cal F}_T) ,\,\bullet\,)$
 of quotient sheaves of $(\rho^{\prime}_T\times\Id_Y)^{\ast}(\tilde{\cal F}_T)$
   over $T$
 that the $U$-family of quotient sheaves
   $(\rho^{\prime}_U\times\Id_Y)^{\ast}(\tilde{\cal F}_U)
                                              \rightarrow  \tilde{\cal E}^{\prime}_U \rightarrow 0$
  extends uniquely to a $T$-family of quotient sheaves
    $$
     (\rho^{\prime}_T\times\Id_Y)^{\ast}(\tilde{\cal F}_T)\;
                  \longrightarrow\;   \tilde{\cal E}^{\prime}_T\;   \longrightarrow\;   0\,,
    $$
   (with $\tilde{\cal E}^{\prime}_T$ flat over $T$), 	
 whose restriction to over $0\in T$ is the quotient sheaf
    $$
     (\rho^{\prime}_0\times\Id_Y)^{\ast}(\tilde{\cal F}_0)\;
                      \longrightarrow\;   \tilde{\cal E}^{\prime}_0\;   \longrightarrow\;   0\,.	
    $$

\bigskip

\begin{lemma}\label{vanishing-R1rpIdYEp0}
 {\bf [vanishing of
               $R^1\!(\rho^{\prime}_0\times\Id_Y)_{\ast}
	                (\tilde{\cal E}^{\prime}_0)$ on $C_0\times Y$
			      and
               $R^1\!(\rho^{\prime}_T\times\Id_Y)_{\ast}
	                (\tilde{\cal E}^{\prime}_T)$ on $C_T\times Y$].}
 $$
   R^1\!(\rho^{\prime}_0\times\Id_Y)_{\ast}
	                                                        (\tilde{\cal E}^{\prime}_0)\; =\; 0\;\;
    \mbox{on $C_0\times Y$}														
	 \hspace{2em}\mbox{and}\hspace{2em}
    R^1\!(\rho^{\prime}_T\times\Id_Y)_{\ast}
	                                                        (\tilde{\cal E}^{\prime}_T)\; =\; 0\;\;
    \mbox{on $C_T\times Y$}\,.
 $$															
\end{lemma}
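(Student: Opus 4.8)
The plan is to reduce both vanishings to the mechanism already established in Lemma~\ref{vR1sm}, after observing that the proof of that lemma uses \emph{only} the quotient/exactness structure of its Condition~(2) and is entirely insensitive to the $Z$-semistability Conditions (1) and (3). This point matters: over the central fiber $0\in T$ (and, a priori, over all of $T$ after the filling-in of Steps (a) and (b)) we do \emph{not} yet know that $\tilde{\cal E}^{\prime}_0$ or $\tilde{\cal E}^{\prime}_T$ defines a semistable morphism --- establishing that is the very purpose of the ongoing reduction --- so Lemma~\ref{vR1sm} cannot be applied as a black box. What survives is its proof.

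First I would record the two quotient presentations produced by Step (b). By construction the $T$-family carries an exact sequence $(\rho^{\prime}_T\times\Id_Y)^{\ast}(\tilde{\cal F}_T)\rightarrow \tilde{\cal E}^{\prime}_T\rightarrow 0$ on $C^{\prime}_T\times Y$, with $\tilde{\cal E}^{\prime}_T$ flat over $T$. Restricting this surjection to the fiber over $0$, and using right-exactness of pullback to the central fiber together with the base-change identity $(\rho^{\prime}_T\times\Id_Y)^{\ast}(\tilde{\cal F}_T)|_{0}\simeq (\rho^{\prime}_0\times\Id_Y)^{\ast}(\tilde{\cal F}_0)$, yields the exact sequence $(\rho^{\prime}_0\times\Id_Y)^{\ast}(\tilde{\cal F}_0)\rightarrow \tilde{\cal E}^{\prime}_0\rightarrow 0$ on $C^{\prime}_0\times Y$. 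Thus each of $\tilde{\cal E}^{\prime}_T$ and $\tilde{\cal E}^{\prime}_0$ is realized as a quotient of a pullback under the relevant collapsing morphism $\rho^{\prime}_{\bullet}\times\Id_Y$, which is precisely the hypothesis (the analogue of Condition~(2)) driving Lemma~\ref{vR1sm}.

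Next I would re-run that argument with the base taken, respectively, to be the affine smooth curve $S=T$ and the point $S=\{0\}=\Spec{\Bbb C}$. In each case one chooses (possibly after a harmless base change) a relative polarization on $(C_S\times Y)/S$ whose $C_S$-part avoids the image $\Sigma_S$ of the contracted ${\Bbb P}^1$-trees, so that $R^1(\rho^{\prime}_S\times\Id_Y)_{\ast}(\tilde{\cal E}^{\prime}_S\otimes(\rho^{\prime}_S\times\Id_Y)^{\ast}{\cal O}(m))\simeq R^1(\rho^{\prime}_S\times\Id_Y)_{\ast}(\tilde{\cal E}^{\prime}_S)$ for all $m$; one then globally generates a twist of $\tilde{\cal F}_S$, presents it as ${\cal O}^{\oplus k}_{C_S\times Y}\twoheadrightarrow \tilde{\cal F}_S(m)$, pulls back, and composes with the quotient presentation to obtain ${\cal O}^{\oplus k}_{C^{\prime}_S\times Y}\twoheadrightarrow \tilde{\cal E}^{\prime}_S\otimes(\rho^{\prime}_S\times\Id_Y)^{\ast}{\cal O}(m)$. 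Completing to a short exact sequence with kernel ${\cal H}^{\prime}_S$ and passing to the long exact sequence of $(\rho^{\prime}_S\times\Id_Y)_{\ast}$ squeezes the desired $R^1$ between $R^1(\rho^{\prime}_S\times\Id_Y)_{\ast}({\cal O}^{\oplus k})=0$ (because $\rho^{\prime}_S\times\Id_Y$ collapses ${\Bbb P}^1$-trees, which have arithmetic genus $0$) and $R^2(\rho^{\prime}_S\times\Id_Y)_{\ast}({\cal H}^{\prime}_S)=0$ (relative dimension $\le 1$). The isomorphism above then removes the twist, giving $R^1(\rho^{\prime}_S\times\Id_Y)_{\ast}(\tilde{\cal E}^{\prime}_S)=0$; and, exactly as at the close of the proof of Lemma~\ref{vR1sm}, the Theorem on Formal Functions descends the conclusion to the original (un-base-changed) $S$.

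The only genuine subtlety --- and the step I would take care to spell out --- is the logical one: that the filled-in objects of Steps (a)--(b) satisfy Condition~(2) by the Quot-scheme construction, so the $R^1$-vanishing mechanism applies verbatim even though semistability is not yet available over $0$. Everything else is the bookkeeping already discharged in Lemma~\ref{vR1sm}. As a cross-check, the over-$0$ statement can alternatively be recovered from the over-$T$ statement by cohomology-and-base-change, since $\tilde{\cal E}^{\prime}_T$ is flat over $T$ and $C^{\prime}_0\times Y=(C^{\prime}_T\times Y)\times_{C_T\times Y}(C_0\times Y)$ is the relevant Cartesian fiber.
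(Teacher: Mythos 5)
Your proposal is correct and is essentially the paper's own proof: the authors likewise invoke the quotient presentations $(\rho^{\prime}_\bullet\times\Id_Y)^{\ast}(\tilde{\cal F}_\bullet)\rightarrow\tilde{\cal E}^{\prime}_\bullet\rightarrow 0$ supplied by the Quot-scheme completion of Step (b), the fact that $\rho^{\prime}_\bullet\times\Id_Y$ has relative dimension $\le 1$ and kills $R^1$ of the structure sheaf, and then state that ``the details follow the same proof of Lemma~\ref{vR1sm}'' --- exactly the re-run you carry out, with your observation that only Condition (2) (not semistability) enters being the correct reading of why this is legitimate over $0\in T$. Your closing cross-check via the sequence $0\rightarrow\tilde{\cal E}^{\prime}_T\stackrel{t}{\rightarrow}\tilde{\cal E}^{\prime}_T\rightarrow\tilde{\cal E}^{\prime}_0\rightarrow 0$ (flatness over $T$) and $R^2=0$ is also sound, though the paper does not use it.
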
	

\begin{proof}
 These are the consequences of:
  \begin{itemize}
   \item[{\Large $\cdot$}]
    the exact sequences
      $(\rho^{\prime}_0\times\Id_Y)^{\ast}(\tilde{\cal F}_0)
                                \rightarrow  \tilde{\cal E}^{\prime}_0  \rightarrow  0$
	and
    $(\rho^{\prime}_T\times\Id_Y)^{\ast}(\tilde{\cal F}_T)
                               \rightarrow   \tilde{\cal E}^{\prime}_T   \rightarrow   0$
    respectively,
	
   \item[{\Large $\cdot$}]
    $C^{\prime}_0\times Y$ (resp.\ $C^{\prime}_T\times Y$)
     is of relative dimension $\le 1$	over $C_0\times Y$ (resp.\ $C_T\times Y$),
	
   \item[{\Large $\cdot$}]
    $R^1\!(\rho^{\prime}_0\times\Id_Y)_{\ast}
	                                  ({\cal O}_{C^{\prime}_0\times Y})=0$
    (resp.\   $R^1\!(\rho^{\prime}_T\times\Id_Y)_{\ast}
	                                        ({\cal O}_{C^{\prime}_T\times Y})=0$).
  \end{itemize}
 The details follow the same proof of Lemma~\ref{vR1sm}.
                             
\end{proof}

\bigskip

Consider now the natural homomorphisms $\tilde{\alpha}_0$ and $\tilde{\alpha}_T$
 from the compositions
  $$
    \begin{array}{c}
      \tilde{\cal F}_0\; \longrightarrow\;
       (\rho^{\prime}_0\times \Id_Y)_{\ast}
	      (\rho^{\prime}_0\times \Id_Y)^{\ast}(\tilde{\cal F}_0)\;
	       \longrightarrow\;
		  (\rho^{\prime}_0\times \Id_Y)_{\ast}(\tilde{\cal E}^{\prime}_0)
		                                                                                                 \\[1.2ex]
      \mbox{and}\hspace{2em}																										 
      \tilde{\cal F}_T\; \longrightarrow\;
       (\rho^{\prime}_T\times \Id_Y)_{\ast}
	      (\rho^{\prime}_T\times \Id_Y)^{\ast}(\tilde{\cal F}_T)\;
	   \longrightarrow\;
	  (\rho^{\prime}_T\times \Id_Y)_{\ast}(\tilde{\cal E}^{\prime}_T)\,.
             \hspace{3em}		
	\end{array}
  $$
	
\bigskip

\begin{lemma}\label{coincidence-pf-C0Y-CTY}
 {\bf [coincidence of push-forward on $C_0\times Y$ and $C_T\times Y$].}
   The natural homomorphisms
     $$
	    \tilde{\alpha}_0\::\;
		   \tilde{\cal F}_0\; \longrightarrow\;
	      (\rho^{\prime}_0\times\Id_Y)_{\ast}(\tilde{\cal E}^{\prime}_0)\;\;
		 \mbox{on $C_0\times Y$}		
		  \hspace{1em}\mbox{and}\hspace{1em}
	    \tilde{\alpha}_T\;:\;
	     \tilde{\cal F}_T\; \longrightarrow\;
	      (\rho^{\prime}_T\times\Id_Y)_{\ast}(\tilde{\cal E}^{\prime}_T)\;\;
         \mbox{on $C_T\times Y$}		
	 $$
	 are isomorphisms.
\end{lemma}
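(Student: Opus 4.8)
The plan is to derive the lemma as a direct application of Lemma~\ref{rpf} to the pointed base curve $(B,b_0)=(T,0)$ and the birational projective morphism
$$
 f_B\;=\;\rho^{\prime}_T\times\Id_Y\;:\;C^{\prime}_T\times Y\;\longrightarrow\;C_T\times Y\,,
$$
taking ${\cal F}_B=\tilde{\cal F}_T$, ${\cal F}^{\prime}_B=\tilde{\cal E}^{\prime}_T$, and letting the built-in surjection $(\rho^{\prime}_T\times\Id_Y)^{\ast}(\tilde{\cal F}_T)\to\tilde{\cal E}^{\prime}_T\to 0$ from Step~$(b)$ play the role of $f_B^{\ast}({\cal F}_B)\to{\cal F}^{\prime}_B\to 0$. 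The natural homomorphism ${\cal F}_B\to f_{B\ast}({\cal F}^{\prime}_B)$ manufactured inside Lemma~\ref{rpf} is literally the composite defining $\tilde{\alpha}_T$, and the ``in particular'' clause of that lemma produces the corresponding statement for $\tilde{\alpha}_0$ on the central fibre. Thus it suffices to verify the hypotheses of Lemma~\ref{rpf} in the present situation.

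The structural and numerical hypotheses are immediate. Since $\rho^{\prime}_T$ only contracts ${\Bbb P}^1$-trees, $f_B$ is birational and projective with exceptional locus of relative dimension $1$. By Step~$(a)$ the sheaf $\tilde{\cal F}_T$ is flat over $T$ and each fibre $\tilde{\cal F}_t$ lies in $\FM_g^{1,[0];\scriptsizeZss}(Y;c)$, hence is pure of dimension $1$, so $\tilde{\cal F}_T$ is a flat family of pure $1$-dimensional sheaves on the fibres of $C_T\times Y$; by Step~$(b)$, $\tilde{\cal E}^{\prime}_T$ is flat over $T$ with $1$-dimensional fibres $\tilde{\cal E}^{\prime}_t$, so the quotient $f_B^{\ast}(\tilde{\cal F}_T)\to\tilde{\cal E}^{\prime}_T\to 0$ is a flat family of $1$-dimensional sheaves. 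Condition~$(1)$ of Lemma~\ref{rpf} then reads $R^1 f_{T\ast}(\tilde{\cal E}^{\prime}_T)=0=R^1 f_{0\ast}(\tilde{\cal E}^{\prime}_0)$, which is exactly Lemma~\ref{vanishing-R1rpIdYEp0}, together with $(\rho^{\prime}_U\times\Id_Y)_{\ast}(\tilde{\cal E}^{\prime}_U)\simeq\tilde{\cal F}_U$, which holds by the very definition of $\tilde{\cal F}_U$ over $U$.

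The one substantive point, which I expect to be the main obstacle, is Condition~$(2)$ of Lemma~\ref{rpf}: that $\tilde{\alpha}_T$ be an isomorphism off a codimension-$2$ subset of $\Supp(\tilde{\cal F}_T)$. Over $U$ this is clear, as $\tilde{\alpha}_U$ is an isomorphism everywhere. Over the central fibre, $\rho^{\prime}_0$ is an isomorphism away from the finite set $\Sigma_0\subset C_0$ of images of contracted trees, so $\tilde{\alpha}_T$ is automatically an isomorphism at every point of $\Supp(\tilde{\cal F}_0)$ lying over $C_0-\Sigma_0$. The difficulty is that $\tilde{\cal F}_0$ may carry $1$-dimensional ``vertical'' components over $\Sigma_0$ (the flat limits of the vertical components of $\tilde{\cal F}_U$ coming from trees mapped nonconstantly to $Y$); the locus $\Supp(\tilde{\cal F}_T)\cap(\Sigma_T\times Y)$ is therefore a priori only of codimension $1$, and I must show that $\tilde{\alpha}_T$ is nevertheless an isomorphism at the generic point of each such vertical component, so that the genuinely bad locus shrinks to dimension $0$.

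To control this I would first note that $\tilde{\alpha}_T$ is injective: its kernel is a subsheaf of the $T$-flat sheaf $\tilde{\cal F}_T$ supported over $0$, hence is both $t$-power-torsion and $t$-torsion-free, so it vanishes. Next, $(\rho^{\prime}_T\times\Id_Y)_{\ast}(\tilde{\cal E}^{\prime}_T)$ is itself flat over $T$ by Lemma~\ref{cpf} combined with the vanishing of Lemma~\ref{vanishing-R1rpIdYEp0}, and its formation commutes with restriction to $0$, identifying $\tilde{\alpha}_T|_{0}$ with $\tilde{\alpha}_0$. Thus $\tilde{\alpha}_T$ is an injection of two $T$-flat sheaves that is an isomorphism over $U$; the generic multiplicities of $\tilde{\cal F}_0$ and of $(\rho^{\prime}_0\times\Id_Y)_{\ast}(\tilde{\cal E}^{\prime}_0)$ along each limiting component agree, since the two families have identical fibres over $U$ and both are flat over $T$. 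The remaining, genuinely local, task is to upgrade this matching of multiplicities to surjectivity of $\tilde{\alpha}_T$ at the generic point of each vertical component $\{p\}\times D$; here I would argue over the contracted tree $E_p=(\rho^{\prime}_T)^{-1}(p)$, using the built-in exactness $(\rho^{\prime}_T\times\Id_Y)^{\ast}(\tilde{\cal F}_T)\to\tilde{\cal E}^{\prime}_T\to 0$ of Step~$(b)$ to see that no global section of $\tilde{\cal E}^{\prime}_0$ over $E_p$ is lost under $f_{B\ast}$ at the generic point of $D$, together with purity of $\tilde{\cal F}_0$ (from $Z$-semistability) to exclude any embedded cokernel. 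With Condition~$(2)$ thus verified, Lemma~\ref{rpf} promotes the generic isomorphism to the asserted isomorphisms $\tilde{\alpha}_0$ and $\tilde{\alpha}_T$ everywhere.
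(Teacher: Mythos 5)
Your global skeleton is the paper's: reduce to Lemma~\ref{rpf} with $f_B=\rho^{\prime}_T\times\Id_Y$, supply Condition~(1) from Lemma~\ref{vanishing-R1rpIdYEp0} together with the definition of $\tilde{\cal F}_U$, get injectivity of $\tilde{\alpha}_T$ from the $T$-flatness of $\tilde{\cal F}_T$, and locate the crux at Condition~(2) along the vertical components over $\Sigma_0$ --- all of which matches the paper's proof exactly. The genuine gap is in your route through that crux: the multiplicity-matching step carries no information. For \emph{any} injection $F\hookrightarrow G$ of $T$-flat coherent sheaves that is an isomorphism over $U$, the $1$-cycles of the special fibers agree automatically: applying the snake lemma to multiplication by $t$ (injective on $F$ and $G$ by flatness) gives the exact sequence $0\to \Ker(t|_{\cal Q})\to F_0\to G_0\to {\cal Q}/t{\cal Q}\to 0$ with ${\cal Q}$ the cokernel, and at the generic point $\eta$ of any $1$-dimensional component of the special-fiber support the localization ${\cal Q}_\eta$ has finite length, whence $l(\Ker(t|_{{\cal Q}_\eta}))=l(({\cal Q}/t{\cal Q})_\eta)$ and the length bookkeeping cancels identically. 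Concretely, multiplication by $t$ on $\tilde{\cal F}_T$ itself is an injection of $T$-flat sheaves, an isomorphism over $U$, whose cokernel is the entire ($1$-dimensional) sheaf $\tilde{\cal F}_0$; so equal generic multiplicities cannot exclude a $1$-dimensional cokernel for $\tilde{\alpha}_T$. Everything then hinges on your closing phrase that ``no global section is lost'' at the generic point of $\{p^{\prime}\}\times D$ --- which is, moreover, slightly misstated (pushforward tautologically retains sections of $\tilde{\cal E}^{\prime}_T$; the issue is whether local sections of $(\rho^{\prime}_T\times\Id_Y)_{\ast}(\tilde{\cal E}^{\prime}_T)$ are hit by $\tilde{\cal F}_T$) --- and for this you name the ingredients but give no mechanism: it is precisely the statement to be proven.

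The mechanism the paper uses, and which your sketch lacks, is a section chase through the $T$-flattened pullback $((\rho^{\prime}_T\times\Id_Y)^{\ast}(\tilde{\cal F}_T))_{\flatscriptsize/T}$, the quotient of the pullback by its torsion sections supported over an infinitesimal neighborhood of $0\in T$. The Step~(b) surjection factors through it as $\tilde{\beta}^{\prime}_T:((\rho^{\prime}_T\times\Id_Y)^{\ast}(\tilde{\cal F}_T))_{\flatscriptsize/T}\rightarrow \tilde{\cal E}^{\prime}_T\rightarrow 0$, while $\tilde{\cal F}_T$ embeds into $(\rho^{\prime}_T\times\Id_Y)_{\ast}$ of this flattened pullback and $\tilde{\alpha}_T$ is the resulting composition. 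Given a local section $\tilde{e}_T$ of $(\rho^{\prime}_T\times\Id_Y)_{\ast}(\tilde{\cal E}^{\prime}_T)$ regular and nowhere zero on a nonempty open subset of $C_0\times Y$ (in particular near the generic point of a vertical component), one lifts it to $\tilde{e}^{\prime}_T\in \tilde{\cal E}^{\prime}_T$ and then through $\tilde{\beta}^{\prime}_T$ to $\tilde{f}^{\prime}_T$; over $U$ the section $\tilde{f}_U:=\tilde{\alpha}_U^{-1}(\tilde{e}_U)$ of $\tilde{\cal F}_U$ coincides with $(\rho^{\prime}_T\times\Id_Y)_{\ast}(\tilde{f}^{\prime}_T)|_U$ inside the common ambient sheaf, so $(\rho^{\prime}_T\times\Id_Y)_{\ast}(\tilde{f}^{\prime}_T)$ extends $\tilde{f}_U$ to $\tilde{f}_T\in\tilde{\cal F}_T$ with $\tilde{\alpha}_T(\tilde{f}_T)=\tilde{e}_T$. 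This gives the isomorphism off a codimension-$2$ subset of $\Supp(\tilde{\cal F}_T)$, after which your appeal to Lemma~\ref{rpf} (with purity of $\tilde{\cal F}_0$ from $Z$-semistability entering there, as you correctly note) closes the proof as in the paper. A minor remark: your invocation of Lemma~\ref{cpf} to flatten the pushforward and identify $\tilde{\alpha}_T|_0$ with $\tilde{\alpha}_0$ is harmless but redundant, since that base-change identification is part of the \emph{conclusion} of Lemma~\ref{rpf} rather than one of its hypotheses.
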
	

\begin{proof}
 We only need to show that
  $\tilde{\alpha}_T:
	     \tilde{\cal F}_T\longrightarrow
	      (\rho^{\prime}_T\times\Id_Y)_{\ast}(\tilde{\cal E}^{\prime}_T)$
  is an isomorphism
  over an open dense complement of a codimension-$2$ subset of $\Supp(\tilde{\cal F}_T)$.
 The lemma then follows from Lemma~\ref{vanishing-R1rpIdYEp0}  and
                                                    Lemma~\ref{rpf}.
						%
						
 First note that
  since
    $\tilde{\cal F}_T$ is flat over $T$ while
     $\Ker(\tilde{\alpha}_T)\subset \tilde{\cal F}_T$ is supported only over $0\in T$,
  it must be that 	
    $\Ker(\tilde{\alpha}_T)=0$ and $\tilde{\alpha}_T$ is a monomorphism.
 Note also that
   $$
   ((\rho^{\prime}_T\times\Id_Y)^{\ast}(\tilde{\cal F}_T))
                                                    _{\flatscriptsize/T}\;
	 =\; (\rho^{\prime}_T\times\Id_Y)^{\ast}(\tilde{\cal F}_T)
	           \left/
			      \left\{\, \parbox{14em}{\small
				    torsion sections of
                    $(\rho^{\prime}_T\times\Id_Y)^{\ast}(\tilde{\cal F}_T)$\\
				    supported over an infinitesimal\\ neighborhood of $0\in T$} \,
				  \right\}
			   \right.\, .
   $$	
 Since
     both $\tilde{\cal F}_T$ and
    $(\rho^{\prime}_T\times\Id_Y)_{\ast}(\tilde{\cal E})^{\prime}_T$
  are flat over $T$,
 one has the exact sequence
  $$
    ((\rho^{\prime}_T\times\Id_Y)^{\ast}(\tilde{\cal F}_T))
                                                    _{\flatscriptsize/T}\;
      \stackrel{\tilde{\beta}^{\prime}_T}{\longrightarrow}\; 	
	  \tilde{\cal E}^{\prime}_T\; \longrightarrow\; 0
  $$
  and
  $\tilde{\alpha}_T$ can be regarded as the composition
  $$
     \tilde{\cal F}_T\; \hookrightarrow\;
       (\rho^{\prime}_T\times \Id_Y)_{\ast}
	      (( (\rho^{\prime}_T\times \Id_Y)^{\ast}(\tilde{\cal F}_T)
		         )_{\flatscriptsize/T})\;
	   \longrightarrow\; (\rho^{\prime}_T\times \Id_Y)_{\ast}
	                                                                                 (\tilde{\cal E}^{\prime}_T)\,.
  $$
 
 Let $\tilde{e}_T\in
       (\rho^{\prime}_T\times \Id_Y)_{\ast}(\tilde{\cal E}^{\prime}_T)$
  be a local section of
  $(\rho^{\prime}_T\times \Id_Y)_{\ast}(\tilde{\cal E}^{\prime}_T)$
  that is regular and nowhere-zero over a nonempty open set of $C_0\times Y$.
 Then, there is a local section
  $\tilde{e}^{\prime}_T \in \tilde{\cal E}^{\prime}_T$,
   regular and nowhere-zero over a nonempty open subset of  $C^{\prime}_0\times Y$,
  such that
  $(\rho^{\prime}_T\times\Id_Y)_{\ast}(\tilde{e}^{\prime}_T)
     =\tilde{e}_T$.
 In turn,
  there is a local section
     $\tilde{f}^{\prime}_T \in
      ((\rho^{\prime}_T\times\Id_Y)^{\ast}(\tilde{\cal F}_T))
                                                    _{\flatscriptsize/T}$,
    regular and nowhere-zero over a nonempty open subset of $C^{\prime}_0\times Y$,
  such that
    $\tilde{\beta}^{\prime}_T (\tilde{f}^{\prime}_T)
	    = \tilde{e}^{\prime}_T$.
		
 Suppose that $\tilde{e}_T\notin \tilde{\alpha}_T(\tilde{\cal F}_T)$.
 Since
   $\tilde{\alpha}_U: \tilde{\cal F}_U \rightarrow
    (\rho^{\prime}_U\times\Id_Y)_{\ast}(\tilde{\cal E}^{\prime}_U)$
   is an isomorphism, 	
 let $\tilde{e}^{\prime}_U$ be the restriction of $\tilde{e}^{\prime}_T$
  to over $U\subset T$ (and similarly for other local section $(\,\cdot\,)_U$ ),
 then there exists an $\tilde{f}_U\in \tilde{\cal F}_U$ such that
  $\tilde{\alpha}_U(\tilde{f}_U)= \tilde{e}_U$.
 From naturality of the construction,
  $\tilde{\beta}^{\prime}_U(\tilde{f}^{\prime}_U)=\tilde{e}_U$
   also holds.
 Thus,
    with $\tilde{\cal F}_T$ as a subsheaf of
     $(\rho^{\prime}_T\times \Id_Y)_{\ast}
	      (( (\rho^{\prime}_T\times \Id_Y)^{\ast}(\tilde{\cal F}_T)
		         )_{\flatscriptsize/T})$,
   $\,\tilde{f}_U$ and
    $(\rho^{\prime}_T\times\Id_Y)_{\ast}(\tilde{f}^{\prime}_T)|_U$
    coincide as local sections of
    $(\rho^{\prime}_T\times \Id_Y)_{\ast}
	      (( (\rho^{\prime}_T\times \Id_Y)^{\ast}(\tilde{\cal F}_T)
		         )_{\flatscriptsize/T})$   				
  and
   $\tilde{f}^{\prime}_T$ gives rise to
   the extension of $\tilde{f}_U\in \tilde{\cal F}_U$  to
    $\,\tilde{f}_T :=
	   	(\rho^{\prime}_T\times\Id_Y)_{\ast}(\tilde{f}^{\prime}_T)
			 \in \tilde{\cal F}_T\,$
	with $\tilde{\alpha}_T(\tilde{f}_T)=\tilde{e}_T$.
   
  This proves that
     $\tilde{\alpha}_T:
	     \tilde{\cal F}_T\longrightarrow
	      (\rho^{\prime}_T\times\Id_Y)_{\ast}(\tilde{\cal E}^{\prime}_T)$
    is an isomorphism
   over an open dense complement of a codimension-$2$ subset of $\Supp(\tilde{\cal F}_T)$
   and, hence, the lemma.

\end{proof}
 
\bigskip	

If $\Err^Z_{\flatscriptsize/C^{\prime}_0}(\tilde{\cal E}^{\prime}_0)=0$,
 then go to the last Step (d).
Otherwise, move on to the next Step (c) to continue the reduction.

\bigskip

\begin{flushleft}
{\bf Step $\boldsymbol{(c):}$ Turning on the reduction
		-- Bubbling off to absorb/reduce irregularities\\
		    $\mbox{\hspace{4.2em}}$
		   of the morphism over $0\in T$}
\end{flushleft}
We first reduce the irregularities of $\varphi_{\tilde{\cal E}^{\prime}_0}$
  that come from $1$-dimensional subsheaf of
  $(\tilde{\cal E}^{\prime}_0)_{\torsionscriptsize/C^{\prime}_0}$.
Once such irregularities are all resolved
  to obtain a new $\varphi_{\tilde{\cal E}^{\prime}_0}$
 we then reduce the irregularities of the new $\varphi_{\tilde{\cal E}^{\prime}_0}$
  that comes from points on $C^{\prime}_0$
  over which $\tilde{\cal E}^{\prime}_0$ is not flat.

\bigskip

\begin{flushleft}
{\bf Step $\boldsymbol{(c.1):}$
 If $\; -\Imaginary\Err^Z_{C^{\prime}_0}(\tilde{\cal E}^{\prime}_0)>0$}
\end{flushleft}
Then, there exists a point $p^{\prime}\in C^{\prime}_0$ such that
 $(\tilde{\cal E}^{\prime}_0)_{(p^{\prime})}$is $1$-dimensional.
Up to a base change of $T$, let
 $$
   \xymatrix{
   C^{\prime\prime}_T   \ar[rr]^-{h_T}\ar[rd]
     && C^{\prime}_T \ar[ld] \\
   & T	
   }
 $$
 be a collapsing morphism of $T$-family of nodal curves
  from blowing up an appropriate subscheme of $C^{\prime}_T$
   that contains $p^{\prime}$, now as a point in $C^{\prime}_T$,
  such that
   \begin{itemize}
    \item[{\Large $\cdot$}]
    $h_U$ is an isomorphism;
	
   \item[{\Large $\cdot$}]		
   the ${\Bbb P}^1$-tree subcurve $C^{\prime\prime}_{u_{h_0}}$
    of $C^{\prime\prime}_0$ that gets contracted by $h_0$ to $p^{\prime}$ is connected.
  \end{itemize}	
 
Consider the new $T$-family  of $1$-dimensional coherent sheaves
  $\tilde{\cal E}^{\prime\prime}_T
     \in  \CohCategory((C^{\prime\prime}_T\times Y)/T)$
  in the exact sequence over $T$
  $$
	  (h_T\times\Id_Y)^{\ast}(\tilde{\cal E}^{\prime}_T)\;
       \stackrel{\tilde{\beta}^{\prime\prime}_T}{\longrightarrow}\;
	   \tilde{\cal E}^{\prime\prime}_T\;   \longrightarrow\; 0
  $$
  from the Quot-scheme$/T$ completion of the exact sequence over $U$
  $$
	(h_U\times\Id_Y)^{\ast}(\tilde{\cal E}^{\prime}_U)\;
	  \stackrel{I\!d}{\longrightarrow}\;
	  (h_U\times\Id_Y)^{\ast}(\tilde{\cal E}^{\prime}_U)\;
	   \longrightarrow\; 0\,.
  $$
Note that
  $\Ker(\tilde{\beta}^{\prime\prime}_T)
      = ((h_T\times\Id_Y)^{\ast}(\tilde{\cal E}^{\prime}_T))
                                    _{\torsionscriptsize/T }$
	 is supported over an infinitesimal neighborhood of $0\in T$. 	
Then,
 similar argument in the proof of  Lemma~\ref{vanishing-R1rpIdYEp0}
   in Step (b) implies that
													
\bigskip

\begin{lemma}\label{vanishing-R1h0IdY-Cpp0Y-c1}
      {\bf [vanishing of
                 $R^1\!(h_0\times\Id_Y)_{\ast}
	              (\tilde{\cal E}^{\prime\prime}_0)$ on $C^{\prime}_0\times Y$  and
                 $R^1\!(h_T\times\Id_Y)_{\ast}
	              (\tilde{\cal E}^{\prime\prime}_T)$ on $C^{\prime}_T\times Y$].}
 $$
    R^1\!(h_0\times\Id_Y)_{\ast}
	                                                        (\tilde{\cal E}^{\prime\prime}_0)\; =\; 0\;\;
                  \mbox{on $C^{\prime}_0\times Y$}															
	 \hspace{2em}\mbox{and}\hspace{2em}
    R^1\!(h_T\times\Id_Y)_{\ast}
	                                                        (\tilde{\cal E}^{\prime\prime}_T)\; =\; 0\;\;
				  \mbox{on $C^{\prime}_T\times Y$}\,.
 $$															
\end{lemma}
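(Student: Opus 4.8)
The plan is to repeat, essentially verbatim, the cohomological argument establishing Lemma~\ref{vR1sm} and Lemma~\ref{vanishing-R1rpIdYEp0}, now carried out for the partial collapsing morphism $h_T\times\Id_Y$ in place of $\rho_S\times\Id_Y$. Three structural inputs are in hand. First, $h_T\times\Id_Y:C^{\prime\prime}_T\times Y\rightarrow C^{\prime}_T\times Y$ has relative dimension $\le 1$, since $h_T$ contracts the connected ${\Bbb P}^1$-tree $C^{\prime\prime}_{u_{h_0}}$ and is an isomorphism elsewhere. Second, $R^1(h_T\times\Id_Y)_{\ast}({\cal O}_{C^{\prime\prime}_T\times Y})=0$, because the non-trivial fibres of $h_T\times\Id_Y$ are ${\Bbb P}^1$-trees, on which the structure sheaf has vanishing first cohomology. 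Third, the built-in surjection $\tilde{\beta}^{\prime\prime}_T:(h_T\times\Id_Y)^{\ast}(\tilde{\cal E}^{\prime}_T)\rightarrow\tilde{\cal E}^{\prime\prime}_T\rightarrow 0$, coming from the Quot-scheme$/T$ completion, plays here the role that Condition~(2) played in Lemma~\ref{vR1sm}; note that it is supplied directly by the construction rather than having to be imposed.

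First I would install the twisting device as in Lemma~\ref{vR1sm}: fix a relative ample line bundle ${\cal O}_{(C^{\prime}_T\times Y)/T}(1)$ on $(C^{\prime}_T\times Y)/T$. By the projection formula, for each $m$ the sheaf $R^1(h_T\times\Id_Y)_{\ast}\bigl(\tilde{\cal E}^{\prime\prime}_T\otimes(h_T\times\Id_Y)^{\ast}{\cal O}(m)\bigr)$ differs from $R^1(h_T\times\Id_Y)_{\ast}(\tilde{\cal E}^{\prime\prime}_T)$ only by an invertible twist, so it suffices to prove the vanishing of the former. For $m\gg 0$ the twist $\tilde{\cal E}^{\prime}_T\otimes{\cal O}(m)$ on $C^{\prime}_T\times Y$ is globally generated (recall $T$ is affine), giving a surjection ${\cal O}_{C^{\prime}_T\times Y}^{\oplus k}\rightarrow\tilde{\cal E}^{\prime}_T\otimes{\cal O}(m)\rightarrow 0$. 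Pulling this back along $h_T\times\Id_Y$ and composing with the $(h_T\times\Id_Y)^{\ast}{\cal O}(m)$-twist of $\tilde{\beta}^{\prime\prime}_T$ yields a surjection
$$
 {\cal O}_{C^{\prime\prime}_T\times Y}^{\oplus k}\;\longrightarrow\;
   \tilde{\cal E}^{\prime\prime}_T\otimes(h_T\times\Id_Y)^{\ast}{\cal O}(m)\;
   \longrightarrow\; 0
$$
with some kernel $\tilde{\cal H}^{\prime\prime}_T$.

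The long exact sequence of $R^{\bullet}(h_T\times\Id_Y)_{\ast}$ attached to $0\rightarrow\tilde{\cal H}^{\prime\prime}_T\rightarrow{\cal O}^{\oplus k}\rightarrow\tilde{\cal E}^{\prime\prime}_T\otimes(h_T\times\Id_Y)^{\ast}{\cal O}(m)\rightarrow 0$ contains
$$
 R^1(h_T\times\Id_Y)_{\ast}({\cal O}_{C^{\prime\prime}_T\times Y}^{\oplus k})\;
 \longrightarrow\;
 R^1(h_T\times\Id_Y)_{\ast}\bigl(\tilde{\cal E}^{\prime\prime}_T\otimes(h_T\times\Id_Y)^{\ast}{\cal O}(m)\bigr)\;
 \longrightarrow\;
 R^2(h_T\times\Id_Y)_{\ast}(\tilde{\cal H}^{\prime\prime}_T)\,,
$$
whose left term vanishes by the second input and whose right term vanishes by the first. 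Hence the middle term is zero, and by the projection-formula reduction $R^1(h_T\times\Id_Y)_{\ast}(\tilde{\cal E}^{\prime\prime}_T)=0$ on $C^{\prime}_T\times Y$. Restricting the entire construction to the central fibre --- where $h_0$ contracts $C^{\prime\prime}_{u_{h_0}}$, where $\tilde{\cal E}^{\prime\prime}_0$ is the fibre of $\tilde{\cal E}^{\prime\prime}_T$, and where the restriction of $\tilde{\beta}^{\prime\prime}_T$ remains surjective by right-exactness of $\otimes\,{\cal O}_{C^{\prime\prime}_0}$ --- the identical argument gives $R^1(h_0\times\Id_Y)_{\ast}(\tilde{\cal E}^{\prime\prime}_0)=0$ on $C^{\prime}_0\times Y$.

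The one point needing care, and the main obstacle such as it is, is the descent off the auxiliary base change: the global-generation step (and, in the faithful transcription of Lemma~\ref{vR1sm}, the choice of a relative ample divisor disjoint from the image of the contracted tree) was performed over a base change of $T$. As in the proof of Lemma~\ref{vR1sm}, I would invoke the Theorem on Formal Functions to conclude that the vanishing of $R^1(h_T\times\Id_Y)_{\ast}(\tilde{\cal E}^{\prime\prime}_T)$ persists over the original base, which completes the argument.
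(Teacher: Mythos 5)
Your proof is correct and takes essentially the same route the paper intends: the paper proves this lemma by referring back to Lemma~\ref{vanishing-R1rpIdYEp0} and thence to Lemma~\ref{vR1sm}, and you faithfully reproduce that argument --- the built-in Quot-scheme surjection $\tilde{\beta}^{\prime\prime}_T$ supplying the quotient presentation in place of Condition~(2), $R^1\!(h_T\times\Id_Y)_{\ast}$ of the structure sheaf vanishing because only ${\Bbb P}^1$-trees are contracted, and $R^2$ vanishing by relative dimension $\le 1$, with restriction to the central fibre handled by right-exactness. The one cosmetic difference is that your projection-formula reduction (any relative ample twist suffices, since vanishing of $R^1f_{\ast}({\cal F})\otimes L$ is equivalent to that of $R^1f_{\ast}({\cal F})$) removes the need for the paper's special ample divisor avoiding the contracted locus, so your closing appeal to the Theorem on Formal Functions is harmless but actually superfluous: relative Serre vanishing gives global generation over the original affine $T$ with no auxiliary base change.
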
	

\bigskip

\noindent
And similar argument in the proof of Lemma~\ref{coincidence-pf-C0Y-CTY}
    in Step (b) implies that
	
\bigskip

\begin{lemma}\label{coincidence-pf-Cp0Y-CpTY-c1}
{\bf [coincidence of push-forward on $C^{\prime}_0\times Y$ and $C^{\prime}_T\times Y$].}
   The natural homomorphisms
     $$
	     \tilde{\cal E}^{\prime}_0\; \longrightarrow\;
	      (h_0\times\Id_Y)_{\ast}(\tilde{\cal E}^{\prime\prime}_0)\;\;
		               \mbox{on $C^{\prime}_0\times Y$}
		  \hspace{2em}\mbox{and}\hspace{2em}
	     \tilde{\cal E}^{\prime}_T\; \longrightarrow\;
	      (h_T\times\Id_Y)_{\ast}(\tilde{\cal E}^{\prime\prime}_T)\;\;
                       \mbox{on $C^{\prime}_T\times Y$}		
	 $$
	 from the compositions
	 $$
    \begin{array}{c}
      \tilde{\cal E}^{\prime}_0\; \longrightarrow\;
       (h_0\times \Id_Y)_{\ast}
	      (h_0\times \Id_Y)^{\ast}(\tilde{\cal E}^{\prime}_0)\;
	       \longrightarrow\;
		  (h_0\times \Id_Y)_{\ast}(\tilde{\cal E}^{\prime\prime}_0)
		                                                                                                 \\[1.2ex]
      \mbox{and}\hspace{2em}																										 
      \tilde{\cal E}^{\prime}_T\; \longrightarrow\;
       (h_T\times \Id_Y)_{\ast}
	      (h_T\times \Id_Y)^{\ast}(\tilde{\cal E}^{\prime}_T)\;
	   \longrightarrow\;
	  (h_T\times \Id_Y)_{\ast}(\tilde{\cal E}^{\prime\prime}_T)\,.
             \hspace{3em}		
	\end{array}
  $$
  are isomorphisms.
\end{lemma}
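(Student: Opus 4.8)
The plan is to run the argument of Lemma~\ref{coincidence-pf-C0Y-CTY} essentially verbatim, with the triple $(\rho^{\prime}_{\bullet},\tilde{\cal F}_{\bullet},\tilde{\cal E}^{\prime}_{\bullet})$ occurring there replaced throughout by $(h_{\bullet},\tilde{\cal E}^{\prime}_{\bullet},\tilde{\cal E}^{\prime\prime}_{\bullet})$ here. Write $\tilde{\alpha}_T\colon \tilde{\cal E}^{\prime}_T\rightarrow (h_T\times\Id_Y)_{\ast}(\tilde{\cal E}^{\prime\prime}_T)$ and $\tilde{\alpha}_0\colon \tilde{\cal E}^{\prime}_0\rightarrow (h_0\times\Id_Y)_{\ast}(\tilde{\cal E}^{\prime\prime}_0)$ for the two natural compositions in the statement. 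First I would reduce both claimed isomorphisms to a single generic statement by invoking Lemma~\ref{rpf}: the morphism $h_T\times\Id_Y\colon C^{\prime\prime}_T\times Y\rightarrow C^{\prime}_T\times Y$ is birational and projective and contracts only the ${\Bbb P}^1$-tree $C^{\prime\prime}_{u_{h_0}}$, so its exceptional locus has relative dimension $1$. Taking $B=T$, $b_0=0$, ${\cal F}_B=\tilde{\cal E}^{\prime}_T$ and ${\cal F}^{\prime}_B=\tilde{\cal E}^{\prime\prime}_T$, Hypothesis~(1) of Lemma~\ref{rpf} is exactly Lemma~\ref{vanishing-R1h0IdY-Cpp0Y-c1} together with the identification over $U$; the latter holds because $h_U$ was arranged to be an isomorphism and $\tilde{\cal E}^{\prime\prime}_T$ was produced as the Quot$/T$-completion of the \emph{identity} sequence over $U$, so that $\tilde{\cal E}^{\prime\prime}_U\simeq (h_U\times\Id_Y)^{\ast}(\tilde{\cal E}^{\prime}_U)$ and hence $(h_U\times\Id_Y)_{\ast}(\tilde{\cal E}^{\prime\prime}_U)\simeq \tilde{\cal E}^{\prime}_U$.

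Everything then reduces to verifying Hypothesis~(2) of Lemma~\ref{rpf}, namely that $\tilde{\alpha}_T$ is an isomorphism over an open dense complement of a codimension-$2$ subset of $\Supp(\tilde{\cal E}^{\prime}_T)$; once this is in hand, Lemma~\ref{rpf} delivers $\tilde{\alpha}_T$ on $C^{\prime}_T\times Y$ and $\tilde{\alpha}_0$ on $C^{\prime}_0\times Y$ as isomorphisms simultaneously. For the generic statement I would copy the two moves from the proof of Lemma~\ref{coincidence-pf-C0Y-CTY}. Since $\tilde{\cal E}^{\prime}_T$ is flat over $T$ while $\Ker(\tilde{\alpha}_T)\subset \tilde{\cal E}^{\prime}_T$ is supported only over $0\in T$, the kernel vanishes and $\tilde{\alpha}_T$ is a monomorphism. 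For surjectivity over the generic locus I would factor $\tilde{\alpha}_T$ through $(h_T\times\Id_Y)_{\ast}$ of the $T$-flat part $((h_T\times\Id_Y)^{\ast}(\tilde{\cal E}^{\prime}_T))_{\flatscriptsize/T}$ and carry out the same local-section extension: a section $\tilde{e}_T$ of $(h_T\times\Id_Y)_{\ast}(\tilde{\cal E}^{\prime\prime}_T)$ that is regular and nowhere-zero over a nonempty open subset of $C^{\prime}_0\times Y$ lifts to a section $\tilde{e}^{\prime\prime}_T$ of $\tilde{\cal E}^{\prime\prime}_T$, hence through $\tilde{\beta}^{\prime\prime}_T$ to a section of the flat part of the pullback; using the isomorphism over $U$, the corresponding section of $\tilde{\cal E}^{\prime}_U$ extends across $0$, showing $\tilde{e}_T$ lies in the image of $\tilde{\alpha}_T$.

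The point requiring genuine care is the faithful transfer of the hypotheses of Lemma~\ref{rpf} to the present roles $({\cal F}_B,{\cal F}^{\prime}_B)=(\tilde{\cal E}^{\prime}_T,\tilde{\cal E}^{\prime\prime}_T)$. Unlike Step~(b), where ${\cal F}_B=\tilde{\cal F}_T$ was pure by $Z$-semistability, here one must confirm that $\tilde{\cal E}^{\prime}_T$ is a flat family of pure $1$-dimensional sheaves on the $T$-fibers --- so that the purity of $\tilde{\cal E}^{\prime}_0$ may be used to annihilate the cokernel inside Lemma~\ref{rpf} --- and that the $U$-identification holds with $\tilde{\cal E}^{\prime}$ in the role of the \emph{target} of the push-forward, which is immediate precisely because $h_U$ is an isomorphism and the completion was taken of the identity sequence. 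I expect this verification, together with the bookkeeping that the two adjunction units and the quotient maps $\tilde{\beta}^{\prime\prime}_{\bullet}$ assemble into the stated compositions $\tilde{\alpha}_{\bullet}$ and that the flat-part factorization is compatible with restriction to $U$, to be the only substantive (though still routine) work; the generic isomorphism and Lemma~\ref{vanishing-R1h0IdY-Cpp0Y-c1} then close the argument.
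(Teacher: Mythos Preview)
Your proposal is correct and takes exactly the paper's approach: the paper's entire proof reads ``similar argument in the proof of Lemma~\ref{coincidence-pf-C0Y-CTY} in Step~(b) implies that [the lemma holds],'' and you have spelled out precisely that transport, invoking Lemma~\ref{vanishing-R1h0IdY-Cpp0Y-c1} in place of Lemma~\ref{vanishing-R1rpIdYEp0} and then Lemma~\ref{rpf}. Your flag about the purity hypothesis on $\tilde{\cal E}^{\prime}_0$ needed for Lemma~\ref{rpf} is well-placed --- the paper does not address it explicitly either --- so this is not a divergence from the paper but rather a point the paper leaves implicit.
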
	
	
\bigskip

By the nature of the blow-up in the construction, one has in addition the following lemma:

\bigskip

\begin{lemma}\label{diagram-sj}
{\bf [diagram of surjections].}
 The natural diagram of morphisms from the construction
 $$
  \xymatrix{
   (\Supp(\tilde{\cal E}^{\prime\prime}_T))_{\redscriptsize}
         \ar[rr]^-{h_T\times I\!d_Y}
		 \ar[d]_-{pr_{C^{\prime\prime}_T}}
	   && (\Supp(\tilde{\cal E}^{\prime}_T))_{\redscriptsize}
	             \ar[d]^-{pr_{C^{\prime}_T}}\\
	 C^{\prime\prime}_T  \ar[rr]^-{h_T}
	   &&   C^{\prime}_T
   }
 $$
  has all the arrows surjections.
\end{lemma}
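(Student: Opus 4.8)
The plan is to verify the four arrows one at a time, observing first that surjectivity is a purely topological assertion, so replacing the two supports by their reductions $(\,\cdot\,)_{\redscriptsize}$ is harmless. Throughout I would use that $Y$ is projective, so that $\pr_{C^{\prime}_T}$, $\pr_{C^{\prime\prime}_T}$ and $h_T\times\Id_Y$ are all proper, together with the elementary fact that for a proper morphism $f$ and a coherent sheaf $\mathcal{G}$ one has $\Supp(f_{\ast}\mathcal{G})\subseteq f(\Supp\mathcal{G})$, with $f(\Supp\mathcal{G})$ closed. The bottom arrow $h_T:C^{\prime\prime}_T\to C^{\prime}_T$ is surjective because it is, by its construction in Step~(c.1), a proper birational contraction of a ${\Bbb P}^1$-tree, in particular an isomorphism off the exceptional locus $C^{\prime\prime}_{u_{h_0}}$.

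For the top arrow I would show its image is exactly $\Supp\tilde{\cal E}^{\prime}_T$. The surjection $(h_T\times\Id_Y)^{\ast}(\tilde{\cal E}^{\prime}_T)\to\tilde{\cal E}^{\prime\prime}_T\to 0$ gives $\Supp\tilde{\cal E}^{\prime\prime}_T\subseteq (h_T\times\Id_Y)^{-1}(\Supp\tilde{\cal E}^{\prime}_T)$, so the image is contained in $\Supp\tilde{\cal E}^{\prime}_T$. For the reverse inclusion I invoke Lemma~\ref{coincidence-pf-Cp0Y-CpTY-c1}, which identifies $(h_T\times\Id_Y)_{\ast}\tilde{\cal E}^{\prime\prime}_T\simeq\tilde{\cal E}^{\prime}_T$; combined with $\Supp(f_{\ast}\mathcal{G})\subseteq f(\Supp\mathcal{G})$ this yields $\Supp\tilde{\cal E}^{\prime}_T\subseteq (h_T\times\Id_Y)(\Supp\tilde{\cal E}^{\prime\prime}_T)$. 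Hence the two inclusions force equality, and the top arrow is onto $\Supp\tilde{\cal E}^{\prime}_T$.

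Both vertical arrows I would handle by one and the same mechanism, which is also where the genuine content sits. It suffices to know that $\pr_{C^{\prime}_T\ast}(\tilde{\cal E}^{\prime}_T)$, respectively $\pr_{C^{\prime\prime}_T\ast}(\tilde{\cal E}^{\prime\prime}_T)$, has rank $r\ge 1$ on each irreducible component of its curve; this is the standing rank condition (Property~(0) of the error-charge setup in Definition~\ref{error-charge}), which holds over $U$ because there $\tilde{\cal E}^{\prime}_U$ is the genuine graph of relative length $r$, and which is maintained through the $\Quot$-scheme completions of Steps~(a)--(c.1). Positive generic rank on a component $Z$ means that over its generic point $\eta_Z$ the pushforward is $r$-dimensional, in particular nonzero, so $\{\eta_Z\}\times Y$ meets the support; since the image of the support under the proper projection is closed and contains the generic point of every component, it is the whole curve. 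This gives surjectivity of $\pr_{C^{\prime}_T}$ and of $\pr_{C^{\prime\prime}_T}$ at once.

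The point I expect to be the main obstacle is precisely the left vertical arrow over the newly created exceptional ${\Bbb P}^1$-tree $C^{\prime\prime}_{u_{h_0}}$: one must rule out that a contracted ${\Bbb P}^1$ is \emph{invisible} to $\tilde{\cal E}^{\prime\prime}_T$, i.e.\ carries no horizontal support. Everything reduces to knowing that Property~(0) survives the bubbling, so that $\tilde{\cal E}^{\prime\prime}_T$ still has full rank $r$ along each exceptional ${\Bbb P}^1$; this is exactly what the $\Quot$-scheme completion in Step~(c.1) is arranged to guarantee, and it is the phrase ``by the nature of the blow-up'' made precise. As a formal alternative one may instead deduce the left vertical from the surjectivity of the other three arrows plus commutativity of the square: these force $\pr_{C^{\prime\prime}_T}(\Supp\tilde{\cal E}^{\prime\prime}_T)$ to be a closed subset mapping \emph{onto} $C^{\prime}_T$ under $h_T$, hence to contain every non-exceptional component, reducing the whole question to the exceptional locus $C^{\prime\prime}_{u_{h_0}}$ and, once more, to the rank condition there.
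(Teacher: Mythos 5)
Your architecture is sound, and for three of the four arrows the argument is complete. Note first that the paper itself offers no proof of this lemma --- it is asserted with the single phrase ``by the nature of the blow-up in the construction'' --- so you are writing a proof where none exists. Your bottom arrow (proper birational contraction, hence closed image containing a dense open), your top arrow (the inclusion $\Supp\tilde{\cal E}^{\prime\prime}_T\subseteq (h_T\times\Id_Y)^{-1}(\Supp\tilde{\cal E}^{\prime}_T)$ from the quotient presentation, and the reverse inclusion from Lemma~\ref{coincidence-pf-Cp0Y-CpTY-c1} combined with $\Supp(f_{\ast}{\cal G})\subseteq f(\Supp{\cal G})$ for $f$ proper), and your right vertical arrow are all correct as written.

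The gap is exactly where you expect it, and your proposed patch does not hold: you rest the left vertical arrow on the fiberwise rank condition --- ``full rank $r$ along each exceptional ${\Bbb P}^1$'' --- and assert that this ``is exactly what the $\Quot$-scheme completion in Step~(c.1) is arranged to guarantee.'' It is not. The $\Quot$-scheme completion guarantees only $T$-flatness of $\tilde{\cal E}^{\prime\prime}_T$; it says nothing a priori about the rank of ${\pr_{C^{\prime\prime}_0}}_{\ast}(\tilde{\cal E}^{\prime\prime}_0)$ on the components of the \emph{central fiber}. Property~(0) in Proposition~\ref{decrease-ec} is a hypothesis to be verified, not a standing fact, and positivity of the pushforward along the exceptional tree is only established later (Lemma~\ref{positive-pfCppuh0}, and there only in Step~(c.2)); it is logically downstream of the present lemma, so invoking it here is circular. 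The repair is cheap and stays inside your own framework: run your generic-point argument on the irreducible components of the \emph{total space} $C^{\prime\prime}_T$, not of $C^{\prime\prime}_0$. Since $C^{\prime\prime}_T/T$ is flat, every irreducible component of $C^{\prime\prime}_T$ dominates $T$, and the exceptional tree $C^{\prime\prime}_{u_{h_0}}$, being the exceptional locus of a blow-up, is contained in (the closures of) such components, namely the strict transforms; over $U$ one has $\tilde{\cal E}^{\prime\prime}_U\simeq\tilde{\cal E}^{\prime}_U$, flat of relative length $r$ over $C^{\prime\prime}_U$, so the pushforward is nonzero at the generic point of every component; the image of $\Supp\tilde{\cal E}^{\prime\prime}_T$ under $\pr_{C^{\prime\prime}_T}$ is closed by properness and contains all these generic points, hence is all of $C^{\prime\prime}_T$, exceptional ${\Bbb P}^1$'s included. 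Equivalently and most directly: $T$-flatness means $\tilde{\cal E}^{\prime\prime}_T$ has no associated point lying over $0\in T$, so $\Supp\tilde{\cal E}^{\prime\prime}_T=\overline{\Supp\tilde{\cal E}^{\prime\prime}_U}$, whose image is $\overline{C^{\prime\prime}_U}=C^{\prime\prime}_T$. With this in place, positive rank along the exceptional ${\Bbb P}^1$'s comes out as a \emph{consequence} of the lemma rather than an input to it, which is the correct order of deduction.
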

  
\bigskip

\noindent
It follows that
   there is a ${\Bbb P}^1$-component  of $C^{\prime\prime}_{u_{h_0}}$
   such that
     ${\pr_Y}_{\ast}((\tilde{\cal E}^{\prime\prime}_0|_{{\Bbb P}^1})$
	 is $1$-dimensional   and, hence,
  we are in `{\it Case $(a):$   $\Ker(\alpha)$ is $1$-dimensional}$\,$'
   of the previous theme.
 It follows from the discussion there that
   $$
     0\; \le \;    -\,\Imaginary \Err^Z_{C^{\prime\prime}_0}
	                                                     (\tilde{\cal E}^{\prime\prime}_0)\;
	 <\;  -\,\Imaginary\Err^Z_{C^{\prime}_0}(\tilde{\cal E}^{\prime}_0)\,.
   $$
  
If $\Err^Z_{C^{\prime\prime}_0}(\tilde{\cal E}^{\prime\prime}_0)=0$,
 then go to the last Step (d).
Else,
  if $\Err^Z_{C^{\prime\prime}_0}(\tilde{\cal E}^{\prime\prime}_0)\ne 0$
   but
      $- \Imaginary\Err^Z_{C^{\prime\prime}_0}
                                           (\tilde{\cal E}^{\prime\prime}_0)=0$,
 then redenote $(C^{\prime\prime}_T, \tilde{\cal E}^{\prime\prime}_T)$
        as $(C^{\prime}_T, \tilde{\cal E}^{\prime}_T)$
         and go to the next Step (c.2).
Finally,
 if $- \Imaginary\Err^Z_{C^{\prime\prime}_0}
                                           (\tilde{\cal E}^{\prime\prime}_0)\ne 0$,
   redenote $(C^{\prime\prime}_T, \tilde{\cal E}^{\prime\prime}_T)$
   as $(C^{\prime}_T, \tilde{\cal E}^{\prime}_T)$
   and repeat the current Step (c.1).

\bigskip

\begin{flushleft}
{\bf Step $\boldsymbol{(c.2):}$
 If $\; -\Imaginary\Err^Z_{C^{\prime}_0}(\tilde{\cal E}^{\prime}_0)=0$}
\end{flushleft}
$\tilde{\cal E}^{\prime}_0$ is now of relative dimension $0$ over $C^{\prime}_0$.
Let
  $p^{\prime}\in C^{\prime}_0$ be such that
    $\tilde{\cal E}^{\prime}_0$ is not flat over $p^{\prime}$.
As in Step (c.1), up to a base change of $T$, let
 $$
   \xymatrix{
   C^{\prime\prime}_T   \ar[rr]^-{h_T}\ar[rd]
     && C^{\prime}_T \ar[ld] \\
   & T	
   }
 $$
 be a collapsing morphism of $T$-family of nodal curves
  from blowing up an appropriate subscheme of $C^{\prime}_T$
   that contains $p^{\prime}$, now as a point in $C^{\prime}_T$,
  such that
   \begin{itemize}
    \item[{\Large $\cdot$}]
    $h_U$ is an isomorphism;
	
   \item[{\Large $\cdot$}]		
   the ${\Bbb P}^1$-tree subcurve $C^{\prime\prime}_{u_{h_0}}$
    of $C^{\prime\prime}_0$ that gets contracted by $h_0$ to $p^{\prime}$ is connected.
  \end{itemize}	
 
 Consider the new $T$-family  of $1$-dimensional coherent sheaves
  $\tilde{\cal E}^{\prime\prime}_T
     \in  \CohCategory((C^{\prime\prime}_T\times Y)/T)$
  in the exact sequence over $T$
  $$
   (h_T\times\Id_Y)^{\ast}(\tilde{\cal E}^{\prime}_T)\;
       \stackrel{\tilde{\beta}^{\prime\prime}_T}{\longrightarrow}\;
	   \tilde{\cal E}^{\prime\prime}_T\;   \longrightarrow\; 0
  $$
  from the Quot-scheme$/T$ completion of the exact sequence over $U$
  $$
   (h_U\times\Id_Y)^{\ast}(\tilde{\cal E}^{\prime}_U)\;
       \stackrel{I\!d}{\longrightarrow}\;
	 (h_U\times\Id_Y)^{\ast}(\tilde{\cal E}^{\prime}_U)\;
	   \longrightarrow\; 0\,.
  $$
Again,
$\Ker(\tilde{\beta}^{\prime\prime}_T)
   = ((h_T\times\Id_Y)^{\ast}(\tilde{\cal E}^{\prime}_T))
                                    _{\torsionscriptsize/T }$
	 is supported over an infinitesimal neighborhood of $0\in T$    and
one has the following two lemmas  by the same reasoning for the corresponding lemmas
  in Step (b) and Step (c.1):
													
\bigskip

\begin{lemma}\label{vanishing-R1h0IdY-Cp0Y-CpTY-c2}
      {\bf [vanishing of
                 $R^1\!(h_0\times\Id_Y)_{\ast}
	              (\tilde{\cal E}^{\prime\prime}_0)$ on $C^{\prime}_0\times Y$  and
                 $R^1\!(h_T\times\Id_Y)_{\ast}
	              (\tilde{\cal E}^{\prime\prime}_T)$ on $C^{\prime}_T\times Y$].}
 $$
    R^1\!(h_0\times\Id_Y)_{\ast}
	                                                        (\tilde{\cal E}^{\prime\prime}_0)\; =\; 0\;\;
                  \mbox{on $C^{\prime}_0\times Y$}															
	 \hspace{2em}\mbox{and}\hspace{2em}
    R^1\!(h_T\times\Id_Y)_{\ast}
	                                                        (\tilde{\cal E}^{\prime\prime}_T)\; =\; 0\;\;
				  \mbox{on $C^{\prime}_T\times Y$}\,.
 $$															
\end{lemma}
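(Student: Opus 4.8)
The plan is to transcribe, essentially verbatim, the proof of Lemma~\ref{vR1sm}, replacing the stabilizing contraction $\rho_S\times\Id_Y$ there by the bubbling contraction $h_T\times\Id_Y$ here. Two structural features of $h_T$ drive the entire argument and are built into the construction: first, $h_T\colon C^{\prime\prime}_T\rightarrow C^{\prime}_T$ is a collapsing morphism of families of nodal curves whose only positive-dimensional fibres are the contracted connected ${\Bbb P}^1$-tree $C^{\prime\prime}_{u_{h_0}}$ over $p^{\prime}$, so $h_T\times\Id_Y$ has relative dimension $\le 1$; second, since $h_T\times\Id_Y$ contracts a ${\Bbb P}^1$-tree and a tree has no higher cohomology, $R^1\!(h_T\times\Id_Y)_{\ast}({\cal O}_{C^{\prime\prime}_T\times Y})=0$, and likewise $R^1\!(h_0\times\Id_Y)_{\ast}({\cal O}_{C^{\prime\prime}_0\times Y})=0$ over the central fibre. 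These are exactly the inputs flagged in the proof of Lemma~\ref{vanishing-R1rpIdYEp0}.

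First I would record the right-exact presentation $(h_T\times\Id_Y)^{\ast}(\tilde{\cal E}^{\prime}_T)\xrightarrow{\tilde{\beta}^{\prime\prime}_T}\tilde{\cal E}^{\prime\prime}_T\rightarrow 0$ furnished by the Quot-scheme$/T$ completion, together with its restriction $(h_0\times\Id_Y)^{\ast}(\tilde{\cal E}^{\prime}_0)\rightarrow\tilde{\cal E}^{\prime\prime}_0\rightarrow 0$ over $0\in T$; these play the role that Condition (2) plays in Lemma~\ref{vR1sm}. Next, after a finite base change if necessary, I would choose a relative ample line bundle ${\cal O}_{(C^{\prime}_T\times Y)/T}(1)$ built from a relative ample divisor on $C^{\prime}_T/T$ disjoint from the image $h_T(C^{\prime\prime}_{u_{h_0}})$ of the contracted tree together with an ample divisor on $Y$; its pullback under $h_T\times\Id_Y$ is then trivial along the contracted ${\Bbb P}^1$-fibres, so that, by the projection formula and the fact that $R^1\!(h_T\times\Id_Y)_{\ast}$ is supported on the image of the contracted locus, twisting by $(h_T\times\Id_Y)^{\ast}{\cal O}(m)$ alters $R^1\!(h_T\times\Id_Y)_{\ast}(\tilde{\cal E}^{\prime\prime}_T)$ only by an invertible sheaf and hence does not affect its vanishing. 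For $m\gg 0$ one globally generates $\tilde{\cal E}^{\prime}_T\otimes{\cal O}_{(C^{\prime}_T\times Y)/T}(m)$, producing a surjection ${\cal O}^{\oplus k}_{C^{\prime}_T\times Y}\twoheadrightarrow\tilde{\cal E}^{\prime}_T\otimes{\cal O}(m)$; pulling back along $h_T\times\Id_Y$, tensoring $\tilde{\beta}^{\prime\prime}_T$ with the line bundle $(h_T\times\Id_Y)^{\ast}{\cal O}(m)$, and composing yields a short exact sequence $0\rightarrow\tilde{\cal H}^{\prime\prime}_T\rightarrow{\cal O}^{\oplus k}_{C^{\prime\prime}_T\times Y}\rightarrow\tilde{\cal E}^{\prime\prime}_T\otimes(h_T\times\Id_Y)^{\ast}{\cal O}(m)\rightarrow 0$.

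The conclusion is then read off the associated long exact sequence of $R^{\bullet}(h_T\times\Id_Y)_{\ast}$: the term $R^1\!(h_T\times\Id_Y)_{\ast}({\cal O}^{\oplus k})$ vanishes by the second structural feature, while $R^2\!(h_T\times\Id_Y)_{\ast}(\tilde{\cal H}^{\prime\prime}_T)$ vanishes by the first (relative dimension $\le 1$), so the sandwiched $R^1$ of the twisted sheaf vanishes, and by the choice of $m$ so does $R^1\!(h_T\times\Id_Y)_{\ast}(\tilde{\cal E}^{\prime\prime}_T)$ itself. Running the identical argument with $h_0$ in place of $h_T$ (whose contracted fibre is again the connected ${\Bbb P}^1$-tree $C^{\prime\prime}_{u_{h_0}}$) gives the central-fibre statement $R^1\!(h_0\times\Id_Y)_{\ast}(\tilde{\cal E}^{\prime\prime}_0)=0$. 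I expect the only genuinely delicate point to be the same one as in Lemma~\ref{vR1sm}: the special relative ample divisor realizing the twist-invariance of $R^1$ may exist only after a base change of $T$, so to descend the vanishing back to the original base I would invoke the Theorem on Formal Functions, exactly as in the closing paragraph of that proof; everything else is a mechanical transcription.
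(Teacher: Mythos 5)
Your proposal is correct and follows essentially the same route as the paper: the paper proves this lemma by citing the presentation $(h_T\times\Id_Y)^{\ast}(\tilde{\cal E}^{\prime}_T)\rightarrow\tilde{\cal E}^{\prime\prime}_T\rightarrow 0$ from the Quot-scheme completion, the relative dimension $\le 1$ of $h_T\times\Id_Y$, and $R^1\!(h_T\times\Id_Y)_{\ast}({\cal O}_{C^{\prime\prime}_T\times Y})=0$, and then declares that ``the details follow the same proof of Lemma~\ref{vR1sm}'' — which is precisely the transcription you carry out, including the twist by a relative ample class supported away from the contracted locus, the two-vanishing long-exact-sequence argument, and the descent to the original base via the Theorem on Formal Functions.
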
	

\bigskip
 
\begin{lemma}\label{coincidence-pf-Cp0Y-CpTY-c2}
{\bf [coincidence of push-forward on $C^{\prime}_0\times Y$ and $C^{\prime}_T\times Y$].}
   The natural homomorphisms
     $$
	     \tilde{\cal E}^{\prime}_0\; \longrightarrow\;
	      (h_0\times\Id_Y)_{\ast}(\tilde{\cal E}^{\prime\prime}_0)\;\;
		               \mbox{on $C^{\prime}_0\times Y$}
		  \hspace{2em}\mbox{and}\hspace{2em}
	     \tilde{\cal E}^{\prime}_T\; \longrightarrow\;
	      (h_T\times\Id_Y)_{\ast}(\tilde{\cal E}^{\prime\prime}_T)\;\;
                       \mbox{on $C^{\prime}_T\times Y$}		
	 $$
	 from the compositions
	 $$
    \begin{array}{c}
      \tilde{\cal E}^{\prime}_0\; \longrightarrow\;
       (h_0\times \Id_Y)_{\ast}
	      (h_0\times \Id_Y)^{\ast}(\tilde{\cal E}^{\prime}_0)\;
	       \longrightarrow\;
		  (h_0\times \Id_Y)_{\ast}(\tilde{\cal E}^{\prime\prime}_0)
		                                                                                                 \\[1.2ex]
      \mbox{and}\hspace{2em}																										 
      \tilde{\cal E}^{\prime}_T\; \longrightarrow\;
       (h_T\times \Id_Y)_{\ast}
	      (h_T\times \Id_Y)^{\ast}(\tilde{\cal E}^{\prime}_T)\;
	   \longrightarrow\;
	  (h_T\times \Id_Y)_{\ast}(\tilde{\cal E}^{\prime\prime}_T)\,.
             \hspace{3em}		
	\end{array}
  $$
  are isomorphisms.
\end{lemma}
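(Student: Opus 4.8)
The plan is to run, \emph{mutatis mutandis}, the argument already used for Lemma~\ref{coincidence-pf-C0Y-CTY} (Step (b)) and Lemma~\ref{coincidence-pf-Cp0Y-CpTY-c1} (Step (c.1)), under the dictionary $\rho^{\prime}\rightsquigarrow h$, $\tilde{\cal F}\rightsquigarrow\tilde{\cal E}^{\prime}$, $\tilde{\cal E}^{\prime}\rightsquigarrow\tilde{\cal E}^{\prime\prime}$. As there, it suffices to treat the $T$-family homomorphism $\tilde{\alpha}_T:\tilde{\cal E}^{\prime}_T\to(h_T\times\Id_Y)_{\ast}(\tilde{\cal E}^{\prime\prime}_T)$; the special-fibre statement for $\tilde{\alpha}_0$ then drops out by base change, the required vanishing $R^1\!(h_0\times\Id_Y)_{\ast}(\tilde{\cal E}^{\prime\prime}_0)=0=R^1\!(h_T\times\Id_Y)_{\ast}(\tilde{\cal E}^{\prime\prime}_T)$ being exactly Lemma~\ref{vanishing-R1h0IdY-Cp0Y-CpTY-c2}. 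By Lemma~\ref{cpf}, both $\tilde{\cal E}^{\prime}_T$ and $(h_T\times\Id_Y)_{\ast}(\tilde{\cal E}^{\prime\prime}_T)$ are then flat over $T$.

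First I would check injectivity. Over $U$ the morphism $h_U$ is an isomorphism and $\tilde{\cal E}^{\prime\prime}_U=(h_U\times\Id_Y)^{\ast}(\tilde{\cal E}^{\prime}_U)$, so $\tilde{\alpha}_U$ is an isomorphism and $\Ker(\tilde{\alpha}_T)$ is supported over $0\in T$. Since $\tilde{\cal E}^{\prime}_T$ is flat over the smooth curve $T$, it has no subsheaf annihilated by a power of the uniformizer $t$, which forces $\Ker(\tilde{\alpha}_T)=0$. Next I would show that $\tilde{\alpha}_T$ is an isomorphism off a codimension-$2$ subset of $\Supp(\tilde{\cal E}^{\prime}_T)$: the exceptional locus of $h_T\times\Id_Y$ lies over the single point $p^{\prime}\in C^{\prime}_0\subset C^{\prime}_T$, so away from the finitely many points of $\Supp(\tilde{\cal E}^{\prime}_T)$ over $p^{\prime}$ the map $h_T\times\Id_Y$ is an isomorphism and $\tilde{\alpha}_T$ is visibly one there; the verification that nothing is lost in the normal direction is the local-section lifting argument of Lemma~\ref{coincidence-pf-C0Y-CTY}, invoking naturality over $U$. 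Feeding this generic isomorphism together with the $R^1$-vanishing into Lemma~\ref{rpf} would then upgrade $\tilde{\alpha}_T$ to a global isomorphism, and with it $\tilde{\alpha}_0$.

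The main obstacle is that Lemma~\ref{rpf} is stated for a flat family of \emph{pure} $1$-dimensional sheaves, whereas in Step (c.2) the special fibre $\tilde{\cal E}^{\prime}_0$ is precisely not pure: the running hypothesis $\Err^Z_{C^{\prime}_0}(\tilde{\cal E}^{\prime}_0)\neq 0$ with $-\Imaginary\Err^Z_{C^{\prime}_0}(\tilde{\cal E}^{\prime}_0)=0$ means that $\tilde{\cal E}^{\prime}_{0,\torsionscriptsize/C^{\prime}_0}$ is a nonzero $0$-dimensional sheaf and$/$or that $\tilde{\cal E}^{\prime}_{0,\torsionfreescriptsize/C^{\prime}_0}$ has positive discrepancy to flatness at $p^{\prime}$. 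Hence the last step of Lemma~\ref{rpf}, where purity is used to conclude that the $0$-dimensional cokernel ${\cal Q}$ vanishes, does not apply verbatim. The way I would get around this is to run the Lemma~\ref{rpf} argument on the $C^{\prime}_T$-relatively-torsion-free quotient $(\tilde{\cal E}^{\prime}_T)_{\torsionfreescriptsize/C^{\prime}_T}$, which \emph{is} pure $1$-dimensional on fibres, obtaining the isomorphism there, and then match the $0$-dimensional $C^{\prime}_T$-torsion separately. For the latter I would use the explicit local reconstruction of (torsion-free) sheaves on nodal curves from Section~\ref{preliminary} (Lemma~\ref{h0F-2} and the torsion bookkeeping in Proposition~\ref{ddwbfTt}), together with the facts that $\tilde{\alpha}$ is an isomorphism over $U$ and that lengths are locally constant in a $T$-flat family, to see that $(h_T\times\Id_Y)_{\ast}$ carries $\tilde{\cal E}^{\prime\prime}_{T,\torsionscriptsize/C^{\prime\prime}_T}$ isomorphically onto $\tilde{\cal E}^{\prime}_{T,\torsionscriptsize/C^{\prime}_T}$. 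Comparing Euler characteristics along the $T$-flat, generically-isomorphic families then forces the candidate cokernel to have length $0$, which completes the identification on the nose.
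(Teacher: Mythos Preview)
Your approach matches the paper's: the paper simply says the lemma holds ``by the same reasoning'' as for Lemma~\ref{coincidence-pf-C0Y-CTY} in Step~(b) and Lemma~\ref{coincidence-pf-Cp0Y-CpTY-c1} in Step~(c.1), and you correctly reproduce that outline. Where you go further than the paper is in flagging the purity hypothesis of Lemma~\ref{rpf}. In Step~(b) the sheaf $\tilde{\cal F}_0$ playing the role of ${\cal F}_{b_0}$ is $Z$-semistable, hence pure, and Lemma~\ref{rpf} applies directly; here $\tilde{\cal E}^{\prime}_0$ may carry nonzero $0$-dimensional $C^{\prime}_0$-torsion (this is exactly part of what a nonzero error charge with vanishing imaginary part records), so the last step of the proof of Lemma~\ref{rpf} --- killing the $0$-dimensional cokernel by purity of ${\cal F}_{b_0}$ --- does not go through verbatim. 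The paper does not comment on this, and the same issue is already latent in Lemma~\ref{coincidence-pf-Cp0Y-CpTY-c1}, which you take as established.

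Your proposed patch (pass to the $C^{\prime}_T$-relatively torsion-free quotient, handle the $0$-dimensional torsion separately, and close with an Euler-characteristic comparison) is a reasonable strategy, but the details are only sketched: for instance, the $C^{\prime}_T$-torsion-free quotient of $\tilde{\cal E}^{\prime}_T$ need not itself be $T$-flat, and matching the torsion pieces under $(h_T\times\Id_Y)_{\ast}$ requires more than the local-structure lemmas you cite. Note also that the bare Euler-characteristic count does not by itself force the cokernel ${\cal Q}$ to vanish, since $\length(\ker(t\,|_{\cal Q}))=\length(\coker(t\,|_{\cal Q}))$ holds for \emph{any} finite-length $t$-torsion module. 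In short: you have identified a genuine subtlety that the paper elides, and your repair points in the right direction but is not yet complete.
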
	
	
\bigskip
 
Furthermore,  one has the following positivity property:
  
\bigskip

\begin{lemma}\label{positive-pfCppuh0}
{\bf [positivity of push-forward to $C^{\prime\prime}_{u_{h_0}}$].}\\
 The torsion-free sheaf
  $\,({\pr_{C^{\prime\prime}_0}}_{\ast}(\tilde{\cal E}^{\prime\prime}_0)
           |_{C^{\prime\prime}_{u_{h_0}}})_{\torsionfreescriptsize}$
  on the ${\Bbb P}^1$-tree $C^{\prime\prime}_{u_{h_0}}$ is positive.
\end{lemma}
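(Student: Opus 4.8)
The plan is to reduce the assertion to a component-by-component statement on the connected ${\Bbb P}^1$-tree $C^{\prime\prime}_{u_{h_0}}$ and to recognize each restriction of $\tilde{\cal E}^{\prime\prime}_0$ as the graph of a special morphism of the sort analyzed in Sec.~\ref{preliminary}, so that Lemma~\ref{ggEpdE} applies; this is the exact analogue of Lemma~\ref{positivity-CppFpp}, the only genuinely new point being that here the non-constancy hypothesis must be extracted from the construction rather than assumed. Write ${\cal G}^{\prime\prime}:=({\pr_{C^{\prime\prime}_0}}_{\ast}(\tilde{\cal E}^{\prime\prime}_0)|_{C^{\prime\prime}_{u_{h_0}}})_{\torsionfreescriptsize}$. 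Since we are in Step (c.2), where $-\Imaginary\Err^Z_{C^{\prime}_0}(\tilde{\cal E}^{\prime}_0)=0$, the sheaf $\tilde{\cal E}^{\prime\prime}_0$ is of relative dimension $0$ over $C^{\prime\prime}_0$ away from its $0$-dimensional torsion, so for every ${\Bbb P}^1$-component of $C^{\prime\prime}_{u_{h_0}}$ the restriction $(\tilde{\cal E}^{\prime\prime}_0|_{{\Bbb P}^1})_{\torsionfreescriptsize/{\Bbb P}^1}\in\CohCategory_1({\Bbb P}^1\times Y)$ is the graph of a morphism from an Azumaya ${\Bbb P}^1$ with fundamental module $({\cal G}^{\prime\prime}|_{{\Bbb P}^1})_{\torsionfreescriptsize}={\pr_{{\Bbb P}^1}}_{\ast}\!\left((\tilde{\cal E}^{\prime\prime}_0|_{{\Bbb P}^1})_{\torsionfreescriptsize/{\Bbb P}^1}\right)$.

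The first step is nonnegativity of ${\cal G}^{\prime\prime}$ on each component. For this I would verify that each such restriction defines a special morphism in the sense of Sec.~\ref{preliminary}: Property (1) holds because $\tilde{\cal E}^{\prime\prime}_0$ arises as a Quot-scheme completion, hence as a quotient of $(h_0\times\Id_Y)^{\ast}(\tilde{\cal E}^{\prime}_0)$, which after twisting by a sufficiently ample ${\cal O}_Y(m)$ is globally generated and so displays each restriction as a quotient of a trivial bundle on ${\Bbb P}^1\times Y$; the remaining conditions follow from the relative-dimension-$0$ reduction recorded above, exactly as in the proof of Lemma~\ref{positivity-CppFpp}. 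The global-generation half of the argument proving Lemma~\ref{ggEpdE} (the embedding into the Grassmannian and the pullback of its universal, globally generated, quotient bundle $\hat{\cal Q}$) then shows ${\cal G}^{\prime\prime}$ is globally generated; since a globally generated locally free sheaf on ${\Bbb P}^1$ is $\oplus_i{\cal O}_{{\Bbb P}^1}(a_i)$ with $a_i\ge 0$, nonnegativity of ${\cal G}^{\prime\prime}$ on every ${\Bbb P}^1$-component follows.

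The hard part will be to secure strict positivity on at least one component, i.e. to produce one ${\Bbb P}^1$ in the connected tree $C^{\prime\prime}_{u_{h_0}}$ on which $\varphi_{\tilde{\cal E}^{\prime\prime}_0}$ is non-constant; by the positive-degree half of Lemma~\ref{ggEpdE} this forces $a_r>0$ there, and then, because $C^{\prime\prime}_{u_{h_0}}$ is connected, nonnegativity together with one strictly positive component is precisely positivity in the sense of Definition~\ref{nnpsptfsPt}. The mechanism I would use is the Step (c.2) analogue of the surjection diagram of Lemma~\ref{diagram-sj}, together with the fact that the blow-up was chosen to resolve a genuine non-flatness, $\delta_{\flatscriptsize}(\tilde{\cal E}^{\prime}_0;p^{\prime})>0$: the excess length of $\tilde{\cal E}^{\prime}_0$ at $p^{\prime}$ cannot be absorbed purely as $0$-dimensional torsion on the tree, for otherwise ${\cal G}^{\prime\prime}$ would have total degree $0$ and, by Lemma~\ref{coincidence-pf-Cp0Y-CpTY-c2} and the push-forward computation underlying Lemma~\ref{difference-ec}, the contraction $h_0$ would return $\tilde{\cal E}^{\prime}_0$ without lowering the discrepancy, contradicting that the reduction strictly decreases the error charge. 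Making this dichotomy rigorous — ruling out the degenerate all-constant tree and confirming that it is excluded by the very choice of the blown-up subscheme — is where I expect the real work to lie; once non-constancy is in hand on some component, the desired positivity of ${\cal G}^{\prime\prime}$ is immediate.
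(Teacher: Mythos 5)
Your first half is essentially sound and matches the paper's intent: on each ${\Bbb P}^1$-component of the contracted tree the torsion-free quotient of $\tilde{\cal E}^{\prime\prime}_0|_{{\Bbb P}^1\times Y}$ defines a special morphism in the sense of Sec.~2.2, and the global-generation half of the Lemma~\ref{ggEpdE} argument gives nonnegativity. (Your quotient-of-a-trivial-bundle presentation does hold componentwise, and even more simply than you argue: the restriction of $(h_0\times\Id_Y)^{\ast}(\tilde{\cal E}^{\prime}_0)$ to ${\Bbb P}^1\times Y$ is $\pr_Y^{\ast}$ of a $0$-dimensional sheaf on $Y$, which is already globally generated, so no twist by ${\cal O}_Y(m)$ is needed.)

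The genuine gap is in your mechanism for non-constancy, and it is a circularity rather than a technicality. You propose to rule out the all-constant tree because then $\degree({\cal G}^{\prime\prime})=0$ and the contraction would ``not lower the discrepancy, contradicting that the reduction strictly decreases the error charge.'' But at this point of Step (c.2) the strict decrease is not an independently known fact: Proposition~\ref{decrease-ec} takes non-constancy as a hypothesis (its Property (3)), and Lemma~\ref{difference-ec} derives the strict inequality from Proposition~\ref{ddwbfTt}, whose hypothesis (1) is precisely the positivity of the pushed-forward sheaf on the contracted tree --- the very statement of Lemma~\ref{positive-pfCppuh0} you are trying to prove. If the tree were all-constant, the pushforward would still return $\tilde{\cal E}^{\prime}_0$ (Lemma~\ref{coincidence-pf-Cp0Y-CpTY-c2} holds unconditionally); the error charge would simply fail to drop, and this contradicts nothing already established --- it is exactly the failure mode the lemma exists to exclude. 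The paper excludes it geometrically: near the non-flat point $p^{\prime}$ one presents $\tilde{\cal E}^{\prime}_T$ as a quotient of ${\cal O}_{U^{\prime}_T\times Y}^{\,\oplus k}$, obtaining a classifying rational map $f^{\prime}_T: U^{\prime}_T \dashrightarrow \Quot_Y({\cal O}_Y^{\,\oplus k},r)$ that is regular off $p^{\prime}$ but genuinely indeterminate at $p^{\prime}$ (regularity there would force flatness of $\tilde{\cal E}^{\prime}_T$ at $p^{\prime}$); after the blow-up, properness of the Quot-scheme extends $f^{\prime\prime}_T$ over a dense open subset $V^{\prime\prime}_0$ of the exceptional tree, and the nature of the blow-up --- chosen exactly to resolve this indeterminacy --- forces $f^{\prime\prime}_0(V^{\prime\prime}_0)$ to be $1$-dimensional, hence non-constant on at least one component; the Lemma~\ref{ggEpdE} argument then gives positivity in the sense of Definition~\ref{nnpsptfsPt}, the tree $C^{\prime\prime}_{u_{h_0}}$ being connected. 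Your intuition that the excess length at $p^{\prime}$ cannot be absorbed purely as torsion on the tree is correct, but its justification must come from this indeterminacy-resolution argument (or an equivalent descent argument showing an all-constant $\tilde{\cal E}^{\prime\prime}_T$ would descend to a flat family through $p^{\prime}$, contradicting non-flatness), not from the error-charge bookkeeping that sits downstream of it.
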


\begin{proof}
 Since ${\pr_{C^{\prime}_T}}_{\ast}(\tilde{\cal E}^{\prime}_T)$
     is coherent,
  there exist
      an affine neighborhood $U^{\prime}_T$  of $p^{\prime}$ in $C^{\prime}_T$
      and a $k>0$
    such that
	 $({\pr_{C^{\prime}_T}}_{\ast}(\tilde{\cal E}^{\prime}_T))
	               |_{U^{\prime}_T}$ is realized as a quotient sheaf
	 $$
	   {\cal O}_{U^{\prime}_T}^{\,\oplus k}\;
	       \longrightarrow\;
	     ({\pr_{C^{\prime}_T}}_{\ast}(\tilde{\cal E}^{\prime}_T))
	               |_{U^{\prime}_T}\;
           \longrightarrow\; 0\,.				   				
	 $$
 The composition of homomorphisms of ${\cal O }_{U^{\prime}_T\times Y}$-modules
   $$
     \pr_{U^{\prime}_T}^{\ast}
	    ({\cal O}_{U^{\prime}_T}^{\,\oplus k})\;
     \longrightarrow\;
     \pr_{U^{\prime}_T}^{\ast}	
	    ( ({\pr_{C^{\prime}_T}}_{\ast}(\tilde{\cal E}^{\prime}_T))
	               |_{U^{\prime}_T})\;
     \longrightarrow\;
	  \tilde{\cal E}^{\prime}_T|_{U^{\prime}_T}	
   $$
  gives a quotient sequence of ${\cal O}_{U^{\prime}_T\times Y}$-modules
   $$
	  {\cal O}_{U^{\prime}_T\times Y}^{\,\oplus k}\;    \longrightarrow\;
	  \tilde{\cal E}^{\prime}_T|_{U^{\prime}_T}\;        \longrightarrow\; 0\,.	
   $$
  Since $\tilde{\cal E}^{\prime}_T$
      is flat over $U^{\prime}_T-\{p^{\prime}\}$, of relative length $r$,
   the above quotient sequence induces a rational map
    $$
	  \xymatrix{
       f^{\prime}_T\;:\;  U^{\prime}_T\;\;
	       \ar@{.>}[r]
		  & \;\;\Quot_Y({\cal O}_Y^{\,\oplus k},r)
	   }	
	$$
	that is regular on $U^{\prime}_T-\{p^{\prime}\}$
  	and has a point of indeterminancy $p^{\prime}$.
  Let
    $U^{\prime\prime}_T:= h_T^{-1}(U^{\prime}_T)\subset C^{\prime\prime}_T$.	
  Then, $U^{\prime\prime}_T$ contains $C^{\prime\prime}_{u_{h_0}}$  and
    the exact sequence
    $$
       {\cal O}_{U^{\prime\prime}\times Y}^{\,\oplus k}\; \longrightarrow\;
         \tilde{\cal E}^{\prime\prime}_T|_{U^{\prime\prime}_T}\;
		 \longrightarrow\; 0	
    $$	
	from the composition of homomorphisms of
	${\cal O}_{U^{\prime\prime}_T\times Y}$-modules
   $$
      (h_T|_{U^{\prime\prime}_T}\times Y)^{\ast}
	      ({\cal O}_{U^{\prime}_T\times Y}^{\,\oplus k})\;  \longrightarrow\;
	  (h_T|_{U^{\prime\prime}_T}\times Y)^{\ast}
          (\tilde{\cal E}^{\prime}|_{U^{\prime}_T})          \;  \longrightarrow\;
         \tilde{\cal E}^{\prime\prime}_T|_{U^{\prime\prime}_T}		  		               	
   $$
    defines rational map
   $$
     \xymatrix{
      f^{\prime\prime}_T\; :\;  U^{\prime\prime}_T\;\;
	   \ar@{.>}[r]
	   & \;\;\Quot_Y({\cal O}_Y^{\,\oplus k},r)
	  }	
   $$
   that is regular on
    $(h_T|_{U^{\prime\prime}_T})^{-1}(U^{\prime}_T-\{p^{\prime}\})
      	= U^{\prime\prime}_T- C^{\prime\prime}_{u_{h_0}}$.
  Furthermore,
  since
     $U^{\prime\prime}_T$ is smooth on the complement of a codimension-$2$ subset
	   and
	 $\Quot_Y({\cal O}_Y^{\,\oplus k},r)$ is proper,
   the domain of definition for $f^{\prime\prime}_T$ extends over an open dense subset
	$V^{\prime\prime}_0\subset C^{\prime\prime}_{u_{h_0}}$.	
 The nature of the blow-up construction implies that
   $f^{\prime\prime}_0(V^{\prime\prime}_0)$ must be $1$-dimensional.
 It follows now by the same argument as that for Lemma~\ref{ggEpdE} that
  the torsion-free sheaf
  $\,({\pr_{C^{\prime\prime}_0}}_{\ast}(\tilde{\cal E}^{\prime\prime}_0)
           |_{C^{\prime\prime}_{u_{h_0}}})_{\torsionfreescriptsize}$
  on the ${\Bbb P}^1$-tree $C^{\prime\prime}_{u_{h_0}}$ is positive.
         
\end{proof}

\bigskip

\noindent
It follows from Lemma~\ref{difference-ec} that, as nonnegative integers,
  $$
     0\; \le \;  \Err^Z_{C^{\prime\prime}_0}
	                                                     (\tilde{\cal E}^{\prime\prime}_0)\;
	 <\;  \Err^Z_{C^{\prime}_0}(\tilde{\cal E}^{\prime}_0)\,.
   $$
	
If $\Err^Z_{\flatscriptsize/C^{\prime\prime}_0}
                                           (\tilde{\cal E}^{\prime\prime}_0)=0$,
 then go to the next and last Step (d).
Otherwise,
   redenote $(C^{\prime\prime}_T, \tilde{\cal E}^{\prime\prime}_T)$
     as $(C^{\prime}_T, \tilde{\cal E}^{\prime}_T)$
   and repeat the current Step (c.2).

\bigskip

\begin{flushleft}
{\bf Step $\boldsymbol{(d):}$  Termination of the reduction\\
          $\mbox{\hspace{4.2em}}$
          -- Recovery of a regular morphism in our category over $t\in T$}
\end{flushleft}
Since
 $$
    0\; \preccurlyeq\;
     \Err^Z_{C^{\prime\prime}_0}(\tilde{\cal E}^{\prime\prime}_0)\;
	 \prec\;    \Err^Z_{C^{\prime}_0}(\tilde{\cal E}^{\prime}_0)
 $$
  by Lemma~\ref{definity-ec} and Proposition~\ref{decrease-ec},   and
in Step (c)
we reduce first $-\Imaginary\Err^Z_{\bullet}(\,\bullet\,) $ until it becomes $0$ and
  then reduce the now-nonnegative-integer $\Err^Z_{\bullet}(\,\bullet\,)$ until it is $0$,
Step (c) must terminate after finitely many repetitions to give a
 $\tilde{\cal E}^{\prime\prime}_0\in \Coh_1(C^{\prime\prime}_0\times Y)$
 with $\Err^Z_{C^{\prime\prime}_0}(\tilde{\cal E}^{\prime\prime}_0)=0$
 since these error charges  lie in a locally finite rank-$2$ lattice in ${\Bbb C}$.

\bigskip

\begin{definition}\label{admissible-tree}
{\bf [admissible ${\Bbb P}^1$-tree].} {\rm
 Let $C^{\prime\prime}$ be a nodal curve.
 A ${\Bbb P}^1$-tree subcurve $C^{\prime\prime}_{(1)}$  of $C^{\prime\prime}$
  is called {\it admissible}  if  $C^{\prime\prime}_{(1)}$ satisfies the following condition:
  \begin{itemize}
   \item[{\Large $\cdot$}]
    Let $C^{\prime\prime}
	          =C^{\prime\prime}_{(0)}\cup C^{\prime\prime}_{(1)}$
	  be the decomposition of $C^{\prime\prime}$ into the union
	   of $C^{\prime\prime}_{(1)}$
	   and the complementary closed subcurve $C^{\prime\prime}_{(0)}$.
    Then, each connected component of $C^{\prime\prime}_{(1)}$
	   intersects with $C^{\prime\prime}_{(0)}$ at only either one or two (smooth) points.
  \end{itemize}
  (Note that by definition, connected components of $C^{\prime\prime}_{(1)}$
      are disjoint from each other in $C^{\prime\prime}$.)
}\end{definition}
 
\bigskip
 
\begin{lemma}\label{factorization-cm}
 {\bf [factorization of collapsing morphism].}
 Let
   $$
     \xymatrix{
       C^{\prime\prime}_T\ar[rr]^-{\rho^{\prime\prime}_T} \ar[rd]   && C_T \ar[ld]   \\
	   & T
	 }
   $$
   be a $T$-morphism between two (flat) $T$-families of nodal curves
    of the same (arithmetic)genus
   such that $\rho^{\prime\prime}_U$     is an isomorphism.
  (Here, recall that $0\in T$ and $U:=T-\{0\}$.)
  Note that $\rho^{\prime\prime}_0$ must be a collapsing morphism that contracts
   an admissible ${\Bbb P}^1$-tree subcurve
    $C^{\prime\prime}_{u_{\rho^{\prime\prime}_0}}$  of $C^{\prime\prime}_0$.
  Let $C^{\prime\prime}_{0; (1)}$	 be an admissible ${\Bbb P}^1$-tree subcurve of
   $C^{\prime\prime}_0$ that is contained in
   $C^{\prime\prime}_{u_{\rho^{\prime\prime}_0}}$.
  Then, up to a base change on $T$ with the pulled-back family,
    there exist collapsing morphisms between $T$-family of nodal curves of the same genus
	  $$
	    \xymatrix{
	       C^{\prime\prime}_T \ar[rr]^-{h_T}\ar[rd]
    		 && \underline{C}^{\prime\prime}_T \ar[ld]  \\
		   & T
		 }
		 \hspace{3em}\mbox{and}\hspace{3em}
        \xymatrix{
	       \underline{C}^{\prime\prime}_T
		      \ar[rr]^-{\underline{\rho}^{\prime\prime}_T}\ar[rd]   && C_T \ar[ld] \\
		     & T
		 } 		
	  $$
     where $h_T$ contracts exactly $C^{\prime\prime}_{0;(1)}$,
     such that $\rho^{\prime\prime}_T=   \underline{\rho}^{\prime\prime}_T\circ h_T$.
\end{lemma}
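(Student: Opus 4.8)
The plan is to realize the intermediate family $\underline{C}^{\prime\prime}_T$ as the relative contraction of the admissible subcurve $C^{\prime\prime}_{0;(1)}$ cut out by a carefully built relatively semiample line bundle, and then to factor $\rho^{\prime\prime}_T$ through it by a universal property. Throughout, write $\pi:C^{\prime\prime}_T\to T$ for the structure map and decompose the central fibre as $C^{\prime\prime}_0 = A\cup B\cup M$, where $A := C^{\prime\prime}_{0;(1)}$, $B := C^{\prime\prime}_{u_{\rho^{\prime\prime}_0}}\setminus A$ is the remaining contracted locus, and $M$ is the non-contracted (``main'') part, which is nonempty.

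First I would fix a relative polarization ${\cal O}_{C_T/T}(1)$ on $C_T/T$ and set ${\cal M} := (\rho^{\prime\prime}_T)^{\ast}{\cal O}_{C_T/T}(1)$; this is relatively nef, of positive fibrewise degree off the contracted locus $C^{\prime\prime}_{u_{\rho^{\prime\prime}_0}}$ and of degree $0$ on every component thereof (since $\rho^{\prime\prime}_0$ contracts it). Next I would produce a vertical ${\Bbb Q}$-divisor $D$ supported on the central fibre with $({\cal M}+D)\cdot\Gamma = 0$ for every component $\Gamma\subset A$ and $({\cal M}+D)\cdot\Gamma>0$ for every $\Gamma\subset B\cup M$. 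The existence of $D$ is linear algebra on the normal surface $C^{\prime\prime}_T$: by Zariski's lemma the intersection matrix of the fibre components is negative semidefinite with one-dimensional radical spanned by the fibre class, so the achievable degree vectors $(D\cdot\Gamma)_{\Gamma}$ are exactly those summing to $0$; since $\sum_{\Gamma}\deg({\cal M}|_{\Gamma}) = \deg({\cal O}(1)|_{C_0})>0$ and $B\cup M\neq\emptyset$, a target vector vanishing on $A$, strictly positive on $B\cup M$, and of the correct total degree exists. Clearing denominators for $N$ divisible enough (so that $ND$ is Cartier, using ${\Bbb Q}$-factoriality at the $A_{k-1}$ fibre nodes), I set $\underline{\cal L} := {\cal M}^{\otimes N}(ND)$; it is relatively nef, relatively ample over $U$ (as $D$ is vertical and $\rho^{\prime\prime}_U$ is an isomorphism), and its fibrewise degree over $0$ vanishes exactly on $A$.

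Second I would show $\underline{\cal L}$ is relatively semiample and that its contraction is the intermediate family. The components of $A$ form a block-diagonal negative definite configuration (each connected component of $A$ is a ${\Bbb P}^1$-tree that is a proper subcurve of the connected fibre), so by the contraction criterion $\underline{\cal L}^{\otimes m}$ is relatively globally generated for $m\gg 0$ and defines $h_T : C^{\prime\prime}_T \to \underline{C}^{\prime\prime}_T := \boldProj_T\bigoplus_{m\ge 0}\pi_{\ast}\underline{\cal L}^{\otimes m}$, contracting exactly $A = C^{\prime\prime}_{0;(1)}$ and an isomorphism over $U$. The crucial verification is that $\underline{C}^{\prime\prime}_T\to T$ is again a flat family of nodal curves of the same arithmetic genus, and this is precisely where admissibility enters: each connected component of $A$ meets $C^{\prime\prime}_0\setminus A$ in one or two smooth points, so its contraction produces respectively a smooth point or a node, keeping the central fibre reduced nodal, while contracting a tree of rational curves leaves the arithmetic genus unchanged. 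Here I would invoke the permitted finite base change on $T$ to guarantee cohomology-and-base-change for $\pi_{\ast}\underline{\cal L}^{\otimes m}$ and hence flatness together with the fibrewise description of $\underline{C}^{\prime\prime}_T$.

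Finally I would factor $\rho^{\prime\prime}_T$: since ${\cal M}$ has degree $0$ on every component of $A$, every curve contracted by $h_T$ lies in the degree-zero locus of ${\cal M}$, so by the universal property of the semiample contraction $h_T$ the morphism $\rho^{\prime\prime}_T$ — which already contracts $A\cup B\supseteq A$ — factors uniquely as $\rho^{\prime\prime}_T = \underline{\rho}^{\prime\prime}_T\circ h_T$, and the induced $\underline{\rho}^{\prime\prime}_T : \underline{C}^{\prime\prime}_T\to C_T$ contracts precisely $h_T(B)$, hence is a collapsing morphism between $T$-families of nodal curves of the same genus. I expect the main obstacle to be the second step — proving that the relative $\boldProj$ is a flat family of nodal curves with the expected central fibre, i.e.\ controlling the total-space surface singularities at the attaching nodes and confirming reducedness, nodality, and constancy of genus of $\underline{C}^{\prime\prime}_0$ — since the construction of $\underline{\cal L}$ and the final factorization are comparatively formal once this is established.
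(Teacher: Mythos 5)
Your route is genuinely different from the paper's, and its skeleton (twist the pull-back ${\cal M}$ by a vertical ${\Bbb Q}$-divisor to kill the degree exactly on $C^{\prime\prime}_{0;(1)}$, contract via relative Proj of the section ring, then factor $\rho^{\prime\prime}_T$ by the universal property of a contraction with connected fibres) would be a clean argument \emph{if} the total space $C^{\prime\prime}_T$ were a normal ${\Bbb Q}$-factorial surface. But there is a concrete gap at the Zariski-lemma step. You only account for the $A_{k-1}$ singularities at nodes of $C^{\prime\prime}_0$ that are smoothed in the family. In this paper the generic fibre of $C_T/T$ is an arbitrary stable curve, typically itself nodal, so $C^{\prime\prime}_T$ is \emph{non-normal} along the closure of the persistent nodal locus, which meets $C^{\prime\prime}_0$ in nodes that can sit inside, or at the attaching points of, the contracted ${\Bbb P}^1$-trees (blow up the constant node family $\{xy=0\}\subset{\Bbb A}^2\times T$ at a point of the central fibre to see a contracted $2$-chain whose interior node is persistent). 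At a persistent node, local model $\{xy=0\}$, a single branch of the fibre is not ${\Bbb Q}$-Cartier: if $f$ cuts $m\Gamma_1$ on the sheet $\{y=0\}$ then $f$ vanishes at the node, so it cannot be a unit on the sheet $\{x=0\}$; hence any ${\Bbb Q}$-Cartier vertical divisor is forced to have \emph{equal} coefficients on the two branches through each persistent node. Consequently the intersection matrix of fibre components is not even defined on the component lattice, and your key claim that the achievable degree vectors $(D\cdot\Gamma)_\Gamma$ are exactly those summing to zero is false: in a locally constant family every ${\Bbb Q}$-Cartier vertical divisor has locally constant coefficients across the nodes, so near a persistent node whole families of zero-sum vectors are unreachable. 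Whether the particular target vector (zero on $A$, positive elsewhere) survives these gluing constraints requires an analysis on the normalization, with coefficients matched along the double curve --- precisely the work your write-up omits; the same non-normality also infects your appeal to ``the contraction criterion'' for semiampleness.

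It is worth contrasting this with the paper's proof, which sidesteps the surface-theoretic issues entirely: after a finite base change one chooses \emph{effective horizontal} Cartier divisors (multisections that become sections after the base change --- this, not cohomology-and-base-change, is where the permitted base change is actually spent) supported in the relative smooth locus of $(C^{\prime\prime}_T - C^{\prime\prime}_{0;(1)})/T$ and meeting every component that is to survive; such divisors are automatically Cartier regardless of persistent nodes, a large multiple is relatively semiample of degree zero exactly on $C^{\prime\prime}_{0;(1)}$, and it defines $h_T$; repeating on $\underline{C}^{\prime\prime}_T$ with a divisor avoiding $h_T(C^{\prime\prime}_{u_{\rho^{\prime\prime}_0}})$ gives $\underline{\rho}^{\prime\prime}_T$ and the factorization. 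If you want to salvage your argument, the cheapest repair is to replace the vertical twist $D$ by (or supplement it with) such horizontal divisors through the relative smooth locus, keeping your universal-property argument for $\rho^{\prime\prime}_T=\underline{\rho}^{\prime\prime}_T\circ h_T$, which is in fact tidier than the paper's implicit identification of the composite with $\rho^{\prime\prime}_T$. Your observation that admissibility of $C^{\prime\prime}_{0;(1)}$ (one or two smooth attaching points) is what keeps the contracted central fibre reduced, nodal, and of the same arithmetic genus is correct and is a point the paper leaves implicit.
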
	
 
\begin{proof}
 Let $H_T$ be an ample Cartier divisor on $C_T$
  that is relative very ample on $C_T/T$ and supported in the relative smooth locus of
   $(C_T
          -\rho^{\prime\prime}_T
		       (C^{\prime\prime}_{u_{\rho^{\prime\prime}_0}}))
		  /T$.
 Then the pull-back Cartier divisor
   $H^{\prime\prime}_T := {\rho^{\prime\prime}_T}^{-1}(H_T)$
   is supported on the relative smooth locus of
   $(C^{\prime\prime}_T-C^{\prime\prime}_{u_{\rho^{\prime\prime}_0}})/T$
    and
  the tautological quotient sheaf
   ${\cal O}_{C^{\prime\prime}_T}\otimes_{\Bbb C}
        H^0(C^{\prime\prime}_T,
	              {\cal O}_{C^{\prime\prime}_T}(H^{\prime\prime}_T))
       \rightarrow
    {\cal O}_{C^{\prime\prime}_T}(H^{\prime\prime}_T)$
   from evaluations of global sections
  determines a morphism
  $f_T: C^{\prime\prime}_T
     \rightarrow
     \Proj(H^0(C^{\prime\prime}_T,
	       {\cal O}_{C^{\prime\prime}_T}(H^{\prime\prime}_T)))$
  whose image $\Image(f_T)$ is isomorphic to $C_T$ and
  the morphism $f_T: C^{\prime\prime}_T\rightarrow \Image(f_T)$
  recovers $\rho^{\prime\prime}_T$.
   
 Similarly, up to a base change on $T$,
  there exists an effective Cartier divisor $\hat{H}^{\prime\prime}_T$
   on $C^{\prime\prime}_T$, supported in the relative smooth locus of
   $(C^{\prime\prime}_T-C^{\prime\prime}_{0;(1)})/T$.
 Then $m\hat{H}^{\prime\prime}_T$, $m$ large enough,
  determines a collapsing morphism $h_T$
  that contracts exactly $C^{\prime\prime}_{0;(1)}$.
 In turn, up to a base change on $T$, there exists an effective Cartier divisor
  $\underline{H}^{\prime\prime}_T$ on $\underline{C}^{\prime\prime}_T$,
   supported in the relative smooth locus of
   $(\underline{C}^{\prime\prime}_T
         -h_T(C^{\prime\prime}_{u_{\rho^{\prime\prime}_0}}))/T$,
  and $\underline{m}\, \underline{H}^{\prime\prime}_T$, $\underline{m}$ large enough,
   determines a collapsing morphism $\underline{\rho}^{\prime\prime}_T$
   that contracts exactly
   $h_T(C^{\prime\prime}_{u_{\rho^{\prime\prime}_0}})$.
 The lemma follows.
 
\end{proof}
  
\bigskip

Resume our proof of the completeness  of
  ${\frak M}_{Az^{\!f}\!(g;r,\chi)}^{\scriptsizeZss}(Y; \beta, c)$.
By construction and with some redenotation, we now have
  \begin{itemize}
   \item[{\Large $\cdot$}]
    a $T$-family of nodal curves
      $C^{\prime\prime}_T/T$ with a collapsing $T$-morphism
      $\rho^{\prime\prime}_T: C^{\prime\prime}_T\rightarrow C_T$
      such that
	  \begin{itemize}
	   \item[{\Large $\cdot$}]
	    $C^{\prime\prime}_U/U \simeq C^{\prime}_U/U$ specified in the beginning;
	
	   \item[{\Large $\cdot$}]
	    under the above isomorphism,
		$\rho^{\prime\prime}_U = \rho^{\prime}_U$.	
	  \end{itemize}
   
   \item[{\Large $\cdot$}]
    a  coherent sheaf $\tilde{\cal E}^{\prime\prime}_T$
	   on $C^{\prime\prime}_T\times Y$
     that is flat over $C^{\prime\prime}_T$, of relative length $r$, 	   	
	with the following properties:
	\begin{itemize}
     \item[(0)]	
	  under the $U$-isomorphism $C^{\prime\prime}_U \simeq C^{\prime}_U$ above,
      $\tilde{\cal E }^{\prime\prime}_U\simeq \tilde{\cal E}^{\prime}_U$.	
	
	 \item[(1)]
	  $(\rho^{\prime\prime}_T\times\Id_Y)_{\ast}
	       (\tilde{\cal E}^{\prime\prime}_T)=:\tilde{\cal F}_T$
	   gives a (flat) $T$-family of $Z$-semistable Fourier-Mukai transforms
      from fibers of $C_T/T$ to $Y$.

    \item[(2)]
     The natural sequence of homomorphisms
	  $\;(\rho^{\prime\prime}_T\times\Id_Y)^{\ast}(\tilde{\cal F}_T)
	      \rightarrow \tilde{\cal E}^{\prime\prime}_T\rightarrow 0\;$
	  is exact.	      	
    \end{itemize}	
 \end{itemize}
Let
 $C^{\prime\prime}_{u_{\rho^{\prime\prime}_0}}$
    be the ${\Bbb P}^1$-tree subcurve of $C^{\prime\prime}_0$
    that is collapsed by $\rho^{\prime\prime}_0$.
	
\bigskip

\begin{lemma}\label{collapsing-aPti}
{\bf  [collapsing admissible ${\Bbb P}^1$-tree in
            $C^{\prime\prime}_{u_{\rho^{\prime\prime}_0}}$].}
 Let
  \begin{itemize}
    \item[{\Large $\cdot$}]
     $C^{\prime\prime}_{0;(1)}
                  \subset C^{\prime\prime}_{u_{\rho^{\prime\prime}_0}}$
     be a connected admissible ${\Bbb P}^1$-tree subcurve of $C^{\prime\prime}_0$
     that is contained in $C^{\prime\prime}_{u_{\rho^{\prime\prime}_0}}$
     such that the restriction of $\varphi_{\tilde{\cal E}^{\prime\prime}_0}$
     to $C^{\prime\prime}_{0;(1)}$ is constant,  and
	
   \item[{\Large $\cdot$}]
    $$
	   \xymatrix{
	       C^{\prime\prime}_T \ar[rr]^-{h_T}\ar[rd]
    		 && \underline{C}^{\prime\prime}_T \ar[ld]  \\
		   & T
		 }
		 \hspace{3em}\mbox{and}\hspace{3em}
        \xymatrix{
	       \underline{C}^{\prime\prime}_T
		      \ar[rr]^-{\underline{\rho}^{\prime\prime}_T}\ar[rd]   && C_T \ar[ld] \\
		     & T
		 }\,, 		
	  $$
     where $h_T$ contracts exactly $C^{\prime\prime}_{0;(1)}$
     and $\rho^{\prime\prime}_T=   \underline{\rho}^{\prime\prime}_T\circ h_T$,
	 be the induced factorization of collapsing morphisms from Lemma~\ref{factorization-cm}.
  \end{itemize}
 Define  
  $$
      \underline{\tilde{\cal E}}^{\prime\prime}_T\;
      :=\;   (h_T\times \Id_Y)_{\ast}(\tilde{\cal E}^{\prime\prime}_T)\,. 
  $$
 Then, 
  $\; \tilde{\cal F}^{\prime\prime}_T
	 = (\underline{\rho}^{\prime\prime}_T\times \Id_Y)_{\ast}
	                (\underline{\tilde{\cal E}}^{\prime\prime}_T)\;$
     and 
  the natural sequence
	 $\;(\underline{\rho}^{\prime\prime}_T\times\Id_Y)^{\ast}(\tilde{\cal F}_T)
	      \rightarrow \underline{\tilde{\cal E}}^{\prime\prime}_T\rightarrow 0\;$
  of homomorphisms is exact.	      	  
\end{lemma}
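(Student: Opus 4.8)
The statement makes two assertions; the first is formal and the second carries all the weight.

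\textbf{Part 1: the push-forward identity.} Since $\rho^{\prime\prime}_T=\underline{\rho}^{\prime\prime}_T\circ h_T$, one has $(\rho^{\prime\prime}_T\times\Id_Y)=(\underline{\rho}^{\prime\prime}_T\times\Id_Y)\circ(h_T\times\Id_Y)$, so functoriality of push-forward gives $(\underline{\rho}^{\prime\prime}_T\times\Id_Y)_{\ast}(\underline{\tilde{\cal E}}^{\prime\prime}_T)=(\underline{\rho}^{\prime\prime}_T\times\Id_Y)_{\ast}((h_T\times\Id_Y)_{\ast}(\tilde{\cal E}^{\prime\prime}_T))=(\rho^{\prime\prime}_T\times\Id_Y)_{\ast}(\tilde{\cal E}^{\prime\prime}_T)=\tilde{\cal F}_T$, using the definition of $\underline{\tilde{\cal E}}^{\prime\prime}_T$ and Property (1) of the original family. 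Thus Part 1 is immediate.

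\textbf{Part 2: reduction to a vanishing.} Write $g:=h_T\times\Id_Y$ and $\tilde{\cal G}:=(\underline{\rho}^{\prime\prime}_T\times\Id_Y)^{\ast}(\tilde{\cal F}_T)$. Because $(\rho^{\prime\prime}_T\times\Id_Y)^{\ast}=g^{\ast}\circ(\underline{\rho}^{\prime\prime}_T\times\Id_Y)^{\ast}$, Property (2) for $\tilde{\cal E}^{\prime\prime}_T$ says that $g^{\ast}\tilde{\cal G}\to\tilde{\cal E}^{\prime\prime}_T\to 0$ is exact; let $\tilde{\cal K}$ be the kernel. Since $h_T$ contracts the connected ${\Bbb P}^1$-tree $C^{\prime\prime}_{0;(1)}$, one has $Rh_{T\ast}{\cal O}={\cal O}$ (both $h_{T\ast}{\cal O}={\cal O}$ and $R^1 h_{T\ast}{\cal O}=0$, the latter being the vanishing of $H^1$ of the structure sheaf on a ${\Bbb P}^1$-tree), whence $Rg_{\ast}{\cal O}_{C^{\prime\prime}_T\times Y}={\cal O}_{\underline{C}^{\prime\prime}_T\times Y}$ and, by the projection formula, $Rg_{\ast}g^{\ast}\tilde{\cal G}=\tilde{\cal G}$. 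The long exact sequence of $g_{\ast}$ applied to $0\to\tilde{\cal K}\to g^{\ast}\tilde{\cal G}\to\tilde{\cal E}^{\prime\prime}_T\to 0$ then identifies the cokernel of the natural adjunction map $\tilde{\cal G}\to g_{\ast}\tilde{\cal E}^{\prime\prime}_T=\underline{\tilde{\cal E}}^{\prime\prime}_T$ with $R^1 g_{\ast}\tilde{\cal K}$. Hence the asserted exactness of $(\underline{\rho}^{\prime\prime}_T\times\Id_Y)^{\ast}(\tilde{\cal F}_T)\to\underline{\tilde{\cal E}}^{\prime\prime}_T\to 0$ is \emph{equivalent} to $R^1 g_{\ast}\tilde{\cal K}=0$.

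\textbf{Part 2: the vanishing.} Since $h_U$ is an isomorphism, $R^1 g_{\ast}\tilde{\cal K}$ is supported over the image $\underline{p}\in\underline{C}^{\prime\prime}_0$ of the contracted tree, and by the Theorem on Formal Functions its stalk is controlled by $H^1$ over infinitesimal neighborhoods of $g^{-1}(\underline{p})=C^{\prime\prime}_{0;(1)}$; so it suffices to understand $\tilde{\cal K}$ along $C^{\prime\prime}_{0;(1)}\times Y$. Here the hypothesis that $\varphi_{\tilde{\cal E}^{\prime\prime}_0}$ is \emph{constant} on the connected tree $C^{\prime\prime}_{0;(1)}$ enters decisively: by the meaning of Condition (3) in Definition~\ref{fZssm} it forces $\tilde{\cal E}^{\prime\prime}_0|_{C^{\prime\prime}_{0;(1)}\times Y}\simeq\pr_Y^{\ast}(\bullet)$ for a $0$-dimensional length-$r$ sheaf $\bullet$ on $Y$, while $g^{\ast}\tilde{\cal G}|_{C^{\prime\prime}_{0;(1)}\times Y}=\pr_Y^{\ast}(\tilde{\cal F}_T|_{\underline{\rho}^{\prime\prime}_T(\underline{p})\times Y})$ is likewise pulled back from $Y$. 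Consequently $\tilde{\cal K}$, restricted to the tree, is (up to torsion in the $Y$-direction) of the form $\pr_Y^{\ast}(\,\cdot\,)$; for such sheaves $R^1 g_{\ast}(\pr_Y^{\ast}(\,\cdot\,))=0$ by the ${\Bbb P}^1$-tree vanishing packaged in Lemma~\ref{cvfdis} and Lemma~\ref{vfdis} (the same mechanism as in Lemma~\ref{vR1sm}). Feeding this back through Formal Functions gives $R^1 g_{\ast}\tilde{\cal K}=0$, hence the exactness. To keep the twisting and global-generation steps uniform over $T$ one selects, exactly as in the proof of Lemma~\ref{vR1sm}, a relative polarization on $\underline{C}^{\prime\prime}_T/T$ whose support avoids $\underline{p}$, so that the relevant higher direct images are unchanged by twists; one also records via Lemma~\ref{cpf} that $\underline{\tilde{\cal E}}^{\prime\prime}_T$ then remains flat of relative length $r$.

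\textbf{Where the difficulty sits.} The genuinely delicate point is that $\tilde{\cal K}$ is a $1$-dimensional sheaf which is \emph{not} supported on the central fibre, so $R^1 g_{\ast}\tilde{\cal K}$ cannot be disposed of by a naive dimension count; what rescues the argument is precisely the constancy of $\varphi_{\tilde{\cal E}^{\prime\prime}_0}$ on $C^{\prime\prime}_{0;(1)}$, which collapses the relevant cohomology to that of the structure sheaf of a ${\Bbb P}^1$-tree and makes the obstruction vanish. This is the same positivity/vanishing phenomenon already exploited in Lemma~\ref{vR1sm} and Proposition~\ref{ddwbfTt}, now specialized to an admissible constant-morphism sub-tree, and it is the one step I would expect to require the most care.
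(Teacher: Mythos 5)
Your Part 1 is exactly right and is what the paper leaves implicit: $\rho^{\prime\prime}_T=\underline{\rho}^{\prime\prime}_T\circ h_T$ plus functoriality of push-forward, together with Property (1) of the family, give the first identity at once. For Part 2 you take a genuinely different and much heavier route than the paper. The paper's entire proof is two facts: $H^0(Z,{\cal O}_Z)={\Bbb C}$ for any connected projective variety $Z$, and $\Supp(\tilde{\cal E}^{\prime\prime}_T)$ is affine over $C^{\prime\prime}_T$. Since $\varphi_{\tilde{\cal E}^{\prime\prime}_0}$ is constant on the \emph{connected} tree $C^{\prime\prime}_{0;(1)}$, the sheaf along the tree is $\pr_Y^{\ast}$ of a fixed $0$-dimensional sheaf on $Y$; because $H^0(C^{\prime\prime}_{0;(1)},{\cal O})={\Bbb C}$, any homomorphism between such pull-backs is itself pulled back from $Y$, and affineness of the support over the curve lets one compute $(h_T\times\Id_Y)_{\ast}$ as an $H^0$ along the tree, which changes nothing; the surjection of Property (2) therefore descends directly, with no kernel sequence, no $R^1$, and no formal functions. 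Your approach instead re-instantiates the Step (b)/(c.1)/(c.2) machinery (kernel, $R^1$-vanishing, Theorem on Formal Functions); it can be completed, and it has the virtue of exposing precisely where constancy enters, but for this lemma it is more delicate than necessary.

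Two steps in your write-up need repair before the argument closes. First, the projection formula yields $Rg_{\ast}Lg^{\ast}\tilde{\cal G}\simeq\tilde{\cal G}$ for the \emph{derived} pull-back, not $Rg_{\ast}g^{\ast}\tilde{\cal G}=\tilde{\cal G}$: here $\tilde{\cal G}=(\underline{\rho}^{\prime\prime}_T\times\Id_Y)^{\ast}(\tilde{\cal F}_T)$ is not flat near the contracted locus, so $g^{\ast}\ne Lg^{\ast}$ there. This is fixable: only \emph{surjectivity} of the unit $\tilde{\cal G}\to g_{\ast}g^{\ast}\tilde{\cal G}$ is needed for the implication you actually use ($R^1g_{\ast}\tilde{\cal K}=0$ implies exactness — your claimed ``equivalence'' is overstated but harmless), and surjectivity follows from the truncation triangle for $Lg^{\ast}\tilde{\cal G}$ together with $R^{\ge 2}g_{\ast}=0$ in relative dimension $\le 1$. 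Second, and more seriously, the inference ``both $g^{\ast}\tilde{\cal G}$ and $\tilde{\cal E}^{\prime\prime}_T$ restrict on the tree to pull-backs from $Y$, consequently $\tilde{\cal K}$ restricted to the tree is a pull-back'' is not automatic: a homomorphism between pull-backs need not be a pull-back, and without this your kernel could have negative degree along the tree — e.g.\ the kernel of ${\cal O}_{{\Bbb P}^1}^{\,\oplus 2}\to{\cal O}_{{\Bbb P}^1}(2)$ is ${\cal O}_{{\Bbb P}^1}(-2)$, whose $H^1$ is nonzero — so $R^1g_{\ast}\tilde{\cal K}$ would \emph{not} vanish. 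What rescues it is that $\Hom(\pr_Y^{\ast}A,\pr_Y^{\ast}B)\simeq\Hom(A,B)$ over a connected tree, which is precisely the $H^0({\cal O})={\Bbb C}$ fact constituting the paper's proof; you use it tacitly, but it must be stated, since it is the only point where connectedness and constancy do any work. Relatedly, Lemma~\ref{cvfdis} is stated for \emph{quotients} of trivial bundles, so citing it for the kernel $\tilde{\cal K}$ is legitimate only after this pull-back identification (a sheaf of the form $\pr_Y^{\ast}(\,\cdot\,)$ is locally such a quotient); and your formal-functions step requires control of $\tilde{\cal K}$ on the infinitesimal thickenings of $C^{\prime\prime}_{0;(1)}$ in $C^{\prime\prime}_T$, not just on the reduced tree — the graded pieces are twists by powers of the conormal sheaf of the contracted (hence negative-definite) tree, which are nonnegative, so the vanishing does persist, but that verification is part of the proof and is currently missing.
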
 

\begin{proof}
 This follows from the fact that 
   $H^0(Z;{\cal O}_Z)={\Bbb C}$ for any connected projective variety $Z$     and that
   $\Supp(\tilde{\cal E}^{\prime\prime}_T)$ is affine over $C^{\prime\prime}_T$. 
  
\end{proof} 

\bigskip
 
Thus, 
  after contracting step by step all the admissible ${\Bbb P}^1$-tree subcurves
       in $C^{\prime\prime}_0$ 
     that are contained in $C^{\prime\prime}_{u_{\rho^{\prime\prime}_0}}$
         and to whose components 
         the restriction of $\varphi_{\tilde{\cal E}^{\prime\prime}_0}$ are constant
  and redenotation,
   $\tilde{\cal E}^{\prime\prime}_T$ now has in addition the following property:
  \begin{itemize}
   \item[]
	\begin{itemize}
	  \item[(3)]
       For each ${\Bbb P}^1$-component (denoted by ${\Bbb P}^1$) 
	     of the ${\Bbb P}^1$-tree subcurve of $C^{\prime\prime}_0$ 
	    that is collapsed by $\rho^{\prime\prime}_0$ to points in $C_0$,
       if $\varphi_{\tilde{\cal E}^{\prime\prime}_0}|_{{\Bbb P}^1}$	
    	   is a constant morphism, 
       then ${\Bbb P}^1$ has at least three special points 	   
    \end{itemize}
  \end{itemize}
 In other words,  we obtains 
    a $T$-family of $Z$-semistable morphisms from Azumaya nodal curves 
	with a fundamental module to $Y$ of type $(g;r,\chi;\beta,c)$
  that, up to a base change, extends the original $U$-family of $Z$-semistable morphisms
   of type $(g;r,\chi;\beta,c)$.
 
\bigskip

This proves the completeness of 
  ${\frak M}_{Az^{\!f}\!(g;r,\chi)}^{\scriptsizeZss}(Y; \beta, c)$.
 
Together with Sec.~4.1, this proves Theorem~4.0
 on the compactness of
 ${\frak M}_{Az^{\!f}\!(g;r,\chi)}^{\scriptsizeZss}(Y; \beta, c)$.

%
%
%
%
%
%
%
%
%
 
%
%
%
%
%
%
%
%
%
%

\newpage
\baselineskip 13pt
{\footnotesize

\vspace{6em}

\noindent
chienhao.liu@gmail.com, chienliu@math.harvard.edu; \\
yau@math.harvard.edu

}

\end{document}